\documentclass{article}
\usepackage{graphicx}
\usepackage{tikz-cd}
\usepackage{tikz}
\usepackage{amsmath}
\usepackage{amssymb}
\usepackage[left=2.7cm,right=2.7cm,top=3cm,bottom=2.8cm]{geometry}
\usepackage{setspace}
\usepackage[OT2,T1]{fontenc}
\usepackage{leftindex,tensor,mhchem}
\usepackage{mathtools}
\usepackage{mathrsfs}
\usepackage{indentfirst}
\usepackage{xypic}
\usepackage{amsthm} 
\usepackage[scr=pxtx]{mathalpha}

\usepackage{hyperref}
\hypersetup{colorlinks=true,linkcolor=blue,anchorcolor=blue,citecolor=blue}
\usepackage{cleveref}

\usepackage{etoolbox}
\usepackage{bbding} % 星积记号宏包
\usepackage{pifont} % diagonal complex 记号宏包
\usepackage{color}
\usepackage{relsize}
\usepackage[bbgreekl]{mathbbol}
\usepackage{amsfonts}
\usepackage{quiver}
\usepackage[normalem]{ulem}

\input pdfmsym
\pdfmsymsetscalefactor{10}

\DeclareSymbolFontAlphabet{\mathbb}{AMSb}
\DeclareSymbolFontAlphabet{\mathbbl}{bbold}

\newcommand{\rmL}{\mathrm{L}}
\newcommand{\stimes}{{\scriptstyle\text{\ding{84}}}} % star times

\newcommand{\cdstimes}{{\scriptstyle\widehat{\text{\ding{84}}^{{\scriptscriptstyle\rmL}}}}}%completed derived star times

\newcommand{\Z}{\mathbb{Z}}
\newcommand{\N}{\mathbb{N}}

\newcommand{\G}{\mathbb{G}}
\newcommand{\cris}{\mathrm{crys}}

\newcommand{\HH}{\mathrm{H}} % cohomology H is mathrm H
\newcommand{\Hom}{\mathrm{Hom}} 
\newcommand{\RHom}{\mathrm{RHom}}
\newcommand{\id}{\mathrm{id}} 
\newcommand{\Fil}{\mathrm{Fil}} % Filtration
\newcommand{\D}{\mathbb{D}}

\newcommand{\Tot}{\mathrm{Tot}} % Total complex of a double complex
\newcommand{\dt}{\widetilde{\tau}} % diagonal truncation 
\newcommand{\dtimes}{\otimes^{\rmL}} % derived tensor product
\newcommand{\rmI}{\mathrm{I}}
\newcommand{\DGr}{\mathcal{DG}r}
\newcommand{\calD}{\mathcal{D}}
\newcommand{\Ext}{\mathrm{Ext}}
\newcommand{\Spec}{\mathrm{Spec}}
\newcommand{\Ker}{\mathrm{Ker}}
\newcommand{\RR}{\mathscr{R}}

\newcommand{\td}{\text{-}}
\newcommand{\coeur}{\text{c\oe ur}}
\newcommand{\Tor}{\mathrm{Tor}}

\numberwithin{equation}{section}

\theoremstyle{plain}
\newtheorem{theorem}[equation]{Theorem}

\newtheorem{proposition}[equation]{Proposition}
\newtheorem{lemma}[equation]{Lemma}

\newtheorem{corollary}[equation]{Corollary}

\newtheorem{construction/proposition}[equation]{Construction/Proposition}

\theoremstyle{definition}
\newtheorem{definition}[equation]{Definition}

\newtheorem{notation}[equation]{Notation}

\newtheorem{construction}[equation]{Construction}

\newtheorem{example}[equation]{Example}
\newtheorem{digression}[equation]{Digression}

\newtheorem{situation}[equation]{Situation}
\newtheorem{remark}[equation]{Remark}

\newtheorem{warning}[equation]{Warning}

\AddToHook{env/theorem/begin}{\crefalias{equation}{theorem}}
\AddToHook{env/thm/begin}{\crefalias{equation}{theorem}}
\AddToHook{env/conjecture/begin}{\crefalias{equation}{conjecture}}
\AddToHook{env/proposition/begin}{\crefalias{equation}{proposition}}
\AddToHook{env/lemma/begin}{\crefalias{equation}{lemma}}
\AddToHook{env/claim/begin}{\crefalias{equation}{claim}}
\AddToHook{env/corollary/begin}{\crefalias{equation}{corollary}}
\AddToHook{env/cor/begin}{\crefalias{equation}{corollary}}
\AddToHook{env/construction/proposition/begin}{\crefalias{equation}{construction/proposition}}

\AddToHook{env/definition/begin}{\crefalias{equation}{definition}} 
\AddToHook{env/defn/begin}{\crefalias{equation}{definition}}
\AddToHook{env/notation/begin}{\crefalias{equation}{notation}}
\AddToHook{env/convention/begin}{\crefalias{equation}{convention}}
\AddToHook{env/construction/begin}{\crefalias{equation}{construction}}
\AddToHook{env/variant/begin}{\crefalias{equation}{variant}}
\AddToHook{env/question/begin}{\crefalias{equation}{question}}
\AddToHook{env/hypothesis/begin}{\crefalias{equation}{hypothesis}}
\AddToHook{env/example/begin}{\crefalias{equation}{example}}
\AddToHook{env/digression/begin}{\crefalias{equation}{digression}}
\AddToHook{env/examples/begin}{\crefalias{equation}{example}} % Aliasing plural to singular name
\AddToHook{env/situation/begin}{\crefalias{equation}{situation}}
\AddToHook{env/remark/begin}{\crefalias{equation}{remark}}
\AddToHook{env/rmk/begin}{\crefalias{equation}{remark}}
\AddToHook{env/exercise/begin}{\crefalias{equation}{exercise}}
\AddToHook{env/warning/begin}{\crefalias{equation}{warning}}
\AddToHook{env/setup/begin}{\crefalias{equation}{setup}}

\newcommand{\simp}[3]{\xymatrix@1{#1  & {\ }#2 \ar@<.4ex>[l] \ar@<-.4ex>[l] & {\ } #3 \ar@<0.8ex>[l] \ar[l] \ar@<-.8ex>[l]& \cdots \ar@<1.2ex>[l] \ar@<.4ex>[l] \ar@<-.4ex>[l] \ar@<-1.2ex>[l]  }}
\newcommand{\cosimp}[3]{\xymatrix@1{#1 \ar@<.4ex>[r] \ar@<-.4ex>[r] & {\ }#2 \ar@<0.8ex>[r] \ar[r] \ar@<-.8ex>[r] & {\ } #3 \ar@<1.2ex>[r] \ar@<.4ex>[r] \ar@<-.4ex>[r] \ar@<-1.2ex>[r] & \cdots }} % simplicial and cosimplicial 

\usepackage{pict2e}

\makeatletter
\newcommand{\dc}{\mathord{\mathpalette\rDelta@\relax}}% diagonal complex 记号
\newcommand{\rDelta@}[2]{%
  \begingroup
  \edef\rDelta@thick{%
    1.5\fontdimen8
      \ifx\displaystyle#1\textfont\else
      \ifx\textstyle#1\textfont\else
      \ifx\scriptstyle#1\scriptfont\else
      \scriptscriptfont\fi\fi\fi 3
    }%
  \edef\rDelta@slant{%
      \ifx\displaystyle#10.71\else
      \ifx\textstyle#10.71\else
      \ifx\scriptstyle#10.71\else
      0.69\fi\fi\fi
  }%
  \sbox\z@{$\m@th#1\Delta$}%
  \begin{picture}(\wd\z@,\ht\z@)
    \linethickness{\rDelta@thick}
    \put(0,0){\usebox{\z@}}
    \Line(\rDelta@slant\wd\z@,0.2)(0.4\wd\z@,0.8\ht\z@)
  \end{picture}%
  \endgroup
}

\title{On de Rham--Witt Cohomology of Classifying Stacks}
\author{Shizhang Li, Yuan Yang}
\date{\today}

\begin{document}

\maketitle

\begin{abstract}
We give an example of proper smooth fourfold over a perfect field \(k\) 
of characteristic \(p > 0\) with asymmetric Hodge--Witt numbers in total degree $3$. 
Our example is sharp both in terms of dimension and total degree.
We arrive at our example by computing and approximating the Hodge--Witt cohomology groups 
of the classifying stack \(B\alpha_p\). 
\end{abstract}

\tableofcontents

\section{Introduction}

de Rham--Witt complex of a smooth variety $X$ over a perfect field $k$ of positive characteristic
was introduced by Illusie \cite{IlldRW}, and further studied by Illusie--Raynaud \cite{IRdRW},
Ekedahl \cite{Ek1, Ek2} and others in the 1980s.
It plays a central role in the study of the arithmetic of such varieties.
%For a proper smooth variety \(X\) over a perfect field \(k\) of characteristic \(p>0\), the de Rham--Witt complex \(W\Omega^\bullet_{X/k}\) gives rise to cohomology groups
%\(\HH^j(X,W\Omega^i_{X/k}). \)
%In particular, the complex \(R\Gamma(X,W\Omega^\bullet_{X/k})\) may be viewed as an object of Illusie’s derived category \(\calD_c^b(R)\), and provides a bridge between Hodge-theoretic and \(p\)-adic invariants of algebraic varieties. 

When $X$ is further assumed to be proper,
associated with the cohomology of its de Rham--Witt complex is a collection of numerical invariants:
Namely the \emph{Hodge--Witt numbers} \(h_W^{i,j}(X)\) introduced by Ekedahl \cite[Definition IV.3.1]{Ek3}. 
These numbers arise from the homological algebra of the de Rham--Witt complex 
and are defined in terms of slope multiplicities and so-called ``\emph{domino numbers}''. 
Although their definition is somewhat indirect, 
Crew’s formula \cite[Theorem 4]{Cr} and Ekedahl’s inequality theorem \cite[Theorem IV.3.3]{Ek3} show 
that they are closely related to the classical Hodge numbers \(h^{i,j}(X)\). 
From this perspective, the Hodge--Witt numbers may be regarded as 
characteristic \(p\) analogues of Hodge numbers.
The search for such an analogue is reasonable, as it is well-known that Hodge numbers in
positive characteristics behave very poorly.

It is natural to ask to what extent the familiar symmetries of Hodge numbers
persist for Hodge--Witt numbers.
Ekedahl proved that they satisfy the analogue of Serre duality \cite[Proposition VI.3.2]{Ek3}:
\[
h_W^{i,j}=h_W^{N-i,N-j},
\]
whenever $X$ is proper and smooth of equidimension $N$.
This symmetry reflects the duality theory of Hodge--Witt cohomology. 
Another fundamental symmetry of Hodge numbers in characteristic \(0\) is \emph{Hodge symmetry},
\[
h^{i,j}=h^{j,i}.
\]
While knowing that in positive characteristic Hodge numbers can be asymmetric, 
it is natural to ask whether the same symmetry holds for Hodge--Witt numbers:
\[
h_W^{i,j}=h_W^{j,i}.
\]

In low degree or dimension, such symmetry holds true:
For instance, Ekedahl proved that Hodge symmetry holds for Hodge--Witt numbers 
whenever \(i+j\leqslant 2\) \cite[Corollary VI.3.3(ii)]{Ek3}, and more generally for all 
smooth proper varieties of dimension \(\leqslant 3\) \cite[Corollary VI.3.3(iii)]{Ek3}. 
Beyond these cases, however, no general principle is known 
that would force ``Hodge--Witt symmetry'' in higher dimensions. 
To this question, Ekedahl himself commented \cite[Remark in Page.~113]{Ek3}
that ``he saw no reason why such a symmetry should hold in general''. 
Nevertheless, no counterexample seems to have appeared in the literature. 
The purpose of this paper is to show that such counterexamples exist starting exactly in dimension $4$
and cohomological degree $3$:
\begin{theorem}[\Cref{the counterexample}]
\label{Main Thm:counterexample}
There exists a smooth proper $4$-fold over any perfect
field of characteristic $p > 0$, with $h_W^{0,3} = 1$, $h_W^{1,2} = -2$,
$h_W^{2,1} = 1$, and $h_W^{3,0} = 0$.
\end{theorem}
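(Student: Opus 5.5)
The plan is to follow the strategy announced in the abstract. First compute the Hodge--Witt cohomology $\HH^j(B\alpha_p, W\Omega^i)$ of the classifying stack in low total degree, then approximate $B\alpha_p$ by a smooth proper variety. The approximation should be a quotient $X = Y/\alpha_p$, where $Y$ is a smooth projective (say complete intersection) variety of dimension $4$ carrying a free $\alpha_p$-action. We need the map $X \to B\alpha_p$ to induce isomorphisms on Hodge--Witt cohomology in total degree $\leqslant 3$. By the Lefschetz-type vanishing for complete intersections, $Y$ looks like a point in low degrees. Hence a Hochschild--Serre / Čech nerve argument for $Y \to X$ identifies the low-degree de Rham--Witt cohomology of $X$ with that of $B\alpha_p$, at least up to the $W$-modules and the operators $F, V, d$ that enter Ekedahl's definition.

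Next we compute $R\Gamma(B\alpha_p, W\Omega^i)$ for $i \leqslant 3$ as Dieudonné-module objects. The input is the known Hodge cohomology of $B\alpha_p$: it is a polynomial-type algebra with asymmetric generators in $\HH^1(\Omega^0)$, $\HH^1(\Omega^1)$, $\HH^2(\Omega^0)$, and so on. We then pass to Witt vectors via the filtration by $V$ and the exact sequences $0 \to \Omega^i \to W_{n+1}\Omega^i \to W_n\Omega^i$ (or the dRW-complex of the cosimplicial scheme $B_\bullet\alpha_p$), keeping track of $F$, $V$ and $d$. The crystalline cohomology of $B\alpha_p$ in low degrees is torsion (killed by $p$), so all slope contributions vanish in degree $3$. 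The Hodge--Witt numbers $h^{i,j}_W = m^{i,j} - 2T^{i,j} + T^{i+1,j-2} + T^{i-1,j+1}$ (slope numbers minus domino contributions) are then governed entirely by the domino numbers $T^{i,j}$ of the differentials $d\colon \HH^j(W\Omega^i) \to \HH^j(W\Omega^{i+1})$.

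Third, we compute these dominoes. For each $i+j=3$ we determine the slope spectral sequence of the approximation in the relevant range, identify the heart of the differentials, and read off the dimensions of the dominoes. For the degree-$3$ numbers we also need the dominoes from total degree $2$ and $4$ that border total degree $3$, so the approximation must be good through total degree $4$ at least at the level of the $d$ in question. Alternatively we can arrange $\dim X = 4$ so that duality $h_W^{i,j}=h_W^{4-i,4-j}$ controls the remaining terms. Assembling the numbers should give $(1,-2,1,0)$. As a sanity check, their sum equals $b_3 = 0$, consistent with Crew's formula $\sum_{i+j=3} h_W^{i,j}=b_3$.

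The main obstacle is the approximation step together with the domino computation in the borderline degree. The Lefschetz-type argument naturally gives agreement only up to total degree $\dim Y - 1 = 3$, but the dominoes $T^{i,2-i}$ and the differentials out of degree $3$ into degree $4$ involve $\HH^4$-level data of $X$, which need not match $B\alpha_p$. To handle this, I would first try to show injectivity in degree $4$ (rather than isomorphism), which suffices to see that the differentials $d$ from degree-$3$ classes are nonzero. I would also check directly that the image in $X$ of the relevant Bockstein/$d$-classes from $B\alpha_p$ stays nonzero, possibly by choosing $Y$ of larger dimension and cutting down. Failing that, I would replace $B\alpha_p$ by a truncated version and use Ekedahl's duality on the $4$-fold to pin down the uncontrolled terms.
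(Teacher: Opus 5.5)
There are genuine gaps in both halves of your plan.

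\textbf{The approximation step.} A smooth projective $Y$ of positive dimension never ``looks like a point.'' By Lefschetz, a complete intersection looks like $\mathbb{P}^N$ in low degrees. So $X=Y/\alpha_p$ looks like the projective bundle $[\mathbb{P}(V)/\alpha_p]$ over $B\alpha_p$, i.e.\ like $B\alpha_p\times B\mathbb{G}_m$, and $X\to B\alpha_p$ already fails to be an isomorphism in total degree $2$: the ample class is non-torsion in $\HH^2_{\cris}(X/W)$, while $B\alpha_p$ is torsion there. The paper works with the correct target. It then uses the K\"unneth formula for stacks, $W\Omega^\bullet(B\alpha_p\times B\mathbb{G}_m)\cong\bigoplus_n W\Omega^\bullet(B\alpha_p)(-n)[-n]$, together with the vanishing of $B\alpha_p$ in total degree $1$, to see that total degree $3$ is unaffected.

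Your comparison mechanism is also unavailable. The \v{C}ech nerve of $Y\to X$ is $Y\times\alpha_p^{\times\bullet}$, and that of $\mathrm{Spec}\,k\to B\alpha_p$ is $\alpha_p^{\times\bullet}$; neither is smooth, so smooth descent for $W\Omega^\bullet$ does not apply. The paper never computes on $X$ directly. It takes the Antieau--Bhatt--Mathew Hodge $4$-equivalence $X\to B\alpha_p\times B\mathbb{G}_m$ and upgrades it to a de Rham--Witt $4$-equivalence: isomorphisms in total degree $\leqslant 3$ and injectivity in total degree $4$. You correctly flag that injectivity as what the $T^{i,3-i}$ need. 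However, the naive upgrade (completeness plus the fact that $\RR_1\otimes^{\rmL}_{\RR}(-)$ detects vanishing) loses exactly that one degree. Recovering it requires knowing that $W\Omega^\bullet$ of a qcqs Hodge-proper stack is coherent, which the paper proves via Ekedahl's duality extended to bounded-below objects, and then using classical completeness of coherent modules. Taking $Y$ of larger dimension sidesteps this but no longer produces a $4$-fold.

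\textbf{The computation on $B\alpha_p$.} Here you give no working mechanism. The sequence $0\to\Omega^i\to W_{n+1}\Omega^i\to W_n\Omega^i\to 0$ is only right for $i=0$; in general the kernel is $V^n\Omega^i+dV^n\Omega^{i-1}$, which is governed by $B_n$ and $Z_n$. You also never say how the $F$, $V$, $d$ structure in total degree $3$ is to be read off from Hodge data.

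What must be shown is that in total degree $3$ all slope numbers vanish and there is exactly one elementary domino, between $\HH^3(W\mathcal{O})$ and $\HH^3(W\Omega^1)$. That is, $T^{0,3}=1$ and $T^{1,2}=T^{2,1}=T^{3,0}=0$. The paper gets this as follows.
\begin{enumerate}
\item It uses the smooth atlas $E^{(1)}\to B\alpha_p$ given by the Frobenius of a supersingular elliptic curve. Its \v{C}ech nerve $E^{\times n}\times E^{(1)}$ consists of Mazur--Ogus abelian varieties.
\item Filtering by Ekedahl's diagonal $t$-structure reduces $\widetilde{\HH}^3$ to $E_2^{1,2}$.
\item Using the completed star product, $E_2^{1,2}$ is an extension of $k(-1)[1]$ by $U_{-1}=\HH^{-1}(E_{1/2}\cdstimes\mathbb{D}(\alpha_p))$.
\end{enumerate}
The paper further shows this extension is non-split, using the Antieau--Bhatt--Mathew Hodge--de Rham differential, so that it is $U_0$.

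Finally, your formula for $h_W^{i,j}$ is misindexed: its terms do not even lie in a single total degree. With the dominoes above it would give $h_W^{0,3}=-2$. The correct formula is $h_W^{i,j}=m^{i,j}+T^{i,j}-2T^{i-1,j+1}+T^{i-2,j+2}$, which gives $(1,-2,1,0)$.
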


We note that, given Ekedahl's result, our theorem is sharp both in dimension and in total degree.
Our approach is inspired by the work of Antieau--Bhatt--Mathew \cite{ABM}. 

Below let us describe the contents of each section.
In \Cref{dRW of smooth stacks}, we define de Rham--Witt cohomology of smooth geometric Artin
stacks and establish several of its various basic properties.
A key result (\Cref{Hodge-proper dRW is coherent})
shows that if the stack is Hodge-proper in the sense of Kubrak--Prikhodko \cite[Definition 0.2.1]{KP22},
then its de Rham--Witt cohomology groups are coherent in the sense of Illusie--Raynaud \cite[D\'efinition I.3.8]{IRdRW}.
%The key idea is to study Hodge--Witt cohomology for geometric Artin stacks. 
%Working in a derived \(\infty\)-categorical framework, we define the de Rham--Witt complex \(W\Omega^\bullet_{\mathfrak X/k}\) for a geometric Artin stack \(\mathfrak X\) over \(k\), together with its Hodge--Witt cohomology. 
%Under mild hypotheses we prove that these cohomology groups are coherent in Illusie’s sense.

In \Cref{dRW of Balphap}, we analyze the stack \(B\alpha_p\) and compute its low-degree Hodge--Witt cohomologies. 
These computations show that the Hodge--Witt numbers of \(B\alpha_p\) fail to satisfy 
the analogue of Hodge symmetry for \(i+j=3\). 
Following the strategy of \cite{ABM}, we then approximate the stack \(B\alpha_p\times B\G_m\)
by smooth proper varieties, thereby producing the counterexample stated in \Cref{Main Thm:counterexample}.

The computations for the stack $B\alpha_p$ do not come from nowhere; rather, 
they are built upon fundamental work of Illusie, Illusie--Raynaud, and Ekedahl.
In \Cref{R-complexes}, we review the theory of graded left \(\RR\)-modules developed by Illusie--Raynaud
and Ekedahl, 
together with their contributions to de Rham--Witt cohomology groups. 
The \Cref{deeper structure} reviews and extends Ekedahl's results
on the deep structures of the derived $\infty$-category \(\DGr(\mathscr{R})\) of graded left $\mathscr{R}$-complexes. 
Along the way, we reinterpret parts of the theory using the modern language of \(\infty\)-categories:
We try our best to extend Ekedahl's construction from the level of homotopy categories to the level of $\infty$-categories.
%In particular, we introduce the derived tensor product functor \(R_n\otimes_R^{\rmL}(-)\), Ekedahl’s duality functor \(D_R(-)\), and the star-product functor \(\stimes\), and we develop the diagonal \(t\)-structure on the unbounded derived \(\infty\)-category \(\DGr_c(R)\). 
%This framework leads to two structural results which appear not to have been previously observed.

%\begin{enumerate}
%\item An object \(M\in\calD(R)\) is coherent if it is bounded below and its Hodge cohomology groups are finite-dimensional.
%\item Ekedahl’s \(\cdstimes\)-product can be computed using free \(\widehat{R}\)-resolutions.
%\end{enumerate}

%The first result may be viewed as a dual form of Ekedahl’s coherence theorem and allows us to establish the coherence of Hodge--Witt cohomology for geometric Artin stacks. 
%The second provides a practical method for computing certain classical star products appearing in \cite{Ek2}, and recovers Ekedahl’s original calculations.

\subsection*{Notation and conventions}
Throughout the article, we shall denote by $k$ a perfect field of characteristic $p > 0$.
We use $W$ to denote the ring of Witt vectors of $k$.
We shall use $\sigma$ to denote the Frobenius operator on $W$: for any element $a \in W$,
we write both $a^{\sigma}$ and $\sigma(a)$ for its image under Frobenius.

We write $\RR$ for the \emph{Cartier--Dieudonn\'e--Raynaud} graded ring,
see \Cref{IRDring}. This is an ordinary associative graded $\mathbb{Z}_p$-algebra.
We denote by $\DGr(\RR)$ the $\infty$-category $\mathrm{LMod}_{\RR}(\DGr(\mathbb{Z}_p))$
of left $\RR$-module objects in the graded derived $\infty$-category of $\mathbb{Z}_p$.
Whenever we refer to a graded left $\RR$-module or left graded $R$-module, we mean an object
in the heart of the standard $t$-structure on $\DGr(\RR)$.

Let \(M \in \DGr(\RR)\). Then each cohomology group \(\HH^j(M)\) is a graded \(\RR\)-module,
and each graded component \(M^i\) can be viewed as an object in $\calD(W)$.
Following Ekedahl's convention, we say that \(M\) is
\emph{bounded below} (resp.~\emph{bounded above})
if \(\HH^j(M) = 0\) for \(j \ll 0\) (resp.~\(j \gg 0\)).
Similarly, we say $M$ is \emph{grading-left bounded} (resp.~\emph{grading-right bounded})
if \(M^i = 0\) for \(i \ll 0\) (resp.~\(i \gg 0\)).
The grading $i$ piece of $\HH^j(M)$ is denoted by $\HH^j(M)^i$.

Following {\cite[Section~0, p.~188]{Ek1}}, we denote cohomological and grading
shifts of $M$ as follows.
For integers \((i,j)\), we write \(M(i)[j]\) for the object obtained from \(M\)
by shifting \(i\) degrees in the \(\RR\)-grading (horizontally)
and \(j\) degrees in the cohomological grading (vertically).
As a result, there are canonical isomorphisms $\HH^n(M(i)[j])^m \cong \HH^{n + j}(M)^{(m + i)}$.
%More precisely, if $M = (M^{k, \ell}, d_v, d_h)$ is represented by a double complex,
%then the $M(i)[j]$ is represented by the following double complex:
%\[
%M(i)[j]^{k,l} = M^{k+i,l+j},
%\]
%with differentials given by
%\[
%d_v = (-1)^j d_v, \qquad d_h = (-1)^i d_h .
%\]
In contrast, Illusie--Raynaud \cite{IRdRW} denotes the \(\RR\)-module grading shifting by $M[i]$,
which we do not follow because it conflicts with the notation of the cohomological shift.
%\textcolor{teal}{There is a problem in this convention: \(\Tot(M(i))=\Tot(M)[i]\) but \(\Tot(M[i])\neq \Tot(M)[i]\). The problem is: all horizontal maps are changed by \((-1)^{i}\) but all vertical maps are the same. There is only a quasi-isomorphism in the second case. }

Occasionally we shall encounter other ordinary graded rings, such as $W[d]/d^2$ or $k[d]/d^2$,
we shall use similar notation such as $\DGr(W[d]/d^2)$ and $\DGr(k[d]/d^2)$ and so on.

\subsection*{Acknowledgement}
The influence of the work by Illusie, Raynaud, and Ekedahl on this article
should be obvious to readers.
S.L.~is deeply grateful to Luc Illusie, who suggested some 4 years ago that S.L.~revisit Ekedahl’s work 
and see what more can be said through a modern lens, 
a proposal that provided an initial motivation for S.L.~to join Y.Y.~in this project.
We are very grateful to Longke Tang for patiently answering many basic questions when we are trying
to put some of Ekedahl's work in the framework of $\infty$-categories
and for numerous discussions and suggestions.
%We have also benefited from communications regarding this work with the following mathematicians:
%\textcolor{red}{TBA}.
Y.Y.~would like to express his gratitude to BICMR and Morningside Center for their support during the writing of this paper. 
S.L.~is supported by the National Key R $\&$ D Program of China No.~2023YFA1009701 
and the National Natural Science Foundation of China (No.~12288201).
Y.Y.~is partially supported by a grant from National Science Foundation of China NSFC-12231001.

\section{Left graded \(\RR\)-modules}
\label{R-complexes}
\addtocontents{toc}{\protect\setcounter{tocdepth}{1}}
It was discovered by Illusie that the crystalline complex admits a canonical representative,
namely the de Rham--Witt complex. 
Its existence is rather striking: it provides a concrete complex equipped with Frobenius \(F\) and Verschiebung \(V\) satisfying the fundamental relation \(FdV=d\), thereby encoding the structure of crystalline cohomology.
Illusie also observed the appearance of certain torsion phenomena in the slope spectral sequence in the study of supersingular abelian and K3 surfaces (\cite[Example II.7.1, II.7.2]{IlldRW}).
%These structures were later organized into the elementary objects now known as dominoes \(\rmU_t\), 
%which form the basic building blocks of the theory. 
It soon became clear that the failure of degeneration for the slope spectral sequence is largely governed by 
certain objects now known as dominoes (see \Cref{Ut}). 
Illusie and Raynaud formalized this observation by introducing the notion of coherent graded \(\RR\)-modules. 
The homological algebra of such \(\RR\)-modules reveals a remarkably rich structure and gives rise to a
number of subtle numerical invariants associated with de Rham--Witt cohomology of smooth proper varieties over $k$.

\subsection{The ring $\RR$ and totalization} 
In the 1970s, Illusie introduced and studied in \cite{IlldRW} the de Rham--Witt complex \(W\Omega_{-/k}^\bullet\)
as a contravariant functor defined on all smooth schemes over $k$
(see also \cite{BLM}). 
This is a complex of \'etale sheaves equipped with two operators:
Frobenius \(F\) and Verschiebung \(V\), satisfying the relations
\[
FV=VF=p, \qquad Fa=a^{\sigma}F, \qquad Va=a^{\sigma^{-1}}V, \qquad FdV=d .
\]
To analyze the structure of the cohomology of these de Rham--Witt complexes, Illusie and Raynaud \cite{IRdRW}
introduced a certain graded ring \(\RR\):

\begin{definition}
\label{IRDring}
Let \(k\) be a perfect field of characteristic $p > 0$. The \emph{Cartier--Dieudonn\'e--Raynaud ring} over \(k\) is the
\(\mathbb{Z}\)-graded ring $\RR = \RR^0 \oplus \RR^1$,
where \(\RR^0 \cong W_{\sigma}[F,V]\) is the (noncommutative) Dieudonn\'e ring generated by
Frobenius \(F\) and Verschiebung \(V\), subject to the relations
\[
Fa = a^{\sigma}F, \qquad Va = a^{\sigma^{-1}}V, \qquad FV = VF = p.
\]
The grading \(1\) component \(\RR^1\) is the two-sided \(\RR^0\)-module generated by an element \(d\),
satisfying
\[
d^2 = 0, \qquad FdV = d.
\]
\end{definition}

Concretely, the graded components of \(\RR\) can be described as:
\begin{equation}
\begin{aligned}
\RR^0 &\cong 
\Biggl\{
\sum_{n>0} a_{-n} F^n + a_0 + \sum_{n>0} a_n V^n
\;\Bigg|\;
a_n \in W \text{ and is } 0 \text{ for all but finitely many } n
\Biggr\},\\
\RR^1 &\cong 
\Biggl\{
\sum_{n>0} a_{-n} F^n d + a_0 d + \sum_{n>0} a_n d V^n
\;\Bigg|\;
a_n \in W \text{ and is } 0 \text{ for all but finitely many } n
\Biggr\}.
\end{aligned}
\end{equation}

Given a complex
\[
\cdots \longrightarrow M^i \longrightarrow M^{i+1} \longrightarrow \cdots
\]
in which each \(M^i\) is a left \(\RR^0\)-module and the compatibility relation \(FdV=d\) holds,
the direct sum totalization \(\bigoplus_{i\in\mathbb Z} M^i\) naturally acquires the structure of a left graded \(\RR\)-module.
Conversely, any left graded \(\RR\)-module determines such a compatible complex.
In particular, the de Rham--Witt complex \(W\Omega^\bullet_{X/k}\) can be regarded as a sheaf of left graded \(\RR\)-modules.

Following \cite{IRdRW}, let \(\DGr(\RR)\) denote the \(\infty\)-category of left \(\RR\)-module objects in the graded derived \(\infty\)-category \(\DGr(\mathbb{Z}_p)\).
The forgetful functor induced by the graded ring map \(W[d]/d^2 \to \RR\) gives rise to a functor
\(
\DGr(\RR) \longrightarrow \DGr(W[d]/d^2).
\)
The \(\infty\)-category \(\DGr(W[d]/d^2)\) admits a more familiar description, which we briefly recall below.

\begin{digression}[{\cite{Ari21, Ant24}}]
\label{Ariotta identification}
Recall Joyal's pointed \(1\)-category \(\Xi\), whose set of objects is \(\mathbb{Z} \cup \{*\}\), where \(*\) is both initial and final, and the morphism sets between integers are given by
\[
\mathrm{Hom}_{\Xi}(m,n) =
\begin{cases}
\{*\} & \text{if } n \neq m,\, m-1,\\
\{*, \mathrm{id}\} & \text{if } n = m,\\
\{*, \partial\} & \text{if } n = m-1.
\end{cases}
\]

Let \(\mathcal{C}\) be a small stable \(\infty\)-category admitting sequential limits. Denote by \(\mathrm{Ch}^{\bullet}(\mathcal{C})\) the \(\infty\)-category of pointed functors from
\(\Xi^{\mathrm{op}}\) to \(\mathcal{C}\).
A result of Ariotta \cite[Theorem 4.7]{Ari21} (see also \cite[Theorem 3.21]{Ant24}) establishes an equivalence of \(\infty\)-categories
\(
\mathrm{Ch}^{\bullet}(\mathcal{C}) \simeq \widehat{\mathcal{DF}}(\mathcal{C}),
\)
where the right-hand side denotes the \(\infty\)-category of completely filtered objects in \(\mathcal{C}\).
Moreover, by \cite[Remark 4.9]{Ari21}, this equivalence intertwines the functor
\(
(\td)\Big| _{\{n\}}:\mathrm{Ch}^{\bullet}(\mathcal{C}) \longrightarrow \mathcal{C}\)
and \(
\mathrm{gr}^n[n]:
\widehat{\mathcal{DF}}(\mathcal{C}) \longrightarrow \mathcal{C}.
\)
\end{digression}

\begin{construction}
\label{DGr and Ch*}
Let $A$ be a commutative ring and consider the graded associative ring
$A[d]/d^2$, where $d$ is placed in grading $1$.
We specialize the discussion of \Cref{Ariotta identification} to the case
$\mathcal{C}=\mathcal{D}(A)$. 
There is a natural pointed functor
\(
A[d]/d^2(\td)\colon \;\Xi \rightarrow \DGr(A[d]/d^2)
\)
which sends $n\in\Xi$ to $A[d]/d^2(-n)$ and the morphism
$n \xrightarrow{\partial} n-1$ to the map of right multiplication by \(d\): 
\(\;A[d]/d^2(-n) \xrightarrow{ } A[d]/d^2(1-n).
\)
Here, for a graded object $M^\bullet$, we adopt the convention that the twist
is defined by
\(
M(n)^i \coloneqq M^{i+n}.
\)
\footnote{As a sanity check, $A[d]/d^2(1)$ is supported in gradings $-1$ and $0$,
and multiplication by $d$ indeed defines a map
$A[d]/d^2 \to A[d]/d^2(1)$.}
\end{construction}

\begin{proposition}
\label{d gives rise to coherent cochain}
Let the notation be as in \Cref{DGr and Ch*}.
Then the composite of the Yoneda embedding with restriction along the
(opposite of the) functor $A[d]/d^2(\bullet)$ constructed above
\[
\DGr(A[d]/d^2)
\xrightarrow{\ \mathrm{Yoneda}\ }
\mathrm{Fun}_*\!\big(\DGr(A[d]/d^2)^{\mathrm{op}},\mathcal{D}(A)\big)
\xrightarrow{\ \mid_{\Xi^{\mathrm{op}}}\ }
\mathrm{Ch}^{\bullet}(\mathcal{D}(A))
\]
is an equivalence of $\infty$-categories.
\end{proposition}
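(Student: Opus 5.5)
The strategy is to apply Lurie's recognition principle for module categories, in its enriched Yoneda form. A stable presentable $\infty$-category generated by a set of compact objects $\{G_n\}$ is equivalent to the category of spectral presheaves on the full subcategory spanned by the $G_n$. Here we enrich over $\mathcal{D}(A)$, since $\DGr(A[d]/d^2)$ is $A$-linear.

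\textbf{Step 1: generators.} Take $G_n = A[d]/d^2(-n)$ for $n\in\mathbb{Z}$. These are free rank-one graded modules, so they are compact. Moreover
\[
\mathrm{Map}(A[d]/d^2(-n), M) \simeq M^{n},
\]
the grading-$n$ piece. Since a graded object vanishes exactly when all its graded pieces do, the $G_n$ jointly detect zero objects and hence generate. Schwede--Shipley/Lurie (HA 7.1.2.1, in its $A$-linear multi-object form) then gives
\[
\DGr(A[d]/d^2)\simeq \mathrm{Fun}^{A\text{-lin}}(\mathcal{G}^{\mathrm{op}},\mathcal{D}(A)),
\]
where $\mathcal{G}$ is the full $\mathcal{D}(A)$-enriched subcategory on the $G_n$. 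The equivalence is the restricted Yoneda functor.

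\textbf{Step 2: identify $\mathcal{G}$ with the $A$-linearization of $\Xi$.} Compute
\[
\mathrm{RHom}(G_m,G_n)\simeq (A[d]/d^2(-n))^{m} = (A[d]/d^2)^{m-n}.
\]
This is $A$ in degree $0$ when $m=n$, it is $A\cdot d$ when $m=n+1$ (right multiplication by $d$, matching $\partial\colon m\to m-1$), and it is $0$ otherwise. All mapping complexes are therefore discrete. Composition of two consecutive $\partial$'s is multiplication by $d^2=0$, and it lands in a zero Hom anyway, since $\mathrm{RHom}(G_m,G_{m-2})=0$.

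It follows that $\mathcal{G}$ is a $1$-category, namely the $A$-linear category freely generated by $\Xi$ modulo the zero object $*$. Concretely, $\mathrm{Hom}_{\mathcal{G}}(m,n)=A[\mathrm{Hom}_\Xi(m,n)]/A\cdot *$. Discreteness is the point that keeps higher coherences from entering.

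\textbf{Step 3: pass from $A$-linear functors to pointed functors.} For a pointed category $\Xi$ and a stable, hence additive and $A$-linear, target, we have
\[
\mathrm{Fun}_*(\Xi^{\mathrm{op}},\mathcal{D}(A))\simeq \mathrm{Fun}^{A\text{-lin}}(A_*[\Xi]^{\mathrm{op}},\mathcal{D}(A)).
\]
This is the universal property of the reduced linearization. It holds because the linearization is left adjoint to the forgetful functor from $A$-linear categories to pointed categories, and $\mathcal{D}(A)$ is $A$-linear.

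Combining Steps 1--3 shows that the composite "Yoneda, then restrict along $A[d]/d^2(\bullet)$" is an equivalence onto $\mathrm{Ch}^\bullet(\mathcal{D}(A))$.

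\textbf{Main obstacle.} The delicate step is the enriched recognition theorem in Step 1, together with checking that its equivalence really is the composite described in the statement. The Yoneda image lands in pointed functors $\DGr^{\mathrm{op}}\to\mathcal{D}(A)$ via the $\mathcal{D}(A)$-enriched mapping object, and this must be compatible with the universal property of Step 3.

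To handle this, I would work with Lurie's result that $\mathcal{D}(A)$-linear presentable stable categories with a set of compact generators are module categories over the endomorphism algebra $\bigoplus_{m,n}\mathrm{RHom}(G_m,G_n)$, a ring with many objects. I would then note that this algebra is discrete, namely the path algebra of $\Xi$ modulo the relations $\partial^2=0$ and $*=0$.

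As a sanity check, one can compare directly with a concrete description: a graded $A[d]/d^2$-module is a sequence $(M^n)$ with maps $d\colon M^{n}\to M^{n+1}$, together with nullhomotopies of $d^2$ and higher coherences. This matches a coherent cochain complex, i.e.\ an object of $\mathrm{Ch}^\bullet$.
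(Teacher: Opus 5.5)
Your proposal is correct, but it takes a genuinely different route from the paper.

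\textbf{What the paper does.} It never computes the endomorphism category of the generators. It first notes that $\underline{(-)}$ preserves limits, and preserves colimits because each $A[d]/d^2(n)$ is compact. Full faithfulness is reduced, via colimits in the source and Postnikov towers in the target, to maps out of $A[d]/d^2(m)$ into an object $M$ in the heart; there both sides are the grading-$(-m)$ piece of $M$. Essential surjectivity is proved by $t$-structure arguments: both $t$-structures are left and right complete and $\underline{(-)}$ is $t$-exact, so it suffices to check the hearts, which are ordinary cochain complexes of $A$-modules on both sides.

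\textbf{What your route buys.} You argue Morita-theoretically: both sides are $\mathcal{D}(A)$-valued presheaves on one discrete $A$-linear category, $A_*[\Xi]$. This explains why no higher coherence data appear. It also gives full faithfulness and essential surjectivity in one stroke, and it needs no knowledge of the $t$-structure or heart of $\mathrm{Ch}^{\bullet}(\mathcal{D}(A))$.

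\textbf{What it costs.} You need a many-object $\mathcal{D}(A)$-enriched Schwede--Shipley theorem and rigidification of enriched categories with discrete mapping complexes. Step~3 also needs more care than you give it. The $1$-categorical universal property of linearization does not by itself control homotopy-coherent pointed functors $\Xi^{\mathrm{op}}\to\mathcal{D}(A)$. What you actually need is the $\infty$-categorical change-of-enrichment adjunction along $A\otimes\Sigma^\infty_*\colon\mathcal{S}_*\to\mathcal{D}(A)$. It then matters that $A\otimes\Sigma^\infty_*$ applied to the discrete pointed sets $\mathrm{Hom}_\Xi(m,n)$ returns $A[\mathrm{Hom}_\Xi(m,n)]/A\cdot *$ concentrated in degree $0$. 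A minor point: $\mathcal{D}(A)$ is $A$-linear because it is $\mathrm{Mod}_A$, not because it is stable.

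\textbf{A cheaper version of your idea.} You can drop Step~3 by running Step~1 on $\mathrm{Ch}^{\bullet}(\mathcal{D}(A))$ as well. Use the pointed corepresentables $h_n$, which satisfy $\mathrm{Map}(h_n,F)\simeq F(n)$ and form a set of compact generators. Then $\underline{(-)}$ preserves colimits and sends $G_n$ to $h_n$ (check this pointwise using your Step~2 computation). By the Yoneda lemma it is fully faithful on these generators, so it is an equivalence, with no enriched machinery needed.
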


Denote this composite functor by
\(
\underline{(-)}\colon \DGr(A[d]/d^2)\longrightarrow \mathrm{Ch}^{\bullet}(\mathcal{D}(A)).
\)

\begin{proof}
Concretely, the composite sends an object $M\in \DGr(A[d]/d^2)$ to the functor \(\underline{M}\) with
\(
\underline{M}(n) \coloneqq 
R\Hom_{\DGr(A[d]/d^2)}\!\big(A[d]/d^2(n),M\big)
\in \mathcal{D}(A).
\)
It is formal that the functor $\underline{(-)}$ preserves limits. 
Since $A[d]/d^2(n)$ is compact for every integer $n$, the functor $\underline{(-)}$
also preserves colimits. Moreover, the canonical $t$-structures on both sides are
left and right $t$-complete, and the functor $\underline{(-)}$ is $t$-exact.

We now show that $\underline{(-)}$ is an equivalence. For fully faithfulness,
since the cohomological and grading shifts of $A[d]/d^2$ generate
$\DGr(A[d]/d^2)$ under colimits, it suffices to prove that the natural map
\(
\RHom_{\DGr(A[d]/d^2)}\!\big(A[d]/d^2(m)[n],M\big)
\longrightarrow
\RHom_{\mathrm{Ch}^{\bullet}(\mathcal{D}(A))}
\!\big(\underline{A[d]/d^2(m)[n]},\underline{M}\big)
\)
is an equivalence for all integers $m,n$ and all $M$. 
Since $\underline{(-)}$ preserves limits, using the Postnikov filtration of $M$, we are furthur
reduced to the case where $M$ is a shift of an ordinary graded
$A[d]/d^2$-module. Because $\underline{(-)}$ commutes with shifts, we may further
reduce to the case $n=0$ and $M$ concentrated in cohomological degree $0$.
In this situation both sides identify with the grading $-m$ component of $M$,
and the induced map is the identity. 
As for essential surjectivity, 
since
\begin{itemize}
\item the functor $\underline{(-)}$ preserves both limits and colimits,
\item both categories are left and right $t$-complete, and
\item the functor $\underline{(-)}$ is $t$-exact,
\end{itemize}
it suffices to show that $\underline{(-)}$ induces an equivalence on the hearts: 
We may check explicitly that both hearts are given by cochain complexes in $A$-modules
and the induced functor is identity.
\end{proof}

\begin{remark}
\label{intertwining in d gives rise to coherent cochain}
The equivalence $\underline{(-)}$ intertwines the functors
\((-)^n:\; \DGr(A[d]/d^2) \longrightarrow \mathcal{D}(A)\) and \((-)\Big|_{\{n\}}: \;
\mathrm{Ch}^{\bullet}(\mathcal{D}(A)) \longrightarrow \mathcal{D}(A).
\)
Indeed, for any $M \in \DGr(A[d]/d^2)$ we have
\[
\RHom_{\DGr(A[d]/d^2)}\big(A[d]/d^2(-n), M\big)
\cong
\RHom_{\DGr(A)}\big(A(-n), M\big)
\cong
M^n .
\]
Here the first equivalence follows from the tensor–Hom adjunction, and the second is tautological from the definition of the grading.
\end{remark}

Combining \Cref{Ariotta identification}, \Cref{d gives rise to coherent cochain},
and \Cref{intertwining in d gives rise to coherent cochain}, we obtain the following.

\begin{corollary}
\label{d gives rise to DFhat}
Let the notation be as in \Cref{DGr and Ch*}. Then there is an equivalence
\(
\DGr(A[d]/d^2) \longrightarrow \widehat{\mathcal{DF}}(A).
\)
Moreover, this equivalence intertwines the functors
\((-)^n:\;
\DGr(A[d]/d^2) \longrightarrow \mathcal{D}(A)\) and \(\mathrm{gr}^n[n]: \;
\widehat{\mathcal{DF}}(A) \longrightarrow \mathcal{D}(A).
\)
\end{corollary}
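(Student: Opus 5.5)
The equivalence will be the composite
\[
\DGr(A[d]/d^2)\xrightarrow{\ \underline{(-)}\ }\mathrm{Ch}^{\bullet}(\mathcal{D}(A))\xrightarrow{\ \simeq\ }\widehat{\mathcal{DF}}(A).
\]
The first functor is an equivalence by \Cref{d gives rise to coherent cochain}. The second is Ariotta's equivalence from \Cref{Ariotta identification}, applied with $\mathcal{C}=\mathcal{D}(A)$. Composing two equivalences gives the first assertion. One small point needs a remark first: \Cref{Ariotta identification} is stated for a small stable $\infty$-category admitting sequential limits, while $\mathcal{D}(A)$ is presentable rather than small. I would handle this by the standard universe enlargement, or by noting that Ariotta's argument only uses stability and the existence of sequential limits, both of which $\mathcal{D}(A)$ has. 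Here $\widehat{\mathcal{DF}}(A)$ means $\widehat{\mathcal{DF}}(\mathcal{D}(A))$.

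For the intertwining statement I will chain the two compatibilities already recorded. By \Cref{intertwining in d gives rise to coherent cochain} there is a natural equivalence $\underline{M}|_{\{n\}}\simeq M^n$ for every $M\in\DGr(A[d]/d^2)$. By \cite[Remark 4.9]{Ari21}, as quoted in \Cref{Ariotta identification}, Ariotta's equivalence carries evaluation at $\{n\}$ to $\mathrm{gr}^n[n]$. Composing these gives a natural equivalence between $M^n$ and $\mathrm{gr}^n[n]$ of the image of $M$. This equivalence is natural in $M$, i.e.\ an equivalence of functors, since both inputs are natural transformations of functors.

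The only point requiring care, and the step I expect to be the real obstacle, is bookkeeping of indexing conventions. These are the sign of $n$ in the twist $A[d]/d^2(-n)$, the direction of $\partial\colon n\to n-1$, and the use of $\Xi^{\mathrm{op}}$. Together they must make the chain complex $\underline{M}$ have its differential $M^n\to M^{n+1}$ given by multiplication by $d$, matching Ariotta's convention for which object of $\Xi$ corresponds to $\mathrm{gr}^n$. I would verify this directly on the generator $M=A[d]/d^2(-n)$. There $\underline{M}$ should be the two-term complex $A\xrightarrow{\mathrm{id}}A$ in positions $n,n+1$. I would then check that Ariotta's equivalence sends it to the filtered object whose only nonzero graded pieces are $\mathrm{gr}^n\simeq A[-n]$ and $\mathrm{gr}^{n+1}\simeq A[-n-1]$, which is consistent with $\mathrm{gr}^i[i]\simeq M^i$. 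If the conventions turned out to be reversed, I would precompose with the reindexing $n\mapsto -n$ or pass to the opposite filtration; this adjusts only the statement's labeling, not the argument.
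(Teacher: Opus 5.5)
Your proposal is correct and is essentially the paper's argument: there the corollary is obtained simply by composing the equivalence $\underline{(-)}$ of \Cref{d gives rise to coherent cochain} with Ariotta's equivalence from \Cref{Ariotta identification}, and by chaining $\underline{M}|_{\{n\}}\simeq M^n$ from \Cref{intertwining in d gives rise to coherent cochain} with Ariotta's identification of evaluation at $\{n\}$ with $\mathrm{gr}^n[n]$. Your extra size and indexing checks are harmless but not needed; the generator computation you sketch comes out consistent with the stated conventions, so no reindexing is required.
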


\begin{notation}\label{Totalization}
We denote the functor
\(
\DGr(A[d]/d^2) \longrightarrow \widehat{\mathcal{DF}}(A)
\)
by \(\mathrm{Fil}^{\bullet}\mathrm{Tot}\), referring to it as the \emph{filtered totalization}.
We further denote by
\(
\mathrm{Tot}\colon \DGr(A[d]/d^2) \longrightarrow \mathcal{D}(A)
\)
the composite of this functor with the forgetful functor sending a filtered object to its underlying object; we refer to this as the \emph{totalization}.
\end{notation}

\begin{remark}
\label{Totalization of cochain complex}
Unwinding the construction, one finds that if a cochain complex of \(A\)-modules
is viewed as a graded \(A[d]/d^2\)-module, then its filtered totalization is canonically
identified with the same cochain complex equipped with the \emph{stupid filtration}.
\end{remark}

In view of the totalization functor defined above, the result of Illusie
\cite[Theorem II.1.4]{IlldRW} can be re-stated as follows (see also \cite[Theorem 1.1.2]{BLM}):
\begin{theorem}
\label{dRW-crys comparison}
There is a commutative diagram:
\[
\begin{tikzcd}[column sep=large]
\mathrm{SmSch}_k^{\mathrm{op}} \arrow[rr, "{\mathrm{R\Gamma}(-, W\Omega^{\bullet})}"] 
\arrow[dr, "{\mathrm{R\Gamma}_{\mathrm{\cris}}(-/W(k))}"'] && 
\DGr(\RR) \arrow[dl, "{\mathrm{Tot} \circ \mathrm{forget}}"] \\
& \mathcal{D}(W(k)). & 
\end{tikzcd}
\]
\end{theorem}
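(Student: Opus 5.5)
We need to show that for a smooth $k$-scheme $X$, the object $\mathrm{Tot}(\mathrm{forget}(R\Gamma(X, W\Omega^\bullet)))$ is naturally equivalent to $R\Gamma_{\cris}(X/W(k))$, functorially in $X$. The plan is to reduce to Illusie's classical comparison at the level of complexes of sheaves, using \Cref{Totalization of cochain complex} to identify our totalization with the naive underlying complex.

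\textbf{Step 1: defining the horizontal arrow.} Regard $W\Omega^\bullet_X$ as an étale (or Zariski) sheaf of left graded $\RR$-modules in the heart of the standard $t$-structure, functorially in $X$, as explained after \Cref{IRDring}. Define $R\Gamma(X,W\Omega^\bullet)\in\DGr(\RR)$ as derived global sections in the sheaf-valued $\infty$-category $\mathrm{Shv}(X,\DGr(\RR))$. Functoriality in $X$ comes from pullback maps of sheaves of $\RR$-modules, which can be organized by a strict model (for instance Godement resolutions) to obtain an honest functor of $\infty$-categories $\mathrm{SmSch}_k^{\mathrm{op}}\to\DGr(\RR)$.

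\textbf{Step 2: identifying totalization with the naive complex.} Forgetting to $\DGr(W[d]/d^2)$ commutes with $R\Gamma$, since forgetful functors preserve limits. Next we show that $\mathrm{Tot}$ commutes with $R\Gamma$. By \Cref{d gives rise to coherent cochain} and \Cref{Ariotta identification}, $\mathrm{Fil}^\bullet\mathrm{Tot}$ is an equivalence onto complete filtered objects, and the forgetful map to the underlying object preserves limits. Hence $\mathrm{Tot}\,R\Gamma(X,-)\simeq R\Gamma(X,\mathrm{Tot}(-))$ at the sheaf level. By \Cref{Totalization of cochain complex}, applied sheafwise, $\mathrm{Tot}(W\Omega^\bullet_X)$ is the cochain complex of $W(k)$-module sheaves $W\Omega^\bullet_X$ itself, which is derived $p$-complete as an inverse limit of the $W_n\Omega^\bullet_X$.

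\textbf{Step 3: invoking Illusie.} Apply Illusie's theorem \cite[Theorem II.1.4]{IlldRW}: there are functorial quasi-isomorphisms $Ru_{X/W_n*}\mathcal{O}\simeq W_n\Omega^\bullet_X$. Passing to $R\lim_n$ gives $R\Gamma_{\cris}(X/W)\simeq R\Gamma(X,W\Omega^\bullet_X)$. The $R\lim$ is harmless because the transition maps $W_{n+1}\Omega^i\to W_n\Omega^i$ are surjective and the groups involved are quasi-coherent on affines, so the Mittag-Leffler condition holds.

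\textbf{Main obstacle.} The main obstacle is promoting the functoriality from the homotopy category to an $\infty$-categorical commutative triangle. Illusie's comparison is a natural quasi-isomorphism of strict functors into complexes, but it is only canonical via zigzags through crystalline resolutions, or through a Čech–Alexander/de Rham comparison along local embeddings. I would handle this either by citing \cite[Theorem 1.1.2]{BLM}, which gives a natural equivalence of functors valued in $\mathcal{D}(W)$ via the saturated de Rham--Witt complex, or by expressing both sides as right Kan extensions from smooth affines admitting a lift. In the latter approach the comparison is a strict natural transformation of complex-valued functors, so it induces an $\infty$-natural equivalence.
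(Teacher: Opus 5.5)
Your strategy matches the paper's: naturality on affines via \cite{BLM}, the equivalence from \cite[Theorem II.1.4]{IlldRW}, then globalization by descent. The paper uses \cite[Remark 9.3.5]{BLM} together with \Cref{Totalization of cochain complex} for the naturality step. However, Step 2 rests on a false claim, and this is exactly the one point the paper has to argue. The functor $\widehat{\mathcal{DF}}(A)\to\mathcal{D}(A)$ taking the underlying object is $\mathrm{colim}_{n\to-\infty}\mathrm{Fil}^n$, a sequential colimit. Completeness only says $\lim_{n\to+\infty}\mathrm{Fil}^n=0$, so this functor does not preserve infinite limits, and neither does $\mathrm{Tot}$.

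For instance, let $M_m=A(m)[-m]$, that is, $A$ placed in grading $-m$ and cohomological degree $m$. Its filtered totalization has a single nonzero graded piece $\mathrm{gr}^{-m}=A$, so $\mathrm{Tot}(M_m)\simeq A$. On the other hand, $\mathrm{Fil}^n\bigl(\prod_{m\geqslant 0}M_m\bigr)=\prod_{0\leqslant m\leqslant -n}A$ for $n\leqslant 0$. Hence $\mathrm{Tot}\bigl(\prod_{m\geqslant 0}M_m\bigr)\simeq\bigoplus_{m\geqslant 0}A$, and the comparison map to $\prod_{m\geqslant 0}\mathrm{Tot}(M_m)=\prod_{m\geqslant 0}A$ is not an isomorphism. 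A sheaf on $\bigsqcup_{\mathbb{N}}\Spec(k)$ with these stalks shows that your asserted equivalence $\mathrm{Tot}\,R\Gamma(X,-)\simeq R\Gamma(X,\mathrm{Tot}(-))$ fails for general sheaves.

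The missing observation is the one the paper makes: $W\Omega^\bullet$ has no negative grading pieces. So every object in the limit diagrams computing $R\Gamma$ is uniformly grading-left bounded by $0$, and so is their limit. This covers \v{C}ech or Godement resolutions, Zariski descent from affines, and also your right-Kan-extension alternative, which needs the same point. For such objects $\mathrm{gr}^n=0$ for $n<0$, so $\mathrm{Tot}\simeq\mathrm{Fil}^0$. That is evaluation at a single filtration index, hence it commutes with all limits. Replace your general claim with this and Step 2 holds for $W\Omega^\bullet$. The rest of your argument, including the $R\lim$ discussion in Step 3 and the appeal to \cite{BLM} for $\infty$-categorical naturality, then goes through as in the paper.
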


\begin{proof}
By \cite[Remark 9.3.5]{BLM} together with \Cref{Totalization of cochain complex},
there is a canonical natural transformation
\[
\mathrm{R\Gamma}(-, W\Omega^{\bullet})
\longrightarrow
\mathrm{R\Gamma}_{\cris}(-/W(k))
\]
when both functors are regarded as defined on affine smooth \(k\)-schemes.
By \cite[Theorem II.1.4]{IlldRW}, this natural transformation is an equivalence.

Note that although the functor \(\mathrm{Tot}\) does not commute with limits in general,
it does commute with limits when the objects involved are uniformly grading-left bounded; that is, when there exists an integer \(N\)
such that all grading pieces \(\leqslant -N\) vanish.
The reason being, in this situation, the totalization is the same as taking the \((-N)\)-th filtration piece,
which clearly commutes with arbitrary limits.

The general case follows from the fact that both functors are Zariski sheaves.
\end{proof}

For later use we record the following observation.

\begin{proposition}
\label{totalization of acyclic}
Let the notation be as in  {\Cref{DGr and Ch*}}. Then the composite
\[
\DGr(A) \xrightarrow{A[d]/d^2 \otimes_A -} \DGr(A[d]/d^2)
\xrightarrow{\mathrm{Tot}} \mathcal{D}(A)
\]
is naturally equivalent to the zero functor.
\end{proposition}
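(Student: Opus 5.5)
The plan is to reduce the statement to a computation on the generators $A(n)[m]$ of $\DGr(A)$ and then pass to all objects using colimit-preservation. Both functors in the composite preserve colimits. The first is a left adjoint. For the second, $\mathrm{Tot}$ is the underlying object of a complete filtered object, and via \Cref{d gives rise to DFhat} it is computed as a sequential limit over the filtration pieces; so we cannot simply claim it preserves all colimits.

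To avoid this issue, I would instead exhibit a natural null-homotopy directly using the filtered structure. For $N \in \DGr(A)$, the object $A[d]/d^2 \otimes_A N$ has graded pieces $(A[d]/d^2\otimes N)^n = N^n \oplus N^{n-1}$, and $d$ maps the second summand of degree $n$ identically onto $N^{n-1}$. Under \Cref{d gives rise to coherent cochain}, the associated object of $\mathrm{Ch}^\bullet(\mathcal{D}(A))$ is the ``direct sum of cones of identities'' $\bigoplus_n \mathrm{cone}(\mathrm{id}_{N^n})$ placed in degrees $n, n+1$. More precisely, I would show that the filtered totalization of $A[d]/d^2 \otimes_A N$ is the completely filtered object whose $\mathrm{gr}^n[n]$ is $N^n \oplus N^{n-1}$, with each $\mathrm{Fil}^n$ computed as the completed filtered colimit.

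The key step is the case of a single generator. Take $N = A(-n)[m]$, that is, $A$ placed in grading $n$. Then $A[d]/d^2 \otimes_A N = A[d]/d^2(-n)[m]$. By \Cref{Totalization of cochain complex}, viewed as a cochain complex this is $A \xrightarrow{\mathrm{id}} A$ in degrees $n, n+1$, shifted by $m$. Its totalization is acyclic, so $\mathrm{Tot}$ vanishes on generators.

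For general $N$, I would show that $\mathrm{Fil}^\bullet\mathrm{Tot}(A[d]/d^2\otimes_A N)$ is, for each $k$, given by $\mathrm{Fil}^k = \lim_j \mathrm{Fil}^k/\mathrm{Fil}^{j}$. Each finite stage $\mathrm{Fil}^k/\mathrm{Fil}^j$ is an iterated extension of the $\mathrm{gr}$-pieces, and it depends on only finitely many graded components of $N$. On such finite stages $\mathrm{Tot}$ commutes with the colimit/truncation of $N$ into pieces $N^{\leqslant j}$. Concretely, $\mathrm{Fil}^k/\mathrm{Fil}^{j}$ of $A[d]/d^2\otimes N$ is $\mathrm{cofib}$-like: it equals $N^{k-1}[-(k-1)]$-type boundary terms. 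Telescoping the cones shows $\mathrm{Fil}^k/\mathrm{Fil}^j \simeq N^{k-1}[1-k] \oplus N^{j-1}[-j]$ up to shifts, so it is nonzero only because of the two boundary terms. Hence $\mathrm{Fil}^k \simeq \lim_j (\text{boundary at } k) \oplus N^{j-1}[-j]$. Taking $k \to -\infty$ in the colimit and $j\to\infty$ in the limit, the boundary at $k$ and the Mittag-Leffler-type terms $N^{j-1}[-j]$ vanish. For the second, the transition maps between boundary terms are zero, because they factor through $d$ composed with the identification. Naturality in $N$ follows because every identification is built functorially from the graded pieces of $N$.

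The main obstacle I expect is the last step: making the telescoping and vanishing of the boundary terms precise and natural at the $\infty$-categorical level, given that $\mathrm{Tot}$ is not colimit-preserving. If that becomes messy, I would fall back on a cleaner argument. Write $A[d]/d^2\otimes_A N \simeq \mathrm{cofib}(N(1)[-1] \xrightarrow{} N)$ as $A[d]/d^2$-modules, using the fiber sequence $A(-1) \to A[d]/d^2 \to A$ of bimodules. Then identify $\mathrm{Fil}^\bullet\mathrm{Tot}$ of this object with the filtered object $n \mapsto \bigoplus_{i\geq n}\mathrm{cone}(\mathrm{id}_{N^i})[-i]$, which is levelwise zero, naturally in $N$.
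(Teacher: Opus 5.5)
Your main line is essentially sound and has the same skeleton as the paper's proof. You are right to retract the claim that both functors preserve colimits. $\mathrm{Tot}$ does not, so vanishing on the generators $A(-n)[m]$ proves nothing by itself. One must use completeness to express $\mathrm{Tot}$ through colimit-preserving finite stages, and then show that the transition maps in the remaining limit vanish.

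The two routes organize this differently.
\begin{itemize}
\item The paper uses the one-sided stages $\mathrm{Tot}/\mathrm{Fil}^{j+1}$. It identifies $\mathrm{Tot}/\mathrm{Fil}^{j+1}\circ(A[d]/d^2\otimes_A -)$ with $(-)^j[-j]$: the summand inclusion $(-)^j[-j]\hookrightarrow\mathrm{gr}^j\circ(A[d]/d^2\otimes_A-)$ followed by $\mathrm{gr}^j\to\mathrm{Tot}/\mathrm{Fil}^{j+1}$, checked on generators. It then kills $\mathrm{Rlim}_j$ by observing that every natural transformation $(-)^i\to(-)^j[n]$ with $i\neq j$ is zero, using the idempotents of $\prod_{\mathbb{Z}}A$ acting by grading projections.
\item You work objectwise with two-sided stages, taking $\lim_j$ first and $\mathrm{colim}_k$ second. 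The correct shifts are $\mathrm{Fil}^k/\mathrm{Fil}^j\simeq N^{k-1}[-k]\oplus N^{j-1}[1-j]$, giving $\mathrm{Fil}^k\simeq N^{k-1}[-k]$.
\end{itemize}
Working objectwise costs nothing. A functor into the stable $\infty$-category $\mathcal{D}(A)$ is equivalent to the zero functor as soon as all its values vanish, so the naturality you worry about is automatic.

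The step you flag as the main obstacle is where the content lies, and ``they factor through $d$ composed with the identification'' does not yet prove it. An abstract splitting of $\mathrm{Fil}^k/\mathrm{Fil}^j$ into two boundary terms leaves room for a cross component $N^j[-j]\to N^{j-1}[1-j]$ of the transition map. That is a class in $\mathrm{Ext}^1_A(N^j,N^{j-1})$, which has no reason to vanish a priori.

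You can close this by splitting along the grading of $N$.
\begin{itemize}
\item In $\DGr(A)$ one has $N\simeq\bigoplus_i N^i(-i)$, and $\mathrm{Fil}^k/\mathrm{Fil}^j$ is exact and commutes with colimits. So all stages and transition maps decompose as sums over $i$ of the corresponding data for $M_i\coloneqq A[d]/d^2\otimes_A N^i(-i)$.
\item Each $M_i$ lives in gradings $i,i+1$. Hence $\mathrm{Tot}(M_i)=\mathrm{Fil}^i/\mathrm{Fil}^{i+2}(M_i)$ commutes with colimits in $N^i$, and it vanishes by your generator computation.
\item Only finitely many $i$ contribute to each $\mathrm{Fil}^k/\mathrm{Fil}^j$. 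So you get $\mathrm{Fil}^k\simeq\prod_i\mathrm{Fil}^k(M_i)=\mathrm{Fil}^k(M_{k-1})$.
\item The map $\mathrm{Fil}^{k+1}\to\mathrm{Fil}^k$ is zero: its only nonzero source factor $\mathrm{Fil}^{k+1}(M_k)$ maps to $\mathrm{Fil}^k(M_k)=\mathrm{Tot}(M_k)=0$. Hence $\mathrm{Tot}=\mathrm{colim}_k\mathrm{Fil}^k=0$.
\end{itemize}
This is the hands-on version of the paper's idempotent argument, which exploits the same grading decomposition without writing it down.

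Your fallback, by contrast, is wrong as stated. $\mathrm{Fil}^\bullet\mathrm{Tot}(A[d]/d^2\otimes_A N)$ cannot be levelwise zero, because its graded pieces $\mathrm{gr}^n\simeq(N^n\oplus N^{n-1})[-n]$ are not. As your own main computation shows, $\mathrm{Fil}^n\simeq N^{n-1}[-n]$: the summand $dN^{n-1}$ of $\mathrm{gr}^n$ has no partner inside $\mathrm{Fil}^n$. Only the colimit over $n$ vanishes.
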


\begin{proof}
For an integer \(j\), instead of taking the underlying object of the filtered totalization,
consider the ``totalization modulo the \((j+1)\)-st filtration'', which we denote by
\(
\mathrm{Tot}/\mathrm{Fil}^{j+1} \colon \DGr(A[d]/d^2) \to \mathcal{D}(A).
\)
This functor commutes with small colimits. Hence the composite
\(
\DGr(A) \xrightarrow{A[d]/d^2 \otimes_A -} \DGr(A[d]/d^2)
\xrightarrow{\mathrm{Tot}/\mathrm{Fil}^{j+1}} \mathcal{D}(A)
\)
also commutes with small colimits.
We claim that this composite is naturally equivalent to the functor
\(
(-)^j[-j] \colon \DGr(A) \longrightarrow \mathcal{D}(A).
\)

First, let us construct a natural transformation. 
Consider the natural transformation of functors 
\[
\mathrm{gr}^j \longrightarrow \mathrm{Tot}/\mathrm{Fil}^{j+1}
\colon \DGr(A[d]/d^2) \longrightarrow \mathcal{D}(A).
\]
By the second part of \Cref{d gives rise to DFhat}, we compute
\[
\mathrm{gr}^j \circ (A[d]/d^2 \otimes_A -)
\cong (A[d]/d^2 \otimes_A -)^j[-j]
\cong \big((-)^{j-1} \oplus (-)^j\big)[-j]
\colon \DGr(A) \longrightarrow \mathcal{D}(A).
\]
Precomposing with the inclusion of the direct summand
\(
(-)^j \hookrightarrow (-)^{j-1} \oplus (-)^j
\)
produces the desired natural transformation. 
We next show that this natural transformation is an equivalence.
Since both functors preserve colimits and \(\DGr(A)\) is generated under colimits by the objects
\(A(m)[n]\) for all integers \(m,n\), it suffices to verify the claim on these generators.
For such objects the statement follows by direct inspection.

Finally, we compute the limit
\(
\mathrm{Rlim}_{j\to\infty}
\mathrm{Tot}/\mathrm{Fil}^{j+1}\circ(A[d]/d^2\otimes_A -)
\cong
\mathrm{Rlim}_{j\to\infty} (-)^j[-j].
\)
We do not need to identify the transition maps explicitly.
Indeed, we claim that any natural transformation
\(
(-)^i \to (-)^j[n]
\)
vanishes whenever \(i\neq j\).
This immediately implies that the above limit is zero. 
To prove the claim, observe that every object of \(\DGr(A)\) admits an action of the ring
\(\prod_{\mathbb{Z}} A\) by grading projections.
Let \(e_i\) denote the idempotent projecting to the \(i\)-th grading component.
Then the endomorphism \(1-e_i\) acts as \(0\) under the functor \((-)^i\),
but as the identity under \((-)^j[n]\) when \(i\neq j\).
Naturality therefore forces any such transformation to vanish.
\end{proof}

\subsection{Coherent \(\RR\)-modules}

As a consequence of \Cref{dRW-crys comparison}, one obtains the
so-called ``slope spectral sequence''
\begin{equation}
\label{SS}
E_1^{i,j} \coloneqq \HH^j(X,W\Omega^i_{X/k}) \Longrightarrow \HH^{i+j}_{\cris}(X/W).
\end{equation}

The main structural result of \cite{IlldRW} may be stated as follows.

\begin{theorem}[{\cite[Theorem II.3.2]{IlldRW}}]
\label{SSbasic}
For a smooth proper variety $X$ over $k$.
Its associated slope spectral sequence \((\ref{SS})\) degenerates at the \(E_1\)-page after inverting \(p\).
Moreover, the isocrystal \(\HH^j(X,W\Omega_{X/k}^i)\otimes_W K\), endowed with Frobenius \(p^iF\),
coincides with the slope interval \([i,i+1)\) of
\(\HH^{i+j}_{\cris}(X/W)\otimes_W K\).
\end{theorem}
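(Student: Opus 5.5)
The plan follows Illusie's original strategy, organized around the structure of the de Rham--Witt pro-complex. We use the finite-length quotients $W_n\Omega^i_X$ and the graded $\RR$-module structure on $\HH^j(X,W\Omega^i_X)$ coming from \Cref{dRW-crys comparison}.

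\textbf{Step 1: degeneration after inverting $p$.}
\begin{enumerate}
\item First, show that each $\HH^j(X,W\Omega^i)$ is, modulo a module killed by a bounded power of $p$ (its $V$-torsion-free part modulo "$d$-torsion"), a finitely generated $W$-module. The first step is to prove that $\HH^j(X,W\Omega^i)$ is $V$-adically separated and complete with $\HH^j(X,W\Omega^i)/V$ related to coherent cohomology of $\Omega^i$ and $B$/$Z$ sheaves via the exact sequences $0\to V^n\Omega^i+dV^n\Omega^{i-1}\to W_{n+1}\Omega^i\to W_n\Omega^i\to 0$. These graded pieces are coherent over $\mathcal{O}_X$ through the iterated Frobenius (Cartier isomorphisms $C^{-1}$ identifying them with $\Omega^i/B_n$ and $B_n$ sheaves).
\item Next, the key input is the relation $FdV=d$. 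It implies that the differential $d\colon \HH^j(W\Omega^i)\to\HH^j(W\Omega^{i+1})$ kills the torsion-free quotient after inverting $p$.
\item Concretely, on $W\Omega^i\otimes\mathbb{Q}$ we have $d=FdV$. Iterating gives $d=F^n d V^n$. The image of $d$ on $\HH^j(W\Omega^i)$ therefore lies in $\bigcap_n F^n \HH^j(W\Omega^{i+1})$, whose image in the finitely generated $W$-module modulo torsion is $p$-divisible, hence zero.
\item Hence $d_1\otimes K=0$. Running the same argument on higher differentials (all induced by $d$ on representatives) gives $E_1\otimes K=E_\infty\otimes K$.
\end{enumerate}

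\textbf{Step 2: slope identification.}
\begin{enumerate}
\item After degeneration, $\HH^n_{\cris}\otimes K$ acquires a filtration with graded pieces $\HH^{n-i}(W\Omega^i)\otimes K$. On the $i$-th piece, crystalline Frobenius $\varphi$ acts as $p^iF$, since $\varphi=p^iF$ on $W\Omega^i$ by Illusie's formula.
\item Since $FV=p$ and $V$ acts topologically nilpotently on $\HH^j(W\Omega^i)$, the operator $F$ has slopes in $[0,1)$. Here $V$-adic completeness/separatedness gives that $V$ is topologically nilpotent, so $F$ has no slope $\geq 1$ parts: a slope-$\geq 1$ piece would make $V=pF^{-1}$ invertible-ish.
\item Thus $p^iF$ has slopes in $[i,i+1)$.
\item The slope filtration of an $F$-isocrystal splits canonically (Dieudonné–Manin) by slopes. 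So the filtration by $i$ is exactly the slope decomposition, and the graded piece is the slope-$[i,i+1)$ part.
\end{enumerate}

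\textbf{Main obstacle.} The hardest step is the finiteness claim in Step 1: that the modules are finitely generated over $W$ modulo a $p$-bounded torsion, so that $\bigcap F^n$ on the torsion-free part vanishes. It is also needed to know $V$ is topologically nilpotent. I would attack this by proving $\HH^j(W\Omega^i)=R\lim_n \HH^j(W_n\Omega^i)$ (Mittag-Leffler from finite length) and bounding the "$d$-contribution" $\HH^j(dV^n\Omega^{i-1})$ via the coherent $\mathcal{O}_X$-structure transported by $F^n$.
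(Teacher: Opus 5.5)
\textbf{The gap is in Step 1.3.} The paper does not reprove this statement; it quotes Illusie. The standard argument there has the shape of your outline: first, $\HH^j(X,W\Omega^i)$ is finite modulo torsion; second, $V$ is topologically nilpotent, so $F$ has slopes in $[0,1)$ on $\HH^j(X,W\Omega^i)\otimes K$; third, degeneration and the slope identification follow by separation of slopes. Your Step 2 is fine once that input is in place.

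Step 1.3, however, is where the content lies, and it fails as written. From $d=F^ndV^n$ you keep only $\operatorname{im}(d)\subseteq\bigcap_nF^n\HH^j(X,W\Omega^{i+1})$, and you then assert that this intersection is $p$-divisible modulo torsion. That is false. On the torsion-free quotient of the target, $F$ has slopes in $[0,1)$, and on the slope-$0$ (unit-root) part $F$ is bijective on a lattice, so $\bigcap_nF^n$ contains a nonzero lattice. For example, for an ordinary elliptic curve $E$ one has $\HH^1(E,W\Omega^1_{E/k})\cong\HH^2_{\cris}(E/W)\cong W$ with $F$ acting as $\sigma$. Then $\bigcap_nF^n\HH^1(E,W\Omega^1_{E/k})$ is everything, and your argument says nothing about $d\colon\HH^1(E,W\mathcal{O}_E)\to\HH^1(E,W\Omega^1_{E/k})$. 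The same happens for every $X$ of dimension $N$ in the top differential $\HH^N(X,W\Omega^{N-1})\to\HH^N(X,W\Omega^N)$, since the target has $F$-slope $0$.

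\textbf{How to repair it.} What kills $d$ rationally is the topological nilpotence of $V$ on the \emph{source}, which Step 1.3 discards. Let $L_M$ and $L_N$ be the torsion-free quotients of $\HH^j(X,W\Omega^i)$ and $\HH^j(X,W\Omega^{i+1})$; these are finite free and stable under $F$, $V$, $d$. Choose $m$ with $V^mL_M\subseteq pL_M$. Then for $x\in L_M$,
\[
dx=F^{mk}d(V^{mk}x)\in F^{mk}\bigl(p^k\,d(L_M)\bigr)\subseteq p^kL_N\quad\text{for all }k,
\]
so $dx=0$. Equivalently, $dF=pFd$ makes $d\otimes K$ a morphism of isocrystals from slopes in $[0,1)$ to slopes in $[1,2)$.

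\textbf{Step 1.4 also needs repair.} The higher differentials are not simply ``induced by $d$ on representatives''; they involve connecting maps in hypercohomology. Moreover, $V$ is not an endomorphism of the complex, since $Vd=pdV$, so the $V$-trick does not transfer verbatim to $E_r$. The right tool is that $\Phi=p^iF$ on $W\Omega^i$ is a $\sigma$-semilinear endomorphism of the filtered complex ($d\Phi=\Phi d$ because $dF=pFd$), so it acts on every page. Then $d_r\otimes K$ is $\Phi$-equivariant from slopes in $[i,i+1)$ to slopes in $[i+r,i+r+1)$, hence zero.

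\textbf{The finiteness need not be reproved.} The finiteness you flag as the main obstacle can be imported from \Cref{coherent}. Its Ekedahl-style proof uses only \Cref{alternative definition of coherence}, \Cref{Wn and Rn} and finiteness of Hodge cohomology, so there is no circularity. It gives finite filtrations with factors of type $\rmI_a$, $\rmI_b$, $\rmI\rmI$; after $\otimes K$ only the $\rmI_b$ factors survive. This gives at once finite-dimensionality and positivity of the $V$-slopes, and hence the existence of $m$ above.
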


However, without inverting \(p\)—that is, when the \(p^\infty\)-torsion is taken into account—the spectral sequence \((\ref{SS})\) need not degenerate at the \(E_1\)-page.
%Illusie showed that the obstruction arises from certain infinite torsion subquotients.
%These are organized into structures called \emph{dominoes}.
%Since they satisfy suitable finiteness properties, their sizes can be measured by the corresponding domino numbers \(T^{i,j}(X)\)
%(see \Cref{numerical invariants of smooth proper}). 
In fact, it is known that the non-degeneration of the slope spectral sequence
is equivalent to non-finiteness of Hodge--Witt cohomology groups.
In particular, the Hodge--Witt cohomology groups of smooth proper varieties over $k$ can be 
non-finitely generated as $W$-modules.
As a replacement for the correct ``finiteness'' condition for Hodge--Witt cohomology groups
of smooth proper varieties over $k$,
%To analyze the finer structure of Hodge--Witt cohomology groups, 
Illusie--Raynaud \cite[Remark I.3.10.1]{IRdRW} and Ekedahl \cite[Definition 0.5.13]{Ek2} 
introduced a distinguished subcategory of graded left \(\RR\)-modules consisting of so-called 
\emph{coherent} graded \(\RR\)-modules (see also \cite[Proposition III.1.1]{Ek2}). 
We briefly recall their definition.

\begin{definition}
\label{complete R-module}
Let \(M\) be a left graded \(\RR\)-module. 
The \emph{standard filtration} on \(M\) is defined by
\[
\Fil^n(M^i) \coloneqq V^n M^i + dV^n M^{i-1}.
\]
These form graded \(W\)-submodules of \(M\).
We say that \(M\) is \emph{classically complete} if for each $i$ 
the natural map \(M^i \to \varprojlim_n M^i/\Fil^n(M^i)\) is an isomorphism.
We say that \(M\) is \emph{profinite} if it is classically complete and each quotient \(M^i/\Fil^n(M^i)\) has finite length as a \(W\)-module.
\end{definition}

In the study of Hodge--Witt cohomology, a fundamental class of graded \(\RR\)-modules naturally arises. 
These are the modules \(U_t\), known as \emph{elementary dominoes}, which first appear in the cohomology of supersingular abelian surfaces and K3 surfaces.

\begin{definition}[{\cite[I.2.(D)]{IRdRW}}]
\label{Ut}
For each integer \(t\in\mathbb Z\), the left graded \(\RR\)-module
\(U_t=U_t^0\oplus U_t^1\) is defined by:
\begin{enumerate}
\item The grading $0$ piece is $U_t^0 \coloneqq k_{\sigma}[[V]]
  = \biggl\{\sum_{n\geqslant 0} a_n V^n \mid a_n\in k\biggr\}$,
where $F$ acts by $0$ and 
\[
V\bigg(\sum_{n\geqslant 0} a_n V^n\bigg)
   = \sum_{n\geqslant 0} a_n^{\sigma^{-1}} V^{n+1};
\]
\item The grading $1$ piece is $U_t^1 \coloneqq
 \prod_{n\geqslant  t } k \cdot dV^n$,
%k_{\sigma}[[dV]]_{\geqslant t}$,
%where  k_{\sigma}[[dV]]_{\geqslant t} \coloneqq \biggl\{\sum_{n\geqslant \max\{0, t\}} a_n dV^n \mid a_n\in k\biggr\} k_{\sigma}[[dV]]_{\geqslant t} \coloneqq \biggl\{\sum_{n\geqslant \max\{0, t\}} a_n dV^n \mid a_n\in k\biggr\}
where \(V\) acts by \(0\) and
\[
F\bigg(\sum_{n\geqslant t} a_n dV^n\bigg)
= 
\sum_{n\geqslant t} a_{n+1}^{\sigma} dV^n.
\]
In the above definition, we adopt the convention that $dV^n=F^{-n}d$ when $n \leqslant 0$.
\item The action of $d$ is given by
\[
d\bigg(\sum_{n\geqslant 0} a_n V^n\bigg)
   = \sum_{n\geqslant \max\{0,t\}} a_n dV^n .
\]
\end{enumerate}
\end{definition}

For \(t\ge0\), the structure of \(U_t\) may be illustrated as follows (see also \cite[p.~108]{IRdRW}):
\[
\begin{tikzcd}
{U_t^0:} & k & kV & \cdots & {kV^{i-1}} & {kV^i} & {kV^{i+1}} & \cdots \\
{U_t^1:} &&&& 0 & {kdV^i} & {kdV^{i+1}} & \cdots
\arrow["d"', from=1-1, to=2-1]
\arrow["V", from=1-2, to=1-3]
\arrow["V", from=1-3, to=1-4]
\arrow["V", from=1-4, to=1-5]
\arrow["V", from=1-5, to=1-6]
\arrow["V", from=1-6, to=1-7]
\arrow["d"', from=1-6, to=2-6]
\arrow["V", from=1-7, to=1-8]
\arrow["d"', from=1-7, to=2-7]
\arrow["F"', from=2-6, to=2-5]
\arrow["F"', from=2-7, to=2-6]
\arrow["F"', from=2-8, to=2-7]
\end{tikzcd}
\]

Illusie and Raynaud \cite[Proposition I.2.19]{IRdRW} proved that every grading-left bounded
(resp.~grading-left and right bounded)
profinite left graded \(\RR\)-module admits a separated and exhaustive (resp.~finite) 
increasing filtration by graded \(\RR\)-submodules whose graded pieces are, up to shift, of one of the following types:

    Type \(\rmI_a\): a finite length
    Dieudonn\'{e} module, with \(V\) nilpotent;
    
    Type \(\rmI_b\) : a finite free Dieudonn\'{e} module, with \(V\) 
    topologically nilpotent\footnote{Here the finite free $W$-module is equipped with the $p$-adic
    topology, so concretely we are saying that the operator $V^{N}$ is divisible by $p$ for some $N$.};
    
    Type \(\rmI_c\): the module \(k_{\sigma}[[V]]\), defined as in \Cref{Ut}; 
    
    Type \(\rmI\rmI\): an elementary domino \(U_t\) also defined in \Cref{Ut}.

\begin{definition}
\label{coeuretc}
Let \(M\) be a profinite left graded \(\RR\)-module.

\begin{enumerate}
\item (\cite[D\'efinition I.3.8]{IRdRW})
We say that \(M\) is \emph{coherent} if it admits a finite filtration by graded \(\RR\)-submodules whose successive quotients are of type \(\rmI_a\), \(\rmI_b\), or type \(\rmI\rmI\). In particular, coherent left graded $\RR$-modules have bounded gradings. 

\item (\cite[D\'efinition II.3.1]{IRdRW}) The \emph{heart} of \(M^i\) is defined by
\[
\text{c\oe ur}(M^i)\coloneqq V^{-\infty}Z^{i}/F^{\infty}B^{i},
\]
where
\[
V^{-\infty}Z^{i}\coloneqq \bigcap_{n}\ker(dV^n: M^i\to M^{i+1}),
\qquad
F^{\infty}B^{i}\coloneqq \bigcup_{n}F^n\operatorname{im}(d: M^{i-1}\to M^i).
\]
One checks that \(V^{-\infty}Z^{i}\) is the largest \(\RR^0\)-submodule contained in \(\ker(d:M^i\to M^{i+1})\), while \(F^{\infty}B^{i}\) is the smallest \(\RR^0\)-submodule containing \(\operatorname{im}(d:M^{i-1}\to M^i)\).

\item The \(i\)-th \emph{domino} of \(M\) is the graded \(\RR\)-module
\[
M^i/V^{-\infty}Z^{i}\longrightarrow F^\infty B^{i+1},
\]
concentrated in degrees \(0\) and \(1\).
According to \cite[Proposition I.2.18]{IRdRW}, it is an iterated extension of elementary dominoes \(U_t\).
The number of such factors is called the $i$-th domino number of $M$
and is denoted \(T^{i}(M)\) (see the remark below for why this is well-defined).
\end{enumerate}
\end{definition}

\begin{remark}
Let \(M\) be a profinite left graded \(\RR\)-module concentrated in finitely many gradings.
By \cite[Theorem I.3.8]{IRdRW}, we have that \(M\) is coherent if and only if 
\(\coeur(M^i)\) is finitely generated for all \(i\in\mathbb Z\).
Moreover, by \cite[Proposition I.2.18]{IRdRW}, the \(i\)-th domino number admits an alternative description
\[
T^i(M)\coloneqq \dim_k M^i/(V^{-\infty}Z^{i}+VM^i).
\]

Assuming that \(M\) is coherent, it follows from \Cref{independence} that \(T^i(M)\) is also equal to the number of factors of the form \(U_t(-i)\), for varying \(t\), appearing in the graded pieces of any filtration of \(M\) satisfying the conclusion of \cite[Proposition I.2.19]{IRdRW}.

The category of coherent \(\RR\) modules is an abelian subcategory \cite[page.~55]{Ek2}.\footnote{This is a consequence of the fact that \(\DGr_c^b(\RR)\) is a triangulated subcategory, see the discussion after \Cref{alternative definition of coherence}.}
%, but it is itself not easy to prove directly. 
%For instance, it is not clear if any extension between two profinite \(R\) modules is always profinite.} %\textcolor{red}{This is an abelian subcategory, yes?} 
\end{remark}

\begin{definition} The category \(\DGr_c^b(\RR)\) (resp.~$\DGr_c^{-}(\RR)$, resp.~$\DGr_c(\RR)$)
is the full subcategory of \(\DGr^b(\RR)\) (resp.~$\DGr^{-}(\RR)$, resp.~$\DGr(\RR)$) spanned by those objects
whose cohomology are all coherent as left graded \(\RR\)-modules.
\end{definition}

We now recall several invariants associated with a coherent left graded \(\RR\)-module \(M\), as introduced by Illusie--Raynaud \cite{IRdRW} and further studied by Crew \cite{Cr} and Ekedahl \cite{Ek3}. 

\begin{definition}
\label{numerical invariants}
Let $M \in \DGr_c(\RR)$, following Ekedahl \cite[Chapter IV.3.1]{Ek3}, we define some numerical invariants: 

\begin{enumerate} \item the \emph{domino number} \(T^{i,j}(M)\) is defined to be the \(i\)-th domino number of the coherent \(\RR\)-module \(\HH^j(M)\). \item \label{mij} The \emph{slope number}\footnote{In \cite[Definition IV.3.4]{Ek3}, the Hodge polygon formed by these numbers is called the Newton--Hodge polygon.} $m^{i,j}(M)$ is defined by: \begin{equation} \begin{aligned} m^{i,j}(M)\coloneqq &\sum_{\lambda\in [0,1)}(1-\lambda)m_{\lambda}( \text{c\oe ur}(\HH^j(M)^i)\otimes K)\\ &+\sum_{\lambda\in [0,1)}\lambda m_{\lambda}(\text{c\oe ur}(\HH^{j+1}(M)^{i-1})\otimes K). \end{aligned} \end{equation} where $m_{\lambda}(H)$ denotes the multiplicity of slope $\lambda$ in an (iso)crystal $H$. \item The \emph{Hodge--Witt number} \(h_W^{i,j}(M)\) is defined by: \[h_W^{i,j}(M)\coloneqq m^{i,j}(M)+T^{i,j}(M)-2T^{i-1,j+1}(M)+T^{i-2,j+2}(M).\] \end{enumerate} \end{definition}

\begin{remark}
The slope numbers admit a combinatorial interpretation (see \cite[Proposition IV.2.5]{Ek3}). 
Let \(M\in \DGr_c(\RR)\). Then the element \(d\in \RR^1\) acts by \(0\) on the graded \(K\)-vector space
\(\HH^i(M)[1/p]\).
Let \(n\) be an integer such that
\(
\bigoplus_{i\in\mathbb Z}\HH^i(M)^{\,n-i}[1/p]
\)
is finite-dimensional over \(K\).
We may then consider the isocrystal
\(
\left(\bigoplus_{i\in\mathbb Z}\HH^i(M)^{\,n-i}[1/p],\varphi\right),
\)
where \(\varphi\) acts as \(p^iF\) on \(\HH^i(M)^{\,n-i}[1/p]\), and form its Newton polygon.
The \emph{Newton--Hodge polygon} is the polygon obtained by assigning slope \(i\) with multiplicity \(m^{i,n-i}\).
It is then the maximal convex polygon lying below the Newton polygon with the same endpoints and with integral slopes.
\end{remark}

\subsection{Ekedahl's completion functor}
The following result, due to Illusie--Raynaud, is fundamental: 

\begin{theorem}[{\cite[Theorem II.2.2]{IRdRW}}]
\label{coherent}
    Let \(X\) be a proper smooth variety over the perfect field \(k\) of dimension $n$, then
    \[R\Gamma(X,W\Omega^\bullet_{X/k})\in \DGr_c^b(\RR).\]
    That is, the left graded $\RR$-module associated to the complex
    \begin{equation*}\label{Cplx}
        \HH^j(X,WO_{X/k}) \xrightarrow{d} \HH^j(X,W\Omega^1_{X/k}) \xrightarrow{d} \ldots \xrightarrow{d} 
        \HH^j(X,W\Omega^n_{X/k})
    \end{equation*} is coherent for all $j$.
\end{theorem}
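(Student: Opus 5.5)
Since this is the Illusie--Raynaud theorem \cite[Theorem II.2.2]{IRdRW}, I would prove it by following their argument and organizing it around the canonical filtration of the de Rham--Witt pro-complex. Fix $j$, and let $M=\HH^j(X,W\Omega^\bullet_{X/k})$, viewed as a graded left $\RR$-module with gradings $0,\dots,n$.

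\textbf{Step 1: $M$ is profinite.} Each finite-level group $\HH^j(X,W_r\Omega^i_{X/k})$ is a finitely generated $W_r$-module, hence of finite length, because $X$ is proper and $W_r\Omega^i$ is a coherent sheaf over $W_r\mathcal O_X$. Using the exact sequences of the standard filtration (quotients $\mathrm{gr}^r W\Omega^i$, which are coherent on $X$ via Frobenius twists), one identifies $M^i/\Fil^r M^i$ with a subquotient of $\HH^j(X,W_r\Omega^i)$, up to a bounded error. Mittag-Leffler holds by finite length, and $R\Gamma$ commutes with $R\varprojlim$, so $M^i\simeq\varprojlim_r M^i/\Fil^r$ with finite-length quotients.

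\textbf{Step 2: coherence via the heart.} By the criterion recalled in the remark after \Cref{coeuretc} (\cite[Theorem I.3.8]{IRdRW}), a profinite module with bounded gradings is coherent if and only if $\coeur(M^i)$ is finitely generated over $W$ for every $i$. So it suffices to bound the hearts. Here I would use the fact that $\RR$ acts on the whole complex $R\Gamma(X,W\Omega^\bullet)\in\DGr(\RR)$, together with the Illusie--Raynaud identification of the heart. First, $V^{-\infty}Z^i/F^\infty B^i$ is controlled by the $E_2$-terms of the slope spectral sequence (\ref{SS}) modulo torsion. Second, these $E_2$-terms are subquotients of $\HH^{i+j}_{\cris}(X/W)$, which is finitely generated by \Cref{dRW-crys comparison} and the properness of $X$. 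More precisely, I would show that the kernel of $d$ on $M^i$ stable under all $dV^n$ maps to $\HH^{i+j}_{\cris}$ modulo a submodule annihilated by a fixed power of $p$. To get the uniform bound, I would apply the Cartier isomorphism $Z_\infty/B_\infty$ on $W\Omega^\bullet_X$ levelwise.

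\textbf{Main obstacle.} The hard part is Step 2: proving that the heart is finitely generated, and not merely that it has finitely generated image in crystalline cohomology. Illusie--Raynaud handle this through their structure theory. They show that for a profinite graded $\RR$-module, $M^i/V^{-\infty}Z^i$ is always a domino, in the sense of \cite[Proposition I.2.18]{IRdRW}. The heart then fits into an exact sequence with finitely generated $E_\infty$-type pieces, and this yields finiteness. I would follow that path, using the filtration of \cite[Proposition I.2.19]{IRdRW} to exclude type $\rmI_c$ pieces: a $k_\sigma[[V]]$ factor inside the heart would contribute infinite rank, contradicting finite generation of crystalline cohomology.
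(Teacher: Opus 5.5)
Your Step 2 does not go through, and it carries all the difficulty. Rationally there is nothing to prove: by \Cref{SSbasic}, $\coeur(M^i)\otimes K$ is a subquotient of a finite-dimensional isocrystal, so ``control by $E_2$ modulo torsion'' adds nothing. What must be excluded is infinite $p$-power torsion in the heart, e.g.\ a type $\rmI_c$ constituent $k_\sigma[[V]]$, on which $p=VF=0$. To get a contradiction with finite generation of $\HH^{i+j}_{\cris}(X/W)$, you need the classes in $V^{-\infty}Z^i$ to be permanent cycles of \eqref{SS} that are not hit by higher differentials. Your assertion that the $E_2$-terms are subquotients of $\HH^{i+j}_{\cris}(X/W)$ is exactly the $E_2$-degeneration of the slope spectral sequence. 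As far as I know, Illusie--Raynaud obtain that degeneration, and the finite generation of the $E_2$-terms, as consequences of this coherence theorem, so the argument is circular. Without degeneration, infinite torsion in Hodge--Witt cohomology can be invisible in the abutment. For instance $\mathrm{Tot}(U_0)=0$ although $U_0$ is infinite-dimensional over $k$, and the fourfold of \Cref{the counterexample} has $\HH^3(X_{\overline{k}},W\mathcal{O}_{X_{\overline{k}}})\cong k_\sigma[[V]]$ while its third crystalline cohomology vanishes.

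Step 1 hides the same problem. By \Cref{Wn and Rn} and the resolution in \Cref{Rn is right perfect}, comparing $M^i/\Fil^rM^i=\Tor_0^{\RR}(\RR_r,M)^i$ with $\HH^j(X,W_r\Omega^i)$ is governed by the $\Tor$ spectral sequence for $\RR_r\otimes^{\rmL}_{\RR}R\Gamma(X,W\Omega^\bullet)\simeq R\Gamma(X,W_r\Omega^\bullet)$. Your ``bounded error'' is the image of $d_2\colon\Tor_2^{\RR}(\RR_r,\HH^{j+1})\to\Tor_0^{\RR}(\RR_r,\HH^{j})$. Here $\Tor_2^{\RR}(\RR_r,N)^i=\{x\in N^{i-1}: F^rx=0=F^rdx\}$ is infinite-dimensional as soon as $N$ has a type $\rmI_c$ submodule in grading $i-1$. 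So before coherence is known, you have neither the finite length of $M^i/\Fil^rM^i$ nor the agreement of the $\Fil^r$-topology with the topology cut out by the kernels of $M^i\to\HH^j(X,W_r\Omega^i)$; Mittag-Leffler only gives completeness for the latter.

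The missing idea is to draw the finiteness from Hodge cohomology rather than crystalline cohomology, and to apply it to the whole complex rather than to each $\HH^j$ separately. First, $R\Gamma(X,W\Omega^\bullet)\simeq R\lim_n \RR_n\otimes^{\rmL}_{\RR}R\Gamma(X,W\Omega^\bullet)$ is complete and bounded. Second, $\RR_1\otimes^{\rmL}_{\RR}R\Gamma(X,W\Omega^\bullet)\simeq R\Gamma(X,\Omega^\bullet_{X/k})$ has finite-dimensional cohomology. Ekedahl's criterion \Cref{alternative definition of coherence} then gives coherence of every $\HH^j$ at once, absorbing the $\Tor_2$ error terms you cannot bound module by module. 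This is the paper's proof.
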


Let us sketch an alternative proof of this fact, due to Ekedahl. To do so, we take a brief detour to introduce his completion functor on $\DGr(\RR)$ (see \cite[page~63]{Ek2}).

\begin{digression}[Naturally induced symmetric monoidal structures]
\label{Digression: lim is lax symmetric monoidal}
Let $\mathcal{C}^{\otimes}$ be a small symmetric monoidal $\infty$-category. Then its opposite category $\mathcal{C}^{\mathrm{op}}$ inherits a natural symmetric monoidal structure in which the tensor product of objects is unchanged; see \cite[Remark 2.4.2.7]{HA}.
There is also a canonical symmetric monoidal structure on $\mathrm{Ind}\td\mathcal{C}$, the category of Ind-objects of $\mathcal{C}$, characterized by requiring that the Yoneda embedding \(j \colon \mathcal{C} \to \mathrm{Ind}\td\mathcal{C}\)
be symmetric monoidal and that the tensor product preserve small filtered colimits separately in each variable; see \cite[Corollary 6.3.1.13]{HA}.

Consequently, the category
\(
\mathrm{Pro\text{-}}\mathcal{C} = (\mathrm{Ind}\td\mathcal{C}^{\mathrm{op}})^{\mathrm{op}}
\)
inherits a natural symmetric monoidal structure for which the canonical Yoneda embedding
\(
j \colon \mathcal{C} \to \mathrm{Pro}\td\mathcal{C}
\)
is symmetric monoidal. Unwinding the definitions, if
$\{X_{\lambda}\}_{\lambda \in \Lambda}$ and $\{Y_{\gamma}\}_{\gamma \in \Gamma}$
are two pro-systems, then their tensor product is given by
\[
\{X_{\lambda}\}_{\lambda \in \Lambda} \otimes_{\mathrm{Pro}\td\mathcal{C}}
\{Y_{\gamma}\}_{\gamma \in \Gamma}
=
\{X_{\lambda} \otimes_{\mathcal{C}} Y_{\gamma}\}_{(\lambda,\gamma)\in\Lambda\times\Gamma}.
\]

Finally, suppose that $\mathcal{C}$ admits sufficiently many small limits so that the symmetric monoidal functor
\(
j \colon \mathcal{C} \to \mathrm{Pro}\td\mathcal{C}
\)
admits a right adjoint
\(
\lim \colon \mathrm{Pro}\td\mathcal{C} \to \mathcal{C}.
\)
Then this right adjoint $\lim$ is naturally a lax symmetric monoidal functor.
\end{digression}

% right adjoint of symmetric monoidal functor has a natural lax symmetric monoidal structure, by Proposition A of the paper "Lax monoidal adjunctions, two-variable fibrations and the calculus of mates". The authors said that this was shown by Lurie in HA, but I didn't find the exact location where this is stated.

%\subsection{Tensoring \(\RR_n\)}

After this interlude, we can introduce the completion functor.
For each \(n\geqslant 1\), define \(\RR_n \coloneqq \RR/(V^n\RR + dV^n\RR)\).
Then \(\RR_n\) naturally carries the structure of a graded \((W_n[d]/d^2, \RR)\)-bimodule.
The natural projection induces graded maps
\(\RR_{n+1} \xrightarrow{\pi} \RR_n,\)
while left multiplication by \(F\) (resp.~\(V\), resp.~\(d\)) induces graded maps
\(
F \colon \RR_n \to \sigma_*\RR_{n-1}, \quad
V \colon \sigma_*\RR_{n-1} \to \RR_n, \quad
d \colon \RR_n \to \RR_n(1).
\)
Consequently, the pro-system \(\{\RR_n\}_n\) (with transition maps given by \(\pi\)) forms a graded \((\RR,\RR)\)-bimodule object in \(\mathrm{Pro}\td\mathrm{Mod}_{\mathbb{Z}_p}\), the abelian category of Pro-\{\(\mathbb{Z}_p\) modules\}. 
From now on, we shall view it as a graded \((\RR,\RR)\)-bimodule object in \(\mathrm{Pro}\td\DGr(\mathbb{Z}_p)\).

\begin{construction}[completion functor, cf.~{\cite[page~63]{Ek2}}]
\label{completion on DGr(R)}
Note that \(\DGr(\RR)=\mathrm{LMod}_\RR(\DGr(\mathbb{Z}_p))\).
Let us consider the following composite functor
\[
\DGr(\RR) \xrightarrow{j} \mathrm{LMod}_\RR(\mathrm{Pro}\td\DGr(\mathbb{Z}_p))
\xrightarrow{\{\RR_n\}_n \otimes_\RR^{\rmL} -} \mathrm{LMod}_\RR(\mathrm{Pro}\td\DGr(\mathbb{Z}_p))
\xrightarrow{\lim} \DGr(\RR).
\]
This defines an endo-functor on \(\DGr(\RR)\), which we call the \emph{completion functor} and denote by \(\widehat{(-)}\).
Here we use the symmetric monoidal structure on \(\mathrm{Pro}\td\DGr(\mathbb{Z}_p)\) 
induced from that on \(\DGr(\mathbb{Z}_p)\), as explained in \Cref{Digression: lim is lax symmetric monoidal}.
Moreover, the functor \(\lim\) preserves the left \(\RR\)-module structure because it is lax symmetric monoidal (again by \Cref{Digression: lim is lax symmetric monoidal}).
We shall denote by \(\widehat{\DGr(\RR)}\) the full subcategory of \(\DGr(\RR)\) spanned by the essential image of \(\widehat{(-)}\).
\end{construction}

\begin{proposition}[{\cite[Proposition I.3.2]{IRdRW}}]
\label{Rn is right perfect}
For each integer $n\geqslant 1$, the right graded $\RR$-module $\RR_n$ admits the following resolution:
\begin{equation}
\label{Rn finite flat dim}
0\rightarrow \RR(-1)\xrightarrow{u_n} \RR(-1)\oplus \RR
\xrightarrow{v_n} \RR
\rightarrow \RR_n\rightarrow 0,
\end{equation}
where \(u_n(x)=(F^nx,-F^ndx)\) and \(v_n(x,y)=dV^nx+V^ny\).

Moreover, left multiplication by a scalar \(c\in W\) is compatible with this resolution and is described by the following commutative diagram of right graded \(\RR\)-modules:
\[
\begin{tikzcd}
	0 & {\RR(-1)} & {\RR(-1)\oplus \RR} & \RR & {\RR_n} & 0 \\
	0 & {\RR(-1)} & {\RR(-1)\oplus \RR} & \RR & {\RR_n} & 0
	\arrow[from=1-1, to=1-2]
	\arrow["{u_{n}}", from=1-2, to=1-3]
	\arrow["{c}"', from=1-2, to=2-2]
	\arrow["{v_{n}}",from=1-3, to=1-4]
	\arrow["{\bigl(\sigma^n(c) ,\sigma^n(c)\bigr)}"', from=1-3, to=2-3]
	\arrow[from=1-4, to=1-5]
	\arrow["{c \cdot}"', from=1-4, to=2-4]
	\arrow[from=1-5, to=1-6]
	\arrow["{c\cdot}"', from=1-5, to=2-5]
	\arrow[from=2-1, to=2-2]
	\arrow["{u_{n}}", from=2-2, to=2-3]
	\arrow["{v_{n}}", from=2-3, to=2-4]
	\arrow[from=2-4, to=2-5]
	\arrow[from=2-5, to=2-6]
\end{tikzcd}
\]

Similarly, left multiplication by \(d\) is described by the following commutative diagram of right graded \(\RR\)-modules:
\[
\begin{tikzcd}
	0 & {\RR(-1)} & {\RR(-1)\oplus \RR} & \RR & {\RR_n} & 0 \\
	0 & \RR & {\RR\oplus \RR(1)} & {\RR(1)} & {\RR_n(1)} & 0
	\arrow[from=1-1, to=1-2]
	\arrow["{u_{n}}", from=1-2, to=1-3]
	\arrow["{-d\cdot }"', from=1-2, to=2-2]
	\arrow["{v_{n}}",from=1-3, to=1-4]
	\arrow["{w_n}"', from=1-3, to=2-3]
	\arrow[from=1-4, to=1-5]
	\arrow["{d \cdot}"', from=1-4, to=2-4]
	\arrow[from=1-5, to=1-6]
	\arrow["{d\cdot}"', from=1-5, to=2-5]
	\arrow[from=2-1, to=2-2]
	\arrow["{u_{n}}", from=2-2, to=2-3]
	\arrow["{v_{n}}", from=2-3, to=2-4]
	\arrow[from=2-4, to=2-5]
	\arrow[from=2-5, to=2-6]
\end{tikzcd}
\]
where \(w_n=\begin{pmatrix}
0&1\\
0&0
\end{pmatrix}\).

In particular, the derived functor
\[
\RR_n\otimes_\RR^{\rmL}-\colon \DGr(\RR)\longrightarrow \DGr(W_n[d]/d^2)
\]
has amplitude contained in $[-2,0]$.

Letting $n$ vary, the above resolutions are compatible and fit into the following commutative diagram:
\[
\begin{tikzcd}
	0 & {\RR(-1)} & {\RR(-1)\oplus \RR} & \RR & {\RR_{n+1}} & 0 \\
	0 & {\RR(-1)} & {\RR(-1)\oplus \RR} & \RR & {\RR_n} & 0
	\arrow[from=1-1, to=1-2]
	\arrow["{u_{n+1}}", from=1-2, to=1-3]
	\arrow["p"', from=1-2, to=2-2]
	\arrow["{v_{n+1}}", from=1-3, to=1-4]
	\arrow["{(V,\;V) }"', from=1-3, to=2-3]
	\arrow[from=1-4, to=1-5]
	\arrow["{=}"', from=1-4, to=2-4]
	\arrow[from=1-5, to=1-6]
	\arrow[from=1-5, to=2-5]
	\arrow[from=2-1, to=2-2]
	\arrow["{u_n}", from=2-2, to=2-3]
	\arrow["{v_n}", from=2-3, to=2-4]
	\arrow[from=2-4, to=2-5]
	\arrow[from=2-5, to=2-6]
\end{tikzcd}
\]

Consequently, these resolutions assemble into a resolution of the pro-system
\(\{\RR_n\}\) as a right graded $\RR$-module object in \(\mathrm{Pro}\td\mathrm{Mod}_{\mathbb{Z}_p}\):
\begin{equation}
\label{pro-system resolution of Rn}
0\rightarrow \{\RR(-1)\}_p
\xrightarrow{u}
\{\RR(-1)\oplus \RR\}_V
\xrightarrow{v}
\{\RR\}
\rightarrow \{\RR_n\}
\rightarrow 0 .
\end{equation}
Here the three pro-systems are indexed by the natural numbers, with the terms as above, and with transition maps given by left multiplication by $p$, $V$, and $1$, respectively.
\end{proposition}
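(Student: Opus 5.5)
We verify \eqref{Rn finite flat dim} directly, one grading at a time. Every map in it is left multiplication by an element of $\RR$, so each is a map of right graded $\RR$-modules. The key tool is the explicit normal form of $\RR^0$ and $\RR^1$ recorded after \Cref{IRDring}: $\RR^0$ has topological $W$-basis $\{F^m\}_{m>0}\cup\{1\}\cup\{V^m\}_{m>0}$, and $\RR^1$ has basis $\{F^md\}_{m>0}\cup\{d\}\cup\{dV^m\}_{m>0}$. We also use the derived relations $dF=pFd$ and $Vd=pdV$, both of which follow from $FdV=d$ and $FV=VF=p$.

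First we check that \eqref{Rn finite flat dim} is a complex. We have $v_nu_n(x)=dV^nF^nx-V^nF^ndx=p^n\,dx-p^n\,dx=0$. Exactness at $\RR$ holds by definition, since $\operatorname{im}v_n=dV^n\RR+V^n\RR$ and $\RR_n$ is exactly the quotient by this. The remaining checks are injectivity of $u_n$ and exactness at $\RR(-1)\oplus\RR$. Since $\RR$ lives in gradings $0,1$, each term lives in at most three gradings, and we treat these separately.

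\emph{Grading $0$.} The complex reduces to $0\to \RR^0\xrightarrow{V^n}\RR^0$. Left multiplication by $V^n$ is injective on $\RR^0$; this can be read off the normal form, since $V^n$ sends $F^m$ to $p^{\min(m,n)}$ times $F^{m-n}$ or $V^{n-m}$, twisted by $\sigma$ on coefficients.

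\emph{Grading $2$.} The complex reduces to $0\to\RR^1\xrightarrow{F^n}\RR^1\xrightarrow{dV^n}0$. We must show that $F^n$ is injective on $\RR^1$ and surjects onto $\ker(dV^n\cdot)=\RR^1$. Surjectivity is the identity $F^n\cdot dV^m=dV^{m-n}$ (read with the convention $dV^{-k}=F^kd$), together with the fact that $F^{n}\cdot F^md=F^{n+m}d$. Injectivity is again immediate from the normal form.

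\emph{Grading $1$.} This is the step we expect to be the main obstacle. Here the complex is $0\to\RR^0\xrightarrow{u_n}\RR^0\oplus\RR^1\xrightarrow{v_n}\RR^1$. Suppose $dV^nx+V^ny=0$ with $x\in\RR^0$ and $y\in\RR^1$. We would expand $x$ and $y$ in the normal form and compute $dV^n\cdot F^m$, $dV^n\cdot V^m$, $V^n\cdot F^md$ and $V^n\cdot dV^m$ using $Vd=pdV$, $FdV=d$ and $FV=p$, keeping track of the powers of $p$ and the $\sigma$-twists on coefficients. Comparing coefficients should show that $x=F^nx'$ for some $x'$ and that then $y=-F^ndx'$. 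In effect, one solves a triangular system of equations between the coefficients of $x$ and of $y$, and all the bookkeeping is concentrated in this comparison.

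The commutative diagrams are checked directly on generators. For scalars, we commute $c$ past $F^n$, $V^n$, $dV^n$ and $F^nd$ using $Fa=a^\sigma F$ and $Va=a^{\sigma^{-1}}V$, which produces the indicated twists. For $d$, we use $d\cdot dV^n=0$ and $d\cdot V^n=dV^n$, which yields $w_n$, and $d\cdot F^n=p^nF^nd$ in the left square, which yields the sign/$-d$ entry. Once the resolution is established, the amplitude statement follows because the resolution has length $2$ and consists of free modules. Finally, the compatibility in $n$ reduces to the identities $VF^{n+1}=pF^n$, $V^{n+1}=V^n\cdot V$ and $dV^{n+1}=dV^n\cdot V$. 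These show that the maps $(p,(V,V),1)$ form a morphism of resolutions lifting $\pi\colon\RR_{n+1}\to\RR_n$, and assembling over $n$ gives the pro-resolution \eqref{pro-system resolution of Rn}.
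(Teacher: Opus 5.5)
The paper gives no proof of this statement; it simply quotes Illusie--Raynaud. Your grading-by-grading verification is the natural way to supply one, and it is correct: the complex property, gradings $0$ and $2$, the scalar diagram, the amplitude claim, and the compatibility in $n$ via $VF^{n+1}=pF^n$, $dV^{n+1}=dV^n\cdot V$ are all right. Two comments follow.

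\textbf{Grading $1$.} The step you flag as the main obstacle is immediate, because the system is diagonal, not triangular. Index the (honest, finite-sum) $W$-bases by $k\in\mathbb{Z}$:
\begin{itemize}
\item in $\RR^0$, set $X_k=V^k$ for $k\geqslant 0$ and $X_k=F^{-k}$ for $k<0$;
\item in $\RR^1$, set $e_k=dV^k$ for $k\geqslant 0$ and $e_k=F^{-k}d$ for $k<0$.
\end{itemize}
Using $dF=pFd$ and $Vd=pdV$, one finds, with the same $\sigma^{-n}$-twist on coefficients in both cases,
\[
dV^n\cdot X_k=p^{\max(0,-k)}\,e_{n+k},\qquad V^n\cdot e_k=p^n\,e_{n+k}.
\]
One also has $F^n\cdot X_k=p^{\min(n,\max(k,0))}X_{k-n}$, with a $\sigma^n$-twist.

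Now write $x=\sum a_kX_k$ and $y=\sum b_ke_k$. The equation $dV^nx+V^ny=0$ is equivalent to $p^{\max(0,-k)}a_k=-p^nb_k$ for every $k$. This forces $p^n\mid a_k$ for $k\geqslant 0$ and $p^{n+k}\mid a_k$ for $-n\leqslant k<0$. That is exactly the condition for $x$ to lie in $F^n\RR^0$.

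So write $x=F^nx'$. Subtracting $v_n(u_n(x'))=0$ gives $V^n(y+F^ndx')=0$. Since $V^n$ is injective on $\RR^1$, this yields $y=-F^ndx'$. Injectivity of $u_n$ in grading $1$ comes from injectivity of $F^n$ on $\RR^0$, which the same formula shows.

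\textbf{The $d$-diagram.} Your justification of the left square is off: the relation $dF^n=p^nF^nd$ plays no role there. The computation is
\[
w_n(u_n(x))=w_n(F^nx,-F^ndx)=(-F^ndx,0)=u_n(-dx),
\]
and it uses only $d^2=0$. The sign arises because $w_n$ moves the entry $-F^nd$ of $u_n$ into the first slot.
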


Notice that there is a natural morphism
\(j(\RR) \longrightarrow \{\RR_n\}\)
of graded $(\RR,\RR)$-bimodule objects in $\mathrm{Pro}\td\DGr(W)$. 
Consequently, we obtain a natural transformation
\[
\eta \colon \id \longrightarrow \widehat{(-)}
\]
between endofunctors of $\DGr(\RR)$.

\begin{proposition}[cf.~{\cite[Proposition 2.1]{Ek2}}]
\label{completion is a localization}
The functor $\widehat{(-)}$ is a localization.
\end{proposition}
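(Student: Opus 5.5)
To show $\widehat{(-)}$ is a localization, I will use the standard criterion: an endofunctor $L$ with a natural transformation $\eta\colon \id\to L$ is a localization if and only if the two maps $\eta_{LM}$ and $L(\eta_M)$ from $LM$ to $LLM$ are equivalences for every $M$. In fact it suffices that one of them is an equivalence and that they are homotopic. So the plan is to prove that, for every $M\in\DGr(\RR)$, both $\eta_{\widehat M}$ and $\widehat{\eta_M}$ are equivalences.

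\textbf{Reduction to an idempotence statement for $\{\RR_n\}$.} Write $\widehat M=\lim_n \RR_n\otimes^{\rmL}_\RR M$. The key input is that $\RR_m\otimes_\RR^{\rmL}(-)$ turns $\eta_M$ into an equivalence, that is, the natural map
\[
\RR_m\otimes^{\rmL}_\RR M\longrightarrow \RR_m\otimes^{\rmL}_\RR \widehat M
\]
is an equivalence, at least after passing to pro-systems in $m$. Granting this, applying $\lim_m$ shows that $\widehat{\eta_M}$ is an equivalence. To compare with $\eta_{\widehat M}$, I will use that both maps factor through the pro-object $\{\RR_m\otimes\RR_n\otimes M\}_{m,n}$. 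Symmetry of the two tensor factors, together with the bimodule map $\{\RR_m\}\otimes_\RR\{\RR_n\}\to\{\RR_n\}$ (pro-isomorphic along the diagonal), then identifies the two maps.

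\textbf{Commuting $\RR_m\otimes^{\rmL}_\RR$ past the limit.} Since $\RR_m\otimes^{\rmL}_\RR(-)$ has a finite resolution by free right modules (\Cref{Rn is right perfect}), it is computed by a finite complex
\[
(-)(-1)\to(-)(-1)\oplus(-)\to(-)
\]
with differentials built from $F^m,d,V^m$. Finite complexes of this kind commute with arbitrary limits. Hence
\[
\RR_m\otimes^{\rmL}\widehat M\simeq \lim_n \RR_m\otimes^{\rmL}_\RR\RR_n\otimes^{\rmL}_\RR M.
\]
It remains to show that the pro-system $\{\RR_m\otimes^{\rmL}_\RR\RR_n\}_n$ is pro-isomorphic to the constant system $\RR_m$, as $(W_m[d]/d^2,\RR)$-bimodules.

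\textbf{Computing $\RR_m\otimes^{\rmL}_\RR\RR_n$ (main obstacle).} For this I will use the resolution \eqref{Rn finite flat dim} of $\RR_m$ to compute $\RR_m\otimes^{\rmL}_\RR\RR_n$ as the complex
\[
\RR_n(-1)\xrightarrow{u_m}\RR_n(-1)\oplus\RR_n\xrightarrow{v_m}\RR_n.
\]
For $n\ge m$, I expect $H^0$ to be $\RR_m$, since $V^m\RR_n+dV^m\RR_n$ has quotient $\RR_m$. The lower cohomology is $\ker u_m$ and $\ker v_m/\operatorname{im}u_m$, which I expect to be nonzero but pro-zero as $n$ varies. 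The reason is that the transition maps $\RR_{n+1}\to\RR_n$ combined with $F^m$ kill such classes after $m$ steps: a class killed by $F^m$ in $\RR_{n+m}$ maps to something in $V^n$-filtration that dies in $\RR_n$. This explicit Ekedahl-style computation, using the normal form of elements of $\RR$, is the real content and the place where care is needed. It must be checked in both gradings, including the $d$-terms and the $FdV=d$ relation. Once it is done, pro-nullity of the lower cohomology gives the required pro-isomorphism and the criterion applies.
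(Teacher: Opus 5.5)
Your argument is sound, but it finds the pro-vanishing in the opposite tensor factor from the paper.

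The paper writes the double completion as $\lim_{\mathbb{N}\times\mathbb{N}}$ of $\{\RR_n\}\otimes^{\rmL}_\RR\{\RR_m\}\otimes^{\rmL}_\RR(-)$. It then resolves the \emph{first} factor by the pro-resolution \eqref{pro-system resolution of Rn}, whose transition maps are $p$, $V$ and $1$. The two lower terms are pro-zero for trivial reasons ($p^n\RR_n=0$ and $\pi\circ V^n=0$), so $1\otimes\id$, i.e.\ $\eta_{\widehat M}$, is an equivalence at once. The paper then identifies $\id\otimes 1$ with $1\otimes\id$, because an endomorphism of $\{\RR_n\}$ compatible with $j(\RR)\to\{\RR_n\}$ must be the identity.

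You instead fix $m$, use the finite resolution of $\RR_m$, and need pro-vanishing in the \emph{second} index. This costs a normal-form computation. In exchange it gives the stronger levelwise statement $\RR_m\otimes^{\rmL}_\RR M\simeq\RR_m\otimes^{\rmL}_\RR\widehat M$, which is \Cref{tensor Rn and completion}; the paper quotes that result from Ekedahl rather than proving it.

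The computation you left open does go through, with a shift of $m$:
\begin{itemize}
\item The kernel of $F^m\colon\RR_{n+m}\to\RR_n$ is the image of $V^n\RR$. Since $\ker u_m$ lies inside it, $\HH^{-2}$ dies $m$ levels down.
\item In grading $0$, $\HH^{-1}$ is the kernel of $V^m$ on $\RR^0_n$, which is the image of $V^{n-m}\RR^0$. It also dies $m$ levels down.
\item In grading $2$, $\HH^{-1}$ vanishes outright. Indeed $F^m\colon\RR^1_{n+m}\to\RR^1_n$ is surjective, because $F^m dV^k$ equals $F^{m-k}d$ or $dV^{k-m}$.
\item In grading $1$, a cycle $(x,y)$ at level $n+m$ reduces at level $n$ to $u_m(z)$. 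Here $z$ is read off from the $F^{\geqslant m}$-coefficients of $x$ and the $F^{<m}d$- and $dV^{\geqslant 1}$-coefficients of $y$.
\end{itemize}

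Two points should be corrected in a write-up.

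First, $\RR_n$ is not a left $\RR$-module: left multiplication by $F$ sends $dV^n$ to $dV^{n-1}\notin V^n\RR+dV^n\RR$. So $\RR_m\otimes^{\rmL}_\RR\RR_n$ has no meaning for a single $n$. Only $\RR_m\otimes^{\rmL}_\RR\{\RR_n\}$ makes sense, with $u_m$ a level-shifting map $\RR_{n+m}(-1)\to\RR_n(-1)\oplus\RR_n$. Your heuristic already uses this implicitly.

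Second, there is no symmetry exchanging the two factors of $\{\RR_m\}\otimes^{\rmL}_\RR\{\RR_n\}$. What identifies the two maps is the multiplication $\mu\colon\{\RR_m\}\otimes^{\rmL}_\RR\{\RR_n\}\to\{\RR_n\}$. It can be defined through $\HH^0$, since the source is connective. It is a common retraction of $\id\otimes 1$ and $1\otimes\id$, so once $\id\otimes 1$ is known to be invertible, $\mu$ is the inverse of both.
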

\begin{proof}
We verify the criterion of \cite[Proposition 5.2.7.4.(3)]{HTT} for the natural transformation $\eta$. 
Thus it suffices to show that the two natural transformations
\[
\eta \circ \widehat{(-)} \;\;\text{and}\;\;
\widehat{(\eta)} \colon \widehat{(-)} \longrightarrow \widehat{\widehat{(-)}}
\]
are equivalences.
Since the forgetful functor $\DGr(\RR)\to\DGr(W)$ is conservative, it suffices to verify this after composing with the forgetful functor. 
Unwinding the definitions, the double completion functor is given by
\[
\lim_{\mathbb{N}}\Big(\{\RR_n\}\otimes_\RR^{\rmL} \big(\lim_{\mathbb{N}}(\{\RR_m\}\otimes_\RR^{\rmL}-)\big)\Big)
\simeq
\lim_{\mathbb{N}\times\mathbb{N}}
\big(\{\RR_n\}\otimes_\RR^{\rmL}\{\RR_m\}\otimes_\RR^{\rmL}-\big).
\]
Here the interchange of limits follows from the fact that each $\RR_n$ is perfect as a right $\RR$-module (see \eqref{Rn finite flat dim}), so that the functor $\RR_n\otimes_\RR^{\rmL}-$ commutes with limits.

Under this identification, it suffices to show that both maps
\[
\id\otimes 1 \text{ and } 1\otimes\id \colon 
\{\RR_n\}\longrightarrow \{\RR_n\}\otimes_\RR^{\rmL}\{\RR_n\}
\]
are equivalences in $\mathrm{RMod}_\RR(\mathrm{Pro}\td\DGr(W))$.
Applying the resolution \eqref{pro-system resolution of Rn} to the first copy of $\{\RR_n\}$, we find that
$\{\RR_n\}\otimes_\RR^{\rmL}\{\RR_n\}$ is represented by the following complex of pro-systems,
concentrated in cohomological degrees $[-2,0]$:
\[
\text{``}\lim_{p\cdot -}\text{''}\{\RR_n(-1)\}
\longrightarrow
\text{``}\lim_{V\cdot -}\text{''}\{\RR_n(-1)\oplus \RR_n\}
\longrightarrow
\{\RR_n\}.
\]
Moreover, the morphism
\(
\RR\otimes_\RR \{\RR_n\} \xrightarrow{1\otimes\id}
\{\RR_n\}\otimes_\RR^{\rmL}\{\RR_n\}
\)
is induced by the stupid truncation of this representing complex.
Since the first two terms are pro-$0$\footnote{Indeed, each $\RR_n$ is annihilated by $p^n$, and the composite
$\RR_n\xrightarrow{V^n\cdot -}\RR_{2n}\xrightarrow{\pi}\RR_n$ vanishes by definition.},
it follows that $1\otimes\id$ is an equivalence.

On the other hand, the composite
\(
(1\otimes\id)^{-1}\circ(\id\otimes 1)
\)
is an endomorphism of $\{\RR_n\}$ which commutes with the canonical $(\RR,\RR)$-bimodule map
$j(\RR)\to\{\RR_n\}$. 
As a graded right $\RR$-module morphism, such an endomorphism must necessarily be the identity. 
Hence $\id\otimes 1$ is also an equivalence, completing the proof.
\end{proof}

\begin{proposition}
\label{completeness closed under taking limit}
The full subcategory $\widehat{\DGr}(\RR) \subset \DGr(\RR)$ is closed under limits.
\end{proposition}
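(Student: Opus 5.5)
Since $\widehat{(-)}$ is a localization by \Cref{completion is a localization}, its essential image $\widehat{\DGr}(\RR)$ consists exactly of those $M$ for which $\eta_M\colon M\to\widehat{M}$ is an equivalence. It is also the full subcategory of local objects for the reflective localization. For a reflective subcategory, closure under limits is automatic: the inclusion is a right adjoint, so it preserves all limits that exist in the subcategory. Equivalently, local objects are those $M$ with $\mathrm{Map}(L,M)\simeq *$ for all $\eta$-acyclic $L$, and this condition is stable under limits. So my first line of proof is formal. Given a diagram $\{M_\alpha\}$ of complete objects with limit $M=\lim M_\alpha$ in $\DGr(\RR)$, take any $L$ with $\widehat{L}\simeq 0$. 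Then
\[
\mathrm{Map}(L,M)\simeq \lim_\alpha \mathrm{Map}(L,M_\alpha)\simeq \lim_\alpha\mathrm{Map}(\widehat{L},M_\alpha)\simeq *.
\]
This step uses the characterization in \cite[Proposition 5.2.7.4]{HTT}, namely that local objects are exactly those seeing $\eta$-equivalences as equivalences.

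To keep the argument concrete and independent of that general yoga, I would also verify directly that $\eta_M$ is an equivalence, by showing that $\widehat{(-)}$ commutes with limits. Unwinding \Cref{completion on DGr(R)}, after forgetting to $\DGr(W)$ (conservative, and it commutes with limits) we have $\widehat{M}=\lim_n(\RR_n\otimes^{\rmL}_\RR M)$. By \eqref{Rn finite flat dim}, each $\RR_n$ is perfect as a right $\RR$-module: it is a finite complex of shifts of $\RR$. Hence $\RR_n\otimes^{\rmL}_\RR-$ is an exact functor of stable categories given by a finite limit of grading shifts of the forgetful functor, and so it commutes with all limits. Consequently
\[
\widehat{\lim_\alpha M_\alpha}\simeq \lim_n\lim_\alpha (\RR_n\otimes^{\rmL}_\RR M_\alpha)\simeq \lim_\alpha \widehat{M_\alpha},
\]
and under this identification $\eta_{\lim M_\alpha}$ is the limit of the equivalences $\eta_{M_\alpha}$, which is again an equivalence.

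The only point I expect to require care is compatibility with the module structure. The limit in $\DGr(\RR)=\mathrm{LMod}_\RR(\DGr(\mathbb{Z}_p))$ is computed underlying in $\DGr(\mathbb{Z}_p)$, and $\eta$ is a natural transformation of endofunctors of $\DGr(\RR)$. So the identification of $\eta_{\lim}$ with $\lim\eta$ holds by naturality, and equivalences are detected after forgetting. The interchange of the two limits is harmless, since limits commute with limits. I therefore expect no real obstacle beyond bookkeeping, and I would present the formal reflective-subcategory argument as the proof, with the explicit commutation as a remark.
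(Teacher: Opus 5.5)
Your proposal is correct. The argument you plan to present is not the paper's, but your closing ``remark'' is exactly what the paper does. The paper invokes \Cref{completion is a localization} to reduce to showing that $\widehat{(-)}$ commutes with small limits. It gets this from $\mathrm{Rlim}_n$ commuting with limits together with perfectness of $\RR_n$ as a right $\RR$-module (\Cref{Rn is right perfect}), so that $\eta_{\lim M_\alpha}\simeq\lim_\alpha\eta_{M_\alpha}$.

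Your main argument is purely formal. The essential image of any localization functor in the sense of \cite[Proposition 5.2.7.4]{HTT} is closed under all limits that exist in the ambient category, by the orthogonality characterization of local objects. This uses nothing about $\RR_n$, and it shows that the perfectness input is more than this particular statement needs.

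Two small points of hygiene:
\begin{itemize}
\item Your first phrasing (``the inclusion is a right adjoint, so it preserves limits that exist in the subcategory'') presupposes that the subcategory has the limits in question, which is essentially what is being proved. The orthogonality computation is the actual argument.
\item Passing from ``seeing $\eta$-equivalences as equivalences'' to ``$\mathrm{Map}(L,M)\simeq *$ for $\eta$-acyclic $L$'' uses exactness of $\widehat{(-)}$. This holds, since it is a limit of exact functors, and it is what makes the fiber of $\eta_M$ acyclic. Alternatively, run the same computation with $\mathrm{Map}(Y,-)\to\mathrm{Map}(X,-)$ for $\eta$-equivalences $X\to Y$, which needs no stability. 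Then conclude via the standard retraction argument for $M\to\widehat{M}$.
\end{itemize}

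What the paper's route buys in exchange is the stronger fact that the completion functor itself preserves limits. The same mechanism (perfectness of $\RR_n$ letting $\RR_n\otimes^{\rmL}_\RR-$ commute with $\mathrm{Rlim}$) is reused elsewhere in the paper, e.g.\ in \Cref{dualizing is complete}.
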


\begin{proof}
By \Cref{completion is a localization}, it suffices to show that the completion functor
\[
\widehat{(-)} \coloneqq \mathrm{\RR}\!\lim_{n \in \mathbb{N}} \bigl(\RR_n \otimes_\RR^{\rmL} -\bigr)
\]
commutes with small limits. This follows from the fact that both
$\mathrm{\RR}\!\lim_{n \in \mathbb{N}}(-)$ and the functor
$\RR_n \otimes_\RR^{\rmL} -$ commute with small limits. 
The latter property holds because each $\RR_n$ is perfect as a right $\RR$-module (see \Cref{Rn is right perfect}).
\end{proof}

Let us also record the following important fact.

\begin{proposition}[{\cite[Proposition 2.1]{Ek2}}]
\label{tensor Rn and completion}
The natural transformation of functors
\[
\RR_n \otimes_\RR^{\rmL} - 
\xrightarrow{\,\RR_n \otimes_\RR^{\rmL} \eta\,}
\RR_n \otimes_\RR^{\rmL} \widehat{(-)}
\colon \DGr(\RR) \longrightarrow \DGr(W_n[d]/d^2)
\]
is an equivalence.
\end{proposition}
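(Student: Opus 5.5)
The plan is to reduce the statement to the pro-equivalence already established in the proof of \Cref{completion is a localization}, namely that $1\otimes\id\colon \{\RR_m\}\to\{\RR_n\}\otimes^{\rmL}_\RR\{\RR_m\}$ (and hence also $\id\otimes 1$) is an equivalence of pro-objects. Unwinding the definition of the completion functor gives
\[
\RR_n\otimes^{\rmL}_\RR \widehat{M}
= \RR_n\otimes^{\rmL}_\RR \lim_m\bigl(\RR_m\otimes^{\rmL}_\RR M\bigr)
\simeq \lim_m\bigl(\RR_n\otimes^{\rmL}_\RR\RR_m\otimes^{\rmL}_\RR M\bigr).
\]
The interchange of $\RR_n\otimes^{\rmL}_\RR-$ with the sequential limit holds because $\RR_n$ is a perfect right $\RR$-module by \Cref{Rn is right perfect}. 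Under this identification, the map $\RR_n\otimes^{\rmL}_\RR\eta_M$ becomes the map
\[
\RR_n\otimes^{\rmL}_\RR M\to \lim_m\bigl(\RR_n\otimes^{\rmL}_\RR\RR_m\otimes^{\rmL}_\RR M\bigr)
\]
induced by the unit $\RR\to\RR_m$ placed in the second slot. Its source is a constant pro-system in $m$.

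The key step is to show that, for fixed $n$, the pro-system $\{\RR_n\otimes^{\rmL}_\RR\RR_m\}_m$ of graded $(W_n[d]/d^2,\RR)$-bimodules is pro-isomorphic to the constant system $\RR_n$ via the unit map. To compute it, I would resolve the first factor $\RR_n$ by \eqref{Rn finite flat dim}. This expresses $\RR_n\otimes^{\rmL}_\RR\RR_m$ as the complex
\[
\RR_m(-1)\xrightarrow{u_n}\RR_m(-1)\oplus\RR_m\xrightarrow{v_n}\RR_m,
\]
where $u_n$ and $v_n$ are given by left multiplication by $F^n$, $F^nd$, $dV^n$, $V^n$. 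For $m\geqslant n$ the cokernel of $v_n$ is $\RR/(V^n\RR+dV^n\RR+V^m\RR+dV^m\RR)=\RR_n$, so $\HH^0$ is the constant system $\RR_n$. The terms $\HH^{-1}$ and $\HH^{-2}$ should form pro-zero systems in $m$. The reason is that a class in the kernel of $v_n$ (resp.\ of $u_n$) in $\RR_m$ lifts to an element of $\RR$ whose $v_n$-image (resp.\ $u_n$-image) lies in $V^m\RR+dV^m\RR$. Using the explicit $W$-bases of $\RR^0,\RR^1$ given by $F^a,V^b,F^ad,dV^b$, together with the relation $FdV=d$, one checks that such an element becomes zero in $\RR_{m'}$ for $m'$ somewhat smaller than $m$, roughly $m'=m-n$. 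This is the same mechanism as the footnote in the proof of \Cref{completion is a localization}, where the composite $\RR_n\xrightarrow{V^n}\RR_{2n}\to\RR_n$ vanishes.

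Granting this, I would tensor with $M$ and take $\lim_m$. A pro-zero system has vanishing limit even after tensoring with $M$, since pro-zero is preserved by any functor applied levelwise. A constant system has limit equal to its value. Therefore the limit map is an equivalence, and it is identified with $\RR_n\otimes^{\rmL}_\RR\eta_M$.

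The main obstacle is the explicit pro-vanishing of $\HH^{-1}$ and $\HH^{-2}$. This requires careful bookkeeping of the $\RR$-normal forms in each grading, in particular of how $dV^n x+V^n y\in V^m\RR+dV^m\RR$ forces $(x,y)$ into the image of $u_n$ modulo $\Fil^{m-n}$. If this direct computation becomes messy, I would instead use the equivalence $1\otimes\id$ from the proof of \Cref{completion is a localization} and restrict it to the fixed level $n$ in the first factor. That is, I would argue that $\id\otimes 1$ applied to the pro-system in $n$ already implies the statement levelwise after passing to limits, since the first factor may be fixed because $\RR_n\otimes^{\rmL}_\RR-$ commutes with limits.
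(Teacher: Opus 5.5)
The paper does not prove this statement; it quotes it from Ekedahl. So the natural comparison is with the proof of \Cref{completion is a localization}, which you adapt. Your main route is correct, and it is a genuinely different, stronger fixed-$n$ version of that argument. In the paper, the negative terms of $\{\RR_n\}\otimes^{\rmL}_\RR\{\RR_m\}$ are pro-zero because of the transition maps $p$ and $V$ in the \emph{resolved} index $n$. With $n$ fixed, the vanishing has to come from the index $m$ instead. What your argument buys is the levelwise pro-equivalence $\RR_n\simeq\RR_n\otimes^{\rmL}_\RR\{\RR_m\}_m$ itself, valid for arbitrary $M$. This also implies the idempotence statement. For the same reason, your fallback does not work. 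A pro-isomorphism indexed by $(n,m)$ cannot be restricted to a fixed $n$, since its pro-vanishing used exactly the $n$-transitions. Passing to limits in it yields only $\widehat{M}\simeq\widehat{\widehat{M}}$, which is weaker than the claim. So the $m$-direction computation cannot be avoided.

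Two points need fixing, and the key step is shorter than you expect.

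First, $\RR_m$ is not a left $\RR$-module: for instance $F\cdot dV^m=dV^{m-1}\notin V^m\RR+dV^m\RR$. Hence $\RR_n\otimes^{\rmL}_\RR\RR_m$ makes no sense for a single $m$. Since $F$ maps $\Fil^{k}$ into $\Fil^{k-1}$, while $V$ and $d$ preserve it, the complex must be read as the pro-system, indexed by $m$, of complexes of right $\RR$-modules
\[
\RR_{m+n}(-1)\xrightarrow{u_n}\RR_m(-1)\oplus\RR_m\xrightarrow{v_n}\RR_m .
\]

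Second, for the pro-vanishing no normal-form bookkeeping is needed.
\begin{itemize}
\item For $\HH^{-1}$: suppose $v_n(x,y)=V^{m+n}a+dV^{m+n}b$ in $\RR$. Then $v_n(x-V^mb,\,y-V^ma)=0$, so exactness of \eqref{Rn finite flat dim} in $\RR$ gives $(x,y)\equiv u_n(z)$ modulo $V^m\RR+dV^m\RR$. Thus cycles at level $m+n$ become boundaries at level $m$.
\item For $\HH^{-2}$: suppose $F^nz\in V^m\RR+dV^m\RR$. Then $p^nz=V^nF^nz\in V^{m+n}\RR+dV^{m+n}\RR$, using $Vd=p\,dV$. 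Iterating the fact that $p$ and $\pi$ have the same kernel on $\RR_{k+1}$ (used in the paper to define $\rho$) gives $z\in V^m\RR+dV^m\RR$. So the kernel at level $m+n$ dies in $\RR_m$.
\end{itemize}

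Finally, before applying $-\otimes^{\rmL}_\RR M$ levelwise, you need pro-vanishing of objects, not just of cohomology. This follows because all these complexes have amplitude in $[-2,0]$: an extension of pro-zero systems is pro-zero. Hence the cofiber of the unit map is pro-zero in the derived category of right $\RR$-modules, and its limit after tensoring with $M$ vanishes.
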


\begin{warning}
%\textcolor{red}{Complete as graded $\RR$-mod vs complete as graded $\RR$-complex.}
We warn the readers that for a left graded \(\RR\)-module \(M\), it is unclear to us if
being classically complete as a graded left $\RR$-module in \Cref{complete R-module} is related to
it being complete when viewed as an object in $\DGr(\RR)$ in the sense of \Cref{completion on DGr(R)}.
\end{warning}

From now on, unless stated otherwise we always use ``complete'' in the sense of
\Cref{completion on DGr(R)}.

\begin{proposition}[{\cite[Proposition III.1.1]{Ek2}}]
\label{alternative definition of coherence}

    Let $M\in \DGr^-(\RR)$, then the following conditions are equivalent:

{\rm (1)} $M\in \DGr_c^-(\RR)$;

{\rm (2)} $M$ is complete and for all $n$ the object $\RR_n\otimes^{\rmL}_\RR M\in \DGr^-(W_n)$ 
obtained by forgetting grading and the action of $d$ has finitely generated cohomology.

{\rm (3)} $M$ is complete and $\RR_1\otimes^{\rmL}_\RR M\in \DGr^-(k)$
obtained by forgetting grading and the action of $d$ has finite dimensional cohomology.
\end{proposition}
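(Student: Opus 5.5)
The plan is to prove (1)$\Rightarrow$(2)$\Rightarrow$(3)$\Rightarrow$(1). Of these, (2)$\Rightarrow$(3) is trivial, and the real content lies in the other two implications.

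For (1)$\Rightarrow$(2), we first reduce to a single coherent module. Since $M$ is bounded above and $\RR_n\otimes^{\rmL}_\RR-$ has amplitude $[-2,0]$ by \Cref{Rn is right perfect}, each cohomology group of $\RR_n\otimes^{\rmL}_\RR M$ depends only on finitely many $\HH^j(M)$. For completeness, we write $M$ as the limit of its Postnikov truncations $\tau^{\geqslant -m}M$ and apply \Cref{completeness closed under taking limit}; each truncation is a finite extension of shifts of coherent modules. Completeness is closed under extensions and shifts, since the completion functor is exact. We may therefore assume $M$ is a coherent module in degree $0$. By definition $M$ has a finite filtration with graded pieces of type $\rmI_a$, $\rmI_b$ or $\rmI\rmI$, and both properties in (2) are stable under extensions, so it suffices to treat these elementary types. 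For each of them, the resolution \eqref{Rn finite flat dim} gives $\RR_n\otimes^{\rmL}_\RR N$ explicitly as the complex
\[N(-1)\xrightarrow{(F^n,-F^nd)}N(-1)\oplus N\xrightarrow{dV^n+V^n}N.\]
For a finite-length or finite free Dieudonn\'e module, or for $U_t$, this complex visibly has finite length cohomology. Completeness of these objects follows from the Mittag-Leffler property of $\{N/(V^nN+dV^nN)\}$ together with the vanishing of $\varprojlim^1$ of the pro-systems $\{\ker\}$ given by $F^n$ and similar maps. These pro-systems are pro-zero or essentially zero: for $\rmI_a$ and $U_t$ because $F$ is nilpotent on the relevant piece, and for $\rmI_b$ by a $p$-adic argument. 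This explicit check on $U_t$ is routine but fiddly.

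The main obstacle is (3)$\Rightarrow$(1). The first step is to bootstrap from $n=1$ to all $n$: using $V$ and the short exact pieces relating $\RR_{n}$, $\RR_{n+1}$ and $\sigma_*\RR_1$, or rather the fact that $\RR_{n}$ has a finite filtration by (Frobenius twists of) $\RR_1$, we get that $\RR_n\otimes^{\rmL}_\RR M$ has finite length cohomology for all $n$. Next we argue by descending induction on the top cohomological degree $j_0$ of $M$, which is finite since $M\in\DGr^-$. Because $\RR_1\otimes^{\rmL}_\RR-$ is right exact, we have $\HH^{j_0}(\RR_n\otimes M)=\RR_n\otimes_\RR\HH^{j_0}(M)$, so $N\coloneqq\HH^{j_0}(M)$ has each $N/(V^nN+dV^nN)$ of finite length. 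To see that $N$ is coherent, we use completeness of $M$ to show that $N$ is profinite. The completion of $M$ is computed by $\lim_n$, and the top cohomology of $\lim_n(\RR_n\otimes M)$ is $\varprojlim_n \RR_n\otimes N$, up to a $\varprojlim^1$ term coming from degree $j_0-1$. That $\varprojlim^1$ vanishes because the systems have finite length terms, hence are Mittag-Leffler. Finiteness of $N/\Fil^1$ then forces bounded gradings and finite generation of $\coeur$ via the criterion \cite[Theorem I.3.8]{IRdRW}, more precisely the Illusie--Raynaud/Ekedahl structure theorem \cite[Proposition I.2.19]{IRdRW} for profinite modules. Here the count of $\rmI_c$ pieces is what must be excluded, and this uses the degree $-1$ and $-2$ terms of $\RR_1\otimes^{\rmL}N$: for $k_\sigma[[V]]$, $\ker F$ in the resolution is infinite dimensional. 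I expect this step to be the delicate one.

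Once $N$ is coherent, (1)$\Rightarrow$(2) shows that $N[-j_0]$ satisfies (2). Hence so does $\tau^{<j_0}M=\mathrm{fib}(M\to N[-j_0])$, and we conclude by induction. Since $M\in\DGr^-$ has infinitely many degrees below, the induction is phrased as follows: for every $j$, the module $\HH^j(M)$ is coherent, because it is the top cohomology of a truncation that still satisfies the hypotheses.
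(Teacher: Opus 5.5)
The paper gives no proof of this statement; it only cites Ekedahl. So your proposal has to stand on its own. The skeleton is reasonable and (1)$\Rightarrow$(2) is essentially fine. However, the heart of (3)$\Rightarrow$(1), namely excluding pieces of type $\rmI_c$ in $N=\HH^{j_0}(M)$, is left open, and the mechanism you point to does not work as stated.

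You want finiteness of the degree $-1$ and $-2$ terms of $\RR_1\dtimes_\RR N$, i.e.\ of $\Tor_1^\RR(\RR_1,N)$ and $\Tor_2^\RR(\RR_1,N)$ (your ``$\ker F$''). Only the first is available.

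\emph{What you can get.} Apply $\RR_1\dtimes_\RR-$ to $\tau^{<j_0}M\to M\to N[-j_0]$ and use right $t$-exactness. This gives a surjection $\HH^{j_0-1}(\RR_1\dtimes_\RR M)\twoheadrightarrow\Tor_1^\RR(\RR_1,N)$, so $\Tor_1$ is finite.

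\emph{What you cannot get.} $\Tor_2^\RR(\RR_1,N)$ only sits in an exact sequence $\HH^{j_0-2}(\RR_1\dtimes_\RR M)\to\Tor_2^\RR(\RR_1,N)\to\RR_1\otimes_\RR\HH^{j_0-1}(M)$. The right-hand term is not yet known to be finite; this is the $d_2$ of the Tor spectral sequence. So $\Tor_2$ of the top cohomology is not controlled. Even finite $\Tor$'s of $N$ would not exclude $\rmI_c$ immediately, because $\Tor$ is not additive along the Illusie--Raynaud filtration.

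\emph{A repair using only $\Tor_0$ and $\Tor_1$.}
\begin{enumerate}
\item $\Tor_1^\RR(\RR_1,-)$ already detects $\rmI_c$: for $k_\sigma[[V]]$ in grading $0$ it equals $k_\sigma[[V]]$ placed in grading $1$, which is infinite-dimensional.
\item Types $\rmI_a,\rmI_b,\rmI\rmI$ have finite $\Tor_*$ (your computation in (1)$\Rightarrow$(2)), and all four types have finite $\Tor_0$.
\item Take the finite filtration $0=N_0\subset\cdots\subset N_r=N$ of \cite[Proposition I.2.19]{IRdRW}. By right exactness each $\Tor_0(\RR_1,N_s)$ is finite.
\item The exact sequence $\Tor_1(\RR_1,N)\to\Tor_1(\RR_1,N/N_s)\to\Tor_0(\RR_1,N_s)$ then shows every quotient $N/N_s$ has finite $\Tor_1$.
\item Descend on $s$. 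Suppose $N/N_s$ is coherent, so its $\Tor_2$ is finite by (1)$\Rightarrow$(2). Then $\Tor_2(\RR_1,N/N_s)\to\Tor_1(\RR_1,N_s/N_{s-1})\to\Tor_1(\RR_1,N/N_{s-1})$ forces $\Tor_1(\RR_1,N_s/N_{s-1})$ to be finite. Hence $N_s/N_{s-1}$ is not of type $\rmI_c$, and $N/N_{s-1}$ is coherent.
\end{enumerate}

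A smaller point concerns your bootstrap from $\RR_1$ to $\RR_n$. $\RR_n$ has no filtration by Frobenius twists of $\RR_1$. The successive kernels $(V^m\RR+dV^m\RR)/(V^{m+1}\RR+dV^{m+1}\RR)$ of $\RR_{m+1}\to\RR_m$, applied to $W\Omega_X$, give Illusie's $\mathrm{gr}^mW\Omega_X$. That is built from $\Omega_X/B_m$ and $\Omega_X/Z_m$, not from a twist of $\Omega_X$. Use \Cref{Ek2I.1.1} with $A$ the finite-length $W$-modules instead. The gradingwise finiteness of $\HH^{j_0-1}(\RR_n\dtimes_\RR M)$ it provides is exactly what your Mittag-Leffler and profiniteness step needs.
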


The above proposition shows that $\DGr_c^-(\RR)$ is a triangulated subcategory of $\DGr^-(\RR)$.
In particular, coherent $\RR$-modules form a thick abelian subcategory of the category of left graded
$\RR$-modules. Consequently, $\DGr_c(\RR)$ (resp.~$\DGr_c^b(\RR)$) is also a triangulated subcategory of
$\DGr(\RR)$ (resp.~$\DGr^b(\RR)$).

\begin{definition}
\label{Hodge number of DGrc}
Let $M \in \DGr_c(\RR)$. 
By \Cref{alternative definition of coherence}, after passing to the colimit we know that for every pair $(i,j)$
the group
\(
\HH^j(\RR_1 \otimes_\RR^{\rmL} M)^i
\)
is a finite-dimensional $k$-vector space. We denote its dimension by $h^{i,j}(M)$ and call it the
\((i,j)\)-th \emph{Hodge number} of $M$.
\end{definition}

The above definition is justified by the following result.

\begin{theorem}[{\cite[Th\'{e}or\`{e}me II.1.2]{IRdRW}}]
\label{Wn and Rn}
For any smooth scheme $X$ over the perfect field $k$, there are functorial identifications
\[
W_n\Omega^\bullet_{X/k}
\cong
\RR_n \otimes_\RR W\Omega^\bullet_{X/k}
\cong
\RR_n \otimes_\RR^{\rmL} W\Omega^\bullet_{X/k}.
\]
\end{theorem}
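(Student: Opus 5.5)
Two things need to be shown: the underived identification \(W_n\Omega^\bullet_{X/k}\cong \RR_n\otimes_\RR W\Omega^\bullet_{X/k}\), and the vanishing of the higher Tor terms, so that the underived tensor product already computes \(\RR_n\otimes^{\rmL}_\RR W\Omega^\bullet_{X/k}\). Both statements are local, and the constructions are functorial in \(X\). So I would work on an affine (or étale-local) smooth \(X\), and more precisely stalkwise, with \(M=W\Omega^\bullet_{X/k}\) viewed as a left graded \(\RR\)-module as in the discussion after \Cref{IRDring}.

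I compute \(\RR_n\otimes^{\rmL}_\RR M\) with the explicit resolution \eqref{Rn finite flat dim} of \Cref{Rn is right perfect}. This gives the three-term complex
\[
M(-1)\xrightarrow{\;x\mapsto (F^nx,\,-F^ndx)\;} M(-1)\oplus M\xrightarrow{\;(x,y)\mapsto dV^nx+V^ny\;} M .
\]
Its \(\HH^0\) is \(M/(V^nM+dV^nM)=M/\Fil^n M\). By Illusie's description of the standard filtration, \(\Fil^n W\Omega^i=V^nW\Omega^i+dV^nW\Omega^{i-1}\) is exactly the kernel of the restriction \(W\Omega^i\to W_n\Omega^i\) (\cite[Proposition I.3.2]{IlldRW}). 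Hence \(\HH^0\cong W_n\Omega^\bullet\), and the \(d\) and \(W_n\)-module structures match. Compatibility with \(d\) uses the second diagram in \Cref{Rn is right perfect}.

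It remains to show that \(\HH^{-1}\) and \(\HH^{-2}\) vanish.

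For \(\HH^{-2}\), suppose \(F^nx=0\) and \(F^ndx=0\). Then \(x=0\), because \(F\) is injective on \(W\Omega^\bullet\) of a smooth scheme. This injectivity is a standard consequence of the structure theorems: \(W\Omega^i\) is \(p\)-torsion free and \(V F=p\).

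For \(\HH^{-1}\), take \((x,y)\) with \(dV^nx+V^ny=0\). I want to write \((x,y)=(F^nz,-F^ndz)\). The inputs are the standard results of Illusie: \(V\) is injective, and the formulas \(\ker(V^n)=0\) and \(\ker(d V^n)\cap\ldots\) (\cite[I.3]{IlldRW}), in particular \(V^ny\in dV^nW\Omega\) iff \(y\in F^n dW\Omega+ \ker\)-type statements. The plan is as follows. From \(V^ny=-dV^nx\), apply \(F^n\) and use \(F^nV^n=p^n\) and \(F^ndV^n=d\) to get \(p^ny=-dx\). Since \(p^n\) divides \(dx\), Illusie's lemma (\cite[Proposition I.3.21]{IlldRW}: \(dx\in p^nW\Omega\) implies \(x\in F^nW\Omega\)) gives \(x=F^nz\). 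Then \(p^ny=-dF^nz=-p^nF^ndz\). Torsion-freeness yields \(y=-F^ndz\), as required.

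\textbf{Main obstacle.} The key step is this \(\HH^{-1}\) vanishing, specifically the divisibility lemma \(dx\in p^n W\Omega\Rightarrow x\in F^nW\Omega\) for the de Rham--Witt complex of a smooth scheme. I would take it from Illusie, where it is proved first for polynomial algebras via the explicit basis of \(W\Omega\), and then transferred by étale base change.

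Finally, the identifications are natural in \(X\), because the resolution is natural in \(M\) and the restriction maps are functorial. So they glue to sheaf-level isomorphisms.
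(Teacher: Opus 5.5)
Your proof is correct. The paper gives no argument of its own and simply cites Illusie--Raynaud, and what you have written is essentially the standard proof of the cited result. You tensor the resolution of $\RR_n$ from \Cref{Rn is right perfect} with $W\Omega^\bullet$. You then identify $\HH^0$ with $W_n\Omega^\bullet$, using that $\Fil^nW\Omega^i=V^nW\Omega^i+dV^nW\Omega^{i-1}$ is the kernel of the restriction $W\Omega^i\to W_n\Omega^i$; this also uses that the restriction is surjective, which you rely on implicitly. $\HH^{-2}$ vanishes by injectivity of $F$. $\HH^{-1}$ vanishes by deducing $dx=-p^ny$, then applying $F^nW\Omega^{i}=\{x : dx\in p^nW\Omega^{i+1}\}$ and $p$-torsion-freeness.

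The only blemish is the vague sentence about ``$\ker(dV^n)\cap\ldots$''. Your actual argument never uses it, so it can simply be deleted.
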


We can now give the promised proof of \Cref{coherent}.

\begin{proof}[Proof of \Cref{coherent}]
By \Cref{alternative definition of coherence} and \Cref{Wn and Rn}, the claim follows immediately
from the finiteness of the Hodge cohomology of the smooth proper variety $X/k$.
\end{proof}

Next, let us introduce some homological algebra concerning the functor
\(
\RR_1 \otimes^{\rmL}_\RR - \colon \DGr(\RR) \to \DGr(k[d]/d^2).
\)

\begin{lemma}[{c.f.~\cite[Lemma III.5.6.1]{Ek1}}]
\label{Hodge-de Rham via tensor R1}
Let $M \in \DGr(\RR)$. Then there is a natural map
\[
\RR/p \otimes^{\rmL}_\RR M \to \RR_1 \otimes^{\rmL}_\RR M
\]
in $\DGr(k[d]/d^2)$, which becomes an isomorphism after applying the functor $\mathrm{Tot}$ from \Cref{Totalization}.
\end{lemma}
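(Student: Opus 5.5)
The plan is to realize the map at the level of $(k[d]/d^2,\RR)$-bimodules, identify its fiber, and show that the fiber lies in the image of the functor $k[d]/d^2\otimes_k-$, which \Cref{totalization of acyclic} kills after totalization.

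\textbf{Step 1: the map.} Since $p=VF\in V\RR$, the two-sided ideal $p\RR$ is contained in $V\RR+dV\RR$. Hence there is a canonical surjection of graded $(\RR,\RR)$-bimodules $\RR/p\to \RR_1$. Both sides are killed by $p$ on the left, so left multiplication makes them $(k[d]/d^2,\RR)$-bimodules, and the surjection respects this structure. Because $\RR$ is $p$-torsion free as a $W$-module, $\RR/p$ agrees with the cofiber of $p\colon \RR\to\RR$. Tensoring the surjection with $M$ then gives the natural map
\[
\RR/p\otimes^{\rmL}_\RR M\to \RR_1\otimes^{\rmL}_\RR M
\]
in $\DGr(k[d]/d^2)$. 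Let $K$ be its kernel. We get a fiber sequence $K\otimes^{\rmL}_\RR M\to \RR/p\otimes^{\rmL}_\RR M\to \RR_1\otimes^{\rmL}_\RR M$. Since $\mathrm{Tot}$ is exact, it suffices to prove $\mathrm{Tot}(K\otimes^{\rmL}_\RR M)\simeq 0$.

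\textbf{Step 2: computing $K$.} Using the explicit description of $\RR^0,\RR^1$:
\begin{itemize}
\item $\RR/p$ has $k$-basis $F^n,1,V^n$ in grading $0$ and $F^nd,d,dV^n$ in grading $1$ (for $n>0$).
\item $\RR_1$ has $k$-basis $F^n$ and $F^nd$ for $n\geqslant 0$.
\item So $K^0$ is spanned by the $V^n$ and $K^1$ by the $dV^n$, for $n\geqslant1$.
\end{itemize}
Left multiplication by $d$ sends $V^n\mapsto dV^n$, so it is an isomorphism $K^0\xrightarrow{\sim}K^1$. Moreover, right multiplication by $d$ kills $K^0$: indeed $Vd=VFdV=p\,dV\equiv 0$ mod $p$. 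Also $K^1\cdot\RR^1=0$ because $d^2=0$.

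It follows that $N\coloneqq K^0$, placed in grading $0$, is a right graded $\RR$-module on which $\RR^1$ acts by zero. The multiplication map $k[d]/d^2\otimes_k N\to K$ is an isomorphism of $(k[d]/d^2,\RR)$-bimodules; associativity in $\RR/p$ gives compatibility of the left $d$ with the right action.

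\textbf{Step 3: conclusion.} We obtain
\[
K\otimes^{\rmL}_\RR M\simeq k[d]/d^2\otimes_k\bigl(N\otimes^{\rmL}_\RR M\bigr),
\]
with $N\otimes^{\rmL}_\RR M\in\DGr(k)$. By \Cref{totalization of acyclic} (with $A=k$), its totalization vanishes, which proves the claim.

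\textbf{Main obstacle.} I expect the main difficulty to be making Step 3 rigorous at the $\infty$-categorical level. One must check that the bimodule isomorphism $K\cong k[d]/d^2\otimes_k N$ induces an equivalence of the functors $K\otimes^{\rmL}_\RR-$ and $(k[d]/d^2\otimes_k-)\circ(N\otimes^{\rmL}_\RR-)$ into $\DGr(k[d]/d^2)$, naturally in $M$. I would handle this by viewing $K$ as an object of $\mathrm{Mod}_{k[d]/d^2}\bigl(\mathrm{RMod}_\RR(\DGr(\mathbb{Z}_p))\bigr)$ induced from $N$ along the base change $k\to k[d]/d^2$. Since it is discrete, the identification reduces to the heart-level computation in Step 2.
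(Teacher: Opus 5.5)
Your proposal is correct and follows essentially the same route as the paper: the surjection $\RR/p\to\RR_1$, its kernel identified with $k[d]/d^2\otimes_k\bigl(\bigoplus_{n\geqslant 1}k\cdot V^n\bigr)$ as a $(k[d]/d^2,\RR)$-bimodule, and then \Cref{totalization of acyclic} to kill the totalization of the fiber. One small slip that does not affect the argument: $\RR_1$ is not an $(\RR,\RR)$-bimodule, since $V\RR+dV\RR$ is not a left ideal (e.g.\ $F\cdot dV=d\notin V\RR+dV\RR$); it is only a $(k[d]/d^2,\RR)$-bimodule, and that is the only structure you actually use.
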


\begin{proof}
There is a natural surjection $\RR/p \to \RR_1$ of graded $(k[d]/d^2,\RR)$-bimodules,
giving rise to the map in the statement.
Let us compute its kernel: it is $\bigoplus_{n \geqslant 1} k \cdot V^n$ in grading $0$ and
$\bigoplus_{n \geqslant 1} k \cdot dV^n$ in grading $1$.
Note that the right $\RR$-module structure has the action of $d$ given by $0$.
Hence, as a graded $(k[d]/d^2,\RR)$-bimodule, we can describe it as
\(
k[d]/d^2 \otimes_k \bigoplus_{n \geqslant 1} k \cdot V^n.
\)
Therefore, the fiber of
\(
\RR/p \otimes^{\rmL}_\RR M \to \RR_1 \otimes^{\rmL}_\RR M
\)
becomes
\(
k[d]/d^2 \otimes_k
\Bigl((\bigoplus_{n \geqslant 1} k \cdot V^n) \otimes^{\rmL}_\RR M \Bigr).
\)
This object vanishes after applying $\mathrm{Tot}$, thanks to \Cref{totalization of acyclic}.
\end{proof}

\begin{construction}
\label{Hodge--de Rham spectral sequence}
As a consequence of \Cref{Hodge-de Rham via tensor R1},
for any grading-left bounded object \(M\in \DGr^{l}(\RR)\), we obtain the spectral sequence
\[
E_1^{i,j}=\HH^j(\RR_1\otimes^{\rmL}_\RR M)^i
\Rightarrow
\HH^{i+j}(k\otimes^{\rmL}_W \Tot(M)),
\]
which we refer to as the Hodge--de Rham spectral sequence for \(M\).
\end{construction}

Let us also record a result of Ekedahl concerning the homological algebra of $\RR_n\dtimes_\RR -$.
Note that the original statement in loc.~cit.~contains a typo;
here we state the corrected version based on the proof given there.

\begin{proposition}[{\cite[Proposition I.1.1]{Ek2}}]
\label{Ek2I.1.1}
Let \(A\) be a thick subcategory of \(W\)-modules (ungraded)
stable under \(\sigma_*\), and let \(M\in \DGr(\RR)\).

{\rm (1)} If \(M\) is either grading-left bounded or bounded from above,
and for some \(r,s\in \Z\), \(\HH^j(\RR_1\dtimes_\RR M)^i\in A\) for all
\[
\begin{aligned}
(i,j)\in \;\;&\bigl\{(i,j)\in \Z^2 \mid i+j=r,\; j\geqslant s\bigr\}\\
\cup &\bigl\{(i,j)\in \Z^2 \mid i+j=r+1,\; j\geqslant s+1\bigr\}\\
\cup &\bigl\{(i,j)\in \Z^2 \mid i+j=r-1,\; j\geqslant s\bigr\},
\end{aligned}
\]
then for all \(n\), \(\HH^j(\RR_n\dtimes_\RR M)^i\in A\)
for all \((i,j)\in \Z^2\) such that \(i+j=r\) and \(j\geqslant s\).

{\rm (2)} If \(M\) is either grading-right bounded or bounded from below,
and for some \(r,s\in \Z\), \(\HH^j(\RR_1\dtimes_\RR M)^i\in A\) for all
\[
\begin{aligned}
(i,j)\in\;\; &\bigl\{(i,j)\in \Z^2 \mid i+j=r,\; j\leqslant s\bigr\}\\
\cup &\bigl\{(i,j)\in \Z^2 \mid i+j=r+1,\; j\leqslant s\bigr\}\\
\cup &\bigl\{(i,j)\in \Z^2 \mid i+j=r-1,\; j\leqslant s-1\bigr\},
\end{aligned}
\]
then for all \(n\), \(\HH^j(\RR_n\dtimes_\RR M)^i\in A\)
for all \((i,j)\in \Z^2\) such that \(i+j=r\) and \(j\leqslant s\).

In particular, by taking $A=\{0\}$ we obtain the following consequence:
suppose $M$ is bounded in one of the four directions.
Then $M=0$ if and only if $M$ is complete and $\RR_1 \otimes^\rmL_\RR M=0$.
\end{proposition}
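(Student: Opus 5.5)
The plan is to induct on $n$, using the short exact sequences of right graded $\RR$-modules relating $\RR_{n+1}$, $\RR_n$ and $\RR_1$. The key input is the exact sequence of graded $(\RR,\RR)$-bimodules (Illusie--Raynaud)
\[
0 \to \sigma^n_*\RR_1 \oplus \sigma^n_*\RR_1(1)[?] \to \RR_{n+1} \to \RR_n \to 0 .
\]
More precisely, I will work with the kernel of $\RR_{n+1}\to\RR_n$. In grading $0$ it is spanned by $V^n$ and $dV^{n}$-type terms, modulo $V^{n+1}, dV^{n+1}$. I will identify it, as a right $\RR$-module, with an extension built from $\RR_1$ twisted by $\sigma^n$, via left multiplication by $V^n$ and by $dV^n$. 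This is the standard ``$\mathrm{gr}_V^n W_{n+1}\Omega$'' description.

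Concretely, I expect a four-term description of the following form:
\[
0\to \sigma^n_*\RR_1 \xrightarrow{(V^n,\,dV^n)} \mathrm{Ker}(\RR_{n+1}\to\RR_n)\to \ldots
\]
Equivalently, there is a triangle in which $\RR_1\dtimes_\RR M$ appears twice, shifted by $(0,0)$ and by grading $-1$ (the $dV^n$ part). The $dV^n$ part lowers $i$ by one, and because $F$ links $d V^n$ to $V^{n-1}$, it also introduces a cohomological shift by one.

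Tensoring this triangle with $M$ and taking the long exact sequence in $\HH^j(-)^i$ will show the following. The fiber of $\HH^j(\RR_{n+1}\dtimes M)^i\to \HH^j(\RR_n\dtimes M)^i$ is controlled by Hodge groups $\HH^{j'}(\RR_1\dtimes M)^{i'}$ with $(i',j')$ on the three antidiagonals $i'+j'=r-1,r,r+1$. Here $A$ being thick and $\sigma_*$-stable absorbs the twists and extensions.

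The index bookkeeping is the heart of the matter: the three sets in (1) and (2) are exactly what appears when one tracks which $(i',j')$ feed into $(i,j)$ with $i+j=r$, $j\geqslant s$ (resp.~$j\leqslant s$) through the connecting maps. A second subtlety is that the induction itself must be run on the statement "for all $(i,j)$ with $i+j=r$, $j\geq s$". However, the inductive step also needs information about neighbouring antidiagonals of $\RR_n\dtimes M$, which we do not control. To avoid this, I would not induct naively on $\RR_n$. Instead, I would filter $\RR_n$ by the $V$-adic filtration $\Fil^m\RR_n$, whose graded pieces are all (twists of) the two-term objects above. Then $\RR_n\dtimes M$ is an $n$-step iterated extension of copies of these pieces tensored with $M$, and only the Hodge groups of $M$ enter.

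The boundedness hypothesis (grading-left bounded or bounded above in (1), the dual in (2)) is used to run the spectral sequence / connecting map argument in the correct direction. The relevant maps only move in one direction along $j$. Boundedness guarantees that the pieces indexed by $j\geqslant s$ (resp.~$\leqslant s$) see only finitely many contributions, so that membership in $A$ propagates by thickness. For the final claim, take $A=\{0\}$. Then all $\RR_n\dtimes M=0$, so $M\simeq\widehat M=\lim(\RR_n\dtimes M)=0$ when $M$ is complete. The converse is trivial.

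The main obstacle will be pinning down the exact shape, including the grading and cohomological shifts, of the graded pieces $\mathrm{Ker}(\RR_{n+1}\to\RR_n)$ as right $\RR$-modules relative to $\RR_1$. Getting that wrong would misplace the antidiagonals $r\pm1$ and the bounds $s$ versus $s+1$. I would first verify it against the explicit resolution of \Cref{Rn is right perfect} by computing the cone of the map of resolutions induced by $\pi$.
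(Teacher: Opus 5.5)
There is a genuine gap, and it sits exactly at the step you flagged as the main obstacle. The kernel $K_n:=\Ker(\RR_{n+1}\to\RR_n)$ is \emph{not} an extension of two shifted, $\sigma^n$-twisted copies of $\RR_1$. Consequently $\RR_n\dtimes_\RR M$ is not a finite iterated extension of shifted copies of $\RR_1\dtimes_\RR M$.

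To see this, use $Vd=p\,dV$ and $dF=pFd$. For $0\le m<n$ one finds
\[
V^n\cdot F^md=p^mV^{n-m}d=p^n\,dV^{n-m}=dV^{n+1}F^{m+1}p^{\,n-m-1}\in dV^{n+1}\RR .
\]
So left multiplication by $V^n$ kills the nonzero classes $F^md\in\RR_1$ for $0\le m<n$.

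On the other side, $dV^n\cdot F^n=p^nd=V^n\cdot F^nd$, and $dV^n\cdot F^m=p^mF^{m-n}d\in V^{n+1}\RR$ for $m>n$. Hence, modulo $V^n\RR_1$, the $dV^n$-part is only $n$-dimensional. What actually holds is an exact sequence of right graded $\RR$-modules, with the left structure twisted by $\sigma^n$:
\[
0\to\bigoplus_{0\le m<n}k\,F^md\to\RR_1\xrightarrow{V^n\cdot}K_n\to\bigl(\RR^0/(V\RR^0+F^n\RR^0)\bigr)(-1)\to 0 ,
\]
where $d$ acts by $0$ on the last term.

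After $\otimes_\RR W\Omega_X$ this recovers Illusie's decomposition $\mathrm{gr}^nW\Omega^i\cong\Omega^i/B_n\Omega^i\oplus\Omega^{i-1}/Z_n\Omega^{i-1}$. So the $V^n$-part is $\Omega/B_n$, not $\Omega$, and the $dV^n$-part is $\Omega^{i-1}/Z_n$, not $\Omega^{i-1}$.

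One can also rule out your triangle directly. A triangle $\RR_1\to K_n\to\RR_1(-1)[1]$ would present $K_n$ as the cokernel of an injective right-module map $\RR_1(-1)\to\RR_1$. But every such map is left multiplication by $c\,d$ with $c\in k$, and this kills $F$.

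What is missing is the mechanism that controls these pieces. The finite kernel and cokernel above are iterated extensions of the one-dimensional module $k$ (with $F,V,d$ acting by $0$). Its derived tensor product with $M$ is the analogue of $B_1\Omega^i\cong\Omega^{i-1}/Z_1\Omega^{i-1}$. This object is tied to the Hodge object only through the Cartier-type triangles
\[
Z_1\Omega^{i-1}\to\Omega^{i-1}\xrightarrow{d}B_1\Omega^i,\qquad B_1\Omega^i\to Z_1\Omega^i\xrightarrow{C}\Omega^i,
\]
together with the analogous triangles building $B_n,Z_n$ from $B_1,Z_1$. Here $\Omega^i,B_n\Omega^i,Z_n\Omega^i$ are mnemonics for the grading-$i$ parts of $M$ tensored with the corresponding right $\RR$-modules. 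These triangles do not determine $B_1,Z_1$ from the Hodge groups in finitely many steps.

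Instead one must induct along the grading on a fixed antidiagonal, starting at $i\ll0$ (equivalently $j\gg0$), where everything vanishes. This is exactly what ``grading-left bounded or bounded above'' provides in (1); the mirror induction handles (2). In your scheme the boundedness hypotheses do no work at all: a finite filtration whose pieces are shifted copies of $\RR_1$ would give the conclusion for every $M$. That is a red flag.

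The three antidiagonals also come from this induction, not from ``$\RR_1$ appearing twice''. Take $i+j=r$ and $j\ge s$.
\begin{itemize}
\item The term $\HH^j(\Omega^i/B_n)$ needs $\HH^j(\Omega^i)$ and $\HH^{j+1}(B_n\Omega^i)$; the latter lies on antidiagonal $r+1$ with $j+1\ge s+1$.
\item The term $\HH^j(\Omega^{i-1}/Z_n)$ needs $\HH^j(\Omega^{i-1})$ on antidiagonal $r-1$, and $\HH^{j+1}(Z_n\Omega^{i-1})$ on antidiagonal $r$.
\item The $B/Z$ induction along antidiagonal $r$ (resp.~$r+1$) consumes Hodge groups on $r-1$ and $r$ (resp.~$r$ and $r+1$).
\end{itemize}
Together these give exactly the three index sets in (1).

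Your worry about neighbouring antidiagonals of $\RR_n\dtimes_\RR M$ is not the real difficulty. The long exact sequence of $K_n\to\RR_{n+1}\to\RR_n$ at a fixed $(i,j)$, combined with thickness of $A$, already suffices. Moreover, the graded pieces of the $V$-adic filtration on $\RR_n$ are just the $K_m$.

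The paper itself does not reprove this proposition; it quotes it from Ekedahl.
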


\begin{proposition}
\label{coherence and Hodge-d-equivalence gives de Rham--Witt d-equivalence}
Let \(M\in \DGr_c(\RR)\), and assume that it is grading left-bounded. 
Suppose \(\HH^j(\RR_1\otimes^{\rmL}_\RR M)^i=0\) for all \(i+j< d\). Then \(\HH^j(M)^i=0\) for all \(i+j< d\) as well. 
\end{proposition}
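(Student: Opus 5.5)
We argue by contradiction, inducting on the total degree and, within a fixed total degree, on the grading. We use the grading-left boundedness to pick a "corner" nonzero cohomology group and show it survives to $\RR_1\otimes^{\rmL}_\RR M$. Suppose some $\HH^j(M)^i\neq 0$ with $i+j<d$. Among all such pairs choose one with $e\coloneqq i+j$ minimal. Within the total degree $e$, choose the grading $i$ minimal. Such a minimum exists because $M$ is grading-left bounded, and each total-degree line contains only finitely many candidate gradings below any given bound. Set $N\coloneqq \HH^j(M)$. It is a coherent, hence profinite and in particular classically complete, graded $\RR$-module.

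The main tool is the Künneth/hypertor spectral sequence for $\RR_1\otimes^{\rmL}_\RR(-)$ applied to the Postnikov filtration of $M$:
\[
E_2^{a,b}=\Tor_{-a}^{\RR}(\RR_1,\HH^b(M))\Longrightarrow \HH^{a+b}(\RR_1\otimes^{\rmL}_\RR M),
\]
taken grading by grading. By \Cref{Rn is right perfect} it is concentrated in $a\in\{-2,-1,0\}$, so it converges. We compute the $\Tor$ groups in grading $i$ from the resolution \eqref{Rn finite flat dim} with $n=1$:
\begin{itemize}
\item $\Tor_0$ in grading $i$ is $N^i/(VN^i+dVN^{i-1})$;
\item $\Tor_1$ in grading $i$ is a subquotient of $\HH^{b}(M)^{i-1}\oplus\HH^b(M)^i$;
\item $\Tor_2$ in grading $i$ is a subquotient of $\HH^b(M)(-1)^i=\HH^b(M)^{i-1}$.
\end{itemize}

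Next we determine which terms can interact with $E_2^{0,j}$ in grading $i$. The differentials have the form $d_r\colon E_r^{a,b}\to E_r^{a+r,b-r+1}$.
\begin{itemize}
\item Nothing leaves $E^{0,j}$, since $a=0$ is the top column.
\item The only possible incoming differential is $d_2$ from $E_2^{-2,j+1}$. In grading $i$ this is a subquotient of $\HH^{j+1}(M)^{i-1}$. That pair has total degree $e$ and grading $i-1<i$, so it vanishes by the minimal choice of $i$.
\end{itemize}
Hence $E_2^{0,j}$ in grading $i$ survives to $E_\infty$ and injects as a subquotient into $\HH^j(\RR_1\otimes^{\rmL}_\RR M)^i$. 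This group vanishes by hypothesis, since $i+j<d$. Moreover $N^{i-1}=\HH^j(M)^{i-1}=0$, because $(i-1,j)$ has total degree $e-1<e$ and $e$ was minimal. Consequently $E_2^{0,j}$ in grading $i$ equals $N^i/VN^i$, and we conclude $N^i=VN^i$.

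It remains to deduce $N^i=0$ from $N^i=VN^i$; this is the step we expect to need the most care. The plan is to use the standard filtration of \Cref{complete R-module}. Since $N^{i-1}=0$, we have $\Fil^n(N^i)=V^nN^i$, and $N^i=VN^i$ gives $V^nN^i=N^i$ for every $n$. Classical completeness of the profinite module $N$ says $N^i\cong\varprojlim_n N^i/V^nN^i=0$, contradicting $N^i\neq 0$. The point to check is that coherent modules are indeed classically complete in the sense of \Cref{complete R-module}. This holds because coherent modules are profinite by \Cref{coeuretc}, and is a Nakayama-type argument that avoids the subtlety of the Warning. The remaining verification is bookkeeping: one must check that the grading shift $\RR(-1)$ in \eqref{Rn finite flat dim} really contributes grading $i-1$, and not $i+1$, to $\Tor_2$. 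This follows from the convention $M(n)^i=M^{i+n}$. With this, \Cref{Ek2I.1.1} is not even needed, though one could alternatively use it to dispose of all total degrees $\le d-2$ first and run the corner argument only on the line $d-1$.
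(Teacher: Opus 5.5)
Your argument is, in substance, the paper's. You use the same hypertor spectral sequence and the same two observations: $\Tor_0(\RR_1,\HH^j(M))^i=\HH^j(M)^i/V\HH^j(M)^i$ once $\HH^j(M)^{i-1}=0$, and the only differential into that term comes from $\Tor_2$, which in grading $i$ only sees $\HH^{j+1}(M)^{i-1}$. You also use classical completeness in the same way to pass from $N^i=VN^i$ to $N^i=0$.

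There is, however, a gap in your first step. You pick a counterexample of minimal total degree $e=i+j$, and nothing guarantees that this minimum exists. $M$ is not assumed cohomologically bounded below, and grading-left boundedness bounds $i$ but not $i+j$. For instance, $\bigoplus_{n\geqslant 0}W[n]$ is grading-left bounded, lies in $\DGr_c(\RR)$, and has nonzero cohomology in every total degree $\leqslant 0$. The hypothesis fails for that example, but ruling out such behaviour is exactly what has to be proved. Your justification (finitely many candidate gradings on a line) only covers the minimal grading on a fixed line.

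This is why the paper first invokes \Cref{Ek2I.1.1} to kill all total degrees $\leqslant d-2$, and only then inducts on the grading along the line $i+j=d-1$. As written, your argument does not support the remark that \Cref{Ek2I.1.1} ``is not even needed''.

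The repair is easy, and it gives a cleaner proof than the paper's: minimize the grading alone.
\begin{itemize}
\item Let $i$ be minimal among all pairs $(i,j)$ with $i+j<d$ and $\HH^j(M)^i\neq 0$. This exists because $M$ is grading-left bounded uniformly in $j$.
\item The two vanishings you need are $\HH^j(M)^{i-1}=0$ (for $\Tor_0$) and $\HH^{j+1}(M)^{i-1}=0$ (for the $d_2$ from $\Tor_2$).
\item Both concern grading $i-1<i$, in total degrees $i+j-1$ and $i+j$, both $<d$. So both follow from this single minimality, and the rest of your argument applies verbatim.
\end{itemize}
With this change your proof is complete. It genuinely avoids \Cref{Ek2I.1.1}, and with it the passage from the vanishing of $\RR_n\otimes^{\rmL}_\RR M$ back to $M$ via completeness. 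It uses only the classical completeness built into the definition of a coherent module. Alternatively, you can keep your double minimality and first apply \Cref{Ek2I.1.1} as the paper does; then any minimal counterexample necessarily lies on the line $i+j=d-1$.
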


\begin{proof}
By \Cref{Ek2I.1.1}, we see that \(\HH^j(M)^i=0\) for all \(i+j< d-1\). 
It therefore suffices to show that \(\HH^j(M)^i=0\) for all \(i+j=d-1\).

By assumption, there exists an integer \(N\) such that \(\HH^j(M)^a=0\) for all \(a < N\).
In particular, we see that \(\HH^{(d-1-i)}(M)^i = 0\) when \(i\) is sufficiently small.
We prove by induction on \(i\) that this vanishing holds for all \(i\).
Suppose it has been shown for all \(i < m\); we need to prove that
\(\HH^{(d-1-m)}(M)^m = 0\).

Consider the spectral sequence of left graded \(k[d]/d^2\)-modules:
\[
E_2^{\ell,j}=\Tor_{-\ell}^{\RR}(\RR_1,\HH^j(M))\Rightarrow \HH^{\ell+j}(\RR_1\otimes^{\rmL}_\RR M).
\]
By \Cref{Rn is right perfect}, the above groups vanish whenever \(\ell \notin [-2,0]\).
Moreover, if \(M\) is grading-left bounded by \(D\), then \(\Tor_{2}^\RR(\RR_1,M)\) is grading-left bounded by \(D+1\).
As a result, the cokernel of the map
\[
\Tor_2^\RR(\RR_1,\HH^{j+1}(M))\rightarrow \Tor_0^\RR(\RR_1,\HH^j(M))
\]
is a subobject of \(\HH^{j}(\RR_1\otimes^{\rmL}_\RR M)\).

Now consider \(\HH^{(d-1-m)}(\RR_1\otimes^{\rmL}_\RR M)^m\).
Our assumption implies that it is \(0\).
The induction hypothesis implies that \(\HH^{(d - m)}(M)^{(m - 1)} = 0\).
From the initial observation, we also know that \(\HH^{< (d-m)}(M)^{(m-1)} = 0\).
Hence \(\HH^{(d - m)}(M)\) is grading-left bounded by \(m\).
Consequently,
\(\Tor_2^\RR(\RR_1,\HH^{(d-m)}(M))\) is grading-left bounded by \(m + 1\).

Combining this with the previous paragraph, we conclude that
\(\Tor_0^\RR(\RR_1,\HH^{(d-1-m)}(M))\) has no component in grading \(m\).
Since \(\HH^{(d-1-m)}(M)\), again by the result observed at the beginning,
is grading-left bounded by \(m\), we conclude that
\[
\Tor_0^\RR(\RR_1,\HH^{(d-1-m)}(M))^m
=
\HH^{(d-1-m)}(M)^m / V\cdot \HH^{(d-1-m)}(M)^m
=0.
\]

Therefore,
\(\HH^{(d-1-m)}(M)^m / V^n\cdot \HH^{(d-1-m)}(M)^m = 0\)
for all positive integers \(n\).
By assumption, the cohomology \(\HH^{(d-1-m)}(M)\) is coherent,
which by definition implies that it is classically complete.
Hence the above vanishing implies that
\(\HH^{(d-1-m)}(M)^m = 0\).
\end{proof}

%To compare the slope numbers with the Hodge numbers, Crew and Ekedahl introduced the Hodge--Witt numbers \(h_W^{i,j}\). 
\subsection{Hodge--Witt numbers}

Below we recall several relations between Hodge--Witt numbers and Hodge numbers.

\begin{theorem}[Crew's formula, {\cite[Theorem 4]{Cr}}, {\cite[Theorem IV.3.2(2)]{Ek3}}]
\label{CrewFor} 
Let \(M\in \DGr_c^b(\RR)\). 
For any natural number \(i\), one has
\begin{equation}
    \sum_{j}(-1)^j h_W^{i,j}
    =
    \sum_{j}(-1)^j h^{i,j}.
\end{equation}
\end{theorem}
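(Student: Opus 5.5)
Both sides of the formula are additive in exact triangles of $\DGr_c^b(\RR)$, so I would reduce to elementary building blocks and check those by hand. To set this up, fix $i$ and consider the two functions
\[
\chi_W^i(M)\coloneqq \sum_j (-1)^j h_W^{i,j}(M), \qquad \chi^i(M)\coloneqq \sum_j(-1)^j h^{i,j}(M).
\]
For $\chi^i$, additivity is formal. $\RR_1\otimes^{\rmL}_\RR-$ is exact, and the grading-$i$ part of $\RR_1\otimes^{\rmL}_\RR M$ has finite-dimensional cohomology. This holds by \Cref{alternative definition of coherence}, and the cohomology is bounded because $\RR_1\otimes^{\rmL}_\RR-$ has amplitude $[-2,0]$ by \Cref{Rn is right perfect}. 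So $\chi^i$ is an Euler characteristic of a finite-dimensional complex and is additive in triangles. It is also additive under the shifts $[1]$, acquiring a sign.

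For $\chi_W^i$, the first step is to note that for $M\in\DGr_c^b(\RR)$ each of $m^{i,j}(M)$ and $T^{i,j}(M)$ depends only on the individual cohomology modules $\HH^j(M)$. Hence $\chi_W^i(M)=\sum_j(-1)^j\chi_W^i(\HH^j(M)[-j])$. Likewise the hypercohomology spectral sequence $\Tor^\RR_{-\ell}(\RR_1,\HH^j(M))\Rightarrow \HH^{\ell+j}(\RR_1\otimes^{\rmL}_\RR M)$ gives $\chi^i(M)=\sum_j(-1)^j\chi^i(\HH^j(M)[-j])$. This reduces the statement to a single coherent module $M$ placed in degree $0$.

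The second step is additivity of $\chi_W^i$ in short exact sequences of coherent modules. Here I would argue in each grading separately.

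\emph{Slope part.} For the slope contribution, the $m_\lambda(\coeur(-)\otimes K)$ are additive, because $\coeur$ agrees rationally with the whole module modulo dominoes, which are torsion. Note that $m^{i,j}$ mixes $\HH^j(M)^i$ and $\HH^{j+1}(M)^{i-1}$, but for a module in degree $0$ only the terms with $j\in\{0,-1\}$ contribute. In the alternating sum over $j$, these recombine into $\sum_\lambda (1-\lambda) m_\lambda(M^i\otimes K) - \sum_\lambda \lambda\, m_\lambda(M^{i-1}\otimes K)$, which is manifestly additive.

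\emph{Domino part.} The domino contribution to $\chi_W^i$ is $T^{i,0}-2T^{i-1,0}+T^{i-2,0}$ up to sign, in terms of the $T^{\bullet}$ of $M$. Domino numbers are \emph{not} additive in short exact sequences in general. This is the main obstacle, and the point where the alternating combination must be used. I would handle it using the filtration of \cite[Proposition I.2.19]{IRdRW} together with the independence result \Cref{independence} cited in the remark following \Cref{coeuretc}. Once $T^i(M)$ is identified with the number of factors $U_t(-i)$ in any such filtration, additivity follows provided filtrations splice along short exact sequences. If splicing fails, I would instead prove additivity of the combined quantity $\chi_W^i-\chi^i$ directly on extensions via the long exact $\Tor$ sequence.

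Granting additivity, the final step reduces to checking the formula on the graded pieces of type $\rmI_a$, $\rmI_b$ and $\rmI\rmI$, up to grading shift.

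\emph{Type $\rmI_b$.} Let $M=D$ be a finite free Dieudonné module in grading $0$. Then $\RR_1\otimes^{\rmL}_\RR D$ is computed by \eqref{Rn finite flat dim} as $D/VD$ in grading $0$ and $D/FD$ in grading $1$ (degree $-1$). Mazur-type slope counting then gives $\dim D/VD=\sum(1-\lambda)m_\lambda$ and $\dim D/FD=\sum \lambda m_\lambda$, matching $\chi_W^0$ and $\chi_W^1$.

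\emph{Type $\rmI_a$.} Here there are no slopes and no dominoes, and length considerations give $\dim D/VD=\dim \ker V$. So $\chi^i=0$ by exactness of $D\xrightarrow{V}D$.

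\emph{Type $\rmI\rmI$.} For $U_t$ there are no slopes and $T^0=1$. I would compute $\RR_1\otimes^{\rmL}_\RR U_t$ explicitly from the resolution to get $\chi^0=1$, $\chi^1=-2$ and $\chi^2=1$. These match $T^{0}$, $-2T^{0}$ and $T^0$ in the formula, since $U_t$ contributes to $h_W^{i,j}$ through the pattern $1,-2,1$.
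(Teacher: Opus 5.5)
You cite \Cref{independence} to get additivity of domino numbers, but the paper derives \Cref{independence} from the very formula you are proving, so your argument is circular at its one substantive step.

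The formal steps are sound. $\chi^i$ is additive in triangles. The reduction to a single coherent module in degree $0$ works, though you apply the sign twice: it should read $\chi_W^i(M)=\sum_j\chi_W^i(\HH^j(M)[-j])=\sum_j(-1)^j\chi_W^i(\HH^j(M))$, and likewise for $\chi^i$. The slope part is additive, and the checks for types $\rmI_a$ and $\rmI_b$ go through.

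The gap is the domino part. You obtain additivity of $T^i$ from \Cref{independence}, that is, from identifying $T^i(M)$ with the number of factors $U_t(-i)$ in \emph{any} filtration as in \cite[Proposition I.2.19]{IRdRW}. The paper gives no proof of \Cref{CrewFor}; it quotes Crew and Ekedahl. It then proves \Cref{independence} \emph{from} \Cref{CrewFor}, and the remark after \Cref{coeuretc} rests on \Cref{independence}.

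This is not a side issue. Since $\chi^i$ is formally additive and the elementary checks are easy, additivity of $\chi_W^i$ on all short exact sequences of coherent modules is equivalent to additivity of every $T^i$ (by the induction on $i$ in the proof of \Cref{independence}). Given the easy parts, either is equivalent to the theorem itself. You have relocated the difficulty rather than resolved it.

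Your fallback does not help. $\chi_W^i-\chi^i$ is additive exactly when $\chi_W^i$ is, and the long exact $\Tor$ sequence says nothing about the intrinsically defined $T^i(M)=\dim_k M^i/(V^{-\infty}Z^i+VM^i)$. Also, your claim that domino numbers are not additive is false for coherent modules (that is \Cref{independence}), and it contradicts your own plan.

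The fix is to avoid arbitrary extensions and run your argument on one canonical filtration, where the bookkeeping is tautological.
\begin{enumerate}
\item The submodules $M^{\leqslant i}$ of \Cref{subsection: Diagonal} filter $M$ with graded pieces $M^{[i,i]}$.
\item Each $M^{[i,i]}$ contains $\coeur(M^i)$, placed in grading $i$ with $d=0$. The quotient is the $i$-th domino $M^i/V^{-\infty}Z^i\to F^\infty B^{i+1}$.
\item Split $\coeur(M^i)$ into its torsion part and its free quotient (types $\rmI_a$ and $\rmI_b$).
\item Filter the domino by exactly $T^i(M)$ elementary dominoes $U_t(-i)$. This count is intrinsic by \cite[Proposition I.2.18]{IRdRW}, with no appeal to Crew.
\end{enumerate}
For this filtration, the slope data of the pieces reassemble $\coeur(M^j)\otimes K$, and the domino counts are the $T^j(M)$. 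So the sum of $\chi_W^i$ over the pieces is $\chi_W^i(M)$ by construction. Additivity of $\chi^i$ plus the elementary checks then gives the formula, and additivity of $T^i$ follows afterwards, as in the paper.

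Also fix your signs:
\begin{itemize}
\item The domino contribution to $\chi_W^i$ is $T^i+2T^{i-1}+T^{i-2}$, since $h_W^{i,-1}$ enters with sign $-1$.
\item Correspondingly $\chi^1(U_t)=\chi_W^1(U_t)=2$, not $-2$. For $t\geqslant 1$, the grading-$1$ part of $\RR_1\otimes^{\rmL}_\RR U_t$ has $\HH^{-2}$ of dimension $t+1$, $\HH^{-1}$ of dimension $t-1$, and $\HH^0=0$.
\item For type $\rmI_a$ you also need the grading-$1$ check, comparing $\ker F$ with $D/FD$, not just the check via $V$.
\end{itemize}
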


\begin{corollary}[see also {\cite[Lemma 2.5]{MR15}}]
\label{independence}
For any short exact sequence of coherent \(\RR\)-modules
\[
0 \rightarrow M_1 \rightarrow M \rightarrow M_2 \rightarrow 0,
\] 
for every \(i \in \mathbb{Z}\) one has
\[
T^i(M)=T^i(M_1)+T^i(M_2).
\]
Consequently, let \(M\) be a coherent \(\RR\)-module and let
\(M_\bullet\) be any filtration satisfying the conclusion of
 {\cite[Proposition I.2.19]{IRdRW}}.
Then the multiplicity of the $i$-th shift of type \(\mathrm{II}\) in the associated graded object
is independent of the choice of filtration.
\end{corollary}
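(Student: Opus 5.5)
The idea is to derive additivity of \(T^i\) from Crew's formula (\Cref{CrewFor}), using that the other quantities entering it are additive or controllably non-additive. View a short exact sequence of coherent modules as a distinguished triangle \(M_1\to M\to M_2\to M_1[1]\) in \(\DGr_c^b(\RR)\), each term concentrated in cohomological degree \(0\). For an object \(N\) in degree \(0\) we have \(T^{i,j}(N)=0\) unless \(j=0\), and \(T^{i,0}(N)=T^i(N)\). The slope numbers \(m^{i,0}(N)\) only involve the isocrystals \(\coeur(N^i)\otimes K\) and \(\coeur(N^{i-1})\otimes K\). Moreover \(\coeur(N^i)\otimes K=N^i\otimes K\), because the dominoes are killed by a power of \(p\) after inverting. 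So \(m^{i,0}\) is additive in exact sequences, since slope multiplicities of isocrystals are additive.

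Next I apply \Cref{CrewFor} to \(N\) in degree \(0\). Here \(h_W^{i,j}(N)\) vanishes except when \(j\in\{0,-1,-2\}\), which come from the terms \(T^{i},-2T^{i-1},T^{i-2}\) with shifted \(j\). This gives
\[
\sum_j(-1)^jh^{i,j}(N)=m^{i,0}(N)+T^i(N)+2T^{i-1}(N)+T^{i-2}(N),
\]
up to checking signs carefully. The left side is additive in triangles. Indeed, \(\RR_1\otimes^{\rmL}_\RR-\) is exact, so the long exact sequence in each grading \(i\) gives additivity of the Euler characteristic \(\sum_j(-1)^j h^{i,j}\); finiteness comes from \Cref{alternative definition of coherence}. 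Hence the combination \(T^i+2T^{i-1}+T^{i-2}\) is additive for every \(i\).

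Since coherent modules have bounded gradings, I then induct on \(i\) from below. For \(i\ll0\) all \(T\)'s of \(M_1,M,M_2\) vanish. If \(T^{i-1}\) and \(T^{i-2}\) are already known to be additive, then additivity of \(T^i\) follows. This proves the first claim.

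For the consequence, take a filtration as in \cite[Proposition I.2.19]{IRdRW}. Repeated use of additivity shows that \(T^i(M)\) equals the sum of \(T^i\) of the graded pieces. I would then compute \(T^i\) on each elementary type. A shifted domino \(U_t(-i')\) has \(T^i=\delta_{i,i'}\), using the formula \(T^i=\dim_k M^i/(V^{-\infty}Z^i+VM^i)\): for \(U_t\) we get \(V^{-\infty}Z^0=0\) and \(U_t^0/V=k\). Types \(\rmI_a\) and \(\rmI_b\) are concentrated in one grading with \(d=0\), so \(V^{-\infty}Z\) is everything and \(T=0\). The multiplicity of \(U_t(-i)\) therefore equals \(T^i(M)\) and is independent of the filtration.

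The main obstacle is the precise bookkeeping of indices and signs in Crew's formula for a complex concentrated in one degree. I also need to verify that the slope part \(m^{i,j}\) genuinely depends only on the rationalization and is therefore additive.
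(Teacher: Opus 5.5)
Your proposal is correct and follows the paper's route. You use additivity of the slope contributions (for coherent modules, $d$ dies after inverting $p$) and of the Hodge Euler characteristic, apply Crew's formula to get additivity of $T^i+2T^{i-1}+T^{i-2}$, and induct on $i$ using bounded gradings.

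One small bookkeeping fix: the slope term in your displayed identity should be $m^{i,0}(N)-m^{i,-1}(N)$, since the contribution of $N^{i-1}$ lives in $m^{i,-1}$ (not in $m^{i,0}$, because $\mathrm{H}^1(N)=0$). This term is additive for the same reason, so the argument is unaffected. Your explicit computation of $T^i$ on the elementary pieces also spells out the ``consequently'' part, which the paper leaves implicit.
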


\begin{proof}
By definition, all differentials \(d\) of a coherent
\(\RR\)-module become zero after inverting \(p\). Consequently, the multiplicity of a slope
\(\lambda\), namely
\(m_{\lambda}(\coeur(M^i)\otimes K),\)
is additive in short exact sequences of coherent \(\RR\)-modules.
It follows that the slope numbers \(m^{i,j}\) and the quantities
\[
\sum_{\lambda\in [0,1)}(1-\lambda)m_{\lambda}(\coeur(M^i)\otimes K),
\qquad
\sum_{\lambda\in [0,1)}\lambda m_{\lambda}(\coeur(M^{i-1})\otimes K)
\]
are additive for any short exact sequence of coherent \(\RR\)-modules.

Viewing \(M,M_1,M_2\) as objects of \(\DGr_c^b(\RR)\), the short exact sequence
\(0\to M_1\to M\to M_2\to 0\) induces an exact triangle
\(
\RR_1\otimes_\RR^{\rmL} M_1
\longrightarrow
\RR_1\otimes_\RR^{\rmL} M
\longrightarrow
\RR_1\otimes_\RR^{\rmL} M_2
\)
in \(\DGr(k[d]/d^2)\).
Taking the component of module degree \(i\) yields long exact sequences in cohomology.
Hence the Euler characteristic
\(
\sum_j (-1)^j h^{i,j}(M)
\)
is additive. By \Cref{CrewFor}, the same holds for
\(
\sum_j (-1)^j h_W^{i,j}(M)=\sum_j (-1)^j h^{i,j}(M).
\)

For a coherent \(\RR\)-module \(M\), viewed as an object of \(\DGr_c^b(\RR)\),
the Hodge--Witt numbers are given by
\[
\begin{aligned}
h_W^{i,0}(M)
&=
\sum_{\lambda\in [0,1)}
(1-\lambda)m_{\lambda}(\coeur(M^i)\otimes K)
+T^{i}(M),\\
h_W^{i,-1}(M)
&=
\sum_{\lambda\in [0,1)}
\lambda m_{\lambda}(\coeur(M^{i-1})\otimes K)
-2T^{i-1}(M),\\
h_W^{i,-2}(M)
&=
T^{i-2}(M).
\end{aligned}
\]

Since the slope multiplicities are additive, it follows that the quantity
\[
T^{i}(M)+2T^{i-1}(M)+T^{i-2}(M)
\]
is additive for every \(i\in\mathbb Z\).
An induction on \(i\) then shows that each \(T^i\) is additive for short exact sequences of coherent \(\RR\)-modules.
\end{proof}

\begin{definition}
\label{numerical invariants of smooth proper}
Let \(X\) be a smooth proper variety over a perfect field \(k\) of characteristic \(p\).
We define the \emph{domino numbers} \(T^{i,j}(X)\), \emph{slope numbers} \(m^{i,j}(X)\),
\emph{Hodge--Witt numbers} \(h_W^{i,j}(X)\), and \emph{Hodge numbers} \(h^{i,j}(X)\)
of \(X\) to be the corresponding invariants (see \Cref{numerical invariants} and \Cref{Hodge number of DGrc})
of the object
$R\Gamma(X,W\Omega^\bullet_{X/k}) \in \DGr_c^b(\RR)$.
%as in \Cref{numerical invariants} and \Cref{Hodge number of DGrc}.
\end{definition}

\begin{definition}[{\cite[Definition IV.4.6]{IRdRW}}]
\label{Hodge--Witt varieties}
A smooth proper variety $X$ over $k$ is called a Hodge--Witt variety
if all of its domino numbers are $0$.
\end{definition}

\begin{theorem}[{\cite[Theorem IV.4.5]{IRdRW}}]
\label{Hodge--Witt decomposition}
Let \(X\) be a smooth proper variety over \(k\). Assume that $X$ is Hodge--Witt.
Then for each \(n\in \Z\) there is a canonical decomposition
\(
\HH_{\cris}^{n}(X/W) \cong \bigoplus_{i+j=n}\HH^j(X,W\Omega^i_{X/k}),
\)
such that the Frobenius action on the left-hand side corresponds to the action of
\(\bigoplus p^i F\) on the right-hand side.
\end{theorem}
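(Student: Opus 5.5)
The plan is to show that the slope spectral sequence \eqref{SS} degenerates at $E_1$ integrally, so that the abutment filtration on $\HH^n_{\cris}(X/W)$ has graded pieces $\HH^j(X,W\Omega^i_{X/k})$ with $i+j=n$. We then split this filtration canonically using the Frobenius, whose slopes on the various pieces are separated.

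\textbf{Step 1: degeneration.} Write $M=R\Gamma(X,W\Omega^\bullet_{X/k})\in\DGr_c^b(\RR)$; this is coherent by \Cref{coherent}. Since all domino numbers vanish, each coherent module $\HH^j(M)$ has no type $\rmI\rmI$ factors. By \Cref{independence}, any filtration as in \cite[Proposition I.2.19]{IRdRW} then has graded pieces of types $\rmI_a$ and $\rmI_b$ only, so each $\HH^j(M)^i$ is a finitely generated $W$-module. Moreover $T^i(\HH^j(M))=\dim_k M^i/(V^{-\infty}Z^i+VM^i)=0$, and $\HH^j(M)^i$ is classically complete. As in the proof of \Cref{coherence and Hodge-d-equivalence gives de Rham--Witt d-equivalence}, this forces $V^{-\infty}Z^i=\HH^j(M)^i$. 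Hence $d=0$ on $\HH^j(X,W\Omega^\bullet)$, and in fact $dV^n=0$ for all $n$. The $E_1$-differentials of \eqref{SS} are induced by $d$, so they vanish. The higher differentials should vanish as well; this is the standard Illusie--Raynaud argument. I would carry it out by using the totalization of \Cref{dRW-crys comparison} applied to the complex $M$ with its stupid filtration (\Cref{Totalization of cochain complex}). The key fact is that $d_r$ is compatible with $F$ and $V$ and satisfies $d_r=F^n d_r V^n$ up to twist, because $FdV=d$ holds on the relevant zigzags. Since $V$ is topologically nilpotent on type $\rmI_b$ and nilpotent on type $\rmI_a$, while $d_r$ lands in a finitely generated $W$-module, $d_r$ is divisible by arbitrarily high powers of $p$ and therefore vanishes. \textbf{This is the main obstacle}: making the ``$d_r$ is divisible by $V^n$'' argument precise at the derived level. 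If it becomes awkward, I would instead pass to $\RR_n\otimes^{\rmL}_\RR M$ and compare lengths: degeneration is equivalent to
\[
\mathrm{length}\,\HH^n_{\cris}\otimes W_n=\sum_{i+j=n}\mathrm{length}(\HH^j(W\Omega^i)\otimes^{\rmL}W_n),
\]
and the right-hand side can be computed from \Cref{Wn and Rn} together with the vanishing of $d$ on the cohomology.

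\textbf{Step 2: splitting.} With degeneration in hand, $\HH^n_{\cris}$ carries a finite filtration $\Fil^i$ with $\mathrm{gr}^i=\HH^{n-i}(X,W\Omega^i)$, and the crystalline Frobenius $\varphi$ acts on $\mathrm{gr}^i$ as $p^iF$. To construct a canonical splitting I would use $V$ on $\mathrm{gr}^i$. On $\mathrm{gr}^i$ the operator $\varphi$ equals $p^iF$, which is invertible after applying $p^{-i}$, whereas on $\mathrm{gr}^{i'}$ for $i'>i$ it is divisible by $p^{i+1}$. Concretely, the splitting of $\mathrm{gr}^i$ into $\HH^n_{\cris}/\Fil^{i+1}$ is characterised by the following property: its image is the set of elements $x$ such that $p^{-i}\varphi$ admits iterated preimages $x_m$ with $(p^{-i}\varphi)^m x_m=x$, that is, ``$V$-divisible at level $i$''. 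I would define the section by $x\mapsto\lim_m (p^{i}V')^m$ applied to lifts. Here $V'$ is the lift of $V$ on the crystal, given by $\varphi V'=p^{\dim}$, or more simply by the de Rham--Witt $V$ acting on $W\Omega^i$ via totalization. Convergence holds because on $\Fil^{i+1}$ the relevant operator gains an extra factor of $p$ at each step. Uniqueness follows from the same contraction. The sections then give $\bigoplus_i\mathrm{gr}^i\cong\HH^n_{\cris}$, compatibly with $\varphi\leftrightarrow\bigoplus p^iF$.
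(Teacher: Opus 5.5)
\textbf{Step 2 is a genuine gap, and it is the heart of the theorem.} The paper does not prove this statement; it quotes it from \cite[Th\'eor\`eme IV.4.5]{IRdRW}.

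Your splitting assumes that $p^{-i}\varphi$, i.e.\ $F$, is invertible on $\mathrm{gr}^i=\HH^{n-i}(X,W\Omega^i_{X/k})$. That is what lets you characterise the section by infinite $p^{-i}\varphi$-divisibility inside $\Fil^i$ and build it by a contraction. But $F$ on $\HH^{n-i}(X,W\Omega^i_{X/k})$ has slopes in $[0,1)$ (\Cref{SSbasic}) and is bijective only on its slope-$0$ part. This fails for non-ordinary Hodge--Witt varieties. For a K3 surface of finite height $h\geqslant 2$, $F$ has slope $1-1/h$ on $\HH^2(X,WO_X)$ and a slope-$1/h$ part on $\HH^1(X,W\Omega^1_X)$.

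Consequently, the infinitely $p^{-i}\varphi$-divisible elements of $\Fil^i$ map only into $\bigcap_m F^m\mathrm{gr}^i$. Your Fitting-type argument therefore handles only the case where $F$ is bijective on every $\HH^j(X,W\Omega^i_{X/k})$, which is the ordinary case.

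The explicit formula does not rescue this:
\begin{itemize}
\item If $\varphi V'=p^a$, then $p^iV'$ acts on $\mathrm{gr}^i$ as $p^{a-1}V$. This is topologically nilpotent, so iterating it on lifts tends to $0$ rather than producing a section.
\item The de Rham--Witt $V$ does not act on $\HH^n_{\cris}(X/W)$ at all. Since $Vd=p\,dV$, it is not a chain map on $W\Omega^\bullet$.
\end{itemize}

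Your uniqueness claim also fails in general. On $\Fil^{i+1}$ one has $p^{-i}\varphi=p\cdot p^{-(i+1)}\varphi$. So if $F\mathrm{gr}^i\neq\mathrm{gr}^i$ and $\Fil^{i+1}$ has $p$-torsion, adding any $W$-linear map from $\mathrm{gr}^i/F\mathrm{gr}^i$ to the $p$-torsion of $\Fil^{i+1}$ to a $p^{-i}\varphi$-equivariant section gives another one. The Frobenius on the abutment plus the filtration does not determine the splitting. What is missing is an argument using the full $\RR$-module structure of $R\Gamma(X,W\Omega^\bullet_{X/k})$, i.e.\ $F$, $V$ and $d$ at the level of the complex. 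That is where Illusie--Raynaud's proof does its work.

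\textbf{Step 1 can be completed along your lines, but the relation is not $d_r=F^nd_rV^n$.} Represent $M$ by a double complex of graded $\RR$-modules (e.g.\ a \v{C}ech complex) and assume $d_1=\dots=d_{r-1}=0$. Chasing the zigzag with $FdV=d$ and $dF=pFd$ gives $d_r[x]=p^{r-1}F\,d_r[Vx]$. Iterating, $d_r$ lands in $\bigcap_m p^m E_1$, which is $0$ because the $E_1$-terms are finitely generated. The length-comparison fallback is unnecessary.
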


Ekedahl also studied whether
the Hodge--Witt numbers satisfy symmetry analogous to Hodge numbers in characteristic $0$,
he obtained the following: 

\begin{proposition}[{\cite[Proposition IV.3.2, 3.3]{Ek3}}]\label{basicsymmetry}
    Let \(X/k\) be a proper smooth variety of pure dimension \(d\). 
    
    {\rm(1)} Let $n \in \N$. Then \(h_W^{i,j}=h_W^{j,i}\) for all \((i,j)\in \N^2\) such that \(i+j=n\)
    if and only if \(T^{i,j}=T^{j-2,i+2}\) for all \((i,j)\in \N^2\) such that \(i+j=n\). 
    
    {\rm(2)} The equivalent conditions in $(1)$ is satisfied whenever $n \leqslant 2$ or $d \leqslant 3$.
\end{proposition}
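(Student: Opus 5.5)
Part (1) is a bookkeeping consequence of the definition of $h_W^{i,j}$ in \Cref{numerical invariants}, and part (2) follows from duality together with a low-degree vanishing of dominoes. I treat them in that order.

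\textbf{Part (1).} I will first show that the slope numbers are always symmetric, $m^{i,j}=m^{j,i}$ for $i+j=n$. Using the combinatorial description of the slope numbers (the Newton--Hodge polygon remark), $m^{i,n-i}$ records the multiplicity of the integral slope $i$ in the maximal integral-slope polygon below the Newton polygon of $\HH^n_{\cris}(X/W)\otimes K$. By \Cref{dRW-crys comparison} and \Cref{SSbasic}, this isocrystal is the crystalline one. Poincar\'e duality together with hard Lefschetz (Katz--Messing) makes its Newton polygon symmetric under $\lambda\mapsto n-\lambda$. Hence the Newton--Hodge polygon is symmetric as well, which gives $m^{i,j}=m^{j,i}$.

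With this in hand, $h_W^{i,j}-h_W^{j,i}$ for $i+j=n$ is a linear expression in the domino numbers on the anti-diagonal $i+j=n$, namely
\[
\bigl(T^{i,j}-2T^{i-1,j+1}+T^{i-2,j+2}\bigr)-\bigl(T^{j,i}-2T^{j-1,i+1}+T^{j-2,i+2}\bigr).
\]
If $T^{a,b}=T^{b-2,a+2}$ for all $a+b=n$, then relabelling shows that this expression vanishes term by term. Conversely, suppose the expression vanishes for all $i$. I will order the dominoes by the first index, starting from $i=0$ where $T^{-1,\ast}=T^{-2,\ast}=0$, and argue by induction. At each step the vanishing forces the next equality $T^{i,j}=T^{j-2,i+2}$. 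This is a triangular linear system, so the converse is routine.

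\textbf{Part (2).} For $n\le 2$, the relations $T^{i,j}=T^{j-2,i+2}$ with $i+j\le2$ only involve dominoes $T^{i,j}$ with $j\ge 0$ and $T^{j-2,\cdot}$ with $j-2\ge0$. These pair $T^{0,2}$ with $T^{0,2}$ and otherwise equate to zero terms like $T^{1,1}$ and $T^{2,0}$. So I need the vanishing of $T^{i,j}$ for $j\le1$ with $i\ge1$ (for $n\le 2$) and $T^{i,0}=0$ for $i\geq 1$. For $T^{i,0}$ I will argue that $\HH^0$ dominoes vanish: $\HH^0(X,W\Omega^i)$ is finitely generated since $\HH^0(X,W\Omega^\bullet)$ has $V$-torsion-free, closed-form structure, and Illusie shows $d$ kills it up to finite length. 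For $T^{1,1}$ I will use Ekedahl's duality: Serre-type duality $T^{i,j}=T^{d-i-2,d-j+2}$ relates it to a top-degree domino. I will also use that $\HH^1(X,W\mathcal O)\to \HH^1(X,W\Omega^1)$ has no dominoes, by the structure of $\mathrm{Pic}$ (Dieudonn\'e module of the $p$-divisible part of the Picard scheme is finitely generated, so $\HH^1(W\mathcal O)$ has finite type coeur after removing the formal group torsion, which is killed by $d$).

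For $d\le3$, I combine Ekedahl duality $T^{i,j}=T^{d-i-2,d-j+2}$ with the $n\le2$ case and the vanishing of dominoes outside $0\le i\le d-2$, $0\le j\le d$. This covers all the remaining anti-diagonals $n\ge3$ by symmetry $n\leftrightarrow 2d-n$ (which maps $n\geq 4$ to $\leq 2$ when $d=3$). The case $n=3$, $d=3$ I check directly: the needed relation there is $T^{1,2}=T^{0,3}$, and this is exactly what duality gives.

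The main obstacle will be the precise vanishing statements for low-degree dominoes. They rely on Illusie's analysis of $\HH^0$ and $\HH^1$ and on Ekedahl's duality, and I would simply cite these from \cite{Ek3} rather than reprove them.
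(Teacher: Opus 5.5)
\textbf{Gap in part (1).} The paper gives no proof of this statement; it cites Ekedahl. Your overall plan is the natural one: symmetry of the slope numbers, then bookkeeping with domino numbers, then Ekedahl's duality $T^{i,j}=T^{d-i-2,d-j+2}$ for part (2). However, the step ``this is a triangular linear system, so the converse is routine'' in part (1) is false.

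Put $t_i:=T^{i,n-i}$ and $s_i:=t_i-t_{n-2-i}$. Granting $m^{i,j}=m^{j,i}$, the equations $h_W^{i,n-i}=h_W^{n-i,i}$ for $0\leqslant i\leqslant n$ say exactly $s_i-2s_{i-1}+s_{i-2}=0$. By construction $s_{n-2-i}=-s_i$. So the general solution is $s_i=c\,(i-\tfrac{n-2}{2})$ with $c$ undetermined.

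The boundary values are $s_{-2}=-T^{n,0}$ and $s_{-1}=-T^{n-1,1}$. These are not among the vanishing terms $T^{-1,\ast},T^{-2,\ast}$ your induction starts from. Already the $i=0$ equation reads $T^{0,n}=T^{n,0}-2T^{n-1,1}+T^{n-2,2}$. Concretely:
\begin{itemize}
\item for $n=0$, Hodge--Witt symmetry is vacuous, while the domino condition asserts $T^{0,0}=0$;
\item for $n=1$, Hodge--Witt symmetry only yields $T^{1,0}=3T^{0,1}$;
\item for $n=2$, it only yields $T^{2,0}=2T^{1,1}$.
\end{itemize}

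So ``Hodge--Witt symmetry $\Rightarrow$ domino condition'' is not a consequence of the definitions. You need an external input: either $T^{n,0}=0$, or $T^{n-1,1}=0$ (the latter only for $n\geqslant 1$). The first follows from finite generation of $\HH^0(X,W\Omega^n)$ over $W$, since a nonzero domino would produce a subquotient $k[[V]]$ there. Either vanishing forces $c=0$, after which your recursion does give $s\equiv 0$. You already use these vanishings in part (2), so the repair is to invoke them in part (1) as well.

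\textbf{Projective versus proper.} Hard Lefschetz in the Katz--Messing form requires an ample class, so your proof of $m^{i,j}=m^{j,i}$ only covers projective $X$. For merely proper $X$, as in the statement, the Newton polygon symmetry needs a different input. One route is the $\alpha\mapsto q^n/\alpha$ symmetry of Frobenius eigenvalues for proper smooth varieties over finite fields, plus spreading out. For the projective fourfold used later in the paper, your argument suffices.

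\textbf{Part (2).} Ekedahl's duality by itself gives $T^{i,0}=T^{d-i-2,d+2}=0$ and $T^{i,1}=T^{d-i-2,d+1}=0$. So the loose arguments via $\HH^0$ and the Picard scheme are unnecessary. With this, your checks for $n\leqslant 2$ and for $d\leqslant 3$ are correct; in the latter case the only nontrivial relation is $T^{0,3}=T^{1,2}$.
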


\medskip

\addtocontents{toc}{\protect\setcounter{tocdepth}{2}}

\section{Deeper structures on $\RR$-complexes}
\label{deeper structure}

In this section, we review finer structures on $\DGr(\RR)$ and $\DGr_c(\RR)$,
as exhibited by Ekedahl \cite{Ek1, Ek2, Ek3}.

\subsection{Duality}

The goal of this subsection is to prove \Cref{coherence criteria}.
Surprisingly, we need to use a dualizing functor introduced by Ekedahl in \cite[Chapter III]{Ek1},
which we first review below.\footnote{Note that in \cite[Definition I.6.1]{Ek2} 
Ekedahl also defined another dualizing functor, and showed that
these two dualizing functors agree on \(\DGr_c^b(\RR)\) (see \cite[Proposition III.1.6]{Ek2}).}

\begin{definition}[{\cite[Page.~189]{Ek1}}]
    For each \(n\), let \(\rho:\RR_n\rightarrow \RR_{n+1}\) be the unique injective map such that the following diagram commutes (the map \(\rho\) is unique because \(\pi\) and \(p\) have the same kernel):
    \[\begin{tikzcd}
	{\RR_{n+1}} & {\RR_n} \\
	& {\RR_{n+1}}
	\arrow["\pi", from=1-1, to=1-2]
	\arrow["p"', from=1-1, to=2-2]
	\arrow["\rho", from=1-2, to=2-2]
\end{tikzcd}\]
\end{definition}

\begin{definition}[{\cite[Page.~205]{Ek1}}]
For each $n \in \mathbb{N}$, let 
\[
\varwidecheck{\RR}_n \coloneqq \mathrm{Hom}_{W_n}(\RR_n, W_n),
\]
viewed as a graded left $\RR$-module via the graded right $\RR$-module structure of the source.
Let $\rho^* \colon \varwidecheck{\RR}_{n+1} \to \varwidecheck{\RR}_n$ be the map sending
a functional $f$ to $f \circ \rho$ (identifying the $p^n$-torsion in $W_{n+1}$ with ``$p \cdot W_n$'').
We view $\{\varwidecheck{\RR}_n, \rho^*\}$ as a projective system of graded left $\RR$-modules.
We equip it with another graded left $\RR$-module structure as follows:
\begin{itemize}
\item Define the action of
\(
F \colon \Hom_{W_{n+1}}(\RR_{n+1},W_{n+1}) \rightarrow \Hom_{W_n}(\RR_n,W_n)
\)
by the rule that for any functional $f \in \Hom_{W_{n+1}}(\RR_{n+1},W_{n+1})$
and any $r \in \RR_n$, we have the following equality in $W_{n+1}$:
\[
p \cdot F(f)(r) = \sigma(f(V \cdot r)),
\]
where $W_n \xhookrightarrow{p \cdot -} W_{n+1}$.

\item Similarly, define the action of 
\(
V \colon \Hom_{W_n}(\RR_n,W_n) \rightarrow \Hom_{W_{n+1}}(\RR_{n+1},W_{n+1})
\)
by the rule that for any functional $f \in \Hom_{W_n}(\RR_n,W_n)$
and any $r \in \RR_{n+1}$, we have the following equality in $W_{n+1}$:
\[
\sigma(V(f)(r)) = p \cdot f(F \cdot r).
\]

\item Lastly, define the action of 
\(
d \colon \varwidecheck{\RR}_n \rightarrow \varwidecheck{\RR}_n(1)
\coloneqq \mathrm{Hom}_{W_n}(\RR_n(-1), W_n)
\)
by the rule that for any functional $f \in \Hom_{W_n}(\RR_n,W_n)$
and any $r \in \RR_n$, we have the equality in $W_n$:
\[
d(f)(r) = f(d \cdot r).
\]
\end{itemize}

Since left multiplication commutes with right multiplication, one easily checks that
the above actions respect the individual graded left $\RR$-module structures.
Finally, we define
\[
\varwidecheck{\RR} \coloneqq \lim_{n \in \mathbb{N}} \varwidecheck{\RR}_n,
\]
viewed as a graded left $\RR \otimes_{\mathbb{Z}_p} \RR$-module.
We follow Ekedahl's notation: the action of the first copy of $\RR$
is induced by the right \(\RR\)-module structure on \(\RR_n\),
and the second copy acts by taking the inverse limit of the actions defined above.
\end{definition}

\begin{notation}
In order to minimize confusion, let us denote the inclusion $\RR \to \RR \otimes_{\mathbb{Z}_p} \RR$
via the first (resp.~second) factor by $i$ (resp.~$j$).
When we wish to emphasize that we view $\varwidecheck{\RR}$ as a graded left $\RR$-module via the
first (resp.~second) action of $\RR$, we denote it by $i_*(\varwidecheck{\RR})$
(resp.~$j_*(\varwidecheck{\RR})$).
\end{notation}

\begin{definition}[Ekedahl's dualizing functor {\cite[Definition III.2.8, Proposition III.3.2]{Ek1}}]
Define a contravariant functor from $\DGr(\RR)$ to itself as follows.
Given $M \in \DGr(\RR)$, let
\[
D_\RR(M) \coloneqq \mathrm{RHom}_\RR(M, i_*(\varwidecheck{\RR})).
\]
Here the left $\RR$-action comes from the second $\RR$-action on $\varwidecheck{\RR}$.
\end{definition}

In order to examine the properties of the dualizing functor, we need to explicate
$\varwidecheck{\RR}_n$ and $\varwidecheck{\RR}$.
To that end, let us consider the following \(W\)-linear graded functional
$\psi: \RR(-1) \rightarrow W$ defined by
\[
\begin{cases}
    \psi(F^n)=\psi(V^n)=0\; \text{for all}\;\; n\geqslant 0, \\
    \psi(dV^n)=\psi(F^nd)=0\; \text{for all}\;\; n\geqslant 1,\\
    \psi(d)=1.
\end{cases}
\]

Next we define a graded left $\RR$-module as follows:
\[
_n\overline{\RR} \coloneqq 
\Biggl\{\sum_{m\geqslant 0}V^m a_m +\sum_{1\leqslant m\leqslant n-1}b_m F^m 
+ \sum_{m\geqslant 0} dV^m c_m+\sum_{1\leqslant m\leqslant n-1}e_m F^md
\;\bigg|\; a_m,c_m\in W/p^n,\; b_m,e_m\in W/p^{n-m}\Biggr\}.
\]

We now define a pairing $\RR_n \times _n\overline{\RR} \to W_n$
by sending $(r, s) \mapsto$ ``$\psi(r \cdot s)$''.
Namely, we lift $r$ to an element $\widetilde{r}$ in $\RR$;
the result is the coefficient ``$c_0$'' of the element $\widetilde{r} \cdot s \in _n\overline{\RR}$.
A direct verification shows the following.

\begin{lemma}
\label{explicate checkRn}
The above construction does not depend on the choice of the lift $\widetilde{r}$ and 
induces an isomorphism of graded left $\RR$-modules
\[
\alpha_n \colon _n\overline{\RR} \xrightarrow{\cong} \varwidecheck{\RR}_n.
\]
Moreover, the map
\(
\alpha_n^{-1} \circ \rho^* \circ \alpha_{n+1} \colon _{n+1}\overline{\RR} \to _n\overline{\RR}
\) 
is the natural projection obtained by reducing each coefficient modulo a smaller power of $p$.
In particular, the transition maps $\rho^*$ are all surjective.
\end{lemma}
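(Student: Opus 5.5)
Since \(\RR_n\) is free as a right \(W_n\)-module on a topological basis, the plan is to compute everything on the explicit \(W_n\)-basis of \(\RR_n\) and compare with the coefficients of \(_n\overline{\RR}\). In grading \(0\), \(\RR_n\) has \(W_n\)-basis \(\{V^m\}_{0\leqslant m\leqslant n-1}\) and \(\{F^m\}_{m\geqslant 1}\), with \(F^m\) killed by \(p^{n-m}\) when \(m<n\) and vanishing for \(m\geqslant n\), since \(p^{n-m}F^m\in V^n\RR\). In grading \(1\) the basis is \(\{dV^m\}_{0\leqslant m\leqslant n-1}\) and \(\{F^md\}_{m\geqslant 1}\), with the same torsion. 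Writing \(\RR_n\) this way uses the relations \(V^n\RR+dV^n\RR\) and the identity \(F^m d V^m = d\).

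The first step is well-definedness. Changing the lift \(\widetilde r\) by an element of \(V^n\RR+dV^n\RR\) must not change the \(c_0\)-coefficient modulo \(p^n\). It suffices to check that for \(x\in \RR\) and \(s\in{}_n\overline{\RR}\), the coefficient of \(d\) in \(V^n x s\) and in \(dV^n x s\) vanishes in \(W/p^n\). This reduces to the monomial computations \(V^n F^m = p^m V^{n-m}\) and \(dV^nF^m d = 0\), together with the bounds \(b_m,e_m\in W/p^{n-m}\) and \(m\leqslant n-1\). These bounds are exactly designed so that \(V^n F^m e_m\) and similar terms land either in \(p^n W\) or away from \(d\).

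Next, \(\alpha_n\) is a left \(\RR\)-module map because the pairing is \(\psi(r\cdot s)\), and the left action on \(\varwidecheck{\RR}_n\) comes from right multiplication on \(\RR_n\). This step is formal from associativity, so \(\alpha_n(t s)(r)=\psi(rts)=\alpha_n(s)(rt)\).

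To prove that \(\alpha_n\) is bijective, I would compute the pairing matrix of the basis of \(\RR_n\) against the coefficient basis of \(_n\overline{\RR}\). Up to units and \(\sigma\)-twists the matrix is triangular and in fact a perfect "antidiagonal" matching. For example, \(V^m\) pairs with \(dV^m c_m\) through \(V^m d V^m\)-type products: we have \(\psi(F^m d V^m c)=\psi(d)c^{\sigma^{-m}}\)-ish, and \(F^md\) pairs with \(V^m\)-coefficients by \(F^m d\cdot V^m a= F^m d V^m a = d\,a\). The key observation is that the \(W_n\)-module generated by \(r\) has annihilator matching the torsion of the paired coefficient, namely \(W/p^{n-m}\) against \(W/p^{n-m}\) and \(W/p^n\) against \(W/p^n\). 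This uses \(\Hom_{W_n}(W/p^{n-m},W_n)\cong p^mW_n\cong W/p^{n-m}\). The main obstacle is bookkeeping the Frobenius twists and signs in these products, and checking that \(\Hom_{W_n}(\RR_n,W_n)\) is a product and not a sum. The latter is handled by observing that finiteness conditions match: \(a_m,c_m\) range over all \(m\geqslant 0\), dual to the finitely many \(V^m\) modulo \(V^n\), and \(F^m\) for \(m\geqslant n\) vanishes in \(\RR_n\).

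Finally, for the compatibility with \(\rho^*\), I would use that \(\rho\) is characterized by \(\rho\circ\pi=p\). The composite \(\alpha_{n+1}(s)\circ\rho\) evaluated on \(\pi(\widetilde r)\) equals \(\psi(p\widetilde r s)/p\) after the identification of \(p^n\)-torsion in \(W_{n+1}\) with \(W_n\), and this is the \(c_0\)-coefficient of \(\widetilde r s\) reduced modulo \(p^n\). Hence \(\rho^*\alpha_{n+1}(s)=\alpha_n(\bar s)\), with \(\bar s\) the coefficientwise reduction. Surjectivity of \(\rho^*\) follows because reduction \(W/p^{n+1}\to W/p^n\) and \(W/p^{n+1-m}\to W/p^{n-m}\) are surjective. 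The coefficients \(b_n,e_n\) simply disappear, since \(W/p^{0}=0\).
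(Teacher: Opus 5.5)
Your plan is the right kind of argument, and it matches the paper, which only says ``a direct verification shows''. The plan is: explicit $W_n$-bases of $\RR_n$, a pairing matrix with matching torsion, the dual of a direct sum being a product, and coefficientwise reduction for $\rho^*$. Your well-definedness, $\RR$-linearity and $\rho^*$ steps are essentially fine.

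\textbf{The gap: your description of $\RR_n$ swaps $F$ and $V$.} The bijectivity step rests on this description, so it fails as written.
\begin{itemize}
\item Since $p$ is central, $p^{n-m}V^m=V^nF^{n-m}\in V^n\RR$. So it is $V^m$ that is killed by $p^{n-m}$, and $V^m$ vanishes for $m\geqslant n$.
\item The $F^j$-coefficients of elements of $V^n\RR^0$ all lie in $p^nW$, because $V^nF^{n+j}=p^nF^j$. Hence $p^{n-m}F^m\notin V^n\RR$, and no $F^m$ dies.
\end{itemize}
Using also $Vd=p\,dV$ and $dF=p\,Fd$, one gets
\[
\RR_n^0=\bigoplus_{0\leqslant m<n}W_{n-m}\,V^m\;\oplus\;\bigoplus_{m\geqslant 1}W_n\,F^m,\qquad
\RR_n^1=\bigoplus_{0\leqslant m<n}W_{n-m}\,dV^m\;\oplus\;\bigoplus_{m\geqslant 1}W_n\,F^md .
\]
With your description, each graded piece of $\RR_n$ would be a finitely generated $W_n$-module. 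Its $W_n$-dual could then never be ${}_n\overline{\RR}$, where $a_m$ and $c_m$ range over all $m\geqslant 0$. Your sentence saying these are dual to ``the finitely many $V^m$ modulo $V^n$'' is exactly where the argument breaks. The infinite products in ${}_n\overline{\RR}$ are in fact the duals of the infinite direct sums $\bigoplus_m W_nF^m$ and $\bigoplus_m W_nF^md$.

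\textbf{The proposed matching is also wrong.} For $m\geqslant 1$, $V^m\cdot dV^m=p^m\,dV^{2m}$ has zero $d$-coefficient. On monomials the pairing is actually diagonal:
\begin{itemize}
\item $F^m$ (with $F^0=1$) pairs with $c_m$, via $F^m\,dV^m=d$.
\item $F^md$ (with $F^0d=d$) pairs with $a_m$, via $F^md\cdot V^m=d$.
\item For $1\leqslant m\leqslant n-1$, $V^m$ pairs with $e_m$, via $V^m\,e_mF^md=p^m\sigma^{-m}(e_m)\,d$.
\item For $1\leqslant m\leqslant n-1$, $dV^m$ pairs with $b_m$, via $dV^m\,b_mF^m=p^m\sigma^{-m}(b_m)\,d$.
\end{itemize}
Every other product of basis monomials has zero $d$-coefficient; for instance $V^jF^md=p^jF^{m-j}d$ if $j\leqslant m$, and $F^jdV^m=F^{j-m}d$ if $j>m$.

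So the free summands $W_nF^m$ and $W_nF^md$ match the full coefficients $c_m,a_m\in W/p^n$. The $p^{n-m}$-torsion summands $W_{n-m}V^m$ and $W_{n-m}dV^m$ match $e_m,b_m\in W/p^{n-m}$. The factor $p^m$ realises $\Hom_{W_n}(W_{n-m},W_n)=p^mW_n\cong W/p^{n-m}$. The same products ($dV^m\cdot p^{n-m}F^m=p^nd$ and $V^m\cdot p^{n-m}F^md=p^nd$) show the pairing does not depend on the representatives of $b_m$ and $e_m$, a point you did not address. With the basis corrected in this way, your argument goes through.
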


\begin{proposition}[{{\cite[Proposition III.3.5]{Ek1}}}]
\label{explicate checkR}
The isomorphisms $\{\alpha_n\}$ in  {\Cref{explicate checkRn}} give rise to an identification\footnote{Following
Ekedahl, we write the coefficients from $W$ in this somewhat awkward way,
so that the bijection $\beta$ from \Cref{construction of evR} has a convenient form.}
\[
\alpha \colon \big(\prod_{m \geqslant 0} V^m \cdot W \big) \oplus 
\big(\prod_{m \geqslant 1} W \cdot F^m \big) \oplus
\big(\prod_{m \geqslant 0} dV^m \cdot W \big) \oplus \big(\prod_{m \geqslant 1} W \cdot F^m d \big)
\xrightarrow{\cong} \varwidecheck{\RR}.
\]Under the above identification, the first left $\RR$-action on $\varwidecheck{\RR}$
corresponds to left multiplication, while the second $\RR$-action on $\varwidecheck{\RR}$
corresponds to right multiplication.
This defines a left $\RR^{\mathrm{op}}$-action, which can then be viewed as a left $\RR$-action via the isomorphism
\(
\iota \colon \RR \xrightarrow{\cong} \RR^{\mathrm{op}}
\)
sending \(F,d,V\) and \(a\in W\) to \(V,d,F\) and \(a\in W\), respectively.
\end{proposition}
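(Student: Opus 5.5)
The plan is to pass to the limit over $n$ in \Cref{explicate checkRn}. Since $\varwidecheck{\RR}=\lim_n \varwidecheck{\RR}_n$ and the isomorphisms $\alpha_n\colon {}_n\overline{\RR}\to\varwidecheck{\RR}_n$ are compatible with the transition maps (under $\alpha$, $\rho^*$ becomes reduction of coefficients), we get $\varwidecheck{\RR}\cong \lim_n {}_n\overline{\RR}$. The transition maps are surjective, so there is no $\lim^1$ issue, although this is only needed at the level of sets and modules here. I would then compute $\lim_n {}_n\overline{\RR}$ coefficientwise. For each fixed $m$, the coefficients $a_m, c_m$ run through $\lim_n W/p^n = W$. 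For $b_m, e_m$, which live in $W/p^{n-m}$ once $n>m$, the limit is again $W$. Since ${}_n\overline{\RR}$ is a product over $m$ of these coefficient groups (with the $F$-parts truncated at $m\le n-1$), and limits commute with products, the limit is exactly the product displayed in the statement. This gives the bijection $\alpha$, and it is $W$-linear for the evident structure.

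Next comes the identification of the two actions. The first action on $\varwidecheck{\RR}_n=\Hom_{W_n}(\RR_n,W_n)$ is $(r\cdot f)(x)=f(xr)$, coming from the right module structure on $\RR_n$. Under the pairing $(x,s)\mapsto\psi(\widetilde{x}s)$, we have $f_s(xr)=\psi(\widetilde{x}rs)=f_{rs}(x)$, so this action is left multiplication on ${}_n\overline{\RR}$. Passing to the limit preserves this.

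For the second action, I would check on the generators $F$, $V$, $d$, and $a\in W$ that the formulas defining it correspond to right multiplication by $V$, $F$, $d$, and $a$ respectively, so that we obtain an $\RR^{\mathrm{op}}$-action transported through $\iota$. For $d$, we have $d(f_s)(x)=f_s(dx)=\psi(d\widetilde{x}s)$. I would then compare this with $\psi(\widetilde{x}sd)$ using the graded commutativity relation for $\psi$: one checks $\psi(dy)=\pm\psi(yd)$, possibly up to the sign conventions built into the grading twist. For $F$, the defining relation $p\cdot F(f)(r)=\sigma(f(Vr))$ should become $\psi(\widetilde{r}\,sV)$, using $\psi(Vy)$ versus $\psi(yV)$ together with the $\sigma$-twists. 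Concretely, $\psi(Vz)=\sigma^{-1}$ of the appropriate coefficient, times $p$ when one reindexes. The analogous statement holds for $V$ via $F$. Compatibility of $\iota$ with the relations ($FV=p$, $FdV=d$, and the semilinearity rules) is immediate, because the relations are symmetric under swapping $F$ and $V$ while reversing order.

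The main obstacle will be the bookkeeping of the $\sigma$-twists and powers of $p$ in the $F$ and $V$ actions. This is exactly why the coefficients are written on the left for $F^m$ and on the right for $V^m$. I would verify this on monomials $s=V^m a$, $bF^m$, $dV^m c$, $eF^md$ against test elements $x$ in the $W$-basis of $\RR_n$, using the explicit rule for $\psi$ (only the coefficient of $d$ survives). Bilinearity and continuity then extend the result to the full products.
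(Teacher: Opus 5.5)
Your proposal is correct and follows the same route as the paper. The paper gives no separate argument and only cites Ekedahl, but its phrasing (the $\alpha_n$ ``give rise to'' $\alpha$) is exactly your coefficientwise limit of \Cref{explicate checkRn}, followed by a check of the second action on generators. To remove your hedges, the identities you need are $\psi(dy)=\psi(yd)$ (no sign, since only the degree-$0$ part of $y$ contributes), $\sigma(\psi(Vy))=p\,\psi(yV)$ and $\sigma(\psi(yF))=p\,\psi(Fy)$; these give $d(f_s)=f_{sd}$, $F(f_s)=f_{sV}$ and $V(f_s)=f_{sF}$.
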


\begin{corollary}[{\cite[Corollary III.3.5.1]{Ek1}}]
\label{Relation between checkR and checkRn}
The projection \(\varwidecheck{\RR}\rightarrow \varwidecheck{\RR}_n\)
induces an isomorphism:
%, where tensor product is taken with the second left \(\RR\)-module structure, 
\[
\RR_n\otimes^{\rmL}_\RR j_*\varwidecheck{\RR} \xrightarrow{\cong} \varwidecheck{\RR}_n
\]
of graded left $\RR$-modules. Here the action on the left hand side is given by
$r \cdot (a \otimes b) \coloneqq a \otimes (i(r) \cdot b)$.
\end{corollary}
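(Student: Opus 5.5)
The plan is to compute the derived tensor product explicitly with the resolution of $\RR_n$ from \Cref{Rn is right perfect}, and then to use the explicit form of $\varwidecheck{\RR}$ from \Cref{explicate checkR}. Write $j_*\varwidecheck{\RR}$ for $\varwidecheck{\RR}$ with the second action. By \Cref{explicate checkR}, this action is right multiplication, transported along $\iota$. So $F$ acts as right multiplication by $V$, $V$ as right multiplication by $F$, and $d$ as right multiplication by $d$.

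Tensoring the resolution \eqref{Rn finite flat dim} with $j_*\varwidecheck{\RR}$ presents $\RR_n\otimes^{\rmL}_\RR j_*\varwidecheck{\RR}$ by the three-term complex
\[
\varwidecheck{\RR}(-1)\xrightarrow{u_n}\varwidecheck{\RR}(-1)\oplus\varwidecheck{\RR}\xrightarrow{v_n}\varwidecheck{\RR},
\]
where $u_n(x)=(F^nx,-F^ndx)$ and $v_n(x,y)=dV^nx+V^ny$, with the operators acting through the second structure. The statement then reduces to three claims:
\begin{enumerate}
\item $u_n$ is injective;
\item $\ker v_n=\operatorname{im}u_n$;
\item $\operatorname{coker}v_n\cong\varwidecheck{\RR}_n$, and the isomorphism is induced by the projection.
\end{enumerate}
The projection $\varwidecheck{\RR}\to\varwidecheck{\RR}_n$ kills $\operatorname{im}v_n$. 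Indeed, $\varwidecheck{\RR}_n=\Hom_{W_n}(\RR_n,W_n)$, and acting by $dV^n$ or $V^n$ through the second structure corresponds, under the pairing of \Cref{explicate checkRn}, to precomposing with right multiplication by $dV^n$ or $V^n$ on $\RR_n$. Such elements vanish in $\RR_n$, so we get a well-defined map $\operatorname{coker}v_n\to\varwidecheck{\RR}_n$.

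This map is compatible with the first action of $\RR$. The first action commutes with the second one, and the resolution consists of right-module maps. This gives the left $\RR$-linearity asserted in the statement.

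Exactness is then a coefficient computation in the explicit description of $\varwidecheck{\RR}$ as four products indexed by $V^m$, $F^m$, $dV^m$ and $F^md$. Under $\iota$, $V^n$ acts on $\varwidecheck{\RR}$ as right multiplication by $F^n$, and $dV^n$ as right multiplication by $F^nd$. I will compare the image of $v_n$ with the kernel of the surjection $\alpha_n^{-1}\circ\mathrm{pr}$ onto ${}_n\overline{\RR}$. By \Cref{explicate checkRn}, that kernel consists of the elements whose coefficients are divisible by the prescribed powers: $p^n$ for $a_m,c_m$, and $p^{n-m}$ for $b_m,e_m$. The identity $F^mV^m=p^m$ (up to $\sigma$-twists of coefficients) turns right multiplication by $F^n$ on a $V^m$-term into $p^{\min(m,n)}$ times a shifted term. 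This should show that $\operatorname{im}v_n$ is exactly that kernel, which gives claim 3.

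For claims 1 and 2, I will use the same explicit multiplication rules. Right multiplication by $V^n$ is injective on $\varwidecheck{\RR}$, since $V$-multiplication is injective in these products (the relevant coefficient rings are torsion-free $W$). The kernel of $v_n$ is then matched with $\operatorname{im}u_n$ using $FdV=d$.

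The main obstacle is bookkeeping in step 3. I expect the subtle spots to be the surjectivity onto the $F^m$- and $F^md$-coefficients with their truncation levels $p^{n-m}$, and the Frobenius twists $\sigma^n$ that appear when coefficients pass through $F^n$ and $V^n$, as in the second diagram of \Cref{Rn is right perfect}.

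A cleaner route, if the computation gets messy, is a limit argument. Since $\RR_n$ is perfect as a right module, $\RR_n\otimes^{\rmL}_\RR-$ commutes with the limit $\varwidecheck{\RR}=\lim_m\varwidecheck{\RR}_m$. It would then suffice to show that the pro-system $\{\RR_n\otimes^{\rmL}_\RR\varwidecheck{\RR}_m\}_m$ is pro-isomorphic to the constant system $\varwidecheck{\RR}_n$. One would do this with Hom–tensor duality $\RR_n\otimes_\RR\Hom_{W_m}(\RR_m,W_m)\cong\Hom(\RR_m\otimes\ldots)$, together with the pro-vanishing of the higher terms as in the proof of \Cref{completion is a localization}. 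I would try the direct coefficient computation first.
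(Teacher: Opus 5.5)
Your proposal is correct, and it supplies an argument the paper does not give: the paper states this corollary without proof and imports it from Ekedahl, as a consequence of the explicit description in \Cref{explicate checkR}. Your plan is the natural way to derive it from that description: tensor the free resolution of \Cref{Rn is right perfect} with $j_*\varwidecheck{\RR}$, rewrite the $j$-action as right multiplication through $\iota$, and compare coefficients. The bookkeeping does close up.

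Besides $F^mV^m=p^m$, you also need $dF=pFd$ and $Vd=p\,dV$, both of which follow from $FdV=d$. With these, $\varwidecheck{\RR}F^n+\varwidecheck{\RR}F^nd$ is exactly the set of elements satisfying:
\begin{itemize}
\item $a_m,c_m\in p^nW$ for all $m$;
\item $b_m,e_m\in p^{n-m}W$ for $1\le m<n$;
\item $b_m,e_m$ arbitrary for $m\geq n$.
\end{itemize}
This is precisely the kernel of $\varwidecheck{\RR}\to{}_n\overline{\RR}$, which gives your claim 3.

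Your claim 2 is left vague, so here is the mechanism.
\begin{itemize}
\item The non-$d$ part of $xF^nd+yF^n$ is $y_0F^n$. Right multiplication by $F^n$ is injective there, so $y$ lies in the $d$-part.
\item The $d$-part of $x$ is unconstrained by $v_n(x,y)=0$, since $d^2=0$. It is nonetheless absorbed into $\operatorname{im}u_n$, because right multiplication by $V$ is \emph{bijective} on the $d$-part: $dV^mc\cdot V=dV^{m+1}\sigma(c)$ and $e_mF^md\cdot V=e_mF^{m-1}d$.
\item On the non-$d$ part, the coefficient equations force $a_m\in p^{n-m}W$ for $m<n$ and $b_m\in p^nW$. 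This is exactly the non-$d$ part of $\varwidecheck{\RR}V^n$, and $y$ is then forced to equal $-z\,dV^n$.
\end{itemize}

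One justification in your proposal is wrong, although its conclusion is true. You say that acting by $V^n$ or $dV^n$ through the second structure amounts to precomposing with right multiplication on $\RR_n$, and that such elements vanish in $\RR_n$. Both parts fail.
\begin{itemize}
\item Under the pairing $(r,s)\mapsto\psi(\widetilde{r}s)$, precomposition with right multiplication on $\RR_n$ corresponds to left multiplication on $s$, that is, to the \emph{first} action.
\item Right multiplication by $V^n$ on $\RR_n$ is not zero. For example, $F^nd\cdot V^n=d\neq 0$ in $\RR_n$.
\end{itemize}
What actually holds is this. By the paper's definition, the second action of $V$ is, up to Frobenius twist, $p$ times precomposition with \emph{left} multiplication by $F$, raising the level by one. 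Hence the level-$n$ component of $V^n\cdot_j g$ takes values in $p^nW_n=0$, and the same holds for $dV^n\cdot_j g$. In any case, your claim 3 computation already shows that $\operatorname{im}v_n$ equals the kernel of the projection, so you can simply delete that sentence.
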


Now we can deduce the first property of Ekedahl's dualizing functor:
\begin{proposition}
\label{dualizing commutes with tensor Rn}
There is a natural equivalence of contravariant functors 
\[
\RR_n \otimes^{\rmL}_\RR D_\RR(-) \cong D_{W_n}(\RR_n \otimes^{\rmL}_\RR -) \colon \DGr(\RR)^{\mathrm{op}} \to \DGr(W_n[d]/d^2),
\]
where $D_{W_n} \coloneqq \mathrm{RHom}_{W_n}(-, W_n)$ is the $W_n$-linear dualizing functor.
\end{proposition}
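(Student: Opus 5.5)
The plan is a chain of formal adjunction manipulations. The only genuinely non-formal input is \Cref{Relation between checkR and checkRn}, together with the perfectness of $\RR_n$ as a right $\RR$-module from \Cref{Rn is right perfect}.

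First, since $\RR_n$ admits the finite resolution \eqref{Rn finite flat dim} by finite free right $\RR$-modules, it is a perfect right $\RR$-module. Hence, for any $N$ carrying a left $\RR\otimes\RR$-action, the functor $\RR_n\otimes^{\rmL}_\RR-$ commutes with $\RHom_\RR(M,-)$ in the second variable. Concretely, I would write down the natural map
\[
\RR_n\otimes^{\rmL}_\RR \RHom_\RR\big(M, i_*\varwidecheck{\RR}\big)\longrightarrow \RHom_\RR\big(M, \RR_n\otimes^{\rmL}_\RR j_*\varwidecheck{\RR}\big).
\]
Here the $\RR_n$ is tensored against the second ($j$) action, and $\RHom$ is taken against the first ($i$) action. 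These two actions commute, so both sides make sense as graded left $W_n[d]/d^2$-objects. The map is an equivalence: both sides are exact in the $\RR_n$-variable, and for $\RR_n$ replaced by shifts of $\RR$ it is the identity. Since $\RR_n$ lies in the thick subcategory generated by these, the general case follows.

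Second, I would apply \Cref{Relation between checkR and checkRn} to replace $\RR_n\otimes^{\rmL}_\RR j_*\varwidecheck{\RR}$ by $\varwidecheck{\RR}_n=\Hom_{W_n}(\RR_n,W_n)$. One has to check that the left $\RR$-action used for the $\RHom$ (namely via $i$) matches the module structure on $\varwidecheck{\RR}_n$ coming from right multiplication on $\RR_n$; this is exactly the convention $r\cdot(a\otimes b)=a\otimes(i(r)b)$ stated there. The right-hand side then becomes $\RHom_\RR(M,\Hom_{W_n}(\RR_n,W_n))$.

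Third, I would use the derived tensor–Hom adjunction for the $(W_n[d]/d^2,\RR)$-bimodule $\RR_n$:
\[
\RHom_\RR\big(M,\RHom_{W_n}(\RR_n,W_n)\big)\simeq \RHom_{W_n}\big(\RR_n\otimes^{\rmL}_\RR M, W_n\big)=D_{W_n}(\RR_n\otimes^{\rmL}_\RR M).
\]
For this I need $\Hom_{W_n}(\RR_n,W_n)\simeq\RHom_{W_n}(\RR_n,W_n)$. This holds because each graded piece of $\RR_n$ is a direct sum of copies of $W_n$ and of the quotients $W/p^{n-m}$ appearing in \Cref{explicate checkRn}. Note that $W_n$ is self-injective, so $\Ext^{>0}_{W_n}(-,W_n)$ vanishes on all $W_n$-modules; each graded piece is a possibly infinite direct sum, and $\Hom$ turns it into a product, which is harmless here.

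The main obstacle is the bookkeeping of the $d$-action and the grading signs. The $d$ on $D_{W_n}(\ldots)$ is induced by the left $d$ on $\RR_n$, whereas on the left-hand side it comes through the left $W_n[d]/d^2$-structure of $\RR_n$. I would check that these agree under the adjunction via the formula $d(f)(r)=f(d\cdot r)$ from the definition of $\varwidecheck{\RR}_n$, possibly up to a Koszul sign that can be absorbed. Naturality in $M$ is automatic, since every step is a natural transformation of functors.
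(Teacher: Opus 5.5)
Your proposal is correct and follows the paper's proof step for step. You commute $\RR_n\otimes^{\rmL}_\RR-$ past $\RHom_\RR(M,-)$ using perfectness of $\RR_n$, identify the resulting coefficient object with $\Hom_{W_n}(\RR_n,W_n)$ via \Cref{Relation between checkR and checkRn}, and finish with tensor--Hom adjunction. Your extra remark that $\Hom_{W_n}(\RR_n,W_n)\simeq\RHom_{W_n}(\RR_n,W_n)$, because $W_n$ is self-injective, fills in a point the paper uses without comment.
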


\begin{proof}
Let us compute:
\[
\RR_n \otimes^\rmL_\RR D_\RR(-) \cong \mathrm{RHom}_\RR(-, \RR_n\otimes^{\rmL}_\RR j_*\varwidecheck{\RR})
\cong \mathrm{RHom}_\RR(-, \mathrm{RHom}_{W_n}(\RR_n, W_n))
\cong \mathrm{RHom}_{W_n}(\RR_n \otimes^\rmL_\RR -, W_n).
\]
Here the first identification follows from the fact that $\RR_n$ is a perfect right $\RR$-complex,
the second identification uses \Cref{Relation between checkR and checkRn},
and the last identification is the tensor–Hom adjunction.
\end{proof}

We also know that Ekedahl's dualizing functor automatically takes values in 
$\widehat{\DGr(\RR)}$ (see \Cref{completion on DGr(R)}).

\begin{proposition}
\label{dualizing is complete}
The functor $D_\RR$ defines a contravariant functor from $\DGr(\RR)$ to
$\widehat{\DGr(\RR)}$.
\end{proposition}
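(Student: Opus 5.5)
We have to show that for every $M \in \DGr(\RR)$ the natural map $\eta\colon D_\RR(M) \to \widehat{D_\RR(M)}$ is an equivalence. The idea is to first reduce to the single object $i_*(\varwidecheck{\RR})$, and then handle that object using its explicit description.

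The reduction goes as follows. By \Cref{completeness closed under taking limit}, the subcategory $\widehat{\DGr(\RR)}$ is closed under all small limits. The functor $D_\RR(-)=\RHom_\RR(-,i_*\varwidecheck{\RR})$ turns colimits in $M$ into limits, and $\DGr(\RR)$ is generated under colimits by the free objects $\RR(m)[n]$. So every $D_\RR(M)$ is a limit of shifts $D_\RR(\RR(m)[n])$, each of which is a shift of $i_*(\varwidecheck{\RR})$, now regarded as a left $\RR$-module through the second action. Completeness is also stable under the shifts $(m)[n]$, because $\RR_n\otimes^{\rmL}_\RR-$ and $\lim$ commute with them. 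It therefore suffices to prove that $\varwidecheck{\RR}$, with its second left $\RR$-action (equivalently $j_*\varwidecheck{\RR}$), is complete.

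For that, I would write $\varwidecheck{\RR}=\lim_n \varwidecheck{\RR}_n$. By \Cref{explicate checkRn} the transition maps $\rho^*$ are surjective, so this limit is also the derived limit. Again by \Cref{completeness closed under taking limit}, it then suffices to show that each $\varwidecheck{\RR}_n$ is complete. My plan is to exhibit $\varwidecheck{\RR}_n$ as the completion of something. By \Cref{Relation between checkR and checkRn} we have $\varwidecheck{\RR}_n\simeq \RR_n\otimes^{\rmL}_\RR j_*\varwidecheck{\RR}$. Combining this with \Cref{tensor Rn and completion} and \Cref{completion is a localization}, the question becomes whether objects of the form $\RR_n\otimes^{\rmL}_\RR N$ are complete. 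This should hold because, as pro-objects, $\{\RR_m\}\otimes^{\rmL}_\RR \RR_n\simeq\RR_n$. That in turn follows from the computation in the proof of \Cref{completion is a localization}: use the resolution \eqref{pro-system resolution of Rn}, and note that the terms indexed by $p$ and by $V$ become pro-zero once they are tensored against a module killed by $p^n$ and by $V^n,dV^n$ on the appropriate side. I would carry out this step by tensoring the finite resolution \eqref{Rn finite flat dim} of $\RR_n$ (as a right module) with $\{\RR_m\}$ and checking directly that the completion map is an equivalence for the finitely many free terms, up to the pro-zero corrections. A cleaner alternative, if it is available, is to check directly from the explicit form of $_n\overline{\RR}$ in \Cref{explicate checkRn} that $V^N$ and $dV^N$ act nilpotently on $\varwidecheck{\RR}_n$ for $N\ge n$ in the relevant sense, so that $\RR_m\otimes^{\rmL}_\RR\varwidecheck{\RR}_n\to\varwidecheck{\RR}_n$ is pro-constant.

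The main obstacle is this last step: proving that a bounded $p^n$-torsion, "$V^n$-torsion" module such as $\varwidecheck{\RR}_n$ is complete in the $\infty$-categorical sense. The difficulty is that $\varwidecheck{\RR}_n$ is not finitely generated, because it contains infinitely many $V^m$-terms. So one has to check pro-vanishing of the $\Tor$ terms in the resolution by hand, keeping careful track of the left versus right action conventions via $\iota$ from \Cref{explicate checkR}.
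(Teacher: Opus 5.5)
Your first reduction is valid: $D_\RR$ turns colimits into limits, $\DGr(\RR)$ is generated under colimits by the $\RR(m)[n]$, and complete objects are closed under limits and shifts, so it suffices to show that $j_*\varwidecheck{\RR}=D_\RR(\RR)$ is complete. The paper reaches the same point more directly, by moving $\mathrm{Rlim}_n$ and $\RR_n\otimes^{\rmL}_\RR-$ inside $\RHom_\RR(-,\cdot)$ using that $\RR_n$ is perfect.

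\textbf{The gap is in your second step.} The second $\RR$-action exists only on the pro-system $\{\varwidecheck{\RR}_n\}$. There $F$ maps level $n+1$ to level $n$ and $V$ maps level $n$ to level $n+1$, just as $\{\RR_n\}$ is a left $\RR$-module only as a pro-object, while each $\RR_n$ is merely a $(W_n[d]/d^2,\RR)$-bimodule. Three consequences follow.
\begin{enumerate}
\item An individual $\varwidecheck{\RR}_n$ is not an object of $\DGr(\RR)$ for the action you need, so ``each $\varwidecheck{\RR}_n$ is complete'' has no meaning there.
\item The levelwise limit $\lim_n\varwidecheck{\RR}_n$ in $\DGr(\RR)$ makes sense only for the first action, and it produces $i_*\varwidecheck{\RR}$, not $j_*\varwidecheck{\RR}$.
\item For the same reason, $\RR_n\otimes^{\rmL}_\RR N$ lies in $\DGr(W_n[d]/d^2)$ and $\{\RR_m\}\otimes^{\rmL}_\RR\RR_n$ is not defined, so the pro-argument you sketch cannot be run.
\end{enumerate}
You also leave this final verification undone, so the proof is not complete.

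\textbf{The missing idea.} Use \Cref{Relation between checkR and checkRn} to compute the completion itself, rather than to rewrite $\varwidecheck{\RR}_n$. By definition,
\[
\widehat{j_*\varwidecheck{\RR}}=\mathrm{Rlim}_n\,\RR_n\otimes^{\rmL}_\RR j_*\varwidecheck{\RR}\cong\mathrm{Rlim}_n\,(\varwidecheck{\RR}_n,\rho^*).
\]
These isomorphisms are compatible in $n$ because they are induced by the projections $\varwidecheck{\RR}\to\varwidecheck{\RR}_n$. The maps $\rho^*$ are surjective by \Cref{explicate checkRn}, so the derived limit equals the ordinary limit $\varwidecheck{\RR}$. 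The composite $\varwidecheck{\RR}\to\widehat{j_*\varwidecheck{\RR}}\cong\varwidecheck{\RR}$ is then the identity. Since the forgetful functor to graded $W$-complexes is conservative, $\eta$ is an equivalence.

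This is exactly the paper's argument, and it needs no separate study of the individual $\varwidecheck{\RR}_n$. Your route through the modules $i_*\varwidecheck{\RR}_n$ would in any case need two further inputs: the bijection $\beta$ swapping the two actions, to make it relevant to $j_*\varwidecheck{\RR}$, and a genuine completeness computation for non-finitely-generated $p^n$-torsion modules.
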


\begin{proof}
By \Cref{completion is a localization}, it suffices to show that the natural
map $D_\RR(M) \to \widehat{D_\RR(M)}$ is an equivalence.
Since $\RR_n$ is a perfect right $\RR$-complex, we have
\[
R\lim_n \big(\RR_n \otimes^\rmL_\RR D_\RR(-) \big) \cong 
R\lim_n \mathrm{RHom}_\RR(-, \RR_n\otimes^{\rmL}_\RR j_*\varwidecheck{\RR})
\cong \mathrm{RHom}_\RR\!\left(-, R\lim_n \RR_n\otimes^{\rmL}_\RR j_*\varwidecheck{\RR}\right).
\]
The natural arrow is induced by the natural map
$i_*(\varwidecheck{\RR}) \to R\lim_n \RR_n\otimes^{\rmL}_\RR j_*\varwidecheck{\RR}$, which is an isomorphism
of graded left $\RR$-modules
due to \Cref{Relation between checkR and checkRn}, the surjectivity of $\rho^*$
(\Cref{explicate checkRn}), and the definition of $\varwidecheck{\RR}$.
\end{proof}

A common feature of all known dualizing functors is the existence of a natural transformation from
the identity functor to the double dualizing functor.
Our task is to exhibit such a natural transformation in the context of Ekedahl's
dualizing functor, following \cite[Section IV.1]{Ek1}.
To that end, we begin with a brief digression.

\begin{notation}
Let $\mathcal{C}$ be a stable $\infty$-category equipped with a $t$-structure.
We denote
\(
\mathcal{C}^{-} \coloneqq \bigcup_{n \in \mathbb{Z}} \mathcal{C}^{\leqslant n},
\)
and call it the full subcategory of right-bounded objects.
\end{notation}

\begin{notation}
Let $\mathcal{C}$ and $\mathcal{D}$ be stable $\infty$-categories equipped with $t$-structures.
We say that a functor $F \colon \mathcal{C} \to \mathcal{D}$ is \emph{left $t$-bounded}
if there exists an integer $N$ such that
\(
F(\mathcal{C}^{\geq 0}) \subset \mathcal{D}^{\geq N}.
\)
The notion of a \emph{right $t$-bounded} functor is defined analogously.
A functor is called \emph{$t$-bounded} if it is both left and right $t$-bounded. 
We denote $\mathrm{Fun_{ex}^{lb}}(\mathcal{C}, \mathcal{D})$ the full sub-category
of $\mathrm{Fun}(\mathcal{C}, \mathcal{D})$ spanned by left $t$-bounded exact functors.
\end{notation}

\begin{example}
\label{double dual is t-bounded}
Let $\mathcal{C} = \mathcal{D} = \DGr(\RR)$, equipped with the standard $t$-structure,
which is both left and right complete. The identity functor is clearly $t$-exact,
hence $t$-bounded. We claim that the double dual functor $D_\RR \circ D_\RR$ is also $t$-bounded:
Using \Cref{dualizing commutes with tensor Rn} and \Cref{dualizing is complete},
we obtain
\[
D_\RR(D_\RR(M)) =
\mathrm{Rlim}_n\, D_{W_n}\!\big(D_{W_n}(\RR_n \otimes^{\rmL}_\RR M)\big).
\]
Thus the claim follows from the fact that the following functors:
$(\RR_n \otimes^\rmL_\RR -)$, double dual over $W_n$, and $\mathrm{Rlim}_{n \in \mathbb{N}}$ are 
all $t$-bounded.
\end{example}

\begin{proposition}
\label{restriction to right bounded}
Let $\mathcal{C}$ and $\mathcal{D}$ be stable $\infty$-categories equipped with
$t$-structures. Assume that $\mathcal{D}$ is right complete.
Then restriction along $\mathcal{C}^{-} \subset \mathcal{C}$
induces an equivalence: 
$\mathrm{Fun_{ex}^{lb}}(\mathcal{C}, \mathcal{D}) \xrightarrow{\cong}
\mathrm{Fun_{ex}^{lb}}(\mathcal{C}^{-}, \mathcal{D})$.
\end{proposition}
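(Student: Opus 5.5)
We construct an explicit inverse to the restriction functor, namely right Kan extension along $\mathcal{C}^{-}\subset\mathcal{C}$. The guiding idea is that every object of $\mathcal{C}$ is, in a suitable sense, the limit of its truncations $\tau^{\leqslant n}X$, and left $t$-boundedness makes this limit behave well in $\mathcal{D}$. Given a left $t$-bounded exact functor $G\colon \mathcal{C}^{-}\to\mathcal{D}$ with $G(\mathcal{C}^{-}\cap\mathcal{C}^{\geqslant 0})\subset\mathcal{D}^{\geqslant N}$, we define
\[
\widetilde{G}(X)\coloneqq \lim_{n}G(\tau^{\leqslant n}X),
\]
where the limit is taken over the tower of truncation maps $\tau^{\leqslant n+1}X\to\tau^{\leqslant n}X$. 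Since $\mathcal{D}$ is not assumed to have all limits, we first show that this tower is eventually constant on each truncation $\tau^{\leqslant m}$ of the target, and so defines a well-behaved object.

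\textbf{Key stabilization estimate.} The cofiber of $\tau^{\leqslant n}X\to\tau^{\leqslant n+1}X$ is $\HH^{n+1}(X)[-(n+1)]$, which lies in $\mathcal{C}^{\geqslant n+1}$. Hence $G$ sends it into $\mathcal{D}^{\geqslant N+n+1}$. Consequently, for fixed $m$, the maps $\tau^{\leqslant m}_{\mathcal{D}}G(\tau^{\leqslant n+1}X)\to\tau^{\leqslant m}_{\mathcal{D}}G(\tau^{\leqslant n}X)$ are equivalences once $n\geqslant m-N$. We therefore define $\widetilde{G}(X)$ as the object whose truncations are these stabilized values. Concretely, it is the limit of the compatible Postnikov tower $\{\tau^{\leqslant m}G(\tau^{\leqslant n(m)}X)\}_m$, which exists and is well behaved because $\mathcal{D}$ is right complete, so that $\mathcal{D}\simeq\lim_m \mathcal{D}^{\leqslant m}$.

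This is the step I expect to be the main obstacle: making the construction functorial at the $\infty$-categorical level. The cleanest route I would try first is to write $\mathcal{C}\to\lim_m \mathcal{C}^{\leqslant m}$ and define $\widetilde{G}$ levelwise as the composite $\mathcal{C}\xrightarrow{\tau^{\leqslant m-N}}\mathcal{C}^{-}\xrightarrow{G}\mathcal{D}\xrightarrow{\tau^{\leqslant m}}\mathcal{D}^{\leqslant m}$. The stabilization estimate shows that these functors are compatible with the transition functors $\tau^{\leqslant m}$ of the target tower. Right completeness of $\mathcal{D}$ then assembles them into a single functor $\mathcal{C}\to\mathcal{D}$.

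Next come the three verifications. First, $\widetilde{G}$ is exact: each levelwise functor is exact up to a shift of bounded error, since $\tau^{\leqslant k}$ is not exact but fibers of truncations of fiber sequences are controlled in degrees $\geqslant k$. Exactness can then be checked on each $\tau^{\leqslant m}$ with the index chosen large enough. Second, $\widetilde{G}$ is left $t$-bounded with the same constant $N$, because $\tau^{\leqslant n}$ preserves $\mathcal{C}^{\geqslant 0}$. Third, $\widetilde{G}|_{\mathcal{C}^{-}}\simeq G$: if $X\in\mathcal{C}^{\leqslant a}$, then the tower $\tau^{\leqslant n}X$ is constant for $n\geqslant a$.

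Finally, to show that the restriction functor is fully faithful with $\widetilde{(-)}$ as inverse, we prove that for any left $t$-bounded exact $H\colon\mathcal{C}\to\mathcal{D}$ the natural map $H(X)\to\lim_m\tau^{\leqslant m}H(\tau^{\leqslant n(m)}X)$ is an equivalence. Right completeness reduces this to checking equivalence on each $\tau^{\leqslant m}$. There the fiber $H(\tau^{>n}X)$ lies in $\mathcal{D}^{\geqslant N+n+1}$ by left $t$-boundedness, so its $\tau^{\leqslant m}$ vanishes for $n$ large. Naturality of this identification gives $H\simeq\widetilde{H|_{\mathcal{C}^{-}}}$. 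This also shows that natural transformations between such functors are determined by their restrictions, which completes the equivalence.
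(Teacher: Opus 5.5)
Your ingredients are the right ones: the stabilization estimate from left $t$-boundedness, right completeness of $\mathcal{D}$, and recovering any left $t$-bounded exact $H$ from its restriction. But the construction is framed in the wrong direction, and as literally stated it does not make sense.

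In the cohomological convention used here, $\tau^{\leqslant n}$ is right adjoint to the inclusion $\mathcal{C}^{\leqslant n}\subset\mathcal{C}$. The canonical maps are therefore $\tau^{\leqslant n}X\to\tau^{\leqslant n+1}X\to X$; you use this direction yourself when computing the cofiber $\HH^{n+1}(X)[-(n+1)]$. So there is no tower $\tau^{\leqslant n+1}X\to\tau^{\leqslant n}X$. Hence there is no $\lim_n G(\tau^{\leqslant n}X)$, and no maps $\tau^{\leqslant m}G(\tau^{\leqslant n+1}X)\to\tau^{\leqslant m}G(\tau^{\leqslant n}X)$ as you write them. Likewise, a right Kan extension along $\mathcal{C}^{-}\subset\mathcal{C}$ would be a limit over $\mathcal{C}^{-}_{X/}$. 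That category has no evident initial sequence, because $\mathcal{C}^{\leqslant n}\subset\mathcal{C}$ has no left adjoint in general.

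The fact the paper uses is that $\{\tau^{\leqslant n}X\}_n$ is cofinal in $\mathcal{C}^{-}_{/X}$. So the correct inverse is the \emph{left} Kan extension $X\mapsto\mathrm{colim}_n\,G(\tau^{\leqslant n}X)$. Its existence is exactly your estimate (successive cofibers land in $\mathcal{D}^{\geqslant N+n+1}$) combined with right completeness of $\mathcal{D}$. For the same reason, the comparison map in your last paragraph should go $\widetilde{H|_{\mathcal{C}^{-}}}(X)\to H(X)$, induced by the counits $\tau^{\leqslant n}X\to X$, with cofiber $H(\tau^{>n}X)\in\mathcal{D}^{\geqslant N+n+1}$. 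That is the paper's essential-surjectivity step. Also, $\mathcal{C}$ is not assumed right complete, so $X$ itself need not be recovered from its truncations. Only their images under $G$ matter, and your final argument correctly uses only that.

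Once the arrows are reversed, the concrete construction in your third paragraph is sound and agrees with the paper's. The functors $\tau^{\leqslant m}\circ G\circ\tau^{\leqslant m-N}\colon\mathcal{C}\to\mathcal{D}^{\leqslant m}$, with transition equivalences induced by $\tau^{\leqslant m-N}\to\tau^{\leqslant m+1-N}$, give a functor into $\lim_m\mathcal{D}^{\leqslant m}\simeq\mathcal{D}$. Its value at $X$ is precisely $\mathrm{colim}_n\,G(\tau^{\leqslant n}X)$. With ``right/limit'' replaced by ``left/colimit'', your argument becomes the paper's.

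Two smaller points remain.
\begin{itemize}
\item Your levelwise recipe depends on the bound $N$ of $G$. To get a functor on all of $\mathrm{Fun}^{\mathrm{lb}}_{\mathrm{ex}}(\mathcal{C}^{-},\mathcal{D})$, either treat each $N$ separately and pass to the union, or use the $N$-free colimit description.
\item Saying that natural transformations ``are determined by their restrictions'' is weaker than the required equivalence on mapping spaces. The paper gets this formally from the Kan-extension formalism (HTT 4.3.2.12 and 4.3.2.15): $\mathrm{Lan}$ is left adjoint to restriction with invertible unit. The counit $\mathrm{Lan}(H|_{\mathcal{C}^{-}})\to H$ is invertible by the estimate above.
\end{itemize}
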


\begin{proof}
First we claim that every left $t$-bounded exact functor
$F \colon \mathcal{C}^{-} \to \mathcal{D}$
admits a left Kan extension.
According to \cite[Lemma 4.3.2.13]{HTT}, it suffices to check that
for any object $C \in \mathcal{C}$ the colimit
\(
\mathrm{colim}_{X \in \mathcal{C}^{-}_{/C}} F(X)
\)
exists in $\mathcal{D}$.
Since the sequence of truncations $\{ \tau^{\leqslant n} C \}_{n \in \mathbb{Z}}$
is cofinal in $\mathcal{C}^{-}_{/C}$, it suffices to show that the colimit
\(
\mathrm{colim}_{n \in \mathbb{Z}} F(\tau^{\leqslant n}C)
\)
exists in $\mathcal{D}$.
This follows from the assumption that $F$ is left $t$-bounded and exact,
together with the right completeness of $\mathcal{D}$.

One checks directly that the left Kan extension
\(
\mathrm{Lan}(F)(C) \coloneqq
\mathrm{colim}_{n \in \mathbb{Z}} F(\tau^{\leqslant n}C)
\)
is again left $t$-bounded and exact.
Therefore, by \cite[Proposition 4.3.2.15]{HTT}, we obtain a functor \[\mathrm{Fun_{ex}^{lb}}(\mathcal{C}^{-}, \mathcal{D}) \xrightarrow{\mathrm{Lan}(-)}
\mathrm{Fun_{ex}^{lb}}(\mathcal{C}, \mathcal{D})\] 
sending $F$ to its left Kan extension.

Similar in the proof of \cite[Proposition 4.3.2.17]{HTT}, using
\cite[Lemma 4.3.2.12]{HTT}, one sees that $\mathrm{Lan}(-)$ is left adjoint
to the restriction functor.
Moreover, since the restriction of left Kan extension is identity,
the functor $\mathrm{Lan}(-)$ is fully faithful.

Finally, it suffices to show that $\mathrm{Lan}(-)$ is essentially surjective.
This amounts to proving that for every left $t$-bounded exact functor
$G \colon \mathcal{C} \to \mathcal{D}$, the natural map
\(
\mathrm{colim}_{n \in \mathbb{Z}} G(\tau^{\leqslant n}C)
\longrightarrow G(C)
\)
is an equivalence.
This again follows from the assumption that $G$ is left $t$-bounded and exact
and that $\mathcal{D}$ is right complete.
\end{proof}

\begin{proposition}
\label{restriction to proj mods}
Let $\mathcal{D}$ be a stable $\infty$-category equipped with a $t$-structure
that is both left and right complete.
Let $K$ be a directed graph, and let $\{F_k\}$ be a collection of left $t$-bounded,
uniformly right $t$-bounded, exact functors
from $\DGr(\RR)$ to $\mathcal{D}$ indexed by the vertices of $K$. 
For any full subcategory $\mathcal{C} \subset \DGr(\RR)$,
we denote by $\mathrm{Fun}_{F}(K \times \mathcal{C}, \mathcal{D})$ the fiber of
$\{F_k|_{\mathcal{C}}\}$ under the restriction to vertices map. 
Let $\mathcal{P}roj\mathcal{G}r(\RR) \subset \DGr(\RR)$ denote the full subcategory of 
projective graded left $\RR$-modules.
Then the restriction map induces an equivalence
\[
\mathrm{Fun}_{F}(K \times \DGr(\RR), \mathcal{D})
\xrightarrow{\;\cong\;}
\mathrm{Fun}_{F}(K \times \mathcal{P}roj\mathcal{G}r(\RR), \mathcal{D}).
\]
\end{proposition}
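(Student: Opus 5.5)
The plan is to factor the restriction through the right-bounded part and then pass from right-bounded complexes to projective modules. First reduce from $\DGr(\RR)$ to $\DGr^-(\RR)$ using \Cref{restriction to right bounded}, then from $\DGr^-(\RR)$ to $\mathcal{P}roj\mathcal{G}r(\RR)$ by a resolution argument.

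\textbf{Step 1: from $\DGr(\RR)$ to $\DGr^-(\RR)$.} A diagram $K\times\mathcal{C}\to\mathcal{D}$ is the same as a functor $\mathcal{C}\to\mathrm{Fun}(K,\mathcal{D})$. Equip $\mathrm{Fun}(K,\mathcal{D})$ with the pointwise $t$-structure; it is again left and right complete. Because the $F_k$ are left $t$-bounded, the relevant functors are left $t$-bounded exact functors into this target. Apply \Cref{restriction to right bounded} with target $\mathrm{Fun}(K,\mathcal{D})$. Restriction to vertices commutes with left Kan extension, since the colimit formula $\mathrm{colim}_n F(\tau^{\leqslant n}C)$ is computed pointwise. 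Passing to fibers over $\{F_k\}$ therefore gives
\[
\mathrm{Fun}_F(K\times\DGr(\RR),\mathcal{D})\simeq\mathrm{Fun}_F(K\times\DGr^-(\RR),\mathcal{D}).
\]
One point needs checking: every vertex-wise exact, left $t$-bounded diagram is exact as a functor to $\mathrm{Fun}(K,\mathcal{D})$. This holds because exactness and $t$-boundedness in the pointwise structure are both tested pointwise.

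\textbf{Step 2: from $\DGr^-(\RR)$ to $\mathcal{P}roj\mathcal{G}r(\RR)$.} The aim is to realize $\DGr^-(\RR)$ as the nonabelian-derived / connective-type extension generated by projectives. Concretely, $\DGr^{\leqslant 0}(\RR)$ is equivalent to $\mathcal{P}_\Sigma$-type objects built from graded projectives: every bounded-above complex is quasi-isomorphic to a bounded-above complex of projectives. Equivalently, $\DGr^-(\RR)$ is the union of shifts of $\DGr^{\leqslant 0}(\RR)$, and $\DGr^{\leqslant 0}(\RR)$ is obtained from the additive category $\mathcal{P}roj\mathcal{G}r(\RR)$ by geometric realizations of simplicial objects (Dold--Kan).

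By the universal property of the animation / nonabelian derived category (HTT 5.5.8.15, HA 1.3.3), additive functors $\mathcal{P}roj\mathcal{G}r(\RR)\to\mathcal{D}$ extend uniquely to functors on $\DGr^{\leqslant 0}(\RR)$ preserving geometric realizations (sifted colimits), provided the colimits exist in $\mathcal{D}$. The existence of geometric realizations of the relevant simplicial objects in $\mathcal{D}$ is where uniform right $t$-boundedness and completeness enter. A simplicial object of uniformly bounded-above objects has realization equal to the colimit of its skeleta. That colimit exists by right completeness, since each skeleton's contribution lives in increasingly negative degrees. Exact functors automatically preserve these realizations, because on $\DGr^{\leqslant 0}$ a realization of a simplicial complex of projectives is a colimit of finite skeleta (finite colimits). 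The convergence of the skeletal tower in $\mathcal{D}$ is uniform thanks to the uniform right $t$-bound on the $F_k$, and this is what lets exact functors be recovered.

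Finally, pass from $\DGr^{\leqslant 0}$ to $\DGr^-$ by shifts and stability. Compatibility with the $K$-indexing again works pointwise, by the same trick as in Step 1 with target $\mathrm{Fun}(K,\mathcal{D})$. Taking fibers over $\{F_k\}$ then yields the claimed equivalence.

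\textbf{Main obstacle.} The delicate point is Step 2. There one must show that, within the fiber over fixed exact $F_k$, a diagram on projectives extends uniquely; more precisely, that the space of natural transformations between the $F_k$ is determined by their values on projectives. I would prove full faithfulness directly as follows. Write any $M\in\DGr^-(\RR)$ as $\mathrm{colim}_n$ of the brutal truncations $\sigma^{\geqslant -n}P^\bullet$ of a projective resolution. Each truncation is a finite iterated cofiber of shifted projectives. Then use left/right $t$-boundedness and completeness of $\mathcal{D}$ to show $F(M)\simeq\mathrm{colim}_n F(\sigma^{\geqslant -n}P^\bullet)$. Mapping spaces then become limits of those computed on finite complexes of projectives. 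On finite complexes of projectives, exact functors are determined by their values on projectives via the stable envelope $K^b(\mathcal{P}roj)\simeq$ stable hull of the additive category.
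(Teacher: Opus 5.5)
Your two-step architecture matches the paper's. First you use \Cref{restriction to right bounded} to pass to $\DGr^-(\RR)$. Then you use the universal property of $\mathcal{D}^-$ of an abelian category with enough projectives; the paper cites \cite[Theorem 1.3.3.8 and Lemma 1.3.3.11]{HA} for this. The problem is your justification of the second step, which is where the content lies.

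For a simplicial object of $\DGr^{\leqslant 0}(\RR)$, the $n$-th cofiber of the skeletal tower lies in $\DGr^{\leqslant -n}(\RR)$. So the image of that tower under a right bounded $F_k$ has cofibers going to $-\infty$. Such a sequential colimit exists in $\mathcal{D}$, as $\lim_m$ of the eventually constant truncations $\tau^{\geqslant -m}$, by \emph{left} completeness. Right completeness, which you invoke, only handles towers whose cofibers go to $+\infty$; that is its role in \Cref{restriction to right bounded}. Likewise, exactness alone does not make $F_k$ commute with a realization, because a realization is a sequential colimit, not a finite one.

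The step you need is the one the paper makes explicit. Use the uniform right bound to shift all $F_k$ by one common amount so that they become right $t$-exact. Then the cofiber of $\mathrm{colim}_n F_k(|\mathrm{sk}_n X_\bullet|)\to F_k(|X_\bullet|)$ is $\mathrm{colim}_n F_k\big(\mathrm{cofib}(|\mathrm{sk}_n X_\bullet|\to|X_\bullet|)\big)$. This lies in $\bigcap_m \mathcal{D}^{\leqslant -m}$, which is $0$ by left completeness.

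Relatedly, the $\mathcal{P}_\Sigma$ (sifted-colimit) universal property you quote from HTT is the wrong tool. $\mathcal{D}$ need not have filtered colimits, and the $F_k$ need not preserve them (think of $D_\RR\circ D_\RR$). Working with \emph{all} projectives, as in \cite[\S 1.3.3]{HA}, is what lets geometric realizations alone suffice.

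Step 1 has a smaller flaw. In the pointwise $t$-structure on $\mathrm{Fun}(K,\mathcal{D})$, your diagram is left $t$-bounded only if the $F_k$ share a common left bound. That is not assumed (only the right bounds are uniform), and it can fail for infinite $K$. The fix is to note that \Cref{restriction to right bounded} is an equivalence of full subcategories of functor categories. A $K$-diagram lands in a full subcategory as soon as its vertices do, so you may apply $\mathrm{Fun}(K,-)$ and pass to fibers, with no uniformity needed.

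Your observation in the last paragraph that only full faithfulness matters is correct: for a graph $K$ the fiber is the product over edges $k\to k'$ of $\mathrm{Map}(F_k|_{\mathcal{C}},F_{k'}|_{\mathcal{C}})$. Your brutal-truncation plus stable-envelope argument is a viable alternative to \cite[Theorem 1.3.3.8]{HA}, trading geometric realizations for the towers $\sigma^{\geqslant -n}P^\bullet$. It needs two repairs:
\begin{itemize}
\item The identification $F(M)\simeq\mathrm{colim}_n F(\sigma^{\geqslant -n}P^\bullet)$ comes from the right bound plus left completeness, so the same correction as above applies.
\item The claim that ``mapping spaces become limits'' needs an argument, since a single non-functorial resolution does not control natural transformations. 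You must show that each $F_k|_{\DGr^-(\RR)}$ is left Kan extended from bounded complexes of projectives. This follows from cofinality of $\{\sigma^{\geqslant -n}P^\bullet\}$ in the slice: for bounded $Q$, the maps $\mathrm{Map}(Q,\sigma^{\geqslant -n}P^\bullet)\to\mathrm{Map}(Q,M)$ become arbitrarily highly connected.
\end{itemize}
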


\begin{proof}
By \Cref{restriction to right bounded} and the assumption that the functors $\{F_k\}$
are all left $t$-bounded and exact, the restriction map induces an equivalence
\(
\mathrm{Fun}_{F}(K \times \DGr(\RR), \mathcal{D})
\xrightarrow{\;\cong\;}
\mathrm{Fun}_{F}(K \times \DGr^{-}(\RR), \mathcal{D}).
\)

Since the collection of functors $\{F_k\}$ is uniformly right $t$-bounded, after a finite shift
we may assume without loss of generality that all of them are right $t$-exact
(this does not affect the condition that they are left $t$-bounded).
As $\DGr^{-}(\RR)$ is left complete and right bounded, and $\mathcal{D}$ is assumed to be left complete,
the statement now follows from the combination of
\cite[Theorem 1.3.3.8 and Lemma 1.3.3.11]{HA}.
\end{proof}

\begin{lemma}
\label{coh estimate of dual and double dual of projective mod}
For any projective graded left $\RR$-module $M$, the dual $D_\RR(M)$ lies in $\DGr^{[0,0]}(\RR)$,
and the double dual $D_\RR(D_\RR(M))$ lies in $\DGr^{[0,1]}(\RR)$.\footnote{In fact,
with a more involved argument, one can show that $D_\RR(D_\RR(M))$ 
actually lives in cohomological degree $0$. However, since the present cohomological estimate
is sufficient for our purposes, we do not pursue this stronger statement.}
\end{lemma}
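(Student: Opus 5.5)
To handle general projective modules, I will first reduce to the free case and then compute directly. A projective graded left $\RR$-module $M$ is a direct summand of a free module $\bigoplus_{\lambda}\RR(m_\lambda)$. Both $D_\RR$ and $D_\RR\circ D_\RR$ are additive, so their values on $M$ are retracts of their values on the free module. Retracts preserve the cohomological bounds, so it suffices to treat $M=\bigoplus_\lambda \RR(m_\lambda)$.

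For the dual, $\mathrm{RHom}_\RR$ turns direct sums in the first variable into products. This gives
\[
D_\RR(M)\cong \prod_\lambda \mathrm{RHom}_\RR(\RR(m_\lambda), i_*\varwidecheck{\RR}) \cong \prod_\lambda \varwidecheck{\RR}(-m_\lambda),
\]
a graded product, computed gradingwise. This is an honest module in degree $0$, since products are exact in graded abelian groups. So $D_\RR(M)\in\DGr^{[0,0]}(\RR)$.

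For the double dual I will use the formula from \Cref{double dual is t-bounded}:
\[
D_\RR(D_\RR(M))\cong \mathrm{Rlim}_n\, D_{W_n}\big(D_{W_n}(\RR_n\otimes^{\rmL}_\RR M)\big).
\]
For $M$ free, $\RR_n\otimes^{\rmL}_\RR M=\bigoplus_\lambda \RR_n(m_\lambda)$ sits in degree $0$. In each grading it is a direct sum of $W_n$-modules of the form $W_n/p^{a}$, because $\RR_n$ in each grading is a direct sum of cyclic $W_n$-modules by the explicit description of $\RR_n$. Each grading piece is therefore a direct sum of finite length $W_n$-modules $W/p^a$, and these are injective over $W_n$ only when $a=n$. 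I will handle this via Matlis/Pontryagin-type duality. $D_{W_n}$ of a direct sum is a product of duals $\prod \Hom_{W_n}(W/p^a,W_n)$; this sits in degree $0$ because $W_n$ is self-injective, i.e.\ $\Ext^{>0}_{W_n}(-,W_n)=0$. Applying $D_{W_n}$ again to a product of finite length modules is again concentrated in degree $0$ by self-injectivity of $W_n$. So each term $D_{W_n}D_{W_n}(\RR_n\otimes^{\rmL}_\RR M)$ lies in degree $0$ in each grading.

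Finally, $\mathrm{Rlim}$ over $\mathbb{N}$ of objects in degree $0$ has cohomology only in degrees $0$ and $1$, namely $\lim$ and $\lim^1$. Since limits in $\DGr(\RR)$ are computed gradingwise, this gives $D_\RR(D_\RR(M))\in\DGr^{[0,1]}(\RR)$.

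The main point to justify is the identification in \Cref{double dual is t-bounded}. It requires $\RR_n\otimes^{\rmL}_\RR -$ to commute with the $\mathrm{Rlim}$ defining completeness of $D_\RR(M)$, and this holds by perfectness of $\RR_n$ (\Cref{Rn is right perfect}) together with \Cref{dualizing is complete}. The key input is self-injectivity of $W_n$, which makes $D_{W_n}$ exact on all $W_n$-modules; this already forces the degree-$0$ concentration at each finite level. The only remaining contribution, possibly nonzero, is the $\lim^1$ term, which is why the bound is $[0,1]$ rather than $[0,0]$.
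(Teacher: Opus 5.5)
Your proposal is correct and follows essentially the same route as the paper. You reduce to the graded free case, identify $D_\RR(M)$ with a gradingwise product of shifts of $\varwidecheck{\RR}$ (in degree $0$), and bound the double dual through $\mathrm{Rlim}_n D_{W_n}D_{W_n}(\RR_n\otimes^{\rmL}_\RR M)$, using $t$-exactness of the $W_n$-linear double dual (self-injectivity of $W_n$) and the $[0,1]$ amplitude of $\mathrm{Rlim}$. Your direct use of products, rather than the paper's cofiltered limit of finite products, and your explicit appeal to self-injectivity only spell out the paper's steps; the cyclic decomposition of $\RR_n$ is not needed.
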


\begin{proof}
Such an $M$ is a direct summand of a graded free\footnote{By this we mean a module of the form
$M = \bigoplus_{i \in I} \RR(n_i)$ for some set $I$.} left $\RR$-module,
so we may assume that $M$ is graded free.
Writing $M$ as a filtered colimit of finite direct sums, the dual $D_\RR(M)$ becomes
a cofiltered limit of finite products of $\varwidecheck{\RR}(-n_i)$,
and hence lies in cohomological degree $0$. 
Using \Cref{dualizing commutes with tensor Rn} and \Cref{dualizing is complete},
we obtain
\[
D_\RR(D_\RR(M))
=
\mathrm{Rlim}_n\, D_{W_n}\!\big(D_{W_n}(\RR_n \otimes^\rmL_\RR M)\big).
\]
The claim now follows from the facts that $\RR_n \otimes^\rmL_\RR M$ lies in cohomological degree $0$,
the $W_n$-linear double dual is $t$-exact,
and the functor $\mathrm{Rlim}_{n \in \mathbb{N}}$ has cohomological amplitude contained in $[0,1]$.
\end{proof}

\begin{construction}[Natural map to the double dual]
\label{construction of evR}
We now construct a natural transformation
\[
\id \xrightarrow{\mathrm{ev}_\RR} D_\RR \circ D_\RR .
\]
In view of \Cref{double dual is t-bounded} and \Cref{restriction to proj mods},
it suffices to construct a natural transformation $\id \to D_\RR \circ D_\RR$
between functors from $\mathcal{P}roj\mathcal{G}r(\RR)$ to $\DGr(\RR)$. By \Cref{coh estimate of dual and double dual of projective mod},
it suffices to construct natural maps of graded left $\RR$-modules
\[
M \to \text{``}j_*\text{''}\mathrm{Hom}_\RR(\text{``}j_*\text{''}\mathrm{Hom}_\RR(M, i_*(\varwidecheck{\RR})), 
i_*(\varwidecheck{\RR}))
\]
for projective graded left $\RR$-modules. Indeed, maps from $M$ to an object in
$\DGr^{[0,n]}(\RR)$ automatically factor through its $\mathrm{H}^0$.
Here we write ``$j_*$'' to remind the reader that the $\RR$-module structure arises
from the second action on the target $\varwidecheck{\RR}$. 
Unwinding the definition, this is equivalent to constructing natural maps of graded left $\RR$-modules
\[
\text{``}j_*\text{''}\mathrm{Hom}_\RR(M, i_*(\varwidecheck{\RR}))
\longrightarrow
\text{``}i_*\text{''}\mathrm{Hom}_\RR(M, j_*(\varwidecheck{\RR})).
\]
Finally, this is achieved by exhibiting a bijection
\(
\varwidecheck{\RR} \xrightarrow[\cong]{\beta} \varwidecheck{\RR}
\)
that swaps the $\RR$-actions via $i$ and $j$.
Using the isomorphism $\alpha$ from \Cref{explicate checkR}, this map is given by
\[
\sum_{m\geqslant 0} V^m a_m
+\sum_{m\geqslant 1} b_m F^m
+\sum_{m\geqslant 0} dV^m c_m
+\sum_{m\geqslant 1} e_m F^m d
\mapsto
\sum_{m\geqslant 1} V^m b_m
+\sum_{m\geqslant 0} a_m F^m
+\sum_{m\geqslant 1} dV^m e_m
+\sum_{m\geqslant 0} c_m F^m d.
\footnote{See \cite[Proposition III.3.5]{Ek1}.}
\]
Concretely, the map
\(
M \to
\text{``}j_*\text{''}\mathrm{Hom}_\RR(\text{``}j_*\text{''}\mathrm{Hom}_\RR(M,i_*(\varwidecheck{\RR})),
i_*(\varwidecheck{\RR}))
\)
sends an element $m \in M$ to the functional that associates to
$f \in \mathrm{Hom}_\RR(M,i_*(\varwidecheck{\RR}))$
the element $\beta(f(m)) \in \varwidecheck{\RR}$.
\end{construction}

After defining the natural transformation, our next task is to show its compatibility with the
$W_n$-linear dualizing functor via $\RR_n \otimes^\rmL_\RR -$.

\begin{proposition}[{\cite[Lemma IV.1.5]{Ek1}}]
\label{Rn and Wn double dual compatibility}
There is a commutative diagram of functors from $\DGr(\RR)$ to $\DGr(W_n[d]/d^2)$:
\[
\begin{tikzcd}
	{\RR_n\otimes^{\rmL}_\RR \id} && {\RR_n\otimes^{\rmL}_\RR (D_\RR(D_\RR(-))} \\
	&& {D_{W_n}(\RR_n\otimes^{\rmL}_\RR D_\RR(-))} \\
	{\RR_n\otimes^{\rmL}_\RR (-)} && {D_{W_n}(D_{W_n}(\RR_n\otimes^{\rmL}_\RR (-)))}.
	\arrow["{\RR_n\otimes^{\rmL}_\RR(\mathrm{ev}_\RR)}", from=1-1, to=1-3]
	\arrow[shift left, no head, from=1-1, to=3-1]
	\arrow[shift right, no head, from=1-1, to=3-1]
	\arrow["\cong", from=1-3, to=2-3]
	\arrow["\cong", from=2-3, to=3-3]
	\arrow["{\mathrm{ev}_{W_n}}", from=3-1, to=3-3]
\end{tikzcd}
\]
Here $\mathrm{ev}_{W_n} \colon \id \to D_{W_n} \circ D_{W_n}$ denotes the usual
natural transformation between endofunctors of $\DGr(W_n[d]/d^2)$,
and the identifications in the right column come from
\Cref{dualizing commutes with tensor Rn}.
\end{proposition}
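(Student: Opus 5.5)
The plan is to reduce to projective modules and check there by hand. Both composites in the diagram are natural transformations between functors $\DGr(\RR)\to\DGr(W_n[d]/d^2)$. The source functor $\RR_n\otimes^{\rmL}_\RR(-)$ is exact, left $t$-bounded, and right $t$-bounded with amplitude $[-2,0]$ by \Cref{Rn is right perfect}. The target functor is $D_{W_n}D_{W_n}(\RR_n\otimes^{\rmL}_\RR -)$, which is $t$-bounded as in \Cref{double dual is t-bounded}. Moreover, $\DGr(W_n[d]/d^2)$ carries a left and right complete $t$-structure. So \Cref{restriction to proj mods} applies with $K$ the directed graph with two vertices and one edge. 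A natural transformation is then determined by its restriction to $\mathcal{P}roj\mathcal{G}r(\RR)$, and two such transformations agree if they agree (coherently) there. Because we need homotopies, not just pointwise equalities, I would in fact apply \Cref{restriction to proj mods} with $K$ the "square" (two parallel paths). Alternatively, I would argue that on projectives the mapping spaces are discrete, so agreement of the maps suffices.

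Next I would show that on projectives the relevant $\mathrm{Hom}$ is discrete. For a projective $M$, $\RR_n\otimes^{\rmL}_\RR M=\RR_n\otimes_\RR M$ is concentrated in degree $0$. It is also a direct summand of a direct sum of shifted $\RR_n$'s, which are $W_n$-modules whose graded pieces are direct sums of copies of $W_n$ and $W_j$ for $j<n$. Each $W_j$ is injective over $W_n$ up to the usual $W_n$-duality, since $W_n$ is self-injective. Hence $D_{W_n}D_{W_n}$ of it is concentrated in degree $0$. A map between two objects of the heart is therefore just a map of graded $W_n[d]/d^2$-modules, and this mapping space is discrete. So it suffices to check that two maps of ordinary graded modules coincide, naturally in projective $M$. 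Reducing further by additivity and naturality, it suffices to treat free modules $M=\RR(m)$, and even to check on the generator $1\in\RR$.

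The core is then an explicit computation for $M=\RR$. Here $D_\RR(\RR)=i_*\varwidecheck{\RR}$. The right column identifies $\RR_n\otimes_\RR j_*\varwidecheck{\RR}\cong\varwidecheck{\RR}_n=\Hom_{W_n}(\RR_n,W_n)$ via \Cref{Relation between checkR and checkRn}, and then $D_{W_n}$ of it with $D_{W_n}D_{W_n}(\RR_n)$. I would trace an element $r\in\RR_n$ through both paths. Along the bottom path, $\mathrm{ev}_{W_n}(r)$ is the functional $\phi\mapsto\phi(r)$ on $\varwidecheck{\RR}_n$. Along the top path, $\mathrm{ev}_\RR$ sends $1$ to the functional $f\mapsto\beta(f(1))$, and tensoring with $\RR_n$ and applying the identifications gives $\phi\mapsto$ the $d$-coefficient "$c_0$" of $\widetilde r\cdot\beta(\alpha^{-1}\phi)$, i.e.\ $\psi(\widetilde r\cdot\beta(\ldots))$, using the pairing of \Cref{explicate checkRn}. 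The required identity is that $\psi(r\cdot\beta(s))$ equals the value of $\alpha_n(s)$ at $r$. This amounts to the statement that $\beta$ intertwines the two actions compatibly with the pairing $(r,s)\mapsto\psi(rs)$, together with $\psi\circ\beta=\psi$, which holds because $\beta$ fixes the coefficient $c_0$.

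I expect the main obstacle to be this last bookkeeping. One difficulty is matching Ekedahl's twisted coefficient conventions (the $\sigma$'s in $\beta$ and in the $F,V$ actions on $\varwidecheck{\RR}_n$). The other is making sure the identifications in the right column, which come from tensor–Hom adjunction and perfectness of $\RR_n$, are the ones used, so that the check reduces to the single identity above. I would first verify it on monomials $V^m,F^m,dV^m,F^md$, where both sides are computed directly.
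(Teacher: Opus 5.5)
Your formal reduction is the same as the paper's: $t$-boundedness of both functors, \Cref{restriction to proj mods} to restrict to projectives, and the observation that all values there lie in the heart. Commutativity then becomes a property to be checked on $M=\RR$.

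Two small points on that part. First, the heart statement holds simply because $W_n$ is self-injective, so $D_{W_n}$ is $t$-exact. Your claim that $W_j$ is injective over $W_n$ is false for $j<n$, and you do not need it. Second, a directed graph $K$ cannot encode a commuting square, so it is the discreteness argument that does the work.

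The real problem is the one non-formal step. You take the top path to send $\bar r$ to $\phi\mapsto\psi(\tilde r\cdot\beta(\alpha^{-1}\phi))$, and you reduce to the identity $\psi(\tilde r\cdot\beta(s))=\psi(\tilde r\cdot s)$. This identity is false. Take $\tilde r=Fd$ and $s=Va$ with $a\in W$. Then $Fd\cdot Va=da=ad$, so $\psi(\tilde r s)=a$. On the other hand $\beta(Va)=aF$, and $Fd\cdot aF=\sigma(a)FdF=p\,\sigma(a)F^2d$ (since $dF=FdVF=pFd$), so $\psi(\tilde r\cdot\beta(s))=0$. Your two ingredients do not give the identity either: the swap property says $\tilde r\cdot\beta(s)=\beta(s\,\iota(\tilde r))$, hence $\psi(\tilde r\cdot\beta(s))=\psi(s\,\iota(\tilde r))$, which differs from $\psi(\tilde r s)$.

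The error is in where $\tilde r$ acts. $D_\RR(\RR)$ is $j_*(\varwidecheck{\RR})$, not $i_*(\varwidecheck{\RR})$. The module structure on $D_\RR(D_\RR(\RR))$ comes from the second action on the target, which under $\alpha$ is right multiplication through $\iota$ (\Cref{explicate checkR}). Directly from \Cref{construction of evR}, $\mathrm{ev}_\RR(\tilde r)$ is the functional
\[ f\mapsto\beta(f(\tilde r))=\beta(\tilde r\cdot f(1))=\beta(f(1))\,\iota(\tilde r). \]

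Now trace the right column. Write $[u]$ for the class in $\varwidecheck{\RR}_n$; an element $x\otimes s\in\RR_n\otimes_\RR j_*(\varwidecheck{\RR})$ corresponds to $[s\,\iota(\tilde x)]$. The top path sends $\bar r$ to $[s\,\iota(\tilde x)]\mapsto\psi(\tilde x\cdot\beta(\tilde r s))$. The bottom path sends it to $[s\,\iota(\tilde x)]\mapsto\psi(\tilde r\,s\,\iota(\tilde x))$. These agree because $\tilde x\cdot\beta(t)=\beta(t\,\iota(\tilde x))$ and $\psi\circ\beta=\psi$. For $x=1$ this is exactly the identity $\psi(rs)=\psi(\beta(s)\,\iota(r))$ used in the paper's remark.

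So your route is the paper's, but the identity you propose to check on monomials must be replaced by this one. As written, your check would fail.
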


\begin{proof}
By \Cref{Rn is right perfect} and \Cref{double dual is t-bounded}, all
functors involved are $t$-bounded.
Using \Cref{restriction to proj mods}, it therefore suffices to exhibit such a commutative
diagram with all functors viewed as defined only on projective graded left $\RR$-modules. 
The values of all functors on projective graded left $\RR$-modules are concentrated in
cohomological degree $0$: 
For the right column, it suffices to note that $D_{W_n} \circ D_{W_n}$ is $t$-exact.
Thus we are merely verifying a compatibility condition rather than specifying additional data. 
Finally, the diagram commutes as explained by Ekedahl in \emph{loc.\ cit.}.
\end{proof}

\begin{remark}
For the convenience of the reader, we briefly explain the commutativity above.
First, we may reduce to the case where $M$ is graded free.
By writing $M$ as a colimit of graded finite free modules, we may further reduce
to the case where $M$ is a graded shift of $\RR$.
Finally, since every arrow respects the graded structure, we may reduce to the case
$M = \RR$. In this case, after unwinding the definitions, the commutativity claim follows
from the formula
\(
\psi(r \cdot s) = \psi(\beta(s) \cdot \iota(r)),
\) 
where $r \in \RR$, $s$ is a power series as in \Cref{explicate checkR},
$\iota$ is the isomorphism $\RR \cong \RR^{\mathrm{op}}$ described in
\Cref{explicate checkR}, $\beta$ is the bijection introduced in
\Cref{construction of evR}, and $\psi$ is the linear functional given by
taking the coefficient $c_0$ in the series expansion in
\Cref{explicate checkR}.
\end{remark}

From the above discussion, Ekedahl draws the following conclusion.

\begin{proposition}[{\cite[Proposition IV.1.1]{Ek1}}]
\label{dual of coherent is coherent}
The dualizing functor \(D\) maps \(\DGr_c^b(\RR)\) to itself, and the evaluation
transformation \(\mathrm{ev}:\id \to D(D(-))\) is an isomorphism on
\(\DGr_c^b(\RR)\).
\end{proposition}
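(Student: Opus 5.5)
The plan is to use the coherence criterion of \Cref{alternative definition of coherence} together with the compatibilities between $D_\RR$ and $D_{W_n}$ established above. There are two claims: (a) $D_\RR$ preserves $\DGr_c^b(\RR)$, and (b) $\mathrm{ev}_\RR$ is an equivalence on this subcategory. I will prove (b) first, since part of it does not use (a).

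\textbf{Step 1: $D_\RR(M)$ is complete with finite mod-$p$ cohomology.} Let $M\in\DGr_c^b(\RR)$. By \Cref{dualizing is complete}, $D_\RR(M)$ is complete. By \Cref{dualizing commutes with tensor Rn},
\[
\RR_n\otimes^{\rmL}_\RR D_\RR(M)\simeq D_{W_n}(\RR_n\otimes^{\rmL}_\RR M).
\]
Since $M$ is coherent, \Cref{alternative definition of coherence}(2) says that $\RR_n\otimes^{\rmL}_\RR M$ has finitely generated $W_n$-cohomology. It is also bounded, because $M$ is bounded and $\RR_n\otimes^{\rmL}_\RR-$ has amplitude $[-2,0]$ by \Cref{Rn is right perfect}. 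Therefore its $W_n$-dual is bounded with finitely generated cohomology.

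\textbf{Step 2: $D_\RR(M)$ is bounded.} To apply \Cref{alternative definition of coherence} we need $D_\RR(M)\in\DGr^-(\RR)$, and we want genuine boundedness. I would write $D_\RR(M)=\mathrm{Rlim}_n D_{W_n}(\RR_n\otimes^{\rmL}_\RR M)$, using completeness from Step 1. The terms of this system are uniformly bounded in cohomological degree, and $\mathrm{Rlim}$ over $\mathbb N$ has amplitude $[0,1]$. Hence $D_\RR(M)$ is bounded. Then \Cref{alternative definition of coherence}(3) gives $D_\RR(M)\in\DGr_c^b(\RR)$, which proves (a).

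\textbf{Step 3: $\mathrm{ev}_\RR$ is an equivalence.} Let $C$ be the cofiber of $\mathrm{ev}_\RR\colon M\to D_\RR D_\RR M$. Both $M$ and $D_\RR D_\RR M$ are complete: $M$ because coherent objects are complete, and the double dual by \Cref{dualizing is complete}. By \Cref{completeness closed under taking limit} (cofibers are fibers up to shift in the stable setting), $C$ is complete. Both terms are bounded, by hypothesis and by Step 2 applied twice, so $C$ is bounded. By \Cref{Rn and Wn double dual compatibility}, $\RR_1\otimes^{\rmL}_\RR\mathrm{ev}_\RR$ is identified with $\mathrm{ev}_{k}$ on $\RR_1\otimes^{\rmL}_\RR M$. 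That complex is a bounded complex of $k$-vector spaces with finite-dimensional cohomology, because $M$ is coherent, so $\mathrm{ev}_k$ is an equivalence on it. Therefore $\RR_1\otimes^{\rmL}_\RR C=0$. The final clause of \Cref{Ek2I.1.1} then gives $C=0$.

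\textbf{Main obstacle.} The delicate point is the boundedness bookkeeping in Step 2, together with having the hypotheses of \Cref{Ek2I.1.1} in the right direction. Uniform boundedness of $D_{W_n}(\RR_n\otimes^{\rmL}_\RR M)$ in $n$ is the key input. Only one boundedness direction is needed for \Cref{Ek2I.1.1}, which gives some slack.
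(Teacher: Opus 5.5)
Your argument is correct and follows essentially the route the paper indicates when it says Ekedahl draws this conclusion ``from the above discussion'': it combines \Cref{dualizing commutes with tensor Rn}, \Cref{dualizing is complete} and \Cref{Rn and Wn double dual compatibility} with the coherence criterion \Cref{alternative definition of coherence}. The one small difference is in proving that $\mathrm{ev}_\RR$ is an equivalence. You check it after $\RR_1\otimes^{\rmL}_\RR-$ and then invoke the Nakayama-type \Cref{Ek2I.1.1}, which uses boundedness. The paper's own argument for this step (see \Cref{anti-equivalence on Rn-finite}) instead checks it after every $\RR_n\otimes^{\rmL}_\RR-$ and passes to $\mathrm{Rlim}_n$ using completeness of both sides, so it needs no boundedness.
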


We now generalize Ekedahl's result.
Let $n$ be a positive integer.
Denote by $\DGr^{\RR_n\text{-finite}}(\RR)$ the full subcategory of $\DGr(\RR)$
spanned by objects $M$ such that
$\RR_n\otimes^{\rmL}_\RR M \in \DGr^-(W_n)$ 
obtained by forgetting grading and the action of $d$ has finitely generated cohomology. 

\begin{corollary}
\label{anti-equivalence on Rn-finite}
Ekedahl's dualizing functor $D_\RR$ preserves $\DGr^{\RR_n\text{-}\mathrm{finite}}(\RR)$.
Moreover, if $M \in \DGr^{\RR_n\text{-}\mathrm{finite}}(\RR)$, then 
\(
\RR_n \otimes^\rmL_\RR \mathrm{ev}_\RR :
\RR_n \otimes^\rmL_\RR M
\longrightarrow
\RR_n \otimes^\rmL_\RR D_\RR(D_\RR(M))
\)
is an equivalence.
In particular, Ekedahl's dualizing functor $D_\RR$ restricts to an anti-equivalence
on \(\widehat{\DGr}(\RR) \cap \bigcap_{n \geq 1} \DGr^{\RR_n\text{-}\mathrm{finite}}(\RR).\)
\end{corollary}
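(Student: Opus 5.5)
The plan is to reduce everything, via \Cref{dualizing commutes with tensor Rn} and \Cref{Rn and Wn double dual compatibility}, to statements about $W_n$-linear duality on $\RR_n\otimes^{\rmL}_\RR M$. I take $\DGr^{\RR_n\text{-finite}}(\RR)$ to mean that the underlying object of $\RR_n\otimes^{\rmL}_\RR M$ lies in $\calD^-(W_n)$ with finitely generated cohomology.

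\textbf{Preservation.} Let $M$ be $\RR_n$-finite. By \Cref{dualizing commutes with tensor Rn},
\[
\RR_n\otimes^{\rmL}_\RR D_\RR(M)\cong D_{W_n}(\RR_n\otimes^{\rmL}_\RR M).
\]
Forgetting the grading and $d$ commutes with $D_{W_n}$ at the level of underlying $W_n$-complexes. To see this, note that the graded pieces of the right side are $D_{W_n}$ of the graded pieces up to a reindexing. Since only finitely many total cohomology groups are nonzero in each degree, the direct sum over gradings agrees with the product. So it suffices to know that $D_{W_n}$ sends a complex $N\in\calD^-(W_n)$ with finitely generated cohomology to such a complex.

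This is the step I expect to be the obstacle, because $W_n$ is self-injective. Consequently $D_{W_n}=\Hom_{W_n}(-,W_n)$ is exact, and $\HH^j(D_{W_n}N)\cong \Hom_{W_n}(\HH^{-j}(N),W_n)$, which is finitely generated. The trouble is that $D_{W_n}N$ is then bounded below rather than above. If $N$ is only bounded above, the dual has cohomology in degrees $\geqslant$ something, not $\leqslant$. I would therefore read "$\RR_n$-finite" as requiring finitely generated cohomology, with boundedness interpreted appropriately: either the total cohomology is finite, or membership in $\calD^-$ is not enforced after dualizing. Failing that, I would restrict to bounded $N$, where the dual is again bounded with finite cohomology.

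\textbf{Evaluation.} By the commutative diagram of \Cref{Rn and Wn double dual compatibility}, the map $\RR_n\otimes^{\rmL}_\RR \mathrm{ev}_\RR$ is identified, up to the equivalences in the right column, with $\mathrm{ev}_{W_n}$ applied to $N=\RR_n\otimes^{\rmL}_\RR M$. Because $D_{W_n}$ is exact on modules, the double dual is computed degreewise on cohomology. For finitely generated $W_n$-modules $H$, the map $H\to\Hom(\Hom(H,W_n),W_n)$ is an isomorphism, by reduction to cyclic modules $W_m$. Hence $\mathrm{ev}_{W_n}$ is a quasi-isomorphism on each grading piece. It is therefore an equivalence in $\DGr(W_n[d]/d^2)$, since equivalences are detected on grading pieces by \Cref{intertwining in d gives rise to coherent cochain}.

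\textbf{Anti-equivalence.} Let $M$ lie in $\widehat{\DGr}(\RR)\cap\bigcap_n\DGr^{\RR_n\text{-finite}}(\RR)$. By \Cref{dualizing is complete}, $D_\RR(M)$ is complete, and by the first step it is $\RR_n$-finite for all $n$, so $D_\RR$ preserves the subcategory. Next, $\mathrm{ev}_\RR\colon M\to D_\RR D_\RR M$ is a map between complete objects that becomes an equivalence after applying $\RR_n\otimes^{\rmL}_\RR-$ for every $n$. Since completion is $\mathrm{Rlim}_n(\RR_n\otimes^{\rmL}_\RR-)$ and both source and target are complete (\Cref{completion is a localization}), $\mathrm{ev}_\RR$ is an equivalence. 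Thus $D_\RR$ is its own quasi-inverse on this subcategory, which gives the anti-equivalence.
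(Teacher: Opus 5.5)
Your proposal is correct and follows the paper's proof. Preservation comes from \Cref{dualizing commutes with tensor Rn}, the evaluation claim from \Cref{Rn and Wn double dual compatibility}, and the anti-equivalence from the completeness of $M$ together with \Cref{dualizing is complete}, which reduce $\mathrm{ev}_\RR$ to the maps $\RR_n\otimes^{\rmL}_\RR\mathrm{ev}_\RR$; you only spell out the self-injectivity of $W_n$ that the paper leaves implicit. Your worry about the ``$\DGr^-(W_n)$'' in the definition is justified, since $D_{W_n}$ turns bounded-above objects into bounded-below ones, and your reading (finitely generated cohomology with no boundedness imposed) is the one the paper actually needs, e.g.\ when \Cref{coherence criteria} applies this corollary to bounded-below $M$.
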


\begin{proof}
The first claim follows immediately from \Cref{dualizing commutes with tensor Rn}.
The second claim follows immediately from \Cref{Rn and Wn double dual compatibility}. 
For the last claim, it suffices to check that for
\(
M \in \widehat{\DGr}(\RR) \cap \bigcap_{n \geq 1} \DGr^{\RR_n\text{-finite}}(\RR),
\)
the map
\(
M \xrightarrow{\mathrm{ev}_\RR} D_\RR(D_\RR(M))
\)
is an equivalence.
By the assumption that $M$ is complete and by \Cref{dualizing is complete},
we are reduced to the second claim.
\end{proof}

\begin{corollary}
\label{coherence criteria}
Let $M \in \DGr(\RR)$ be a complete object.
Assume that $M$ is bounded either from above or from below.
Then the following conditions are equivalent:

{\rm (1)} $M \in \DGr_c(\RR)$;

{\rm (2)} for all $n$, the object $\RR_n\otimes^{\rmL}_\RR M \in \DGr(W_n)$,
obtained by forgetting the grading and the action of $d$, has finitely generated cohomology;

{\rm (3)} the object $\RR_1\otimes^{\rmL}_\RR M \in \DGr(k)$,
obtained by forgetting the grading and the action of $d$, has finite-dimensional cohomology.

Moreover, the functor $D_\RR$ induces an anti-equivalence between $\DGr^+_c(\RR)$ and
$\DGr^-_c(\RR)$.
\end{corollary}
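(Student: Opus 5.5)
The plan is to prove the cycle (1)$\Rightarrow$(2)$\Rightarrow$(3)$\Rightarrow$(2)$\Rightarrow$(1). The main input is \Cref{alternative definition of coherence}, which settles the bounded-above case. The bounded-below case is reduced to it by Ekedahl's duality (\Cref{anti-equivalence on Rn-finite}) together with a truncation argument.

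\textbf{(1)$\Rightarrow$(2) and (2)$\Leftrightarrow$(3).} By \Cref{Rn is right perfect}, $\RR_n\otimes^{\rmL}_\RR-$ has amplitude $[-2,0]$. Hence $\HH^j(\RR_n\otimes^{\rmL}_\RR M)\cong \HH^j(\RR_n\otimes^{\rmL}_\RR \tau^{[j,j+2]}M)$ up to the obvious boundary terms: it only sees the cohomology of $M$ in degrees $j,j+1,j+2$. So for (1)$\Rightarrow$(2) we replace $M$ by a bounded truncation, which lies in $\DGr^b_c(\RR)$ and is complete. We then apply \Cref{alternative definition of coherence} to it.

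For (3)$\Rightarrow$(2) we apply \Cref{Ek2I.1.1} with $A$ the thick subcategory of finitely generated $W$-modules. Since $M$ is bounded above or below, one of the two parts applies, and we take $s$ large (resp.\ small) enough that the relevant half-lines cover everything. To control the fact that only finitely many gradings contribute in each cohomological degree, we also apply \Cref{Ek2I.1.1} with $A=\{0\}$ in the regions where $\RR_1\otimes^{\rmL}_\RR M$ vanishes.

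For (2)$\Rightarrow$(3) we argue as follows. Since $\RR_1\otimes^{\rmL}_\RR M$ is obtained from $\RR_n\otimes^{\rmL}_\RR M$ by a finite dévissage, or more simply since (2)$\Rightarrow$(1) is proved below, we get (1), and then (1)$\Rightarrow$(3) is the case $n=1$ of (1)$\Rightarrow$(2).

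\textbf{(2)$\Rightarrow$(1).} If $M$ is bounded above, this is \Cref{alternative definition of coherence}. Suppose instead $M$ is bounded below. By \Cref{anti-equivalence on Rn-finite}, $M\cong D_\RR(D_\RR(M))$, and $N\coloneqq D_\RR(M)$ is complete and $\RR_n$-finite. Moreover
\[
\RR_n\otimes^{\rmL}_\RR N\cong D_{W_n}(\RR_n\otimes^{\rmL}_\RR M)
\]
is uniformly bounded above, because $D_{W_n}$ is $t$-exact on $W_n$-modules. Since $N=\mathrm{Rlim}_n$ of these (\Cref{dualizing is complete}) and $\mathrm{Rlim}$ has amplitude $[0,1]$, $N$ is bounded above. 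Thus $N\in\DGr^-_c(\RR)$, and it remains to prove the following.

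\textbf{Key claim: $D_\RR$ sends $\DGr^-_c(\RR)$ into $\DGr^+_c(\RR)$, and symmetrically.} This is the main obstacle, since \Cref{dual of coherent is coherent} is only available on $\DGr^b_c(\RR)$ and $D_\RR$ does not convert the Postnikov limit $N=\lim_a\tau^{\geqslant a}N$ into a colimit. The plan is to compare $D_\RR(\tau^{\geqslant a}N)\to D_\RR(N)$ and let $C_a$ denote its cone.

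First, $C_a$ is complete by \Cref{dualizing is complete}. Second, by \Cref{dualizing commutes with tensor Rn}, $\RR_1\otimes^{\rmL}_\RR C_a\cong D_k(\RR_1\otimes^{\rmL}_\RR \tau^{<a}N)$. Using the amplitude $[-2,0]$ again, this vanishes in cohomological degrees below roughly $-a$. Third, $C_a$ is bounded in some direction. By the vanishing part of \Cref{Ek2I.1.1} (with $A=\{0\}$), $C_a$ then has vanishing cohomology in degrees below roughly $-a-3$.

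Hence $\HH^j(D_\RR(N))\cong\HH^j(D_\RR(\tau^{\geqslant a}N))$ for $a\ll -j$. The right-hand side is coherent by \Cref{dual of coherent is coherent}, since $\tau^{\geqslant a}N\in\DGr^b_c(\RR)$. The same argument, with the other truncation, handles $\DGr^+_c(\RR)$.

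\textbf{Conclusion.} Applying the key claim gives $M\cong D_\RR(N)\in\DGr_c(\RR)$, which finishes (2)$\Rightarrow$(1). Combining the key claim with \Cref{anti-equivalence on Rn-finite} gives the final anti-equivalence. Coherent objects are complete and, by (1)$\Rightarrow$(2), $\RR_n$-finite, so $\mathrm{ev}_\RR$ is an equivalence on them. Together with the boundedness shift established above, $D_\RR$ exchanges $\DGr^+_c(\RR)$ and $\DGr^-_c(\RR)$.
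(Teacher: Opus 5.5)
Your proposal is correct and follows essentially the same route as the paper. You dualize the bounded-below $M$ to a complete, $\RR_n$-finite, bounded-above object and apply the bounded-above case (\Cref{alternative definition of coherence}); then you return via \Cref{dual of coherent is coherent} on truncations, using $t$-boundedness of $D_\RR$, and conclude with $\mathrm{ev}_\RR$. One simplification: in your key claim the detour through $\RR_1\otimes^{\rmL}_\RR C_a$ and \Cref{Ek2I.1.1} is unnecessary, because $C_a\simeq D_\RR(\tau^{<a}N)$ is an $\RHom_\RR$ from an object of $\DGr^{\leqslant a-1}(\RR)$ into a module and so lies in $\DGr^{\geqslant 1-a}(\RR)$ outright (this same bound is what you would need anyway to justify your assertion that $C_a$ is bounded in some direction).
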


\begin{proof}
When $M$ is bounded from above, this is \Cref{alternative definition of coherence}.
From now on, we assume that $M$ is bounded from below.
It is clear that {\rm (1)} implies {\rm (2)}, and that {\rm (2)} implies {\rm (3)}.
Below we show that {\rm (3)} implies {\rm (1)}.

First, similarly to the reasoning in \Cref{double dual is t-bounded}, we note that
$D_\RR$ is also $t$-bounded.
Indeed, examining the argument there shows that
$D_\RR(M) \in \DGr^{[0,3]}(\RR)$ for any $M \in \DGr^{[0,0]}(\RR)$.

Using the first statement in \Cref{anti-equivalence on Rn-finite} and the case where $M$
is bounded above, we deduce that $D_\RR(M) \in \DGr_c^-(\RR)$.
Then \Cref{dual of coherent is coherent}, together with the $t$-boundedness of $D_\RR$,
implies that $D_\RR(D_\RR(M)) \in \DGr_c^+(\RR)$.
The desired conclusion now follows from \Cref{anti-equivalence on Rn-finite}.
\end{proof}

\subsection{Ekedahl's star product}

The goal of this subsection is to review Ekedahl's star product. 
%, \textcolor{red}{and to calculate \(E_{1/2} \cdstimes k\)}. 
In \cite{Ek2}, Ekedahl defined the star product 
\(\stimes\) which ``corresponds to the correct K\"unneth
formula'' for the de Rham--Witt cohomology \(R\Gamma(X,W\Omega^\bullet_{X/k})\).
Below we review his theory.
%We now review its theory. 

\begin{construction}[{\cite[Definition 1.3.1]{Ek2}}]
\label{star product abelian level}
Let \(M\) and \(N\) be two left graded \(\RR\)-modules, the star product \(M\stimes N\) is 
defined to be the left graded 
\(\RR\)-module generated by homogeneous elements of the form \(m\stimes n\) 
where $m$ and $n$ range over
all homogeneous elements of $M$ and $N$ respectively
(with $m \stimes n$ of grading \(\mathrm{deg}(m)+\mathrm{deg}(n)\)), subject to 
the following relations (where $\lambda$ ranges over $W(k)$):
    \[\begin{aligned}
        (Vm)\stimes n=V(m\stimes Fn),&\quad m\stimes (Vn)=V(Fm \stimes n),\\
        F(m\stimes n)=&(Fm)\stimes (Fn),\\
        d(m\stimes n)=(dm)\stimes n&+(-1)^{\mathrm{deg}(m)}m\stimes (dn),\\
        (m_1+m_2)\stimes n=m_1\stimes n+m_2&\stimes n,\quad (\lambda m)\stimes n=\lambda(m\stimes n),\\
        m\stimes (n_1+n_2)=m\stimes n_1+m&\stimes n_2,\quad 
        m\stimes (\lambda n)=\lambda(m\stimes n).
    \end{aligned}\]
This construction comes equipped with canonical isomorphisms 
$M \stimes N \xrightarrow{\cong} N \stimes M$ sending homogeneous elements
$m \stimes n$ to $(-1)^{\deg(m) \cdot \deg(n)} n \stimes m$ (\cite[Page.~71, Equation (3.4)]{Ek2}).
Moreover, let $W = W(k)$ be the graded left $\RR$-module placed in grading $0$,
with $F$ and $V$ acting as usual Witt vector Frobenius and Verschiebung, and $d = 0$.
Then one checks that $M \xrightarrow{m \mapsto m \stimes 1} M \stimes W$ is an isomorphism.
In this way we obtain a symmetric monoidal structure on
the category of graded left $\RR$-modules.
\end{construction}

By definition, it is clear that $- \stimes M$ is a right-exact functor
for any graded left $\RR$-module $M$.
Here are some properties of this symmetric monoidal structure.

\begin{example}[{\cite[Proposition 3.2]{Ek2}}]
\label{stimesexamples}
The graded left $\RR$-module
\(\RR\stimes \RR\) is a graded 
free module generated by the following two sets of homogeneous elements: 
\[\begin{aligned}
&\mathrm{grading}\;0: F^i\stimes 1,\; i\geqslant 0;\quad 1\stimes F^i,\;i\geqslant 1;\\
&\mathrm{grading}\;1: F^id\stimes 1,\; i\geqslant 0;\quad 1\stimes F^id,\;i\geqslant 1.
\end{aligned}\]
\end{example}

\begin{corollary}[{\cite[Corollary 3.2.3]{Ek2}}]
\label{R is stimes-flat}
For any left graded \(\RR\)-module $M$,
the underlying graded $W$-module (obtained by  forgetting the actions of $F$, $V$ and $d$)
\(\RR\stimes M\) decomposes as: 
\[
\RR\stimes M \cong \Bigl(\bigoplus_{i>0} V^i(1\stimes M)\Bigr) \oplus 
\Bigl(\bigoplus_{i\geqslant 0} F^i\stimes M\Bigr) \oplus 
\Bigl(\bigoplus_{i>0}dV^i(1\stimes M)\Bigr) \oplus 
\Bigl(\bigoplus_{i\geqslant 0}F^id\stimes M\Bigr).
\]
In particular, the functor $\RR \stimes -$ is exact.
\end{corollary}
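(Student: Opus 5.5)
The plan is to deduce the decomposition from the explicit basis of \(\RR\stimes\RR\) in \Cref{stimesexamples}, using right-exactness of \(-\stimes M\) and a free presentation of \(M\). Then exactness of \(\RR\stimes-\) will follow at once from the decomposition, since the right-hand side is visibly an exact functor of the underlying graded \(W\)-module of \(M\).

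\textbf{Step 1: the spanning statement.} Using the relations of \Cref{star product abelian level}, I would show that every element \(r\stimes m\) lies in the sum of the four displayed pieces. Since \(r\in\RR\) is a finite \(W\)-combination of \(F^i\), \(V^i\), \(F^id\), \(dV^i\), it suffices to treat these monomials, using:
\[
V^i\stimes m=V^i(1\stimes F^im), \qquad dV^i\stimes m=d\bigl(V^i(1\stimes F^i m)\bigr)\mp V^i(1\stimes dF^i m).
\]
The relation \(F(x\stimes y)=Fx\stimes Fy\) and \(FdV=d\) then rewrite everything into the forms \(V^i(1\stimes m')\), \(F^i\stimes m'\), \(dV^i(1\stimes m')\), \(F^id\stimes m'\). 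One also needs that the left \(\RR\)-action on these generators lands back in the sum. This uses the commutation rules in \(\RR\) (for instance \(F\cdot V^i(1\stimes m)=pV^{i-1}(1\stimes m)\)), together with the \(F\)-compatibility applied to \(F^j\stimes m\).

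\textbf{Step 2: directness, which is the main obstacle.} Spanning alone does not give a direct sum, so independence is the real content. I would first treat free \(M\). For \(M=\RR(n)\), this is exactly \Cref{stimesexamples}: the basis of \(\RR\stimes\RR\) over \(\RR\) is \(F^i\stimes 1\), \(1\stimes F^i\), \(F^id\stimes1\), \(1\stimes F^id\). One expands the free \(\RR\)-module on these generators as a \(W\)-module, using the \(W\)-basis \(F^j,V^j,F^jd,dV^j\) of \(\RR\). Matching this against the proposed decomposition with \(M=\RR\), whose own \(W\)-basis is \(F^j,V^j,\dots\), amounts to a bookkeeping check. For example, \(V^i(1\stimes F^j)\) with \(j\ge i\) becomes \(V^i\stimes F^{j-i}\)-type terms, and so on. By additivity of \(\stimes\) in direct sums, the free case follows.

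\textbf{Step 3: general \(M\).} Take a graded free presentation \(P_1\to P_0\to M\to 0\). Right-exactness of \(\RR\stimes-\) gives \(\RR\stimes M=\mathrm{coker}(\RR\stimes P_1\to\RR\stimes P_0)\). The map \(\RR\stimes P_1\to\RR\stimes P_0\) respects the four-piece decomposition: the relations used in Step 1 are functorial in \(M\), so each piece is mapped to the corresponding piece, acting by \(\phi\) on the \(M\)-factor. Cokernels of direct sums are the direct sums of cokernels, and each piece is a functor of the form \(M\mapsto M\) up to twist (such as \(\sigma\)-twists and grading shifts), which is exact. This yields the decomposition for \(M\). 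Exactness of \(\RR\stimes-\) is then immediate, since a sequence of graded \(\RR\)-modules is exact iff it is so on underlying graded \(W\)-modules.
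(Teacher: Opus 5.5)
Your strategy is sound, and with the two repairs below it gives a complete proof. The paper has no proof of its own; it only cites Ekedahl. But placing the statement directly after \Cref{stimesexamples} suggests the route you take, which runs as follows:
\begin{itemize}
\item let $\Phi(M)$ be the abstract direct sum of suitably $\sigma$-twisted and grading-shifted copies of $M$;
\item take the natural map $\Phi(M)\to\RR\stimes M$ given by $m\mapsto V^i(1\stimes m)$, $F^i\stimes m$, $dV^i(1\stimes m)$, $F^id\stimes m$;
\item prove this map bijective for $M=\RR$;
\item pass to general $M$ using a graded free presentation, right exactness of $\RR\stimes-$, exactness of $\Phi$, and the five lemma.
\end{itemize}
Phrased this way, your Step~1 is superfluous, since surjectivity for general $M$ is inherited from the free case.

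\textbf{First repair: a slip in Step~1.} From $d(V^i\stimes m)=dV^i\stimes m+V^i\stimes dm$ one gets
\[
dV^i\stimes m=dV^i(1\stimes F^im)-V^i(1\stimes F^i dm).
\]
Your term $dF^im$ equals $p^iF^idm$, because $dF=pFd$ in $\RR$, so your formula is off by $p^i$.

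\textbf{Second repair: the free case in Step~2.} This is the only place where directness is actually proved, and it is not a relabelling of bases. Your sample element $V^i(1\stimes F^j)$ is already $V^i$ times one of Ekedahl's generators, so it tests nothing. In gradings $1$ and $2$ the Leibniz rule produces correction terms, for instance
\[
F^i\stimes d=d(F^i\stimes 1)-p^i(F^id\stimes 1),\qquad
F^id\stimes F^{i+m}=F^id\,(1\stimes F^m)-p^mF^i(1\stimes F^md),\qquad
V^i(1\stimes d)=p^i\,dV^i(1\stimes 1)-V^i(d\stimes 1).
\]
Some corrections even have unit coefficients, e.g.\ $1\stimes dV^n=dV^n(F^n\stimes 1)-V^n(F^nd\stimes 1)$.

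What you must verify is the following. Expand everything in the $W$-basis $\{r\cdot g\}$ of the free module, where $r$ runs over the monomials $F^j,1,V^j,F^jd,d,dV^j$ of $\RR$ and $g$ over Ekedahl's generators. Then the images of the $W$-bases of the four summands should form a unitriangular system.

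This does hold:
\begin{itemize}
\item In grading $0$ the images are exactly the basis vectors.
\item In gradings $1$ and $2$, each image is a basis vector plus at most a $W$-multiple of another basis vector that is itself an image.
\item The leading basis vectors obtained this way are pairwise distinct and exhaust the basis.
\end{itemize}
Since this is where directness is established, it has to be written out rather than described as bookkeeping.
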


\begin{proposition}[{\cite[Proposition 3.3]{Ek2}}]
\label{stimes Frob bijection}
Let \(M\) and \(N\) be left graded \(\RR\)-modules. Suppose that \(F\) is bijective on \(N\). Then the natural map
$M \otimes_W N \to M \stimes N$, sending $m \otimes n$ to $m \stimes n$
for homogeneous elements $m \in M$ and $n \in N$, is an isomorphism.
\end{proposition}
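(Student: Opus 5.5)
We prove that the natural map \(M\otimes_W N\to M\stimes N\), \(m\otimes n\mapsto m\stimes n\), is bijective by building an explicit inverse. The first task is to put a graded left \(\RR\)-module structure on \(M\otimes_W N\). It is natural to use the bijectivity of \(F\) on \(N\) for this, forcing the formulas that the star-product relations dictate:
\[
F(m\otimes n)=Fm\otimes Fn,\qquad V(m\otimes n)=Vm\otimes F^{-1}n,\qquad d(m\otimes n)=dm\otimes n+(-1)^{\deg m}m\otimes dn .
\]
The formula for \(V\) comes from \((Vm)\stimes n=V(m\stimes Fn)\) applied to \(n=F(F^{-1}n)\). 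Each formula should be checked to be well defined on \(\otimes_W\) using semilinearity. For example, \(Fa\otimes Fn=\sigma(a)Fm\otimes Fn\), and \(V(am\otimes n)=\sigma^{-1}(a)Vm\otimes F^{-1}n\) agrees with \(V(m\otimes an)=Vm\otimes\sigma^{-1}(a)F^{-1}n\).

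Next we verify the defining relations of \(\RR\) on \(M\otimes_W N\):
\begin{itemize}
\item \(FV=VF=p\): \(FV(m\otimes n)=FVm\otimes n=pm\otimes n\), and \(VF\) is similar.
\item \(d^2=0\): this is the usual Koszul sign argument.
\item \(FdV=d\): \(FdV(m\otimes n)=F\bigl(dVm\otimes F^{-1}n\pm Vm\otimes dF^{-1}n\bigr)=FdVm\otimes n\pm FVm\otimes FdF^{-1}n\). On \(N\) we have \(FdF^{-1}n=FdV(pF^{-1})^{-1}\)-type identities; more cleanly, \(dn=FdVn'\) where \(n=Vn'\) up to \(p\)-issues. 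Rather than use this, we rewrite \(Vm\otimes dF^{-1}n\) via \(p\): \(FVm\otimes FdF^{-1}n=pm\otimes FdF^{-1}n\), and in \(N\) one has \(pFdF^{-1}=FdVFF^{-1}\)\dots{} In practice we check \(FdV=d\) on \(N\) using that \(F\) is invertible, which gives \(dn=FdVn\) and \(Fd F^{-1}\cdot p=FdV\), so the term becomes \(m\otimes dn\).
\end{itemize}
This relation check is the main computational obstacle, and sign and \(p\)-factor bookkeeping is required there.

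With the module structure in place, the assignment \(m\stimes n\mapsto m\otimes n\) respects every defining relation of \(M\stimes N\) by construction: the \(V\)-relations hold by the choice of \(V\), the \(F\)- and \(d\)-relations hold by definition, and bilinearity and \(W\)-linearity hold. Hence it induces an \(\RR\)-linear map \(\Phi\colon M\stimes N\to M\otimes_W N\), and \(\Phi\circ\phi=\id\), where \(\phi\) is the natural map.

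It remains to show that \(\phi\) is surjective, i.e.\ that the elements \(m\stimes n\) span \(M\stimes N\) as a \(W\)-module. The \(\RR\)-submodule generated by these elements is all of \(M\stimes N\). Their \(W\)-span is stable under each generator of \(\RR\):
\begin{itemize}
\item \(F\): \(F(m\stimes n)=Fm\stimes Fn\).
\item \(d\): this follows from the Leibniz relation.
\item \(V\): \(V(m\stimes n)=V(m\stimes F(F^{-1}n))=Vm\stimes F^{-1}n\).
\end{itemize}
So the \(W\)-span is an \(\RR\)-submodule containing the generators, hence equals \(M\stimes N\). Therefore \(\phi\) is surjective with a left inverse, and so is an isomorphism.
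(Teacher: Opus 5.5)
Your proof is correct. The paper gives no argument of its own for this statement; it simply cites \cite[Proposition 3.3]{Ek2}, so there is no paper proof to compare against. Your route is a complete, self-contained proof:
\begin{enumerate}
\item use $F^{-1}$ on $N$ to put an $\RR$-module structure on $M\otimes_W N$;
\item obtain a left inverse $\Phi$ from the universal property of $M\stimes N$;
\item get surjectivity because the $W$-span of the elements $m\stimes n$ is stable under $F$, $V$ and $d$.
\end{enumerate}

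Two steps should be tidied up.

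First, your verification of $FdV=d$ on $M\otimes_W N$ wanders. The clean input is the identity $dF=FdVF=Fdp=pFd$ in $\RR$, which gives $pFdF^{-1}n=dn$ for $n\in N$. Hence
\[
FdV(m\otimes n)=FdVm\otimes n+(-1)^{\deg m}\,FVm\otimes FdF^{-1}n=dm\otimes n+(-1)^{\deg m}\,m\otimes pFdF^{-1}n=d(m\otimes n).
\]

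Second, only the first $V$-relation, $(Vm)\stimes n=V(m\stimes Fn)$, holds purely ``by the choice of $V$''. For the second relation, $m\stimes Vn=V(Fm\stimes n)$, you need $V(Fm\otimes n)=pm\otimes F^{-1}n=m\otimes pF^{-1}n$ to equal $m\otimes Vn$, i.e.\ $Vn=pF^{-1}n$ in $N$. This holds because both sides map to $pn$ under the injective map $F$.

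With these two lines filled in, the argument is complete.
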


\begin{proposition}[{\cite[Lemma 4.6]{Ek2}}]
\label{tensor R1 symmetric monoidal}
Let \(M\) and \(N\) be left graded \(\RR\)-modules. Then the natural map
$M \otimes_W N \to M \stimes N$, sending $m \otimes n$ to $m \stimes n$
for homogeneous elements $m \in M$ and $n \in N$, induces an isomorphism
$(\RR_1 \otimes_\RR M) \otimes_k (\RR_1 \otimes_\RR N) \xrightarrow{\cong} \RR_1 \otimes_\RR (M \stimes N)$.
\end{proposition}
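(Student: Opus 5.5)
The plan is to build the map from the universal property of $\stimes$, and then to show it is an isomorphism by reducing to free modules, where \Cref{R is stimes-flat} gives everything explicitly.

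\textbf{Step 1: the map is well defined.} The assignment $(m,n)\mapsto m\stimes n$ is $W$-bilinear, so it induces $M\otimes_W N\to M\stimes N$. After reduction to $\RR_1\otimes_\RR(M\stimes N)=(M\stimes N)/(V+dV+F\text{-}\text{stuff})$, the map descends along $M\to \RR_1\otimes_\RR M$ and $N\to\RR_1\otimes_\RR N$. To see this, note that $\RR_1\otimes_\RR M=M/(VM+dVM+pM)$; since $p=FV$, we have $pM\subset VM$ up to $\sigma$-twists. We then check that the images of $Vm\stimes n$ and $dVm\stimes n$ vanish:
\begin{itemize}
\item $(Vm)\stimes n=V(m\stimes Fn)$, which lies in $V(M\stimes N)$.
\item $(dVm)\stimes n=d(Vm\stimes n)\pm Vm\stimes dn$, and both terms lie in $V(M\stimes N)+dV(M\stimes N)$.
\end{itemize}
The same holds symmetrically in $N$. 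This yields a natural map $\phi_{M,N}\colon(\RR_1\otimes_\RR M)\otimes_k(\RR_1\otimes_\RR N)\to\RR_1\otimes_\RR(M\stimes N)$. It is surjective because $M\stimes N$ is generated over $\RR$ by the elements $m\stimes n$, and modulo $V,dV$ the only remaining $\RR$-action comes from $F$ and $d$. Here we use $F(m\stimes n)=Fm\stimes Fn$ and the Leibniz rule to absorb $F$ and $d$ into the pure tensors.

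\textbf{Step 2: reduce to free modules.} Both sides are right exact in each variable: $\RR_1\otimes_\RR-$ and $-\stimes N$ are right exact, and $\otimes_k$ is exact. Both also commute with direct sums and grading shifts. Choosing graded free presentations $P_1\to P_0\to M\to 0$ (and likewise for $N$) and applying the five lemma to cokernels, it suffices to treat $M=\RR(a)$ and $N=\RR(b)$. Shifts commute with everything, so we may take $M=N=\RR$.

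\textbf{Step 3: the case $M=N=\RR$, which I expect to be the main obstacle.} By \Cref{stimesexamples}, $\RR\stimes\RR$ is graded free on $F^i\stimes1$ ($i\ge0$), $1\stimes F^i$ ($i\ge1$), $F^id\stimes1$ ($i\ge0$) and $1\stimes F^id$ ($i\ge1$). For a graded free module $\bigoplus\RR e_\alpha$, the quotient $\RR_1\otimes_\RR(-)$ has $k$-basis $\{F^je_\alpha, F^jde_\alpha\}$, since $\RR_1$ has $k$-basis $F^j$ ($j\ge0$) and $F^jd$. On the other side, $\RR_1\otimes_k\RR_1$ has basis $F^a\otimes F^b$, $F^ad\otimes F^b$, $F^a\otimes F^bd$ and $F^ad\otimes F^bd$.

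The task is to match these bases bijectively under $\phi$, using relations such as $F^a\stimes F^b=F^{\min}(F^{a-\min}\stimes1)$ or $F^{\min}(1\stimes F^{b-\min})$. Mixed terms like $F^ad\stimes F^b$ are expressed through the Leibniz rule together with the relation $F^a d(\ldots)$. The delicate part is tracking the $d\stimes d$ terms: $F^ad\stimes F^bd$ must correspond to $F^jd(F^id\stimes1)$ or $F^jd(1\stimes F^id)$ modulo $V,dV$, with the signs handled by the Leibniz rule. I would organize the count by total grading and by $\min(a,b)$, check that the resulting map on bases is a bijection (equivalently, that the dimensions in each bidegree agree and $\phi$ is surjective), and conclude. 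If the direct basis matching becomes messy, I would instead use \Cref{R is stimes-flat}: it gives $\RR_1\otimes_\RR(\RR\stimes M)$ as the $F^i\stimes M$ and $F^id\stimes M$ summands modulo $V,dV$, which proves the case $M=\RR$ with $N$ arbitrary and suffices after the reduction in Step 2.
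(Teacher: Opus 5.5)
Since the paper only cites Ekedahl for this statement, there is no in-paper proof to compare with. Your Steps 1 and 2 are sound: the descent of $m\otimes n\mapsto m\stimes n$ through $VM+dVM$ and $VN+dVN$ via $(Vm)\stimes n=V(m\stimes Fn)$ and Leibniz, the surjectivity, and the reduction to free modules by right exactness and compatibility with direct sums.

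The gap is Step 3. It carries the whole content of the lemma, you only sketch it, and the verification method you propose does not work.

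\textbf{Dimension count fails.} The graded pieces are infinite-dimensional. For example, $(\RR_1\otimes_k\RR_1)^0$ has basis all $F^a\otimes F^b$, and fixing $\min(a,b)$ still leaves infinitely many. So ``the dimensions agree and $\phi$ is surjective'' proves nothing.

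\textbf{Basis matching is not a bijection.} In grading $1$ one has $F^a\otimes F^ad\mapsto F^a(1\stimes d)=F^ad(1\stimes 1)-F^a(d\stimes 1)$. So on the bases from \Cref{stimesexamples} you need a unitriangularity argument instead. Alternatively, use a genuinely finite auxiliary grading: the $F$-weight, with $F^a\otimes F^b$ of weight $\max(a,b)$ and $F^j$ times a generator of index $i$ of weight $i+j$.

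\textbf{A small slip.} $\RR_1\otimes_\RR X=X/(VX+dVX)$; nothing involving $F$ is divided out.

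Your fallback via \Cref{R is stimes-flat} is the right way to close the gap. But ``the $F^i\stimes M$ and $F^id\stimes M$ summands modulo $V,dV$'' is exactly the claim that needs proof: you must compute $V$ and $dV$ on the summands. Take $X=\RR\stimes M$.

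For $i\geqslant 1$ the relations give
\[
V(F^i\stimes m)=F^{i-1}\stimes Vm,\qquad V(F^id\stimes m)=F^{i-1}d\stimes Vm,
\]
\[
dV(F^i\stimes m)=p^{i-1}F^{i-1}d\stimes Vm+F^{i-1}\stimes dVm,\qquad dV(F^id\stimes m)=-F^{i-1}d\stimes dVm.
\]
In addition:
\begin{itemize}
\item $V(d\stimes m)=pdV(1\stimes m)-V(1\stimes dm)$ and $dV(d\stimes m)=-dV(1\stimes dm)$;
\item $V$ and $dV$ send $1\stimes m$ and the summands $V^i(1\stimes M)$, $dV^i(1\stimes M)$ ($i>0$) back into $\bigoplus_{i>0}V^i(1\stimes M)\oplus\bigoplus_{i>0}dV^i(1\stimes M)$, using $Vd=pdV$.
\end{itemize}
Reading these identities backwards, e.g.\ $F^j\stimes dVm=dV(F^{j+1}\stimes m)-p^jF^jd\stimes Vm$, gives the reverse inclusion. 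Hence
\[
VX+dVX=\bigoplus_{i>0}V^i(1\stimes M)\oplus\bigoplus_{i>0}dV^i(1\stimes M)\oplus\bigoplus_{i\geqslant 0}F^i\stimes(VM+dVM)\oplus\bigoplus_{i\geqslant 0}F^id\stimes(VM+dVM).
\]
Therefore $\RR_1\otimes_\RR(\RR\stimes M)\cong\bigoplus_{i\geqslant 0}\bigl(F^i\stimes\overline{M}\oplus F^id\stimes\overline{M}\bigr)$ with $\overline{M}=\RR_1\otimes_\RR M$. Since $\RR_1$ has left $k$-basis $\{F^i,F^id\}_{i\geqslant 0}$, $\phi$ maps $F^i\otimes\bar m$ and $F^id\otimes\bar m$ in $\RR_1\otimes_k\overline{M}$ isomorphically onto these summands. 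With this computation supplied, your argument is complete.
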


Our next goal is to show that the symmetric monoidal structure on graded left \(\RR\)-modules
extends to the whole category \(\DGr(\RR)\).
In \cite[Proposition 4.5]{Ek2}, Ekedahl obtained such a structure only at the level of the homotopy category,
and one of the two inputs had to be bounded above.
For our purposes, it is more convenient to construct such a symmetric monoidal structure
at the $\infty$-categorical level and remove the bounded-above restriction. 

Recall that for a collection $K$ of simplicial sets, Lurie \cite[Definition 4.8.1.1]{HA} 
uses $\mathrm{Cat}_{\infty}(K)$ to denote the subcategory of $\mathrm{Cat}_{\infty}$
spanned by $\infty$-categories admitting colimits indexed by elements of $K$,
and by functors preserving such colimits.
Moreover, according to \cite[Proposition 4.8.1.3]{HA}, each 
$\mathrm{Cat}_{\infty}(K)$ is a symmetric monoidal $\infty$-category. 
Furthermore, following the proof of \emph{loc.\ cit.}, one sees that if $\mathcal{C}$ is a $1$-category admitting
colimits indexed by elements of $K$, then promoting $\mathcal{C}$ to a (non-unital)
commutative algebra object in $\mathrm{Cat}_{\infty}(K)$ is equivalent to equipping
$\mathcal{C}$ with a (non-unital) symmetric monoidal structure that preserves 
colimits indexed by elements of $K$ in each variable. 

\begin{construction}
\label{star product free level}
Let $K$ be the collection of all sets, viewed as a collection of simplicial sets
via their nerves, so that colimits indexed by elements of $K$ simply mean direct sums.
By \Cref{stimesexamples} and the fact that $- \stimes -$ respects
direct sums in one variable, we see that $M \stimes N$ is a graded free left
$\RR$-module whenever both $M$ and $N$ are.
Thus \Cref{star product abelian level} gives rise to an object
$\mathcal{GM}od^{\mathrm{free}}(\RR)$ in
$\mathrm{CAlg}^{\mathrm{nu}}(\mathrm{Cat}_{\infty}(K))$
whose underlying object is the category of graded free left $\RR$-modules.
Here the superscript ``nu'' stands for non-unital\footnote{The unit for Ekedahl's product is \emph{not}
a graded free $\RR$-module.}
(see \cite[Definition 5.4.4.1]{HA}). 
Moreover, the functor
\(
\RR_1 \otimes_\RR - \colon
\mathcal{GM}od^{\mathrm{free}}(\RR)
\to
\mathcal{GM}od^{\mathrm{free}}(k)
\)
is a morphism in
$\mathrm{CAlg}^{\mathrm{nu}}(\mathrm{Cat}_{\infty}(K))$,
thanks to \Cref{tensor R1 symmetric monoidal}.
\end{construction}

Next, we construct a symmetric monoidal structure on the connective part of
$\DGr(\RR)$. By \cite[Remark 4.8.1.8]{HA}, for any inclusion of collections
$K \subset K'$ of simplicial sets, the functor
$\mathcal{P}^{K'}_K$ of ``freely adjoining colimits indexed by elements of $K'$
while preserving those indexed by elements of $K$'' is symmetric monoidal. 

\begin{construction}
\label{star product connective level}
Now let $K'$ denote the collection of all simplicial sets.
Using the fact that $\mathcal{P}^{K'}_K$ is symmetric monoidal, and the non-unital
commutative algebra structure on $\mathcal{GM}od^{\mathrm{free}}(\RR)$ from \Cref{star product free level},
we see that the object 
$\mathcal{P}^{K'}_K(\mathcal{GM}od^{\mathrm{free}}(\RR)) \cong \DGr^{\leqslant 0}(\RR)$ 
also admits a non-unital symmetric monoidal structure.
Here $\DGr^{\leqslant 0}(\RR)$ denotes the connective part of $\DGr(\RR)$.
\end{construction}

Let us explicate the induced symmetric product in the above construction.

\begin{remark}
\label{explicating star product connective level}
Recall that by Dold--Kan correspondence, each object in $\DGr^{\leqslant 0}(\RR)$
can be represented by a simplicial colimit of graded free left $\RR$-modules.
According to the proof of \cite[Proposition 4.8.1.3]{HA},
the induced non-unital symmetric monoidal structure preserves arbitrary colimits in each variable
and restricts to the given $- \stimes -$ when both inputs are graded free.
Therefore, at the homotopy level, the induced non-unital symmetric monoidal structure
on $\DGr^{\leqslant 0}(\RR)$ is the left derived functor $- \stimes^{\rmL} -$ of $- \stimes -$.
In particular, $\pi_0(M \stimes^{\rmL} N) = \pi_0(M) \stimes \pi_0(N)$.
\end{remark}

Recall \cite[Theorem 5.4.4.5]{HA}, which states that
a non-unital symmetric monoidal structure uniquely (up to contractible choices)
arises from a symmetric monoidal structure if and only if it admits a homotopy unit.

\begin{proposition}
\label{unital star product connective level}
The non-unital symmetric monoidal structure in \Cref{star product connective level}
admits a unit; therefore it defines a symmetric monoidal structure on $\DGr^{\leqslant 0}(\RR)$.
\end{proposition}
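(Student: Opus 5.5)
The natural candidate for the unit is $W=W(k)$, placed in grading $0$ and cohomological degree $0$, with $F,V$ the Witt vector Frobenius and Verschiebung and $d=0$. By \cite[Theorem 5.4.4.5]{HA} it suffices to show that $W$ is a \emph{quasi-unit}: there should be a morphism $u\colon W\stimes^{\rmL}W\to W$ such that, for every $M\in\DGr^{\leqslant 0}(\RR)$, the induced map $M\stimes^{\rmL}W\to M$ is an equivalence. Symmetry then handles the other side. For $u$ I take the abelian-level isomorphism $W\stimes W\cong W$ of \Cref{star product abelian level}, precomposed with the canonical map $W\stimes^{\rmL}W\to\pi_0(W\stimes^{\rmL}W)=W\stimes W$ from \Cref{explicating star product connective level}.

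To compare $M\stimes^{\rmL}W$ with $M$, I will work through the functor $-\stimes^{\rmL}W$. It preserves colimits, since the product does so in each variable. Given a natural map $M\stimes^{\rmL}W\to M$, every object of $\DGr^{\leqslant 0}(\RR)$ is a geometric realization of graded free modules, so it is enough to prove two things. First, the map is an equivalence for $M$ graded free, and by compatibility with direct sums and grading shifts, for $M=\RR$. Second, the map is natural at the $\infty$-level. For naturality, I construct the map as follows: the free left $\RR$-module $\RR$ resolves $W$ via $\RR\to W$, $1\mapsto 1$, and this gives a natural transformation $M\stimes^{\rmL}\RR\to M\stimes^{\rmL}W$. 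More conceptually, I will define the natural map $M\stimes^{\rmL}W\to M$ as the left Kan extension of the abelian isomorphism $m\stimes 1\mapsto m$ from free modules. This works because both functors $M\mapsto M\stimes^{\rmL}W$ and $M\mapsto M$ are colimit-preserving on $\mathcal{P}^{K'}_K(\mathcal{GM}od^{\mathrm{free}}(\RR))$, and hence are determined by their restrictions to free modules.

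The key computation is therefore that $\RR\stimes^{\rmL}W\simeq\RR\stimes W\cong\RR$, which requires the higher derived terms to vanish. Here I use \Cref{R is stimes-flat}: $\RR\stimes-$ is exact. So if I resolve $W$ by a simplicial graded free module $P_\bullet\to W$, then $\RR\stimes P_\bullet$ computes $\RR\stimes^{\rmL}W$, and exactness of $\RR\stimes-$ (applied to the augmented complex, via the Dold--Kan correspondence) gives $\pi_i(\RR\stimes P_\bullet)=0$ for $i>0$ and $\pi_0=\RR\stimes W\cong\RR$. Combining this with the abelian isomorphism from \Cref{star product abelian level} yields the equivalence for $M=\RR$, and by the reduction above for all $M$.

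The main obstacle is making the unit map coherent, namely checking that the transformation $M\stimes^{\rmL}W\to M$ obtained by Kan extension agrees with the one induced by $u$ through the non-unital monoidal structure. This is what the quasi-unit criterion actually requires. I would handle it by noting that both transformations are colimit-preserving in $M$ and coincide on graded free modules. On free modules everything is discrete, so agreement is just the abelian identity $(m\stimes 1)\stimes 1\mapsto m\stimes 1$ in \Cref{star product abelian level}, and no higher coherence data needs to be checked.
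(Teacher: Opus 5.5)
Your proposal is correct and is essentially the paper's argument. Exactness of $\RR\stimes-$ (\Cref{R is stimes-flat}) lets you compute $-\stimes^{\rmL}W$ on free modules with no higher terms. The abelian-level isomorphism $P\stimes W\cong P$ (which the paper obtains from \Cref{stimes Frob bijection}) then exhibits $W$ as a homotopy unit.

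Your reduction via colimits to $M=\RR$, and your Kan-extension construction of $M\stimes^{\rmL}W\to M$ from free modules, only make explicit the naturality the paper leaves implicit. One small slip: $\RR\to W$, $1\mapsto 1$, is a surjection, not a resolution, but you never actually use it.
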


\begin{proof}
According to \Cref{explicating star product connective level}, the symmetric product
$- \stimes^{\rmL} -$ is the derived functor of $- \stimes -$.
By \Cref{R is stimes-flat}, we see that one only has to resolve one of the variables
by graded free modules when computing $- \stimes^{\rmL} -$.
Therefore, by \Cref{stimes Frob bijection}, we see that $W$ is a homotopy unit
of $- \stimes^{\rmL} -$.
\end{proof}

%It preserves arbitrary colimit in each variable, has a unit
%which at homotopy level is given by left derived functor of Ekedahl's $- \stimes -$.
\begin{remark}
\label{derived tensor R1 is symmetric monoidal connective level}
By the last statement of \Cref{star product free level}, we see that
$\RR_1 \otimes^{\rmL}_\RR (-) = \mathcal{P}^{K'}_K(\RR_1 \otimes_\RR -)$ is a symmetric monoidal functor
from $\DGr^{\leqslant 0}(\RR)$ to $\DGr^{\leqslant 0}(k)$.
\end{remark}

Lastly, we obtain an induced symmetric monoidal structure on the whole of $\DGr(\RR)$
by tensoring with the category of spectra $\mathrm{Sp}$, as follows:

\begin{construction}
\label{star product derived level}
Observe that $\DGr^{\leqslant 0}(\RR)$ is presentable; therefore
\Cref{star product connective level} promotes it to an object of
$\mathrm{CAlg}(Pr^{\rmL})$.
Since the category $\mathrm{Sp}$ of spectra is also an object of
$\mathrm{CAlg}(Pr^{\rmL})$, we obtain an object
\(
\DGr^{\leqslant 0}(\RR) \otimes \mathrm{Sp}
\in
\mathrm{CAlg}(Pr^{\rmL}).
\)
According to \cite[Example 4.8.1.23]{HA}, the underlying category is simply
\(
\mathrm{Sp}(\DGr^{\leqslant 0}(\RR)) = \DGr(\RR)
\)
(see \cite[Remark C.1.2.10.(b)]{SAG}).
Therefore we obtain a symmetric monoidal structure on $\DGr(\RR)$.
We continue to denote the induced symmetric monoidal product by $-\stimes^{\rmL}-$.
\end{construction}

\begin{remark}
Let us summarize the properties of $- \stimes^{\rmL} -$
obtained in the above construction:
\begin{enumerate}
\item It preserves arbitrary colimits in each variable:
This is a general feature of 
$\mathrm{CAlg}(Pr^{\rmL})$;
\item It also preserves finite limits in each variable:
This is because finite limits are always shifts of finite colimits in a stable $\infty$-category.
\end{enumerate}
Combining these two properties, we see that for any pair of objects $M, N \in \DGr(\RR)$, we may compute
$M \stimes^{\rmL} N = \mathrm{colim}_{(m, n) \in \mathbb{Z}^2} \tau^{\leqslant m}M \stimes^{\rmL} \tau^{\leqslant n} N$;
and for each $\tau^{\leqslant m}M \stimes^{\rmL} \tau^{\leqslant n} N$, we can compute by resolving
one of them by graded free left $\RR$-modules.
In particular, our $- \stimes^{\rmL} -$ extends Ekedahl's symmetric monoidal product
at the homotopy level.
\end{remark}

\begin{remark}
\label{derived tensor R1 is symmetric monoidal}
Taking \Cref{derived tensor R1 is symmetric monoidal connective level} and passing to
$- \otimes \mathrm{Sp}$, we also see that
$\RR_1 \otimes^{\rmL}_\RR -$ is a symmetric monoidal functor from $\DGr(\RR)$ to $\DGr(k)$.
\end{remark}

Since the cohomology of de Rham--Witt complexes is always complete, it is desirable
to have a symmetric monoidal structure on $\widehat{\DGr}(\RR)$.

\begin{proposition}
\label{star product complete level}
There is a unique way to endow $\widehat{\DGr}(\RR)$ with a symmetric monoidal structure
such that the completion functor $\widehat{(-)} \colon \DGr(\RR) \to \widehat{\DGr}(\RR)$ is symmetric monoidal.
\end{proposition}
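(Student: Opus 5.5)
We will use the standard criterion for a localization of a symmetric monoidal $\infty$-category to inherit a symmetric monoidal structure, namely \cite[Proposition 2.2.1.9]{HA}: if $L\colon \mathcal{C}\to\mathcal{C}$ is a localization whose local equivalences are stable under tensoring with any object, then the essential image $L\mathcal{C}$ admits a unique symmetric monoidal structure making $L\colon\mathcal{C}\to L\mathcal{C}$ symmetric monoidal; concretely, the product is $L(X\otimes Y)$. By \Cref{completion is a localization}, the completion functor $\widehat{(-)}$ is a localization of $\DGr(\RR)$ onto $\widehat{\DGr}(\RR)$. So the whole task is to show that the class of $\widehat{(-)}$-equivalences is compatible with $-\stimes^{\rmL}-$. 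Explicitly, if $f\colon M\to M'$ becomes an equivalence after completion, then so does $f\stimes^{\rmL}\id_N$ for every $N\in\DGr(\RR)$.

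The first step is to characterize completion-equivalences. By \Cref{tensor Rn and completion}, $\RR_n\otimes^{\rmL}_\RR M\simeq \RR_n\otimes^{\rmL}_\RR \widehat M$. Since $\widehat M=\lim_n \RR_n\otimes^{\rmL}_\RR M$, a map $f$ is a completion-equivalence if and only if $\RR_n\otimes^{\rmL}_\RR f$ is an equivalence for all $n$. We would then reduce this to $n=1$. The cofiber $C$ of $f$ satisfies $\RR_n\otimes^{\rmL}_\RR C=0$ for all $n$ iff $\RR_1\otimes^{\rmL}_\RR C=0$. We would try to prove this by the dévissage $0\to\sigma_*\RR_{n-1}\to\RR_n\to\RR_1\to0$ (or similar extensions of $\RR_n$ by twisted copies of $\RR_1$, as in the proof of \Cref{Ek2I.1.1}) as right $\RR$-modules, together with induction on $n$. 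Here one must be careful that the kernel of $\RR_n\to\RR_{n-1}$ is built from Frobenius-twisted copies of $\RR_1$, possibly up to grading shifts; \Cref{Ek2I.1.1} with $A=\{0\}$ essentially encodes this without boundedness hypotheses on the level of each fixed $n$.

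Granting this, the key computation is \Cref{derived tensor R1 is symmetric monoidal}: $\RR_1\otimes^{\rmL}_\RR(C\stimes^{\rmL}N)\simeq(\RR_1\otimes^{\rmL}_\RR C)\otimes^{\rmL}_k(\RR_1\otimes^{\rmL}_\RR N)$. If $\RR_1\otimes^{\rmL}_\RR C=0$, this vanishes. Since $-\stimes^{\rmL}N$ is exact, $\mathrm{cofib}(f\stimes^{\rmL}\id_N)=C\stimes^{\rmL}N$, and therefore $f\stimes^{\rmL}\id_N$ is a completion-equivalence. Uniqueness of the structure is then part of the cited criterion, since $\widehat{(-)}$ is essentially surjective onto $\widehat{\DGr}(\RR)$.

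The main obstacle is the reduction from all $\RR_n$ to $\RR_1$ with no boundedness on $C$. If the dévissage of $\RR_n$ by twisted $\RR_1$'s is not a finite extension of right $\RR$-modules, the plan fails at this point. In that case the fallback is to prove directly the analogue of \Cref{tensor R1 symmetric monoidal} for $\RR_n$, namely $\RR_n\otimes^{\rmL}_\RR(C\stimes^{\rmL}N)$ vanishes if $\RR_n\otimes^{\rmL}_\RR C$ does. We would first check this on graded free $C,N$ using \Cref{R is stimes-flat}, then extend by colimits and the fact that both sides commute with colimits in each variable.
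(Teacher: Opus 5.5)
Your overall frame is sound. \cite[Proposition 2.2.1.9]{HA} reduces the statement to showing that $f\stimes^{\rmL}\id_N$ is a completion-equivalence whenever $f$ is. Completion-equivalences are exactly the maps inverted by all $\RR_n\otimes^{\rmL}_\RR-$. The $\RR_1$-step via \Cref{derived tensor R1 is symmetric monoidal} is also correct.

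\textbf{The gap.} It is the step you flag yourself. You need ``$\RR_1\otimes^{\rmL}_\RR C'=0\Rightarrow\RR_n\otimes^{\rmL}_\RR C'=0$'' for $C'=C\stimes^{\rmL}N$, where $C$ is an arbitrary completion-acyclic object. Nothing you propose proves this.
\begin{itemize}
\item The d\'evissage does not exist. The natural injection $\rho\colon\RR_{n-1}\to\RR_n$ has cokernel $\RR_n/p\RR_n$, in which $V,\dots,V^{n-1}$ survive. The kernel of $\RR_n\to\RR_{n-1}$ is a quotient of $\RR_1\oplus\RR_1(-1)$ via $(x,y)\mapsto V^{n-1}x+dV^{n-1}y$, not a twisted copy of $\RR_1$.
\item \Cref{Ek2I.1.1} does not supply this either. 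It explicitly assumes that $M$ is grading-left bounded or bounded above (resp.\ grading-right bounded or bounded below), because its conclusion on one diagonal uses input from the neighbouring diagonals.
\end{itemize}

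This is not a technicality. One checks that, modulo the localizing subcategory generated by the $\RR_1(i)[j]$, the right module $\RR_2$ becomes a sum of shifts of the trivial right module $k$ (with $F,V,d$ acting by $0$). This $k$ sits in exact sequences $0\to\RR_1\xrightarrow{F\cdot}\RR_1/kd\to k\to 0$ and $0\to k(-1)\to\RR_1\to\RR_1/kd\to 0$. So $\RR_1\otimes^{\rmL}_\RR C'=0$ only gives $X\simeq X(-1)[1]$ for $X=k\otimes^{\rmL}_\RR C'$. That kills $X$ when $C'$ is, say, grading-left bounded or bounded above, but not in general, and $C\stimes^{\rmL}N$ has no such boundedness.

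\textbf{The fallback.} It does not work either. No nonzero graded free $C$ satisfies $\RR_n\otimes^{\rmL}_\RR C=0$, so there is nothing to check on free objects. Moreover, for $n\geqslant 2$ the functor $\RR_n\otimes^{\rmL}_\RR-$ is not monoidal (the naive K\"unneth map for $W_n\Omega^\bullet$ is not an isomorphism). So there is no analogue of \Cref{tensor R1 symmetric monoidal} to appeal to.

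\textbf{The missing idea.} Test only the generating completion-equivalences $\eta_M\colon M\to\widehat{M}$; this allows a reduction to bounded objects.
\begin{itemize}
\item Apply two-out-of-three to the square comparing $f\stimes^{\rmL}\id_N$ with $\widehat{f}\stimes^{\rmL}\id_N$. Your criterion becomes: $M\stimes^{\rmL}N\to\widehat{M}\stimes^{\rmL}N$ is a completion-equivalence for all $M,N$.
\item This condition is stable under colimits in $N$. Since completion is $t$-bounded, it can be checked for $M$ on graded free modules. So one may take $M,N$ graded free in cohomological degree $0$.
\item Then $\widehat{M\stimes N}\to\widehat{\widehat{M}\stimes N}$ is a map of complete, cohomologically bounded objects. \Cref{Ek2I.1.1} now legitimately reduces it to $\RR_1\otimes^{\rmL}_\RR-$.
\item Your monoidality argument together with \Cref{tensor Rn and completion} finishes.
\end{itemize}
This is how the paper argues.
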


We shall denote the symmetric monoidal product on $\widehat{\DGr}(\RR)$ by $- \cdstimes -$.

\begin{proof}
Let us mimic the proof of \cite[Theorem 6.2]{Condensed}. The requirement that
$\widehat{(-)}$ be symmetric monoidal forces $- \cdstimes - \coloneqq \widehat{(- \stimes^{\rmL} -)}$.
Similar to loc.~cit., all we need to check is that for any $M, N \in \DGr(\RR)$, the natural map
$M \stimes^{\rmL} N \to \widehat{M} \stimes^\rmL N$ becomes an isomorphism after completion.
Since both functors commute with colimits in $N$, we may first assume that $N$ is bounded above (so that it is 
represented by a bounded above complex of graded free modules).
We may then further assume that $N$ is a graded free left $\RR$-module concentrated in cohomological degree $0$
(by considering the stupid filtration on the representing complex).
%Finally, we may assume that $N = \RR(i)$ by commuting with filtered colimits and finite direct sums.
In particular, at this point $- \stimes^{\rmL} N$ is exact.
Since completion is $t$-bounded, we can repeat the same reduction process for $M$. 
As a result, we may also assume that $M$ is a graded free left $\RR$-module concentrated in cohomological degree $0$.
Therefore, it suffices to prove that
$\widehat{M \stimes N} \to \widehat{\widehat{M} \stimes N}$\footnote{Notice that we dropped the ``$\rmL$'' for both star products,
since $N$ is graded free.}
is an isomorphism. Since both sides are complete and (cohomologically) bounded, 
by \Cref{Ek2I.1.1}, it suffices to show that the map becomes an isomorphism
after applying $\RR_1 \otimes^\rmL_\RR -$.
Using \Cref{tensor Rn and completion}, after applying $\RR_1 \otimes^\rmL -$, the map becomes
$\RR_1 \otimes^\rmL_\RR (M \stimes N) \to \RR_1 \otimes^\rmL_\RR (\widehat{M} \stimes^\rmL N)$.
Using \Cref{derived tensor R1 is symmetric monoidal}, the map becomes
\[
(\RR_1 \otimes^\rmL_\RR M) \otimes_k (\RR_1 \otimes^\rmL_\RR N)
\xrightarrow{(\RR_1 \otimes^\rmL \eta) \otimes \id} (\RR_1 \otimes^\rmL_\RR \widehat{M}) \otimes^\rmL_k (\RR_1 \otimes^\rmL_\RR N).
\]
Using \Cref{tensor Rn and completion} again, we see that the map on the first factor is an isomorphism.
\end{proof}

\begin{theorem}[c.f.~{\cite[Theorem II.1.1]{Ek2}}]
\label{K\"unneth}
Let \(X, Y\) be smooth varieties over \(k\). Assume that one of the following holds:
\begin{enumerate}
\item Both $X$ and $Y$ are quasi-compact; or
\item The Hodge cohomology of either $X$ or $Y$ is finite-dimensional.
\end{enumerate}
Then there is a natural isomorphism:
\[
R\Gamma(X,W\Omega^\bullet_{X/k})\cdstimes R\Gamma(Y,W\Omega^\bullet_{Y/k})
\xrightarrow{\cong} R\Gamma(X \times Y, W\Omega^\bullet_{X \times Y/k}).
\]
\end{theorem}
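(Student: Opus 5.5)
To prove the K\"unneth isomorphism, I would first build the map, then reduce the isomorphism to a statement about Hodge cohomology via $\RR_1\otimes^{\rmL}_\RR-$, and finally compare with the classical K\"unneth formula for differential forms.

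\emph{The map.} Ekedahl's product of de Rham--Witt forms, $\omega\stimes\eta\mapsto \mathrm{pr}_1^*\omega\cdot\mathrm{pr}_2^*\eta$, satisfies exactly the relations of \Cref{star product abelian level}: $V(x)\cdot y=V(x\cdot Fy)$, $F$ is multiplicative, and the Leibniz rule holds for $d$. It therefore gives a map of complexes of graded $\RR$-modules $W\Omega^\bullet_X\stimes W\Omega^\bullet_Y\to W\Omega^\bullet_{X\times Y}$. I would first produce this on affine opens $U\subset X$, $U'\subset Y$, functorially in the pair. Taking $R\Gamma$ and using the lax monoidal structure coming from \Cref{star product derived level}, this gives a map $R\Gamma(X,W\Omega^\bullet)\stimes^{\rmL}R\Gamma(Y,W\Omega^\bullet)\to R\Gamma(X\times Y,W\Omega^\bullet)$. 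The target is complete: it is a limit of complete objects (\Cref{completeness closed under taking limit}), since for affines $W\Omega^\bullet$ is the limit of the $W_n\Omega^\bullet=\RR_n\otimes^{\rmL}W\Omega^\bullet$ by \Cref{Wn and Rn}. By \Cref{completion is a localization}, the map therefore factors through the completion, giving the desired arrow out of $-\cdstimes-$.

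\emph{Reduction to Hodge cohomology.} Both sides are complete. In case (1), both are cohomologically bounded, because quasi-compact smooth schemes have finite cohomological dimension. By the final statement of \Cref{Ek2I.1.1}, applied to the cone, it then suffices to check the map after $\RR_1\otimes^{\rmL}_\RR-$. By \Cref{tensor Rn and completion}, $\RR_1\otimes^{\rmL}_\RR(-)$ is insensitive to completion. By \Cref{derived tensor R1 is symmetric monoidal} together with \Cref{Wn and Rn}, the source becomes $R\Gamma(X,\Omega^\bullet_X)\otimes_k R\Gamma(Y,\Omega^\bullet_Y)$ with the Hodge grading, and the target becomes $R\Gamma(X\times Y,\Omega^\bullet_{X\times Y})$. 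The resulting map is the usual K\"unneth map for Hodge cohomology, using $\Omega^m_{X\times Y}=\bigoplus_{a+b=m}\Omega^a_X\boxtimes\Omega^b_Y$ and flat base change over a field. It is an isomorphism whenever $X$ and $Y$ are quasi-compact and quasi-separated, via a \v{C}ech argument.

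\emph{Case (2).} Suppose $X$ has finite-dimensional Hodge cohomology but $Y$ is arbitrary. I would write $Y$ as a union of quasi-compact opens, or use Zariski descent on $Y$, and express both sides as limits over a \v{C}ech/hypercover diagram of affines $Y_\bullet$. The right side commutes with this limit because it is a sheaf. For the left side, I need $R\Gamma(X,W\Omega)\cdstimes-$ to commute with these limits. Here finiteness enters: by \Cref{alternative definition of coherence}/\Cref{coherence criteria}, $M=R\Gamma(X,W\Omega)$ is coherent and, after $\RR_1\otimes^{\rmL}$, is a finite-dimensional complex. Tensoring with a finite complex over $k$ commutes with limits. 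Completion commutes with limits, and completed objects are detected on $\RR_n\otimes^{\rmL}$ with $\RR_n$ perfect. So on each $\RR_n\otimes^{\rmL}$-layer the functor $M\cdstimes-$ commutes with the relevant limits, and hence so does the completed product. One should also arrange boundedness so that \Cref{Ek2I.1.1} still applies, or else check the statement directly on each $\RR_n$-layer and pass to the limit.

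\emph{Main obstacle.} I expect the hardest step to be the first one: constructing the comparison map coherently at the $\infty$-categorical level rather than on the homotopy category. This means showing that the pointwise multiplication on de Rham--Witt complexes of affines is compatible with the symmetric monoidal structure built in \Cref{star product connective level}. I would try to do this by defining it on free resolutions, using that $W\Omega^\bullet$ of a smooth affine can be computed by explicit $\RR$-complexes. The commutation of $\cdstimes$ with limits in case (2) is the second delicate point.
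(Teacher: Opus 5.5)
Your architecture matches the paper's (see \Cref{KF for stacks}). You get the map from the multiplicative structure, note that both sides are complete, and use \Cref{Ek2I.1.1} to reduce to $\RR_1\otimes^{\rmL}_\RR-$. That functor is symmetric monoidal (\Cref{derived tensor R1 is symmetric monoidal}) and insensitive to completion, and K\"unneth for Hodge cohomology finishes.

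Case (1) goes through with one correction. What you actually know about the source is that it is cohomologically bounded \emph{above}: $\stimes^{\rmL}$ preserves connective objects, and completion raises the upper bound by at most one. You do not know it is bounded. Bounded above is one of the directions allowed in \Cref{Ek2I.1.1}, so the reduction is still valid.

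The map is also less of an obstacle than you expect. On affines everything lies in the heart, so $W\Omega^\bullet_U\stimes^{\rmL}W\Omega^\bullet_{U'}\to W\Omega^\bullet_{U\times U'}$ factors through $\pi_0$ and is purely $1$-categorical data. The paper packages this as the $\mathbb{E}_1$-structure of \Cref{E1 structure} and takes $p_1^*\cup p_2^*$.

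The gap is in case (2). Your argument that $M\cdstimes-$ commutes with the \v{C}ech limits only works after applying $\RR_1\otimes^{\rmL}_\RR-$. The compatibility in \Cref{tensor R1 symmetric monoidal} is special to $\RR_1$. For $n\geqslant 2$, $\RR_n\otimes^{\rmL}_\RR-$ does not turn $\stimes$ into $\otimes_{W_n}$: already $W_2(k[x])\otimes_{W_2(k)}W_2(k[y])\to W_2(k[x,y])$ misses $V[xy]$. So the claim that the commutation holds ``on each $\RR_n$-layer'' has no support.

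Passing from $n=1$ to the full statement via \Cref{Ek2I.1.1} needs a boundedness hypothesis, which you leave open, and cohomological boundedness is not available. For non-quasi-compact $Y$, e.g.\ $Y=\bigsqcup_n\mathbb{P}^n$, $R\Gamma(Y,W\Omega^\bullet)$ is unbounded above. Bounding $M\cdstimes R\Gamma(Y,W\Omega^\bullet)$ from below would need a Tor-amplitude bound for $\stimes^{\rmL}$ that you do not have.

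The missing idea is \emph{grading}-left boundedness. Every $W\Omega^\bullet(-/k)$ has no negative gradings (\Cref{dRW of a smooth stack is always complete}). This property is preserved by limits, which are computed gradingwise, and by completion. It is also preserved by $\stimes^{\rmL}$: reduce to bounded-above objects, which are resolved by sums of $\RR(i)$ with $i\leqslant 0$.

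Hence both sides of the K\"unneth map are complete and grading-left bounded by $0$ with no quasi-compactness assumption. \Cref{Ek2I.1.1} then reduces \emph{both} cases directly to the Hodge-level K\"unneth map, and you never need to commute $\cdstimes$ with limits at the de Rham--Witt level.

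Case (2) becomes a statement about Hodge cohomology only. Write $Y$ as an increasing union of quasi-compact opens $U_\lambda$. Tensoring with the finite-dimensional Hodge complex of $X$ commutes with $\lim_\lambda$. Then apply case (1) to $X\times U_\lambda$, which needs $X$ quasi-compact, as assumed in \Cref{KF for stacks}.
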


Notice that the original statement in Ekedahl's paper seems to be incorrect: He did not assume
any finiteness condition, with the simplest counterexample being taking both $X = Y = \bigsqcup_{\mathbb{Z}} \mathrm{Spec}(k)$.
However, with the technical assumption added above, Ekedahl's proof holds.
We shall prove the above statement in greater generality, allowing $X$ and $Y$ to be smooth algebraic
stacks (\Cref{KF for stacks}), so let us postpone its proof until there. 

The remainder of this subsection is devoted to some concrete computations that will be
useful for later purposes.

\begin{notation}
We write $\mathbb{D}(\alpha_p)$ for the graded left $\RR$-module
given by $k$ placed in degree $0$, on which the operators
$F$, $V$, and $d$ act trivially. 
For coprime integers $i\geqslant 1,j \geqslant 0$, we denote by $E_{j/i+j}$ the graded left
$\RR$-module
\(
E_{j/i+j} := \RR^0 / \RR^0(F^i - V^j),
\)
concentrated in degree $0$.
%\footnote{Note that $E_{1/2}$
%was already defined in \Cref{The de Rham-Witt cohomology of curves}, but there is no conflict of notation
%as the first crystalline cohomology of a supersingular elliptic curve over an algebraically closed field $k$
%is indeed given by $E_{1/2}$.}
Equivalently, $E_{j/i+j}$ is the free $W$-module generated by
\[
\{f_i, f_{i-1}, \ldots, f_1, e_0, e_1, \ldots, e_{j-1}\},
\]
with $d = 0$, and the semi-linear operators $F$ and $V$ given by
\[
\begin{aligned}
Ff_s &= f_{s+1} && (0 \le s \le i-1), &
Fe_t &= p\,e_{t-1} && (1 \le t \le j),\\
Vf_s &= p\,f_{s-1} && (1 \le s \le i), &
Ve_t &= e_{t+1} && (0 \le t \le j-1).
\end{aligned}
\]
Here we use the conventions $f_0 = e_0$ and $e_j = f_i$.
\end{notation}

Observe that $F$ commutes with $F^i - V^j$, so right multiplication by $F$ on $\RR^0$ descends 
to a well-defined map of graded left $\RR$-modules:
\(
E_{j/i+j} \xrightarrow{\cdot F} E_{j/i+j}.
\)
For \(E_{1/2}\), one verifies that this map fits into a short exact sequence
\(
0 \longrightarrow E_{1/2} \xrightarrow{\cdot F} E_{1/2}
\longrightarrow \mathbb{D}(\alpha_p) \longrightarrow 0 .
\)

\begin{lemma}
\label{decompose wdhR cdstimes k}
There is a natural decomposition
\[
\widehat{\RR} \cdstimes \mathbb{D}(\alpha_p) \cong
\Bigl(\prod_{i > 0} V^i(1 \stimes \mathbb{D}(\alpha_p))\Bigr) \oplus 
\Bigl(\bigoplus_{i \geqslant 0} F^i \stimes \mathbb{D}(\alpha_p)\Bigr) \oplus 
\Bigl(\prod_{i > 0} dV^i(1 \stimes \mathbb{D}(\alpha_p))\Bigr) \oplus 
\Bigl(\bigoplus_{i \geqslant 0} F^i d \stimes \mathbb{D}(\alpha_p)\Bigr)
\]
compatible with the decomposition in  {\Cref{R is stimes-flat}}.\footnote{This compatibility,
together with the requirement that the action of $F$, $V$, and $d$ be continuous with respect
to the topology from \Cref{complete R-module},
automatically determines the graded left $\RR$-module structure on the right-hand side.}
In particular, the object $\widehat{\RR} \cdstimes \mathbb{D}(\alpha_p) \in \widehat{\DGr}(\RR)$ is concentrated 
in cohomological degree $0$.
\end{lemma}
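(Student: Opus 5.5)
By the construction of $\cdstimes$ in \Cref{star product complete level}, I would write $\widehat{\RR} \cdstimes \mathbb{D}(\alpha_p) = \widehat{(\RR \stimes^{\rmL} \mathbb{D}(\alpha_p))}$, since completing the first factor does not change the completed product. The plan is to compute $\RR \stimes^{\rmL} \mathbb{D}(\alpha_p)$ first and then complete it by hand.

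\textbf{Step 1: the uncompleted product.} By \Cref{explicating star product connective level} and the proof of \Cref{unital star product connective level}, it suffices to resolve one variable by graded free modules. Resolving $\mathbb{D}(\alpha_p)$ and applying the exact functor $\RR \stimes -$ (\Cref{R is stimes-flat}) shows that $\RR \stimes^{\rmL} \mathbb{D}(\alpha_p) \cong \RR \stimes \mathbb{D}(\alpha_p)$ sits in cohomological degree $0$. As a graded $W$-module it is the direct sum
\[
P \coloneqq \Bigl(\bigoplus_{i>0} V^i(1\stimes \mathbb{D}(\alpha_p))\Bigr)\oplus\Bigl(\bigoplus_{i\geqslant 0} F^i\stimes \mathbb{D}(\alpha_p)\Bigr)\oplus\Bigl(\bigoplus_{i>0} dV^i(1\stimes \mathbb{D}(\alpha_p))\Bigr)\oplus\Bigl(\bigoplus_{i\geqslant 0} F^id\stimes \mathbb{D}(\alpha_p)\Bigr),
\]
with each summand a copy of $k$.

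\textbf{Step 2: the tower $\RR_n \otimes^{\rmL}_\RR P$.} Use the resolution \eqref{Rn finite flat dim}: $\RR_n\otimes^{\rmL}_\RR P$ is the complex
\[
P(-1)\xrightarrow{u_n} P(-1)\oplus P\xrightarrow{v_n} P,
\]
with $u_n(x)=(F^nx,-F^ndx)$ and $v_n(x,y)=dV^nx+V^ny$. In the basis above, $V$ shifts the $V^i$-summands and also moves $F^{i}$-summands to $F^{i-1}$-summands, using $V(F^i\stimes m)=pF^{i-1}\stimes m$, which is $0$ since $p$ kills $\mathbb{D}(\alpha_p)$. One must check this against the relations in \Cref{star product abelian level}. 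With that, one computes three things:
\begin{itemize}
\item $H^0 = P/(V^nP + dV^nP)$, which should be the truncation keeping $V^i$ and $dV^i$ for $i<n$ together with all $F^i$ and $F^id$ terms;
\item $H^{-1}$, the kernel of $v_n$ modulo the image of $u_n$;
\item $H^{-2}=\ker u_n$.
\end{itemize}
The key step is to show that the systems $\{H^{-1}\}_n$ and $\{H^{-2}\}_n$ are pro-zero, or at least Mittag-Leffler with vanishing limit. For example, $F^n$ kills the $V^i$-part with $i\le n$, and the transition maps on these kernels are multiplication by $p$ or $V$, which eventually vanish on each graded piece. Likewise, the transition maps on $\{H^0\}_n$ are surjective.

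\textbf{Step 3: pass to the limit.} Then $\mathrm{Rlim}_n$ of the tower is concentrated in degree $0$, and equals $\varprojlim_n P/(V^nP+dV^nP)$. This is exactly the displayed decomposition: products over the $V^i$ and $dV^i$ directions, and unchanged direct sums over the $F^i$ and $F^id$ directions. Compatibility with \Cref{R is stimes-flat} holds because the map $P\to \widehat{P}$ is the identity on each summand, and the $\RR$-action is the continuous extension.

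\textbf{Main obstacle.} I expect the hardest part to be the explicit control of $H^{-1}$ and $H^{-2}$ in Step 2. This depends on pinning down exactly how $F$, $V$ and $d$ act on the basis of $\RR\stimes\mathbb{D}(\alpha_p)$, for instance whether $F(V^i(1\stimes m))$ equals $pV^{i-1}(\cdots)=0$ or something nonzero, and then verifying that the transition maps, multiplication by $p$ and by $V$, annihilate these groups after finitely many steps. If the direct check gets messy, I would instead filter $P$ by finite grading and summand truncations and apply \Cref{Ek2I.1.1}-style arguments. Alternatively, I would use the exact sequence $0\to E_{1/2}\to E_{1/2}\to \mathbb{D}(\alpha_p)\to 0$ together with \Cref{stimes Frob bijection}-type facts to reduce to a simpler case.
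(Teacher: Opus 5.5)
Your proposal is correct and follows the paper's route: identify $\widehat{\RR}\cdstimes\mathbb{D}(\alpha_p)$ with the completion of $P=\RR\stimes\mathbb{D}(\alpha_p)$, tensor with the (pro-)resolution of $\{\RR_n\}$, and use that $p=0$ on $P$ and that $V$ kills everything outside the chain $V^i(1\stimes m)$ to see that the tower is pro-isomorphic to $\{P/(V^nP+dV^nP)\}$ (in fact the transition maps on $\HH^{-2}$ and $\HH^{-1}$ are zero, being induced by $p$ and $V$), after which you take the limit. The only slip is the formula $V(F^i\stimes m)=pF^{i-1}\stimes m$, which is not a star-product relation; the correct derivation uses $m\stimes Vn=V(Fm\stimes n)$ to get $V(F^i\stimes m)=F^{i-1}\stimes Vm=0$, which gives the same conclusion.
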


\begin{proof}
By \Cref{star product complete level}, we need to apply the completion functor $\widehat{(-)}$
to the graded left $\RR$-module $\RR \stimes \mathbb{D}(\alpha_p)$.
In the decomposition displayed in \Cref{R is stimes-flat}, we note that the action
of $V$ annihilates the latter three summands and that the action of $p$ is $0$.
Therefore, by the resolution of the pro-system $\{\RR_n\}$ in \Cref{pro-system resolution of Rn},
we obtain an isomorphism of pro-systems: 
\[
\{\RR_n \otimes^\rmL_\RR (\RR \stimes \mathbb{D}(\alpha_p))\}
\cong \left\{
\Bigl(\bigoplus_{0 < i < n} V^i(1 \stimes \mathbb{D}(\alpha_p))\Bigr) \oplus 
\Bigl(\bigoplus_{i \geqslant 0} F^i \stimes \mathbb{D}(\alpha_p)\Bigr) \oplus 
\Bigl(\bigoplus_{0 < i < n} dV^i(1 \stimes \mathbb{D}(\alpha_p))\Bigr) \oplus 
\Bigl(\bigoplus_{i \geqslant 0} F^i d \stimes \mathbb{D}(\alpha_p)\Bigr)
\right\}.
\]
Taking the inverse limit gives our result.
\end{proof}

\begin{proposition}
\label{key computation}
The object \(E_{1/2} \cdstimes \mathbb{D}(\alpha_p)\) is cohomologically concentrated in 
$[-1, 0]$. Its cohomologies are given by the following dominoes:
$\mathrm{H}^{-1} = U_{-1}$ and $\mathrm{H}^0 = U_1$.
\end{proposition}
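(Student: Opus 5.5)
Since $E_{1/2}$ is a finite free $W$-module, I would compute $E_{1/2}\cdstimes\mathbb{D}(\alpha_p)$ via a two-term graded free resolution of $E_{1/2}$ over $\RR$, and then apply \Cref{decompose wdhR cdstimes k}. Because $d$ acts by $0$ on $E_{1/2}=\RR^0/\RR^0(F-V)$, as a graded left $\RR$-module it is the cokernel of
\[
\RR(-1)\oplus\RR \xrightarrow{\ \phi\ } \RR,\qquad \phi(x,y)= x\,d + y\,(F-V),
\]
and this map needs a correction term. Since $d(F-V)=dF-dV$ is nonzero in $\RR$, one adds a syzygy; the expected resolution is
\[
0\to \RR(-1)\xrightarrow{\ \cdot(-(F-V)^{\iota},\, d)\ }\RR(-1)\oplus\RR\xrightarrow{\ \phi\ }\RR\to E_{1/2}\to 0 .
\]
Here the first map uses the relation $d\cdot(F-V)$ rewritten through $FdV=d$. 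I would pin down the correct element $G\in\RR^0$ with $d(F-V)=G\,d$ up to units, or rather right-multiply by suitable elements, so that the complex is exact. I would first check exactness on graded pieces using the explicit bases of $\RR^0,\RR^1$.

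Applying $-\stimes\mathbb{D}(\alpha_p)$ (exact on free modules by \Cref{R is stimes-flat}) and then completing, \Cref{star product complete level} together with \Cref{decompose wdhR cdstimes k} shows that $E_{1/2}\cdstimes\mathbb{D}(\alpha_p)$ is represented by a three-term complex in degrees $[-2,0]$:
\[
\widehat{\RR}\cdstimes\mathbb{D}(\alpha_p)(-1)\to\bigl(\widehat{\RR}\cdstimes\mathbb{D}(\alpha_p)(-1)\bigr)\oplus\widehat{\RR}\cdstimes\mathbb{D}(\alpha_p)\to\widehat{\RR}\cdstimes\mathbb{D}(\alpha_p).
\]
Each term is an explicit $k$-vector space with basis $V^i(1\stimes e)$, $F^i\stimes e$, $dV^i(1\stimes e)$, $F^id\stimes e$ (with products over the $V$-parts). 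Note $p=0$ and $V(F^i\stimes e)=F^{i-1}\stimes Fe\cdot$, which vanishes for $i\ge1$, and similarly for the other basis elements. The differentials are right multiplication by $F-V$, $d$, and so on. I would compute them on this basis using the relations of \Cref{star product abelian level}: $(Vm)\stimes n=V(m\stimes Fn)=0$, $F(1\stimes e)=F\stimes Fe$, and so on.

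Next I would read off the cohomology. Injectivity of the first map should give vanishing of $\HH^{-2}$. In $\HH^0$, right multiplication by $F-V$ kills the $F^i$-towers against the $V^i$-towers, leaving a $k[[V]]$ in grading $0$ glued by $d$ to a $dV^{\ge1}$-tail in grading $1$, namely $U_1$. In $\HH^{-1}$, the kernel is a diagonal copy yielding $k_\sigma[[V]]$ in grading $0$ with $d$-image starting at $F d=dV^{-1}$, namely $U_{-1}$. Identifying these with the dominoes of \Cref{Ut} then amounts to matching bases and the $F,V,d$ actions. As a consistency check, \Cref{tensor R1 symmetric monoidal} gives $\RR_1\otimes^{\rmL}_\RR(E_{1/2}\cdstimes\mathbb{D}(\alpha_p))\cong(\RR_1\otimes^{\rmL}E_{1/2})\otimes_k(\RR_1\otimes^{\rmL}\mathbb{D}(\alpha_p))$. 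This predicts the Hodge numbers, which should agree with those of $U_{-1}[1]\oplus U_1$ up to extension.

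The main obstacle is writing down the correct resolution of $E_{1/2}$ and the precise differentials on the completed basis, with their $\sigma$-twists and signs. If the direct resolution gets messy, I would instead use the exact sequence $0\to E_{1/2}\xrightarrow{\cdot F}E_{1/2}\to\mathbb{D}(\alpha_p)\to0$ together with a computation of $\mathbb{D}(\alpha_p)\cdstimes\mathbb{D}(\alpha_p)$ as a fallback or cross-check.
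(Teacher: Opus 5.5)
\textbf{The resolution step fails.} Your overall plan is the paper's: resolve $E_{1/2}$, apply $-\stimes\mathbb{D}(\alpha_p)$, complete, and read off kernel and cokernel from \Cref{decompose wdhR cdstimes k}. But the step you flag as the main obstacle cannot be carried out as stated. There is no exact complex $0\to\RR(-1)\to\RR(-1)\oplus\RR\xrightarrow{\phi}\RR\to E_{1/2}\to 0$ over the uncompleted ring $\RR$ with your $\phi(x,y)=xd+y(F-V)$.

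First, your syzygy is not one. Using $Vd=p\,dV$ and $dF=pFd$, one gets $(F-V)d+d(F-V)=(1+p)(Fd-dV)\neq 0$. The actual grading-$1$ syzygy is $(1-pF^2,\,Fd)$.

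The real obstruction is in grading $2$. There your complex reads $0\to\RR^1\xrightarrow{\cdot a}\RR^1\to 0$, with $a\in\RR^0$ the first component of the syzygy, so right multiplication by $a$ must be bijective on $\RR^1$. Write $\RR^1=\bigoplus_{m\in\Z}W e_m$ as finite sums, with $e_m=dV^m$ and $e_{-m}=F^md$ for $m\geq 0$. Right multiplication by $V$ is $e_m\mapsto e_{m+1}$, and by $F$ is $e_m\mapsto pe_{m-1}$. So $\cdot(1-pF^2)$ is $e_m\mapsto e_m-p^3e_{m-2}$, which is not surjective on finite sums; for example, $e_0$ is not hit. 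Your complex therefore has nonzero homology $\RR^1/\RR^1(1-pF^2)$ in grading $2$. The same computation shows that bijectivity forces $a\in W^\times V^n$. That in turn would require $p^ne_n\in\RR^1(F-V)$, which fails because $\RR^1/\RR^1(F-V)$ is $p$-torsion-free and $e_n$ is nonzero there. So no choice of your element $G$ repairs this, and the exactness check you plan will fail.

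\textbf{The missing idea.} Exactness is only needed after completion: $\cdstimes$ factors through $\widehat{(-)}$ by \Cref{star product complete level}. In $\widehat{\RR}^1$, coefficients of $e_m$ may be arbitrary as $m\to+\infty$ and must tend to $0$ $p$-adically as $m\to-\infty$. On this space, right multiplication by $F-V$ is $-S(1-pS^{-2})$, with $S$ the shift, and it is invertible because the geometric series $\sum_k p^kS^{-2k}$ converges. This is why the paper can use Ekedahl's two-term resolution $0\to\widehat{\RR}\xrightarrow{\cdot(F-V)}\widehat{\RR}\to E_{1/2}\to 0$ \cite[Lemma III.1.5]{Ek2}, with no $d$-generator at all.

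Equivalently, $1-pF^2$ is a unit in $\widehat{\RR}$, with inverse $\sum_k p^kF^{2k}$. So your corrected three-term complex, after completion, is the two-term one plus a contractible summand. Its defect $\RR^1/\RR^1(1-pF^2)$ is uniquely $p$-divisible and so has zero completion. With this repair, your identification of $\HH^{-1}$ as $U_{-1}$ and $\HH^0$ as $U_1$ agrees with the paper's computation.

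Your fallback via $0\to E_{1/2}\xrightarrow{\cdot F}E_{1/2}\to\mathbb{D}(\alpha_p)\to 0$ cannot replace this. It only relates the unknown object to itself and to $\mathbb{D}(\alpha_p)\cdstimes\mathbb{D}(\alpha_p)$, which is not computed either.
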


\begin{proof}
By \cite[Lemma III.1.5]{Ek2}, we have the following resolution of graded left $\RR$-modules:
\[
0 \rightarrow \widehat{\RR} \xrightarrow{\cdot(F^i - V^j)} \widehat{\RR} \rightarrow E_{j/i+j} \rightarrow 0.
\]
Therefore, we need to consider the map
$\widehat{\RR} \cdstimes \mathbb{D}(\alpha_p) \to \widehat{\RR} \cdstimes \mathbb{D}(\alpha_p)$
given by applying $\widehat{(-)}$ to
\[
\RR \stimes \mathbb{D}(\alpha_p) \xrightarrow{(- \cdot (F - V)) \stimes \id} \RR \stimes \mathbb{D}(\alpha_p).
\]
Using the decomposition in \Cref{decompose wdhR cdstimes k}, we obtain the following descriptions of the
kernel and cokernel:
\begin{itemize}
\item The kernel is given by $\prod_{i > 0} V^i(1 \stimes \mathbb{D}(\alpha_p))$
in grading $0$ and $\prod_{i \geqslant 0} dV^i(1 \stimes \mathbb{D}(\alpha_p))$
in grading $1$;
\item The cokernel is given by $\prod_{i \geqslant 0} V^i(1 \stimes \mathbb{D}(\alpha_p))$
in grading $0$ and $\prod_{i > 0} dV^i(1 \stimes \mathbb{D}(\alpha_p))$
in grading $1$.
\end{itemize}
This completes our computation.
\end{proof}

\subsection{Diagonal $t$-structure}
\label{subsection: Diagonal}

Recall that in \cite{Ek3}, Ekedahl defined a $t$-structure on \(\DGr_c^b(\RR)\), which he termed
the ``diagonal $t$-structure'', and proved his famous inequality \Cref{Ekeineq}. 
The goal of this subsection is to extend his diagonal $t$-structure to the whole \(\DGr_c(\RR)\).
We begin by reviewing his definition of the diagonal $t$-structure on \(\DGr_c^b(\RR)\).

\begin{definition}
For each coherent graded left \(\RR\)-module \(M\), define \(M^{\leqslant i}\) and \(M^{\geqslant i}\) 
to be the coherent \(\RR\)-modules given by
\[
\begin{aligned}
M^{\leqslant i} \coloneqq (\cdots \xrightarrow{d} M^{i-1}\xrightarrow{d} M^i 
\xrightarrow{d} F^\infty d(M^{i}) \rightarrow 0 \rightarrow \cdots), \\
M^{\geqslant i} \coloneqq (\cdots \rightarrow 0 \rightarrow M^i/F^\infty d(M^{i-1})\xrightarrow{d} M^{i+1}\rightarrow \cdots).
\end{aligned}
\]
We denote
\[
M^{[i, i]} \coloneqq (M^{\leqslant i})^{\geqslant i} = (M^{\geqslant i})^{\leqslant i} 
= (\cdots \rightarrow 0 \rightarrow M^i/F^\infty d(M^{i-1})\xrightarrow{d} F^\infty d(M^{i}) \rightarrow 0 \rightarrow \cdots).
\]

Note that these two formulas define functors \((-)^{\leqslant i}\) and \((-)^{\geqslant i}\).
Moreover, there are natural transformations
\(
(-)^{\leqslant i} \to \id \to (-)^{\geqslant j}
\)
for each \(i\) and \(j\).
The connective (resp.~co-connective) part of Ekedahl's diagonal $t$-structure on \(\DGr_c^b(\RR)\) is then defined by
\[
\begin{aligned}
\widetilde{\DGr}^{b,\leqslant 0}_{c} \coloneqq \{M \in \DGr_c^b(\RR) \mid 
\HH^i(M)^{\leqslant -i} \xrightarrow{\cong} \HH^i(M)\},\\
\widetilde{\DGr}^{b,\geqslant 0}_c \coloneqq \{M \in \DGr_c^b(\RR) \mid
\HH^i(M) \xrightarrow{\cong} \HH^i(M)^{\geqslant -i}\}.
\end{aligned}
\]
\end{definition}

\begin{theorem}[{\cite[Chapter 0 \& I]{Ek3}}]
\label{bounded diagonal t structure}
The above defines a $t$-structure on $\DGr_c^b(\RR)$. Moreover, if we
denote the canonical connective/co-connective truncation functors by $\tau^{\leqslant 0}$ and $\tau^{\geqslant 0}$,
and the truncation functors with respect to the diagonal $t$-structure by $\widetilde{\tau}^{\leqslant 0}$
and $\widetilde{\tau}^{\geqslant 0}$, then they satisfy the following properties:
\begin{enumerate}
\item For each pair of integers $(i, j)$ and each object $M \in \DGr_c^b(\RR)$, the map
$\HH^i(\widetilde{\tau}^{\leqslant j}(M)) \to \HH^i(M)$ induces a natural identification
$\HH^i(\widetilde{\tau}^{\leqslant j}(M)) \xrightarrow{\cong} \HH^{i}(M)^{\leqslant j-i}$;
similarly the map $\HH^i(M) \to \HH^i(\widetilde{\tau}^{\geqslant j}(M))$ induces a natural identification
$\HH^{i}(M)^{\geqslant j-i} \xrightarrow{\cong} \HH^i(\widetilde{\tau}^{\geqslant j}(M))$.
\item The canonical truncation functors preserve connective/co-connective part of the diagonal $t$-structure;
similarly the diagonal truncation functors preserve canonical connective/co-connective part.
\item For each pair of integers $(i, j)$, we have the following natural transformations
which are all equivalences:
$\widetilde{\tau}^{\leqslant j} \circ \tau^{\leqslant i} \xrightarrow{\cong} \tau^{\leqslant i} \circ \widetilde{\tau}^{\leqslant j}$,
$\tau^{\geqslant i} \circ \widetilde{\tau}^{\leqslant j} \xrightarrow{\cong} \widetilde{\tau}^{\leqslant j} \circ \tau^{\geqslant i}$,
$\widetilde{\tau}^{\geqslant i} \circ \tau^{\leqslant j} \xrightarrow{\cong} \tau^{\leqslant j} \circ \widetilde{\tau}^{\geqslant i}$, and
$\tau^{\geqslant i} \circ \widetilde{\tau}^{\geqslant j} \xrightarrow{\cong} \widetilde{\tau}^{\geqslant j} \circ \tau^{\geqslant i}$.
\end{enumerate}
\end{theorem}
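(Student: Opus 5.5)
The plan is to build the diagonal truncations of a general $M\in\DGr_c^b(\RR)$ by induction on the number of nonzero cohomology modules, starting from the module case given by the functors $(-)^{\leqslant i}$ and $(-)^{\geqslant i}$. First I will settle the heart-level input. For a coherent module $N$, the sequence
\[
0\to N^{\leqslant i}\to N\to N^{\geqslant i+1}\to 0
\]
is exact, and both $N^{\leqslant i}$ and $N^{\geqslant i+1}$ are coherent. Exactness holds because $F^\infty d(N^i)$ is an $\RR^0$-submodule containing $\mathrm{im}(d)$, so the quotient is a graded $\RR$-module; coherence follows from thickness of coherent modules, using the $\coeur$ criterion of \cite[Theorem I.3.8]{IRdRW}. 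The key vanishing statement is then
\[
\Hom_{\DGr(\RR)}\bigl(A,B[n]\bigr)=0 \quad\text{for } n\le 1,
\]
whenever $A=A^{\leqslant i}$ and $B=B^{\geqslant i+1}$ are coherent modules. The case $n\le 0$ is elementary for $\Hom$, since $d$ maps $A^i$ into $F^\infty d(A^i)$, which is killed in $B$. The case $n=1$ (Ext$^1$) is genuinely subtle. I plan to compute it via Ekedahl's duality (\Cref{dual of coherent is coherent}) together with the resolution of \Cref{Rn is right perfect}, reducing to dominoes $U_t$ and type I modules by d\'evissage along the filtration of \cite[Proposition I.2.19]{IRdRW}. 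Concretely this means checking $\Ext^1(U_t(-i),U_s(-i-1))=0$ and the analogous statements for type I modules. I expect this to be the main obstacle: it is exactly where the coherence and completeness hypotheses enter, since without completeness (the $k_\sigma[[V]]$ type $\rmI_c$ pieces) such Ext groups fail to vanish.

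With this in hand, I will verify the axioms of a $t$-structure. The subcategories are clearly stable under the appropriate shifts: $M[1]$ has $\HH^{i}(M[1])=\HH^{i+1}(M)$, and the diagonal condition $\HH^i\subset(\cdot)^{\leqslant -i}$ shifts accordingly. For orthogonality, given $X\in\widetilde{\DGr}^{\leqslant 0}$ and $Y\in\widetilde{\DGr}^{\geqslant 1}$, I will filter both by canonical truncations and reduce to $\Hom(\HH^a(X)[-a],\HH^b(Y)[-b])$. This group is an $\Ext^{a-b}$ from a module concentrated in $\leqslant -a$ to one in $\geqslant 1-b$. When $a\ge b$ it is an $\Ext^{\ge 0}$ with an index gap $\ge a-b+1$, so I need the strengthened vanishing $\Ext^n(A^{\leqslant i},B^{\geqslant i+n+1})=0$ for all $n$. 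By \Cref{Rn is right perfect} only $n\le 2$ matter, and $\Ext^2$ with gap $3$ should vanish for grading reasons, since the resolution only shifts grading by at most $1$. When $a<b$ the group is a negative Ext and vanishes.

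For the existence of truncation triangles, I will induct on cohomological length. Write $M$ as an extension of $\HH^{n}(M)[-n]$ by $\tau^{\leqslant n-1}M$, truncate each piece via the module sequence and the inductive hypothesis, and use the orthogonality just proved, via the octahedral axiom, to glue the pieces into a triangle $\widetilde\tau^{\leqslant 0}M\to M\to\widetilde\tau^{\geqslant 1}M$. Properties (1)–(3) will then follow from the construction: $\widetilde\tau$ is computed degreewise in canonical cohomology. This gives the formula in (1), which in turn yields (2) and the commutations in (3) by uniqueness of truncations.
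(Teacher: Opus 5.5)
Your ``key vanishing'' fails for $n=1$, so the step you single out as the main obstacle cannot succeed. By \Cref{classify k(-1) to U-1} there is an exact sequence of coherent modules $0\to k(-1)\to U_{-1}\to U_0\to 0$, and it does not split. A section $s$ commutes with $d$, so $s(dV^0)=d(s(1))$ is a nonzero element of $d(U_{-1}^0)=\prod_{n\geqslant 0}k\,dV^n$ (nonzero because it lifts $dV^0$). But $F(dV^0)=0$ in $U_0$, while the kernel of $F$ on $U_{-1}^1$ is $k\,dV^{-1}$, which meets $d(U_{-1}^0)$ trivially. (Equivalently, \Cref{nonsplit SS} exhibits a nonzero map $U_0\to k(-1)[1]$.) Since $U_0=U_0^{\leqslant 0}$ and $k(-1)=k(-1)^{\geqslant 1}$, this is a nonzero class in $\Ext^1_\RR(A^{\leqslant 0},B^{\geqslant 1})$.

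Orthogonality only needs $\Ext^i_\RR(A,B)=0$ when the grading gap is at least $i+1$. That follows from the grading argument you mention at the end: a module generated in gradings $\leqslant m$ has a graded free resolution whose $i$-th term is generated in gradings $\leqslant m+i$. This works for every $i$ at once. No duality or d\'evissage is needed, and \Cref{Rn is right perfect} is not a valid reason to ignore $\Ext^{\geqslant 3}$.

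The genuine gap is the existence of truncations, which does not follow from orthogonality and the octahedral axiom. To build $\widetilde{\tau}^{\leqslant 0}M$ from $\tau^{<a}M\to M\to \HH^a(M)[-a]$, you must lift the connecting map to a map $\widetilde{\tau}^{\leqslant 0}(\HH^a(M)[-a])\to\widetilde{\tau}^{\leqslant 0}(\tau^{<a}M)[1]$. The obstruction is the composite $\widetilde{\tau}^{\leqslant 0}(\HH^a(M)[-a])\to(\tau^{<a}M)[1]\to\widetilde{\tau}^{\geqslant 1}(\tau^{<a}M)[1]$. This is a map from the diagonal $\leqslant 0$ part into the diagonal $\geqslant 0$ part, about which orthogonality says nothing. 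Filtering by canonical truncations, it is controlled by the groups $\Ext^{i}_\RR(\HH^a(M)^{\leqslant -a},\HH^b(M)^{\geqslant 1-b})$ with $b<a$, $i=a-b+1\geqslant 2$ and grading gap $\geqslant i$; the new case is gap exactly $i$.

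The paper's sketch kills these with Ekedahl's second vanishing \cite[Proposition 0.4.1]{Ek3}: for coherent modules concentrated in gradings $m$ and $n$, $\Ext^i_\RR=0$ for $2\leqslant i\leqslant n-m$. This is a non-formal fact extracted from the resolutions of \cite[Lemma III.1.5]{Ek2}. It cannot be avoided, because property (1) implies it. Take $\xi\in\Ext^i_\RR(A,B)$ with $i\geqslant 2$, $A$ concentrated in grading $m$ and $B=B^{\geqslant m+i}$, and set $M=\mathrm{fib}(\xi)$. Property (1) gives $\widetilde{\tau}^{\leqslant m}M\simeq A$ (since $B^{\leqslant m+i-1}=0$), mapping isomorphically onto $\HH^0(M)=A$. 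This splits $M\to A$ and forces $\xi=0$.

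So your claim that (1)--(3) ``follow from the construction'' presupposes exactly what your plan never proves. Note also that the gap-one $\Ext^1$ you tried to kill never enters the gluing: inducting on canonical degree produces only $\Ext^{\geqslant 2}$ obstructions.
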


Let us explain how the natural transformation 
\(
\tau^{\geqslant i} \circ \widetilde{\tau}^{\leqslant j} 
\xrightarrow{\cong} \widetilde{\tau}^{\leqslant j} \circ \tau^{\geqslant i}
\)
is defined. 
Applying $\widetilde{\tau}^{\leqslant j}$ to the natural arrow $\id \to \tau^{\geqslant i}$
gives an arrow $\widetilde{\tau}^{\leqslant j} \to \widetilde{\tau}^{\leqslant j} \circ \tau^{\geqslant i}$.
By the second property above, we see that the target lies in $\DGr^{b,\geqslant i}_c(\RR)$.
Hence the above arrow factors through $\tau^{\geqslant i} \circ \widetilde{\tau}^{\leqslant j}$.
The other arrows are defined similarly.\footnote{Namely, we always apply diagonal truncations to the natural transformation
between $\id$ and the canonical truncations, and then obtain the desired arrow by observing that the source or the target lies
in the appropriate shifts of the connective or co-connective part of the canonical $t$-structure, by the second property above.}

For the reader's convenience, we include a sketch of Ekedahl's proof. 

\begin{proof}[Proof sketch]
We say that a coherent $\RR$-module $M$ is ``concentrated in grading $m$'' if $M^{\ell} = 0$ for all $\ell \notin \{m, m+1\}$,
and $M^{m+1} = F^{\infty}d(M^m)$.
Using the resolutions in \cite[Lemma III.1.5]{Ek2},
Ekedahl observed (\cite[Proposition 0.4.1]{Ek3}) that
if $M$ and $N$ are coherent $\RR$-modules concentrated in gradings $m$ and $n$, respectively, then
\(
\Ext^i_{\RR}(M, N) = 0
\)
whenever $i < (n - m)$ and $2 \leqslant i \leqslant (n - m)$.
The first vanishing result immediately implies that if $M \in \widetilde{\DGr}^{b, \leqslant 0}_c(\RR)$
and $N \in \widetilde{\DGr}^{b, \geqslant 1}_c(\RR)$, then
\(
\Hom_{\DGr(\RR)}(M, N) = 0.
\)

Next, we show the existence of a diagonal connective cover that moreover satisfies property (1).
To this end, we argue by induction on the cohomological amplitude
of $M \in \DGr^b_c(\RR)$.
If the cohomological amplitude is $0$, that is, if $M$ is a cohomological shift of a coherent
$\RR$-module, the existence of a diagonal connective cover satisfying property (1) is obvious.

In general, assume that the statement is known for all objects of cohomological amplitude $\leqslant (n-1)$.
Suppose that $M \in \DGr^{[a-n, a]}_c(\RR)$, and consider the exact triangle
\(
\tau^{< a}M \to M \to \HH^a(M)[-a].
\)
By the induction hypothesis, both $\tau^{< a}M$ and $\HH^a(M)[-a]$
admit diagonal connective covers satisfying property (1).
The second vanishing in the first paragraph then implies that the composite
\[
\widetilde{\tau}^{\leqslant 0}(\HH^a(M)[-a]) \to \HH^a(M)[-a] \to (\tau^{< a}M)[1] \to 
\widetilde{\tau}^{\geqslant 1}(\tau^{< a}M)[1]
\]
is $0$. Therefore, this composite canonically factors through
\(
\widetilde{\tau}^{\leqslant 0}(\HH^a(M)[-a]) \to \widetilde{\tau}^{\leqslant 0}(\tau^{< a}M)[1].
\)
Taking its fiber, which then necessarily maps to $M$, produces the desired diagonal connective cover
of $M$ satisfying property (1).

Finally, properties (2) and (3) are formal consequences of property (1).
\end{proof}

We now extend Ekedahl's diagonal $t$-structure to $\DGr_c(\RR)$ without the cohomological boundedness constraint.

\begin{definition}
We define the following full subcategories of $\DGr_c(\RR)$:
\[
\begin{aligned}
\widetilde{\DGr}^{\leqslant 0}_{c} \coloneqq \{M \in \DGr_c(\RR) \mid 
\HH^i(M)^{\leqslant -i} \xrightarrow{\cong} \HH^i(M)\},\\
\widetilde{\DGr}^{\geqslant 0}_c \coloneqq \{M \in \DGr_c(\RR) \mid
\HH^i(M) \xrightarrow{\cong} \HH^i(M)^{\geqslant -i}\}.
\end{aligned}
\]
\end{definition}

\begin{theorem}
\label{diagonal t structure}
The above defines a $t$-structure on $\DGr_c(\RR)$. Moreover, if we
denote the canonical connective and co-connective truncation functors by $\tau^{\leqslant 0}$ and $\tau^{\geqslant 0}$,
and the truncation functors with respect to the diagonal $t$-structure by $\widetilde{\tau}^{\leqslant 0}$
and $\widetilde{\tau}^{\geqslant 0}$, then they satisfy the following properties:
\begin{enumerate}
\item For each pair of integers $(i, j)$ and each object $M \in \DGr_c(\RR)$, the map
$\HH^i(\widetilde{\tau}^{\leqslant j}(M)) \to \HH^i(M)$ induces a natural identification
$\HH^i(\widetilde{\tau}^{\leqslant j}(M)) \xrightarrow{\cong} \HH^{i}(M)^{\leqslant j-i}$.
Similarly, the map $\HH^i(M) \to \HH^i(\widetilde{\tau}^{\geqslant j}(M))$ induces a natural identification
$\HH^{i}(M)^{\geqslant j-i} \xrightarrow{\cong} \HH^i(\widetilde{\tau}^{\geqslant j}(M))$.

\item The canonical truncation functors preserve the connective and co-connective parts of the diagonal $t$-structure.
Similarly, the diagonal truncation functors preserve the connective and co-connective parts of the canonical $t$-structure.

\item For each pair of integers $(i, j)$, we have the following natural transformations,
all of which are equivalences:
\[
\widetilde{\tau}^{\leqslant j} \circ \tau^{\leqslant i} \xrightarrow{\cong} \tau^{\leqslant i} \circ \widetilde{\tau}^{\leqslant j},\quad
\tau^{\geqslant i} \circ \widetilde{\tau}^{\leqslant j} \xrightarrow{\cong} \widetilde{\tau}^{\leqslant j} \circ \tau^{\geqslant i},
\]
\[
\widetilde{\tau}^{\geqslant i} \circ \tau^{\leqslant j} \xrightarrow{\cong} \tau^{\leqslant j} \circ \widetilde{\tau}^{\geqslant i},\quad
\tau^{\geqslant i} \circ \widetilde{\tau}^{\geqslant j} \xrightarrow{\cong} \widetilde{\tau}^{\geqslant j} \circ \tau^{\geqslant i}.
\]
\end{enumerate}
\end{theorem}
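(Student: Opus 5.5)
The plan is to build the diagonal truncations on $\DGr_c(\RR)$ as limits and colimits of Ekedahl's bounded truncations from \Cref{bounded diagonal t structure}, using the canonical truncations to reduce to the bounded case. For $M\in\DGr_c(\RR)$ and integers $a\leqslant b$, the object $\tau^{\geqslant a}\tau^{\leqslant b}M$ lies in $\DGr_c^b(\RR)$, so it has a diagonal truncation $\dt^{\leqslant 0}$. I would define
\[
\dt^{\leqslant 0}M \coloneqq \mathrm{colim}_{b}\,\lim_{a}\,\dt^{\leqslant 0}\bigl(\tau^{\geqslant a}\tau^{\leqslant b}M\bigr),
\]
and dually for $\dt^{\geqslant 1}$. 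Property (3) of \Cref{bounded diagonal t structure} shows that the transition maps are compatible. It also shows that the systems stabilize cohomologically. By property (1), $\HH^i\dt^{\leqslant 0}(\tau^{\geqslant a}\tau^{\leqslant b}M)$ is $\HH^i(M)^{\leqslant -i}$ when $a\leqslant i\leqslant b$ and vanishes otherwise. So in each cohomological degree the limit in $a$ has no $\lim^1$ problem, and the colimit in $b$ is eventually constant. Left and right completeness of the standard $t$-structure on $\DGr(\RR)$ then give the required computations of cohomology. The resulting object has cohomology $\HH^i(M)^{\leqslant -i}$, which is coherent because coherent modules form a thick abelian subcategory. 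Hence it lies in $\widetilde{\DGr}^{\leqslant 0}_c$ and satisfies property (1). Because the construction is natural, $\dt^{\leqslant 0}M$ comes with a map to $M=\mathrm{colim}_b\lim_a\tau^{\geqslant a}\tau^{\leqslant b}M$.

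Next I would show the two classes form a $t$-structure. Stability under the shifts $[1]$ and $[-1]$ follows directly from the definitions, since the index in $\HH^i(M)^{\leqslant -i}$ moves with $i$. Orthogonality is the main obstacle, because Hom out of an unbounded object does not obviously reduce to the bounded case. Take $M\in\widetilde{\DGr}^{\leqslant 0}_c$ and $N\in\widetilde{\DGr}^{\geqslant 1}_c$. Writing $M=\mathrm{colim}_b\tau^{\leqslant b}M$ and $N=\lim_a\tau^{\geqslant a}N$ gives
\[
\Hom(M,N)=\lim_{a,b}\Hom(\tau^{\leqslant b}M,\tau^{\geqslant a}N)
\]
at the level of mapping spectra, so it suffices to prove $\pi_0$ vanishing with Milnor $\lim^1$ terms controlled. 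The spectra $\mathrm{Map}(\tau^{\leqslant b}M,\tau^{\geqslant a}N)$ have connectivity controlled by $b-a$. I then reduce to $\mathrm{Map}(\tau^{\geqslant a'}\tau^{\leqslant b}M,\tau^{\geqslant a}\tau^{\leqslant b'}N)$ with both arguments bounded. By \Cref{bounded diagonal t structure} (2), these truncations stay in the respective diagonal halves, so the bounded orthogonality applies. It applies in all nonpositive degrees $\pi_k$, $k\geqslant 0$, since $M[k]$ remains diagonally connective, and this kills the $\lim^1$ terms. To replace $\tau^{\leqslant b}M$ by $\tau^{\geqslant a'}\tau^{\leqslant b}M$, I would use that $\Hom(\tau^{<a'}M,\tau^{\geqslant a}N)$ vanishes in the relevant degrees once $a'\ll a$, for connectivity reasons. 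The right-bounded replacement for $N$ is dual.

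Finally, the cofiber sequence $\dt^{\leqslant 0}M\to M\to \dt^{\geqslant 1}M$ is obtained by passing to the colimit and limit of the bounded cofiber sequences, since cofiber sequences are preserved by limits and colimits in a stable category. This gives existence and property (1) for both truncations. Properties (2) and (3) then follow formally from (1), exactly as in the bounded case: the maps in (3) are checked to be equivalences on cohomology using the formulas $\HH^i(M)^{\leqslant j-i}$ and $\HH^i(M)^{\geqslant j-i}$.
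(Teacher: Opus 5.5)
Your proposal is correct and follows essentially the same route as the paper. Orthogonality is proved by reducing $\RHom(M,N)$ through Postnikov truncations to bounded objects, where Ekedahl's result shows there is no $\HH^{\leqslant 0}$, so the $\lim^1$ terms vanish; the truncations are built as limits and colimits of Ekedahl's bounded diagonal truncations of $\tau^{[a,b]}M$, property (1) is read off on cohomology, and (2)--(3) are formal, the only cosmetic difference being that the paper takes $\lim_a \mathrm{colim}_b$ of the connective covers alone and gets $\dt^{\geqslant 1}$ as the cone, whereas you pass both halves of the bounded cofiber sequences to the limit.
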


\begin{proof}
Suppose that $M \in \widetilde{\DGr}^{\leqslant 0}_{c}(\RR)$ and $N \in \widetilde{\DGr}^{\geqslant 1}_c(\RR)$.
We need to show that $\mathrm{Hom}_{\DGr(\RR)}(M , N) = 0$.
Using the Postnikov filtrations on $M$ and $N$, we have a canonical isomorphism
\[
\mathrm{RHom}_{\DGr(\RR)}(M, N) \cong \mathrm{Rlim}_{n \to \infty} 
\mathrm{RHom}_{\DGr(\RR)}(\tau^{[-n, n]}M, \tau^{[-n, n]}N),
\]
which has no $\HH^{\leqslant 0}$. Indeed, for each $n$, we have 
\(
\tau^{[-n, n]}M \in \widetilde{\DGr}^{b,\leqslant 0}_{c}(\RR)\), and \(
\tau^{[-n, n]}N \in \widetilde{\DGr}^{b, \geqslant 1}_c(\RR),
\)
so 
\(
\mathrm{RHom}_{\DGr(\RR)}(\tau^{[-n, n]}M, \tau^{[-n, n]}N)
\)
has no $\HH^{\leqslant 0}$ by \Cref{bounded diagonal t structure}.

Next, we show the existence of diagonal truncations. Let $M \in \DGr_c(\RR)$ and
consider the diagonal connective covers of its
Postnikov truncations
\(
N^{[a, b]} \coloneqq \widetilde{\tau}^{\leqslant 0}(\tau^{[a, b]}M),
\)
where $a \leqslant b$ runs through all such pairs of integers.
By property (2) of \Cref{bounded diagonal t structure}, we see that
$N^{[a,b]} \in \DGr^{[a, b]}_c(\RR)$. Moreover, by property (3) of \Cref{bounded diagonal t structure},
for each pair of integers $a \leqslant b$, the natural maps
\[
\begin{aligned}
N^{[a, b]} &\longrightarrow \tau^{[a, b]} N^{[a, b+1]}, \\
\tau^{[a, b]} N^{[a-1, b]} &\longrightarrow  N^{[a, b]}
\end{aligned}
\]
are isomorphisms.
Therefore, we see that
\(
N \coloneqq \lim_{a \to -\infty} \mathrm{colim}_{b \to \infty} N^{[a, b]}
\)
exists together with natural isomorphisms
\(
\tau^{[a, b]}N \cong N^{[a, b]}
\)
for all pairs of integers $a \leqslant b$.

Using property (3) of \Cref{bounded diagonal t structure} again,
each $N^{[a, b]}$ has a natural map to $\tau^{[a, b]}M$ compatible with
Postnikov truncations. Therefore, we obtain a natural map $N \to M$ whose induced map on $\HH^i$
coincides with the map induced by $N^{[a, b]} \to \tau^{[a, b]}M$ for any $a \leqslant b$
with $i \in [a, b]$.
Hence property (1) of \Cref{bounded diagonal t structure}
implies that the map $N \to M$ satisfies the first half of property (1) here.

Therefore, we see that $N \in \widetilde{\DGr}_c^{\leqslant 0}(\RR)$ and
$\mathrm{Cone}(N \to M) \in \widetilde{\DGr}_c^{\geqslant 1}(\RR)$.
Moreover, the second half of property (1) also follows by considering the long exact sequence associated with
\(
N \to M \to \mathrm{Cone}(N \to M).
\)

Finally, properties (2)–(3) follow formally from property (1).
\end{proof}

\begin{notation}
We denote the heart of the diagonal $t$-structure by 
\(
\dc' \coloneqq \widetilde{\DGr}_c^{\leqslant 0}(\RR) \cap 
\widetilde{\DGr}_c^{\geqslant 0}(\RR).
\)
We denote the associated cohomology functors by
\(
\widetilde{\HH}^n(\td) \coloneqq 
(\dt^{\leqslant n} \circ \dt^{\geqslant n}(\td))[n] \colon 
\DGr_c(\RR)\rightarrow \dc'.
\)
\end{notation}

\begin{lemma}
\label{H and tildeH}
Let $M \in \DGr_c(\RR)$, for each pair of integers $(i, j)$ we have
$\HH^j(\widetilde{\HH}^{i}(M)) \cong \HH^{i + j}(M)^{[-j, -j]}$.
\end{lemma}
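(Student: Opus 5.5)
The plan is to compute the ordinary cohomology of $\widetilde{\HH}^i(M) = (\dt^{\leqslant i}\circ\dt^{\geqslant i}(M))[i]$ one truncation at a time, using property (1) of \Cref{diagonal t structure}. First apply the second half of property (1) to $M$ with $j=i$:
\[
\HH^{n}(\dt^{\geqslant i}M)\cong \HH^n(M)^{\geqslant i-n}
\]
for every $n$. Next apply the first half of property (1) to $N\coloneqq\dt^{\geqslant i}M$ with $j=i$:
\[
\HH^n(\dt^{\leqslant i}N)\cong \HH^n(N)^{\leqslant i-n}\cong \bigl(\HH^n(M)^{\geqslant i-n}\bigr)^{\leqslant i-n}.
\]

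The second step is to identify this double truncation with $\HH^n(M)^{[i-n,i-n]}$. By definition $M^{[m,m]}=(M^{\geqslant m})^{\leqslant m}$, and the paper records this equality in the definition preceding \Cref{bounded diagonal t structure}. For safety I would check it directly from the explicit formulas. $M^{\geqslant m}$ has $M^m/F^\infty d(M^{m-1})$ in grading $m$ and agrees with $M$ above. Applying $(-)^{\leqslant m}$ then keeps grading $m$ and replaces grading $m+1$ by $F^\infty d$ of the grading-$m$ piece. That piece is the image of $F^\infty d(M^m)$, and since $d$ kills $F^\infty d(M^{m-1})$, it equals $F^\infty d(M^m)$. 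This gives exactly the two-term object $M^{[m,m]}$.

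The third step is the shift. Since $\widetilde{\HH}^i(M)=(\dt^{\leqslant i}\dt^{\geqslant i}M)[i]$, we have
\[
\HH^j(\widetilde{\HH}^i(M))=\HH^{i+j}(\dt^{\leqslant i}\dt^{\geqslant i}M).
\]
Substituting $n=i+j$ into the formula above gives $\HH^{i+j}(M)^{[i-(i+j),\,i-(i+j)]}=\HH^{i+j}(M)^{[-j,-j]}$, which is the claim.

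The only real subtlety, which I expect to be the main point to verify, is that the identification in property (1) is natural and compatible with the composite of the two truncations. Concretely, the map $\HH^n(\dt^{\leqslant i}N)\to\HH^n(N)$ must land in the submodule $\HH^n(N)^{\leqslant i-n}$, and the isomorphism $\HH^n(M)^{\geqslant i-n}\cong\HH^n(N)$ must be one of graded $\RR$-modules. Both hold because the functors $(-)^{\leqslant m}$ and $(-)^{\geqslant m}$ are functorial on coherent modules, so the composite isomorphism is canonical. The remaining steps are formal.
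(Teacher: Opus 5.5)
Your proposal is correct and is essentially the paper's proof. Both arguments absorb the shift $[i]$, then apply property (1) of \Cref{diagonal t structure} twice, once for $\dt^{\geqslant i}$ and once for $\dt^{\leqslant i}$, and combine the two module truncations into $(-)^{[-j,-j]}$. Your extra check that $(M^{\geqslant m})^{\leqslant m}=M^{[m,m]}$ (using that $d$ kills $F^\infty d(M^{m-1})$) is a harmless verification of an identity the paper builds into the definition.
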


\begin{proof}
Let us compute:
\[
\HH^j(\widetilde{\HH}^{i}(M)) 
\coloneqq 
\HH^j\Big((\dt^{\leqslant i} \circ \dt^{\geqslant i})(M)[i]\Big)
=
\HH^{i+j}\Big(\dt^{\leqslant i} ( \dt^{\geqslant i}(M))\Big)
\cong 
\HH^{i+j}(\dt^{\geqslant i}(M))^{\leqslant -j}
\cong 
\HH^{i+j}(M)^{[-j, -j]}.
\]
In the last two identifications, we have used property (1) of \Cref{diagonal t structure}. 
\end{proof}

\begin{remark}
\label{diagonal t-bound for smooth varieties}
In particular, as a consequence of \Cref{H and tildeH},
for any smooth proper variety $X$ over $k$ of dimension $\leqslant D$,
we have $\mathrm{R\Gamma}(X, W\Omega^{\bullet}_{X/k})$ lives in $\widetilde{\DGr}_c^{[0, 2D]}(\RR)$.
\end{remark}

\begin{lemma}
\label{characterizing dc inside dc'}
Let $M \in \dc' \subset \DGr_c(\RR)$, the following are equivalent:
\begin{enumerate}
\item $M \in \dc \subset \dc'$;
\item $M$ has bounded cohomology;
\item $M$ has bounded grading.
\end{enumerate}
\end{lemma}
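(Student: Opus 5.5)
The plan is to read $\dc$ as the heart of Ekedahl's bounded diagonal $t$-structure on $\DGr_c^b(\RR)$ from \Cref{bounded diagonal t structure}. Under that reading, (1)$\Leftrightarrow$(2) is essentially a tautology, and the real content is (2)$\Leftrightarrow$(3).

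First step, (1)$\Leftrightarrow$(2). Compare the two definitions. The subcategory $\widetilde{\DGr}^{\leqslant 0}_c$ (resp.\ $\widetilde{\DGr}^{\geqslant 0}_c$) restricts on $\DGr^b_c(\RR)$ exactly to $\widetilde{\DGr}^{b,\leqslant 0}_c$ (resp.\ $\widetilde{\DGr}^{b,\geqslant 0}_c$). Hence $\dc = \dc' \cap \DGr_c^b(\RR)$, and an object $M \in \dc'$ lies in $\dc$ precisely when it has bounded cohomology.

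Second step: describe the cohomology of an object $M\in\dc'$. Applying \Cref{H and tildeH} with $i=0$, and using $\widetilde{\HH}^0(M)\cong M$, we get for every $j$
\[
\HH^j(M)\cong \HH^j(M)^{[-j,-j]}.
\]
So each $\HH^j(M)$ is supported in gradings $-j$ and $-j+1$, and its grading $-j+1$ piece equals $F^\infty d$ of its grading $-j$ piece. In particular:
\begin{itemize}
\item $\HH^j(M)=0$ if and only if $\HH^j(M)^{-j}=0$;
\item a nonzero $\HH^j(M)$ forces $\HH^j(M)^{-j}\neq 0$.
\end{itemize}

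Third step, (2)$\Rightarrow$(3). Suppose $\HH^j(M)=0$ for $j\notin[a,b]$. Taking graded components is $t$-exact, so $\HH^j(M^i)=\HH^j(M)^i$. By the previous step, this vanishes unless $i\in\{-j,-j+1\}$ for some $j\in[a,b]$. Therefore $M^i$ has zero cohomology, hence vanishes, for $i\notin[-b,-a+1]$, and $M$ has bounded grading.

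Fourth step, (3)$\Rightarrow$(2). Suppose $M^i=0$ for $i\notin[c,e]$. Then $\HH^j(M)^{-j}=\HH^j(M^{-j})=0$ whenever $-j\notin[c,e]$. By the second bullet above, $\HH^j(M)=0$ for $j\notin[-e,-c]$, so $M$ has bounded cohomology.

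The main obstacle is pinning down the definition of $\dc$. If the paper instead defines $\dc$ as the heart of Ekedahl's bounded diagonal $t$-structure inside $\DGr^b_c(\RR)$, the first step requires checking that $\dc$ and $\dc'$ have the same truncation conditions on bounded objects. That check is immediate from the definitions, but it is the step I would verify carefully against how the paper defines $\dc$.
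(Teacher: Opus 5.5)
Your proposal is correct and follows essentially the same route as the paper: (1)$\Leftrightarrow$(2) via $\dc = \dc' \cap \DGr^b_c(\RR)$, and (2)$\Leftrightarrow$(3) via the fact that for $M \in \dc'$ each $\HH^j(M)$ is concentrated in gradings $-j$ and $-j+1$ (the paper reads this off directly from the definition of $\dc'$, while you extract it from \Cref{H and tildeH}). Your extra observation that the grading $-j+1$ piece is $F^\infty d$ of the grading $-j$ piece is not needed; it only sharpens the bound in (3)$\Rightarrow$(2). The reading of $\dc$ you worried about in your last paragraph is the one you already used, so no gap remains.
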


\begin{proof}
The equivalence between (1) and (2) follows from the fact that
\(
\dc = \dc' \cap \DGr^b_c(\RR).
\)
The equivalence between (2) and (3) follows from the definition of
$M \in \dc'$: this implies that the $i$-th cohomology of $M$
has no graded pieces other than those in gradings $-i$ and $-i + 1$.
\end{proof}

\begin{lemma}
\label{First quadrant implies inside dc}
Let $M \in \DGr_c(\RR)$ be an object that is cohomologically bounded below
and grading-left bounded (that is, the grading $\leqslant -N$ pieces of $M$ vanish for some integer $N$).
Then for each $i \in \mathbb{Z}$, we have
\(
\widetilde{\HH}^i(M) \in \dc \subset \dc'.
\)
\end{lemma}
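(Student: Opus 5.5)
By \Cref{characterizing dc inside dc'}, the object $\widetilde{\HH}^i(M)$ already lies in $\dc'$. So it suffices to show that it has bounded cohomology, or equivalently bounded grading. The plan is to use \Cref{H and tildeH}, which computes the cohomology of the diagonal cohomology object in terms of the canonical cohomology of $M$:
\[
\HH^j(\widetilde{\HH}^{i}(M)) \cong \HH^{i+j}(M)^{[-j,-j]}.
\]
The task is therefore to show that, for fixed $i$, the right-hand side vanishes for all but finitely many $j$. Both boundedness hypotheses on $M$ will be used, one for each direction.

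First, fix $i$ and take $j$ very negative. Since $M$ is cohomologically bounded below, there is $a$ with $\HH^n(M)=0$ for $n<a$. Hence $\HH^{i+j}(M)=0$ once $i+j<a$, and so its subquotient $\HH^{i+j}(M)^{[-j,-j]}$ also vanishes. This handles all $j<a-i$.

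Next, take $j$ very positive. The object $\HH^{i+j}(M)^{[-j,-j]}$ is the two-term complex
\[
\HH^{i+j}(M)^{-j}/F^\infty d\big(\HH^{i+j}(M)^{-j-1}\big) \xrightarrow{d} F^\infty d\big(\HH^{i+j}(M)^{-j}\big),
\]
placed in gradings $-j$ and $-j+1$. Its grading $-j$ piece is a quotient of $\HH^{i+j}(M)^{-j}$, and its grading $-j+1$ piece is a submodule of $\HH^{i+j}(M)^{-j+1}$. Grading-left boundedness of $M$ (vanishing of $M^{\ell}$ for $\ell\leqslant -N$) should give $\HH^{n}(M)^{\ell}=0$ for $\ell\leqslant -N$ and every $n$. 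Granting that, both pieces vanish as soon as $-j+1\leqslant -N$, so the object is zero for all $j\geqslant N+1$.

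Combining the two ranges, $\HH^j(\widetilde{\HH}^i(M))$ is nonzero only for $a-i\leqslant j\leqslant N$. Thus $\widetilde{\HH}^i(M)$ has bounded cohomology, and \Cref{characterizing dc inside dc'} places it in $\dc$. The only step needing care is the passage from grading-left boundedness of $M$ to that of its cohomology objects. This holds because the grading-$\ell$ functor $(-)^\ell\colon \DGr(\RR)\to\calD(W)$ is $t$-exact, so $\HH^n(M)^\ell=\HH^n(M^\ell)=0$ whenever $M^\ell=0$. The rest is bookkeeping of indices.
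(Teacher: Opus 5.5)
Your argument is correct and is essentially the paper's proof: you reduce, via the characterization of $\dc$ inside $\dc'$, to bounded cohomology, then use $\HH^j(\widetilde{\HH}^i(M)) \cong \HH^{i+j}(M)^{[-j,-j]}$, with cohomological boundedness below killing small $j$ and grading-left boundedness killing large $j$. Your remark that $\HH^n(M)^\ell = \HH^n(M^\ell)$, by $t$-exactness of the grading functors, fills in the one step the paper leaves implicit. One small misattribution: $\widetilde{\HH}^i(M)\in\dc'$ holds by the definition of $\widetilde{\HH}^i$, not by the characterization lemma.
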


\begin{proof}
Fix an integer $i$. By \Cref{characterizing dc inside dc'}, it suffices to show that
\(
\HH^j(\widetilde{\HH}^{i}(M)) = 0
\)
whenever $j$ is sufficiently large or sufficiently small.
Using \Cref{H and tildeH}, we obtain the following:
\begin{enumerate}
\item If $j$ is sufficiently small, the vanishing follows from the assumption that $M$ is cohomologically bounded below.
\item If $j$ is sufficiently large, the vanishing follows from the assumption that $M$ is grading-left bounded.
\end{enumerate}
This completes the proof.
\end{proof}

\medskip 

Using the diagonal \(t\)-structure on \(\DGr_c^b(\RR)\), Ekedahl established a fundamental inequality
that reveals deep connections between Hodge--Witt numbers and Hodge numbers.

\begin{theorem}[Ekedahl's inequality {\cite[Theorem IV.3.3]{Ek3}}]
\label{Ekeineq}
Let \(M \in \DGr_c^b(\RR)\). Then for any \((i,j)\in \mathbb{Z}^2\), we have
\(
h_W^{i,j}(M) \leqslant h^{i,j}(M).
\)
Consequently, if \(X\) is a smooth proper variety over a perfect field \(k\) of characteristic \(p\), then for any \((i,j)\in \mathbb{N}^2\) we have
\(
h_W^{i,j}(X) \leqslant h^{i,j}(X).
\)
\end{theorem}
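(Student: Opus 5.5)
Ekedahl's inequality is cited from \cite[Theorem IV.3.3]{Ek3}; here is how I would reprove it with the tools above. The idea is to compare both sides through the diagonal $t$-structure of \Cref{bounded diagonal t structure}.

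\textbf{Step 1: additivity.} Both $h^{i,j}$ and $h_W^{i,j}$ should be computed from the diagonal cohomology objects $\widetilde{\HH}^n(M)\in\dc$. For the Hodge side, filter $M$ by its diagonal truncations $\dt^{\leqslant n}M$, which is a finite filtration since $M\in\DGr_c^b(\RR)$. Applying $\RR_1\otimes^{\rmL}_\RR-$ gives a spectral sequence
\[
E_1=\bigoplus_n \HH^{*}\bigl(\RR_1\otimes^{\rmL}_\RR \widetilde{\HH}^n(M)[-n]\bigr)^{i}\Longrightarrow \HH^{*}(\RR_1\otimes^{\rmL}_\RR M)^i .
\]
Hence $h^{i,j}(M)\leqslant\sum_n h^{i,j}(\widetilde{\HH}^n(M)[-n])$. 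For the Hodge--Witt side, I would check from \Cref{H and tildeH} that the slope and domino data of $M$ are read off directly from the cohomology modules $\HH^j(\widetilde{\HH}^n(M))\cong\HH^{n+j}(M)^{[-j,-j]}$. This should give exact additivity $h_W^{i,j}(M)=\sum_n h_W^{i,j}(\widetilde{\HH}^n(M)[-n])$. The key point is that the dominoes $M^i/V^{-\infty}Z^i\to F^\infty B^{i+1}$ sit entirely inside the single piece $(\cdot)^{[i,i]}$, so nothing is split between different $\widetilde{\HH}^n$.

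\textbf{Step 2: reduction to the heart.} After Step 1, it suffices to prove $h_W^{i,j}(N)\leqslant h^{i,j}(N)$ for $N$ a shift of an object of the heart $\dc$. Such an $N$ is a complex whose $\HH^{-m}$ is concentrated in gradings $m,m+1$, with $\HH^{-m}(N)^{m+1}=F^\infty d(\HH^{-m}(N)^m)$.

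\textbf{Step 3: the heart.} I would then compute $\RR_1\otimes^{\rmL}_\RR N$ explicitly for the basic building blocks: a type $\rmI_a$, $\rmI_b$ piece concentrated in one grading, and an elementary domino $U_t$. The tool is the resolution \eqref{Rn finite flat dim} with $n=1$. For a type $\rmI_b$ Dieudonn\'e module $D$, the complex $D\xrightarrow{V}D$ together with the $d$-terms should give Hodge numbers $\dim D/VD$ and $\dim D/FD$, hence $\dim(D/VD)$ in the relevant bidegrees. These dominate the slope contributions $\sum(1-\lambda)m_\lambda$ and $\sum\lambda m_\lambda$ by Mazur-type Hodge-above-Newton reasoning. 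For $U_t$, the resolution gives Hodge contributions exactly matching $T,-2T,T$ up to the correct sign bookkeeping, possibly with extra nonnegative terms.

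\textbf{Main obstacle.} The hardest part is making the heart case precise. Objects of $\dc$ are not direct sums of the basic blocks, only iterated extensions of them. Hodge numbers are only subadditive under such extensions, because connecting maps in the long exact sequences can cancel. So the natural inequality comes out as $h\leqslant\sum h(\text{pieces})$, which is the wrong direction.

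To handle this, I would first use \Cref{CrewFor} on each heart object. It fixes $\sum_j(-1)^j h^{i,j}=\sum_j(-1)^jh_W^{i,j}$, so only the distribution of cancellation among the bidegrees matters. Next I would use the Ext vanishing from the proof sketch of \Cref{bounded diagonal t structure}: $\Ext^i=0$ for $2\leqslant i\leqslant n-m$. This should show that in the heart, cancellation can only occur in one specific direction, namely between $h^{i,j}$ and $h^{i+1,j-1}$. I expect that direction to decrease $h$ and $h_W$ compatibly, but verifying this is where I expect the real work to lie.
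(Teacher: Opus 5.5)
You have a genuine gap, and it already appears in the passage from Step 1 to Step 2, not only in the heart case. The diagonal spectral sequence gives $h^{i,j}(M)\leqslant\sum_n h^{i,j}(\widetilde{\HH}^n(M)[-n])$, which is an \emph{upper} bound on $h^{i,j}(M)$. Suppose you grant $h_W\leqslant h$ on every diagonal piece and exact additivity of $h_W$. You then only get $h_W^{i,j}(M)\leqslant\sum_n h^{i,j}(\widetilde{\HH}^n(M)[-n])$, which says nothing about $h^{i,j}(M)$. This is the same ``wrong direction'' problem you flag for extensions inside the heart, but it already invalidates the reduction to the heart. The heart case itself is then left open. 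Your guess about where cancellation happens is also off: connecting maps and differentials are computed in a fixed grading $i$, so they relate $h^{i,j}$ to $h^{i,j\pm1}$, never to $h^{i+1,j-1}$.

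The missing idea is a support estimate that turns the spectral sequence into a \emph{lower} bound. For $N\in\dc$, each $\HH^{-m}(N)$ lives in gradings $m,m+1$. Computing $\RR_1\otimes^{\rmL}_\RR\HH^{-m}(N)$ with the resolution \eqref{Rn finite flat dim} (for $n=1$) shows that $\HH^j(\RR_1\otimes^{\rmL}_\RR N)^i=0$ unless $i+j\in\{-1,0,1\}$. Your Step 1 claim about $h_W$ is correct, and in fact sharper: by \Cref{H and tildeH}, $h_W^{i,j}(M)$ depends only on $\widetilde{\HH}^{i+j}(M)$. Now each $\mathrm{gr}_n\coloneqq\widetilde{\HH}^n(M)[-n]$ contributes only in total degrees $n-1,n,n+1$. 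A differential $d_r$ runs from the $\mathrm{gr}_n$ column to the $\mathrm{gr}_{n-r}$ column, fixing $i$ and raising $j$ by one. It can therefore be nonzero only for $r=1$, going from the total-degree-$(n-1)$ part of $\mathrm{gr}_n$ to the total-degree-$n$ part of $\mathrm{gr}_{n-1}$. The total-degree-$n$ part of $\mathrm{gr}_n$ is never touched, so $h^{i,j}(M)\geqslant h^{i,j}(\widetilde{\HH}^{i+j}(M)[-(i+j)])$. In the heart no building-block analysis is needed. For $N\in\dc$, the support estimate turns \Cref{CrewFor} into $h^{i,-i}(N)-h^{i,-i-1}(N)-h^{i,-i+1}(N)=h_W^{i,-i}(N)$, so $h^{i,-i}(N)\geqslant h_W^{i,-i}(N)$. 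Chaining these two inequalities (after shifting) proves the theorem, and the statement for $X$ follows from \Cref{coherent}.
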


This result provides the following insight into the behaviour of the Hodge--Witt numbers \(h_W^{i,j}\).

\begin{proposition}[{\cite[Proposition III.4.1]{Ek3}, \cite[Theorem IV.1.2(i)]{Ek3}}]
\label{Mazur--Ogus}
Let \(M \in \DGr_c^b(\RR)\). The following conditions are equivalent:

{\rm (1)} The cohomology groups of the total complex \(\Tot(M) \in \calD(W)\) are torsion-free, and the Hodge--de Rham spectral sequence (see  {\Cref{Hodge--de Rham spectral sequence}})
\[
E_1^{i,j}=\HH^j(\RR_1\otimes^{\rmL}_\RR M)^i
\;\Rightarrow\;
\HH^{i+j}(k\otimes^{\rmL}_W \Tot(M))
\]
degenerates at the \(E_1\)-page.

{\rm (2)} For every \((i,j)\), we have
\(
h^{i,j}=h_W^{i,j}.
\)

{\rm (3)} For each \(n\in \mathbb{Z}\), let \(b_n\) denote the dimension of \(\HH^n(\Tot(M))[1/p]\). Then
\(
\sum_{i+j=n} h^{i,j}=b_n.
\)
\end{proposition}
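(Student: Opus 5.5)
The plan is to route everything through the diagonal $t$-structure of \Cref{bounded diagonal t structure} and Ekedahl's inequality \Cref{Ekeineq}, using the numerical identity $h^{i,j}\geqslant h_W^{i,j}$ together with Crew's formula and the Hodge--de Rham spectral sequence of \Cref{Hodge--de Rham spectral sequence}.

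First I would show (1)$\Rightarrow$(3) and (3)$\Rightarrow$(1) directly. Since $M\in\DGr_c^b(\RR)$, the totalization $\Tot(M)$ has finitely generated cohomology over $W$: it is complete and $k\otimes^\rmL_W\Tot(M)\cong\Tot(\RR_1\otimes^\rmL_\RR M)$ by \Cref{Hodge-de Rham via tensor R1}, which has finite-dimensional cohomology. Writing $t_n$ for the torsion length data, the universal coefficient theorem gives $\dim_k\HH^n(k\otimes^\rmL_W\Tot(M))=b_n+(\text{torsion contributions from }\HH^n\text{ and }\HH^{n+1})$. The spectral sequence gives $\sum_{i+j=n}h^{i,j}\geqslant\dim_k\HH^n(k\otimes^\rmL_W\Tot(M))$, with equality for all $n$ iff $E_1$-degeneration. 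Hence $\sum_{i+j=n}h^{i,j}\geqslant b_n$ for all $n$, and equality for all $n$ holds exactly when both the torsion vanishes and the spectral sequence degenerates. (Strictly, (3) as stated for each $n$ should be read as for all $n$, matching (1).)

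Second, I would link (2) to (3) via the identity $\sum_{i+j=n}h_W^{i,j}=b_n$. This follows from the definition of $h_W^{i,j}$ in \Cref{numerical invariants}: the domino terms $T^{i,j}-2T^{i-1,j+1}+T^{i-2,j+2}$ telescope to zero when summed along $i+j=n$, and the slope numbers $m^{i,j}$ summed along the antidiagonal give $\sum_i\dim_K\coeur(\HH^{n-i}(M)^i)\otimes K$, which by \Cref{SSbasic}-type degeneration after inverting $p$ (the $d$ acts by $0$ rationally, remark after \Cref{numerical invariants}) equals $\dim_K\HH^n(\Tot(M))[1/p]=b_n$. Checking that the fractional weights $(1-\lambda)$ and $\lambda$ recombine to full multiplicities needs the fact that $\coeur(\HH^j(M)^i)\otimes K$ and $\coeur(\HH^{j+1}(M)^{i-1})\otimes K$ contribute on the same antidiagonal; this is the bookkeeping I expect to require the most care.

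Given this identity and \Cref{Ekeineq}, we get $\sum_{i+j=n}h^{i,j}\geqslant\sum_{i+j=n}h_W^{i,j}=b_n$ with termwise inequalities, so equality of the sums for every $n$ is equivalent to termwise equality $h^{i,j}=h_W^{i,j}$, giving (2)$\Leftrightarrow$(3). The main obstacle is thus the rational identity $\sum_{i+j=n}h_W^{i,j}=b_n$; I would first try to reduce it, by additivity (\Cref{independence} and exact triangles), to the case of a single coherent module concentrated in one cohomological degree, where the three formulas for $h_W^{i,0},h_W^{i,-1},h_W^{i,-2}$ displayed in the proof of \Cref{independence} make the telescoping explicit.
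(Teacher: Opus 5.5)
Your proposal is correct and follows essentially the same route as the paper: (1)$\Leftrightarrow$(3) comes from the chain $b_n\leqslant\dim_k\HH^n(k\otimes^{\rmL}_W\Tot(M))\leqslant\sum_{i+j=n}h^{i,j}$, where the first inequality is the universal coefficient theorem and the second is the Hodge--de Rham spectral sequence; (2)$\Leftrightarrow$(3) comes from the identity $\sum_{i+j=n}h_W^{i,j}=\sum_{i+j=n}m^{i,j}=b_n$ together with Ekedahl's inequality \Cref{Ekeineq}. Crew's formula and the proposed reduction to a single coherent module are not actually needed, since the domino terms telescope and the weights $(1-\lambda)$ and $\lambda$ recombine along each antidiagonal directly from the definition. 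Your explicit check that $\Tot(M)$ has finitely generated cohomology, which the universal-coefficient step needs, is a welcome detail that the paper leaves implicit.
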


\begin{proof}
A direct computation shows that
\(
\sum_{i+j=n} h_W^{i,j}(M)
=
\sum_{i+j=n} m^{i,j}(M)
=
b_n .
\)
Therefore, by Ekedahl's inequality \Cref{Ekeineq}, condition (2) is equivalent to condition (3).

By the universal coefficient theorem and the spectral sequence of \Cref{Hodge--de Rham spectral sequence}, we have
\[
b_n
\leqslant
\dim_k \HH^n(k \otimes^{\rmL}_W \Tot(M))
\leqslant
\sum_{i+j=n} h^{i,j}.
\]
Condition (1) is precisely the statement that both inequalities are equalities for all \(n \in \mathbb{Z}\).
Thus condition (1) is also equivalent to condition (3).
\end{proof}

\begin{definition}[{\cite[Definition IV.1.1]{Ek3}}]
\label{Mazur--Ogus definition}
An object \(M\in \DGr_c^b(\RR)\) is called \emph{Mazur--Ogus} if it satisfies the equivalent conditions of \Cref{Mazur--Ogus}. 
A smooth proper variety \(X\) over a perfect field \(k\) of characteristic \(p\) is called \emph{Mazur--Ogus} if the object
\(
\mathrm{R}\Gamma(X,W\Omega^\bullet_{X/k}) \in \DGr_c^b(\RR)
\)
is Mazur--Ogus.
\end{definition}

\begin{remark}
In Ekedahl's original definition \cite[Definition I.4.3]{Ek3}, 
the Mazur--Ogus condition for an object \(M\in \dc\) is formulated differently.
However, he later showed in \cite[Theorem IV.1.2(i)]{Ek3} that the two definitions agree.
\end{remark}

\begin{theorem}[{\cite[Theorem IV.1.2(iv)]{Ek3}}]
\label{MazurOgus}
Assume that \(M\in \DGr_c^b(\RR)\) is Mazur--Ogus in the sense of  {\Cref{Mazur--Ogus definition}}.
Then there is an isomorphism in \(\DGr_c^b(\RR)\):
\[
M \simeq \bigoplus_n \widetilde{\HH}^n(M)[-n].
\]
\end{theorem}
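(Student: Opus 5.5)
The plan is to split $M$ one diagonal cohomology at a time along the diagonal $t$-structure, and to use the Mazur--Ogus hypothesis to kill each connecting map.

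\textbf{Setup.} Since $M\in\DGr_c^b(\RR)$ is cohomologically bounded and each coherent cohomology module has bounded grading, \Cref{H and tildeH} shows that $M$ is bounded for the diagonal $t$-structure of \Cref{bounded diagonal t structure}. So only finitely many $\widetilde{\HH}^n(M)$ are nonzero, and all of them lie in $\dc$. I would argue by induction on the diagonal amplitude. Let $a$ be the top diagonal degree and consider the triangle
\[
A\coloneqq\widetilde{\tau}^{\leqslant a-1}M\longrightarrow M\longrightarrow B\coloneqq\widetilde{\HH}^a(M)[-a]\xrightarrow{\ \delta\ }A[1].
\]
It suffices to prove two things: that $A$ is again Mazur--Ogus, so that induction gives $A\simeq\bigoplus_{n<a}\widetilde{\HH}^n(M)[-n]$; and that $\delta=0$.

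\textbf{Step 1: the pieces inherit the Mazur--Ogus property.} By property (1) of \Cref{bounded diagonal t structure}, each $\HH^j(M)$ splits along the diagonal truncation by grading. The number $h_W^{i,j}$ only involves the dominoes of $\HH^j(M)$ in gradings $i-2,i-1,i$ (paired with $\HH^{j+2},\HH^{j+1}$ accordingly) and the hearts of $\HH^j(M)^i$ and $\HH^{j+1}(M)^{i-1}$. All of these sit on the single diagonal $i+j$, or on a domino straddling it that is kept intact by $(-)^{\leqslant}$ and $(-)^{\geqslant}$. Hence
\[
h_W^{i,j}(M)=h_W^{i,j}(A)+h_W^{i,j}(B).
\]
On the other hand, the long exact sequence of $\RR_1\otimes^{\rmL}_\RR(-)$ applied to the triangle gives $h^{i,j}(M)\leqslant h^{i,j}(A)+h^{i,j}(B)$. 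Equality holds exactly when $\RR_1\otimes^{\rmL}_\RR\delta$ induces zero on cohomology.

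Combining these with $h_W(M)=h(M)$ and Ekedahl's inequality \Cref{Ekeineq} for $A$ and $B$, all inequalities are forced to be equalities. Therefore $A$ and $B$ are Mazur--Ogus, and $\RR_1\otimes^{\rmL}_\RR\delta$ vanishes on cohomology. Since $\RR_1\otimes^{\rmL}_\RR(-)$ lands in complexes of $k$-vector spaces, it then vanishes outright in $\DGr(k)$ after forgetting $d$, and I would try to upgrade this to $\DGr(k[d]/d^2)$.

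\textbf{Step 2: from vanishing of $\RR_1\otimes^{\rmL}_\RR\delta$ to $\delta=0$.} This is where I expect the main difficulty. By induction, $\delta$ decomposes into components
\[
\widetilde{\HH}^a(M)[-a]\to\widetilde{\HH}^n(M)[-n+1],\qquad n<a,
\]
that is, classes in $\Ext^{a-n+1}$ between objects of the heart $\dc$. For $a-n\geqslant 2$, I would show that these groups vanish outright. The tool is Ekedahl's vanishing $\Ext^i_\RR(P,Q)=0$ for $2\leqslant i\leqslant n-m$ between coherent modules concentrated in gradings $m$ and $n$, recalled in the proof sketch of \Cref{bounded diagonal t structure}, applied through the cohomology filtration of the heart objects. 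The remaining case is $n=a-1$, i.e.\ an $\Ext^2$ between adjacent heart objects, where no formal vanishing is available.

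For that case I would first try to compute these $\Ext^2$ groups explicitly. Each heart object is a finite extension of type $\rmI_a$, $\rmI_b$ and domino pieces, and each such piece has a length $\leqslant 2$ resolution by completed free modules (\cite[Lemma III.1.5]{Ek2}, as used in \Cref{key computation}). With these resolutions I would aim to show that, for Mazur--Ogus heart objects, the reduction map
\[
\Ext^2_\RR(X,Y)\longrightarrow \Hom\bigl(\RR_1\otimes^{\rmL}_\RR X,\ \RR_1\otimes^{\rmL}_\RR Y[2]\bigr)
\]
is injective. The Mazur--Ogus condition should rule out the domino-type classes that reduce to zero. If this direct computation becomes unwieldy, the fallback is an approximation argument. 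By \Cref{Ek2I.1.1} and completeness, a map between bounded complete objects that vanishes modulo $\RR_1$ vanishes modulo every $\RR_n$. Then one uses the $\mathrm{Rlim}$ description of $\RR\mathrm{Hom}$ of complete objects, controlling the $\lim^1$ term via the finiteness in \Cref{alternative definition of coherence}. Once $\delta=0$, the triangle splits and the induction closes.
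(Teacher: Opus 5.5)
Your outline has a genuine gap: Step~1 does not establish what it claims, and Step~2 leaves the decisive case open. The paper does not reprove this statement; it quotes \cite[Theorem IV.1.2(iv)]{Ek3}.

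\textbf{Step 1.} The two estimates you combine point in the same direction, so nothing is forced.
\begin{itemize}
\item The long exact sequence of $\RR_1\otimes^{\rmL}_\RR(-)$ gives $h^{i,j}(M)\leqslant h^{i,j}(A)+h^{i,j}(B)$.
\item Additivity of $h_W$ along $0\to\HH^j(A)\to\HH^j(M)\to\HH^j(B)\to0$, the Mazur--Ogus hypothesis and \Cref{Ekeineq} give $h^{i,j}(M)=h_W^{i,j}(A)+h_W^{i,j}(B)\leqslant h^{i,j}(A)+h^{i,j}(B)$. This is the same upper bound again.
\end{itemize}
To force equality you would need the reverse inequality $h^{i,j}(M)\geqslant h^{i,j}(A)+h^{i,j}(B)$. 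That inequality is equivalent to the vanishing of the connecting maps, which is the mod-$\RR_1$ shadow of what you are trying to prove. So you obtain neither that $A$ and $B$ are Mazur--Ogus nor any information about $\RR_1\otimes^{\rmL}_\RR\delta$. The Mazur--Ogus property of the pieces is a consequence of the splitting, not an input available beforehand. In particular, the induction hypothesis cannot be applied to $A$.

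\textbf{Step 2.} Three separate problems here.
\begin{itemize}
\item The case $n=a-1$ is the real content of the theorem, and it is only a hope. The injectivity of $\Ext^2_\RR(X,Y)\to\Hom(\RR_1\otimes^{\rmL}_\RR X,\RR_1\otimes^{\rmL}_\RR Y[2])$ for Mazur--Ogus heart objects is neither proved nor reduced to a known statement.
\item Your fallback rests on a false principle. \Cref{Ek2I.1.1} concerns objects, not morphisms, and a map killed by $\RR_1\otimes^{\rmL}_\RR-$ need not be killed by $\RR_n\otimes^{\rmL}_\RR-$. For instance, multiplication by $p$ on $W$ induces $0$ on $\RR_1\otimes^{\rmL}_\RR W=k$, but not on $\RR_2\otimes^{\rmL}_\RR W=W_2$. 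Even if every $\RR_n\otimes^{\rmL}_\RR\delta$ vanished, you would still have to exclude phantom maps arising from $\mathrm{R}^1\lim$.
\item The case $a-n\geqslant 2$ is mis-justified. After filtering by standard cohomology, the relevant groups are $\Ext^e_\RR(P,Q)$ with $P$, $Q$ concentrated in gradings $m$, $m'$ and $e=(m'-m)+(a-n+1)>m'-m$. This lies outside the range $e\leqslant m'-m$ covered by Ekedahl's vanishing. Any vanishing there must come from an upper bound on Ext degrees, for example via the short resolutions of \cite[Lemma III.1.5]{Ek2}, not from the result you cite.
\end{itemize}

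What is missing overall is a genuine structural input on Mazur--Ogus objects that kills the class $\widetilde{\HH}^a(M)[-a]\to\widetilde{\HH}^{a-1}(M)[-a+2]$. Hodge-number bookkeeping alone cannot supply it.
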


\begin{proposition}[{\cite[Proposition III.4.11]{Ek3}}]
\label{Kunneth for Mazur--Ogus}
Assume \(M,N \in \dc\) are Mazur--Ogus objects. Then \(M \cdstimes N\in \dc\) is also Mazur--Ogus. 
Consequently, for two Mazur--Ogus objects \(M,N\in \DGr_c^b(\RR)\), we have
\begin{enumerate}
\item \(M\cdstimes N\) is Mazur--Ogus; and
\item \(\widetilde{\HH}^n(M\cdstimes N)\cong \bigoplus_{i+j=n}\widetilde{\HH}^i(M)\cdstimes \widetilde{\HH}^j(N)\).
\end{enumerate}
\end{proposition}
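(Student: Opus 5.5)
The statement has two parts: first that $M \cdstimes N$ lies in $\dc$ and is Mazur--Ogus when $M,N\in\dc$ are, and then two consequences for general Mazur--Ogus objects of $\DGr_c^b(\RR)$. The first part is cited from Ekedahl \cite[Proposition III.4.11]{Ek3}. I will take it as the external input and concentrate on deriving the consequences, using criterion (3) of \Cref{Mazur--Ogus} wherever possible, since it is purely numerical.

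\emph{Step 1: reduction to the heart.} Let $M,N\in\DGr_c^b(\RR)$ be Mazur--Ogus. By \Cref{MazurOgus} we have $M\simeq\bigoplus_i \widetilde{\HH}^i(M)[-i]$ and $N\simeq\bigoplus_j\widetilde{\HH}^j(N)[-j]$. These are finite sums, since bounded coherent objects have bounded diagonal amplitude by \Cref{H and tildeH}. Because $\cdstimes$ is exact in each variable, it commutes with finite sums and shifts, so
\[
M\cdstimes N\simeq\bigoplus_{i,j}\bigl(\widetilde{\HH}^i(M)\cdstimes\widetilde{\HH}^j(N)\bigr)[-i-j].
\]
Next I check that each $\widetilde{\HH}^i(M)$ is again Mazur--Ogus, using criterion (3). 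Both $h^{a,b}$ and $b_n$ are additive over finite direct sums and transform predictably under shifts. Ekedahl's inequality \Cref{Ekeineq}, applied to each summand, gives $h_W\le h$ summandwise. Since the totals agree, equality must hold in each summand, so every summand is Mazur--Ogus; equivalently, condition (2) of \Cref{Mazur--Ogus} holds summandwise.

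\emph{Step 2: applying the heart statement.} By the first sentence of the proposition, each $\widetilde{\HH}^i(M)\cdstimes\widetilde{\HH}^j(N)$ lies in $\dc$ and is Mazur--Ogus. A finite direct sum of shifted Mazur--Ogus objects is Mazur--Ogus, because conditions (2)/(3) are additive. This proves (1). For (2), the summand $(\widetilde{\HH}^i(M)\cdstimes\widetilde{\HH}^j(N))[-i-j]$ lies in $\dc[-(i+j)]$, so its only nonzero diagonal cohomology is in degree $i+j$, where it equals $\widetilde{\HH}^i(M)\cdstimes\widetilde{\HH}^j(N)$. Since $\widetilde{\HH}^n$ is additive, collecting the terms with $i+j=n$ gives the formula in (2).

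\emph{Main obstacle.} The key input is the heart statement itself, i.e.\ that $\cdstimes$ of two Mazur--Ogus objects in $\dc$ stays in $\dc$. If I had to prove it rather than cite it, I would argue as follows. First, by \Cref{derived tensor R1 is symmetric monoidal} and \Cref{tensor Rn and completion}, $\RR_1\otimes^\rmL(M\cdstimes N)\cong(\RR_1\otimes^\rmL M)\otimes_k(\RR_1\otimes^\rmL N)$, so Hodge numbers multiply as a Künneth convolution. Second, the Betti numbers $b_n$ of $\Tot$ behave multiplicatively after inverting $p$, so criterion (3) holds for $M\cdstimes N$ once we know it is bounded coherent, which follows from \Cref{alternative definition of coherence}. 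Third, membership in $\dc$ should follow from a diagonal $t$-exactness of $\cdstimes$ on Mazur--Ogus objects, which is the genuinely hard part of Ekedahl's argument. That last point is where I expect the difficulty to lie, and I would defer to \cite{Ek3} for it.
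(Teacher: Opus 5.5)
Your proposal is correct and matches the paper. The paper gives no argument beyond citing \cite[Proposition III.4.11]{Ek3} for the statement on $\dc$, and it places the ``consequently'' right after \Cref{MazurOgus}. Your deduction of (1) and (2) from the decomposition $M\simeq\bigoplus_i\widetilde{\HH}^i(M)[-i]$ is the intended fill-in, including the summandwise use of \Cref{Ekeineq} to see that each $\widetilde{\HH}^i(M)$ is itself Mazur--Ogus; your closing sketch of the heart statement is unnecessary, since you, like the paper, take that part from \cite{Ek3}.
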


\begin{example}\label{The de Rham-Witt cohomology of P^n}
Let \(\mathbb{P}^n\) be the projective space over \(k\) of dimension \(n\).
For each \(0\le i\le n\), the crystalline cohomology satisfies
\(
\HH^{2i}_{\cris}(\mathbb{P}^n/W) \cong W,
\)
where the Frobenius operator acts as \(p^i\) times the usual Witt-vector Frobenius,
and \(\HH^{2i+1}_{\cris}(\mathbb{P}^n/W)=0\).
The Hodge numbers are given by \(h^{i,i}(\mathbb{P}^n)=1\) and \(h^{m,n}(\mathbb{P}^n)=0\) for \(m\neq n\).

A direct computation shows that the equality
\(
\sum_{i+j=n} h^{i,j}=b_n
\)
holds for all \(n\). Hence \(\mathbb{P}^n\) is Mazur--Ogus in the sense of \Cref{Mazur--Ogus definition}.
Unwinding the equality \(h_W^{i,j}=h^{i,j}\), together with the description of the slopes of its crystalline cohomology,
we see that \(\mathbb{P}^n\) is Hodge--Witt in the sense of \Cref{Hodge--Witt varieties}.
By \Cref{Hodge--Witt decomposition}, we obtain a canonical decomposition
\[
(\HH^m_{\cris}(\mathbb{P}^n/W), \varphi_{\cris})
\cong 
\bigoplus_{i+j=m}
(\HH^j(\mathbb{P}^n,W\Omega^i_{\mathbb{P}^n/k}), p^i F).
\]
Consequently, there is a decomposition in \(\DGr_c^b(\RR)\):
\[
\mathrm{R}\Gamma(\mathbb{P}^n,W\Omega^\bullet_{\mathbb{P}^n/k})
\;\cong\;
\bigoplus_{0\le i\le n} W(-i)[-i].
\]
\end{example}

For any Dieudonn\'e module \((M,F,V)\), there is a canonical decomposition
\(
(M,F,V)
=
(M_{\mathrm{uni}},F_{\mathrm{uni}},V_{\mathrm{uni}})
\oplus
(M_{\mathrm{mul}},F_{\mathrm{mul}},V_{\mathrm{mul}}),
\)
where \(M_{\mathrm{uni}}\) is the summand on which \(V\) acts topologically nilpotently,
and \(M_{\mathrm{mul}}\) is the summand on which \(V\) is bijective.
We introduce the following notation.

\begin{notation}
\label{' notation}
For any multiplicative-type Dieudonn\'e module \((M,F,V)\) (that is, \(V\) is bijective),
we denote by \(M'\) the Dieudonn\'e module \((M,V^{-1},pV)\).
\end{notation}

\begin{lemma}
\label{taking ' is exact}
The endofunctor on the category of finite-type Dieudonn\'e modules
given by
\(
(M,F,V)\longmapsto (M_{\mathrm{mul}})'
\)
is exact.
\end{lemma}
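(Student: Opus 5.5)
The plan is to reduce exactness of $(M,F,V)\mapsto (M_{\mathrm{mul}})'$ to two separate facts: that $M\mapsto M_{\mathrm{mul}}$ is exact, and that the twist $N\mapsto N'$ is exact on multiplicative-type modules. The second fact is immediate. The functor $N\mapsto N'$ does not change the underlying $W$-module or the underlying map of a morphism. A morphism of Dieudonn\'e modules commutes with $V$, hence with $V^{-1}$ and with $pV$, so it is again a morphism of the primed modules. Kernels and cokernels are computed on underlying $W$-modules, so a short exact sequence of multiplicative-type modules stays short exact after priming. One should also check that $N'$ is again a Dieudonn\'e module: writing $F'=V^{-1}$ and $V'=pV$, we have $F'V'=V'F'=p$, and $F'$ is $\sigma$-semilinear because $V$ is $\sigma^{-1}$-semilinear.

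For the first fact, I would realize $M_{\mathrm{mul}}$ and $M_{\mathrm{uni}}$ functorially through $V$. For $M$ of finite type over $W$, I would take $M_{\mathrm{mul}}=\bigcap_n V^nM$. This is the maximal submodule on which $V$ is bijective; the intersection stabilizes modulo each $p^m$ by finiteness. I would take $M_{\mathrm{uni}}$ to be the set of $x$ with $V^n x\to 0$ $p$-adically. Functoriality is clear, since a morphism $f$ commutes with $V$, so $f(V^nM)\subset V^nN$ and $f$ preserves topological nilpotence. Given a short exact sequence $0\to M_1\to M\to M_2\to 0$, the canonical splitting of each term is respected by the maps. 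The sequence is therefore the direct sum of the sequence of multiplicative parts and the sequence of unipotent parts. A direct summand of an exact sequence of $W$-modules is exact, so $0\to M_{1,\mathrm{mul}}\to M_{\mathrm{mul}}\to M_{2,\mathrm{mul}}\to 0$ is exact.

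The main point requiring care is that the decomposition is genuinely functorial and compatible with morphisms, and that it holds for all finite-type modules, torsion ones included. Concretely, we need $\mathrm{Hom}(M_{\mathrm{mul}},N_{\mathrm{uni}})=0$ and $\mathrm{Hom}(M_{\mathrm{uni}},N_{\mathrm{mul}})=0$. For the first, take $f$ and $x\in M_{\mathrm{mul}}$. Since $V$ is bijective on $M_{\mathrm{mul}}$, we can write $x=V^ny_n$ with $y_n\in M_{\mathrm{mul}}$, so $f(x)=V^nf(y_n)$. The image $f(M_{\mathrm{mul}})$ is a finitely generated submodule of $N_{\mathrm{uni}}$, and $V$ is topologically nilpotent on it, so $V^n$ maps it into $p^{k}N$ for large $n$; hence $f(x)\in\bigcap_k p^kN=0$. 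The second vanishing is symmetric: topological nilpotence of $V$ on $M_{\mathrm{uni}}$ is carried into $N_{\mathrm{mul}}$, where $V$ is bijective and hence, by finiteness, cannot be topologically nilpotent on a nonzero submodule. I would cite the standard Dieudonn\'e--Manin style decomposition for the existence of the splitting itself.

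With both Hom-vanishings in hand, every morphism is diagonal with respect to the splitting, and the additivity argument of the second paragraph applies.
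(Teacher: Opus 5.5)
Your proposal is correct and takes the same route as the paper, which just observes that passing to the summand on which $V$ is bijective is exact and that $(M,F,V)\mapsto(M,V^{-1},pV)$ is exact on multiplicative-type modules. You fill in details the paper leaves implicit, notably the intrinsic descriptions $M_{\mathrm{mul}}=\bigcap_n V^nM$ and of $M_{\mathrm{uni}}$ via topological nilpotence, which already make the splitting functorial (so the Hom-vanishings are redundant though correct); note only that the existence of the splitting is a Fitting-lemma argument modulo $p^m$ followed by passage to the limit, not a Dieudonn\'e--Manin statement.
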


\begin{proof}
Taking the direct summand on which \(V\) is bijective is an exact operation.
Moreover, the passage from \((M,F,V)\) (with bijective \(V\)) to \((M,V^{-1},pV)\) is also exact.
\end{proof}

\begin{lemma}
\label{widetildeH^1(A)}
For any abelian variety \(A\), there is a canonical isomorphism
\[
\widetilde{\HH}^1(W\Omega^\bullet(A))
\cong
\HH^1_{\cris}(A/W)_{\mathrm{uni}}
\oplus
(\HH^1_{\cris}(A/W)_{\mathrm{mul}})'(-1)[1],
\]
where \(\D(-)\) denotes the usual contravariant Dieudonn\'e module functor.
\end{lemma}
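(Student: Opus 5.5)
The plan is to identify the two diagonal cohomology groups of $\widetilde{\HH}^1(M)$, where $M\coloneqq \mathrm{R\Gamma}(A,W\Omega^\bullet_{A/k})\in \DGr_c^b(\RR)$, using \Cref{H and tildeH}, and then to show that the resulting extension in $\dc$ splits canonically.

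\Cref{H and tildeH} gives
\[
\HH^j(\widetilde{\HH}^1(M))\cong \HH^{1+j}(M)^{[-j,-j]}.
\]
The only possibly nonzero values are $j=0$ and $j=-1$.
\begin{itemize}
\item For $j=0$ we get $\HH^1(A,WO_A)\xrightarrow{d}F^\infty d\,\HH^1(A,WO_A)$, placed in gradings $0,1$.
\item For $j=-1$ we get $\HH^0(A,W\Omega^1)/F^\infty d\,\HH^0(A,WO_A)\xrightarrow{d}F^\infty d\,\HH^0(A,W\Omega^1)$, placed in gradings $1,2$.
\end{itemize}
Since $\HH^0(A,WO_A)=W$ with $d=0$, the quotient in the second item is just $\HH^0(A,W\Omega^1)$.

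I would next show that all these differentials vanish, i.e.\ that there are no dominoes in total degree $1$. An abelian variety has torsion-free crystalline cohomology and a degenerate Hodge--de Rham spectral sequence, so $A$ is Mazur--Ogus by \Cref{Mazur--Ogus}(1). Hence $h_W^{i,j}=h^{i,j}$ and, by \Cref{MazurOgus}, $M\simeq\bigoplus_n\widetilde{\HH}^n(M)[-n]$. Together with $\HH^1(A,WO_A)$ being finite free (Serre) and \Cref{basicsymmetry}(2), which gives $T^{0,1}=T^{-1,2}$ and hence $T^{0,1}=0$, this yields $d=0$ on $\HH^1(A,WO_A)$. On $\HH^0(A,W\Omega^1)$, which is torsion-free, $d$ vanishes because it vanishes rationally and its target $F^\infty d(\cdot)$ would be torsion.

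The next step is to read off slopes using \Cref{dRW-crys comparison} and \Cref{SSbasic}. The degenerate slope spectral sequence in total degree $1$ gives
\[
0\to \HH^0(A,W\Omega^1)\to \HH^1_{\cris}(A/W)\to \HH^1(A,WO_A)\to 0,
\]
with $\varphi=F$ on $\HH^1(A,WO_A)$ (slopes in $[0,1)$) and $\varphi=pF$ on $\HH^0(A,W\Omega^1)$ (slope $1$). The slope-$<1$ part is exactly where $V=p\varphi^{-1}$ is topologically nilpotent, and the slope-$1$ part is where $V$ is bijective. Since slopes split canonically over $W$ for finite free Dieudonn\'e modules, the sequence splits canonically, giving
\[
\HH^1(A,WO_A)\cong \HH^1_{\cris}(A/W)_{\mathrm{uni}},\qquad \HH^0(A,W\Omega^1)\cong \HH^1_{\cris}(A/W)_{\mathrm{mul}}.
\]
On $\HH^0(A,W\Omega^1)$ the $\RR$-operator $F$ equals $\varphi/p=V_{\cris}^{-1}$ and the $\RR$-operator $V$ equals $pV_{\cris}$, so this piece is $(\HH^1_{\cris}(A/W)_{\mathrm{mul}})'$ in the sense of \Cref{' notation}. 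Therefore $\widetilde{\HH}^1(M)$ sits in a triangle
\[
(\HH^1_{\mathrm{mul}})'(-1)[1]\to \widetilde{\HH}^1(M)\to \HH^1_{\mathrm{uni}}.
\]

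The main obstacle is splitting this triangle canonically. One needs
\[
\Ext^2_\RR\bigl(\HH^1_{\mathrm{uni}},(\HH^1_{\mathrm{mul}})'(-1)\bigr)=0
\]
for existence of a splitting, and the corresponding $\Ext^1$ to vanish for canonicity. For existence, I would resolve the uni module by a two-term complex of finite free $\widehat{\RR}$-modules, as in \cite[Lemma III.1.5]{Ek2}, reducing to the elementary modules $E_{j/i+j}$ with resolution $0\to\widehat{\RR}\xrightarrow{\cdot(F^i-V^j)}\widehat{\RR}$. Since the target is complete and concentrated in cohomological degree $0$, $\mathrm{RHom}$ has amplitude $[0,1]$, which kills $\Ext^2$. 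For $\Ext^1$, it is the cokernel of $F^i-V^j$ acting on $((\HH^1_{\mathrm{mul}})')^{\,0}$, but the target $(\cdot)(-1)$ is placed in grading $1$ and has no grading-$0$ component, so I would check that this $\Ext^1$ vanishes for grading reasons. Failing that, I would fall back on the slope incompatibility: $F^i-V^j$ acts invertibly on a module where $V$ is bijective and $F=V^{-1}$, by a contraction argument in the $p$-adic topology. Either route should give the canonical decomposition.
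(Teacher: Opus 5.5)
Your proposal is correct and follows essentially the paper's route. You compute the two diagonal pieces of $\widetilde{\HH}^1$ via \Cref{H and tildeH} and show that total degree $1$ is domino-free; the paper cites \cite{IlldRW} and \cite[Th\'eor\`eme IV.4.5]{IRdRW} here, whereas you use Ekedahl's symmetry in total degree $\leqslant 2$ together with torsion-freeness. You then identify the pieces with $\HH^1_{\cris}(A/W)_{\mathrm{uni}}$ and $(\HH^1_{\cris}(A/W)_{\mathrm{mul}})'$ and split the extension.

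Your splitting argument is what justifies the paper's one-line claim that the extension splits canonically because the two terms live in different gradings. Namely, $\mathrm{RHom}_{\RR}\bigl(\HH^1_{\mathrm{uni}},(\HH^1_{\mathrm{mul}})'(-1)\bigr)=0$ because a two-term $\widehat{\RR}$-resolution generated in grading $0$ only sees the zero grading-$0$ piece of the target. One caveat: the unipotent part need not literally be a sum of $E_{j/i+j}$'s, so you would still need a d\'evissage or a general form of Ekedahl's resolution; the paper also leaves this point implicit.
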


\begin{proof}
By \cite[Corollaire II.2.17, Proposition II.2.19]{IlldRW}, 
any smooth proper variety satisfies the condition of \cite[IV.2.15.6]{IRdRW}
for \(n=1\).
Using \cite[Th\'eor\`eme IV.4.5]{IRdRW}, we obtain a canonical decomposition
\[
(\HH^1_{\cris}(A/W), \varphi_{\cris})
\cong
(\HH^1(WO_A), F)
\oplus
(\HH^0(W\Omega^1_{A/k}), pF).
\]
Consequently, there are canonical isomorphisms of Dieudonn\'e modules
\[
\HH^1(WO_A)
\cong
\HH^1_{\cris}(A/W)_{\mathrm{uni}},
\qquad
\HH^0(W\Omega^1_{A/k})
\cong
(\HH^1_{\cris}(A/W)_{\mathrm{mul}})'.
\]
By \Cref{H and tildeH}, the object \(\widetilde{\HH}^1(W\Omega^\bullet(A))\)
is an extension of \(\HH^1(WO_A)\) by \(\HH^0(W\Omega^1_{A/k})(-1)[1]\).
Since these two terms are concentrated in different gradings, the extension is canonically split.
Therefore we obtain the desired canonical isomorphism.
\end{proof}

\addtocontents{toc}{\protect\setcounter{tocdepth}{2}}

\section{de Rham--Witt cohomology of smooth stacks}
\label{dRW of smooth stacks}

Let \(k\) be a perfect field of characteristic \(p>0\).
We begin by following \cite[Section 2]{ABM} to extend the definition of the de Rham--Witt complex
(and its cohomology) from smooth \(k\)-schemes to smooth geometric Artin stacks over \(k\).
For a brief review of geometric Artin stacks, we refer the reader to \cite[Appendix A.1]{KP24},
especially \cite[Theorem A.1.6]{KP24} and the discussion preceding it.

\begin{notation}[cf.~{\cite[Notation 2.1]{ABM}}]
Let \(\mathrm{Sm}_k\) denote the category of smooth \(k\)-algebras. Its opposite category
\(\mathrm{Sm}_k^{\mathrm{op}}\), equipped with the Grothendieck topology in which covers are given by
finite families of smooth morphisms that are jointly surjective, is called the \emph{smooth site} of \(k\). 
A geometric Artin stack \(X/k\) is said to be \emph{smooth} if there exists a smooth
cover \(U \twoheadrightarrow X\) with \(U\) a smooth \(k\)-scheme.
This notion agrees with the smoothness of the morphism
\(X \to \Spec(k)\) as defined in \cite[A.1.15--16]{KP24}. 
We denote by \(\mathrm{SmStk}_k\) the category of smooth geometric Artin stacks over \(k\),
again equipped with the smooth topology.
\end{notation}

\begin{proposition}
\label{WOmega smooth sheaf}
The \(\DGr(\RR)\)-valued functor \(A \mapsto W\Omega^{\bullet}(A/k)\)
defines a sheaf on \(\mathrm{Sm}_k^{\mathrm{op}}\).
Similarly, for each \(n\), the \(\DGr(W_n[d]/d^2)\)-valued functor
\(
A \mapsto W_n\Omega^\bullet_{A/k}
\)
defines a sheaf on \(\mathrm{Sm}_k^{\mathrm{op}}\).
\end{proposition}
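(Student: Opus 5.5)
The plan is to reduce the sheaf condition to a statement about the finite-level pieces $W_n\Omega^\bullet$, and then to quasi-coherent sheaves on the smooth site. We treat the second claim first. Regard $W_n\Omega^\bullet_{A/k}$ as an object of $\DGr(W_n[d]/d^2)$. The forgetful functor $\DGr(W_n[d]/d^2)\to\DGr(W_n)\to\prod_i\calD(W_n)$ is conservative and preserves limits, so it suffices to prove the following: for each $i$, the functor $A\mapsto W_n\Omega^i_{A/k}$ satisfies descent for smooth covers, as a $\calD(W_n)$-valued presheaf. (It is a presheaf of discrete modules.)

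For the key step we filter $W_n\Omega^i$ by the standard $V$-filtration, using the exact sequences of Illusie \cite{IlldRW}:
\[
0\to \mathrm{gr}^{m}W_n\Omega^i \to W_{m+1}\Omega^i\to W_m\Omega^i\to 0 .
\]
This reduces descent to the graded pieces $\mathrm{gr}^m$. By Illusie's structure theorem, each $\mathrm{gr}^m W\Omega^i_{A}$ is an extension of $\Omega^i_A/B_m\Omega^i_A$ by $\Omega^{i-1}_A/Z_m\Omega^{i-1}_A$ (Frobenius twisted). For smooth $A$, the Cartier isomorphism identifies the sheaves $B_m\Omega^j$, $Z_m\Omega^j$, $\Omega^j/B_m$ and $\Omega^{j}/Z_m$, via iterated Frobenius pushforward, as finite locally free modules over $A$ (that is, over $A^{p^m}\cong A$). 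We also need their formation to commute with étale, and indeed smooth, base change in a way that makes them quasi-coherent sheaves on the smooth site. For smooth morphisms $A\to B$ this is weaker than it sounds. Under étale maps everything is base-change compatible. For a general smooth map, the sheaf $\Omega^j_{B}$ is not the pullback of $\Omega^j_A$, but it has a finite filtration $\Omega^j_B\supset\cdots$ whose graded pieces are $\Omega^a_A\otimes_A\Omega^{j-a}_{B/A}$.

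So the cleanest route is to avoid these filtrations. We note that the presheaf $A\mapsto\Omega^j_A$ itself satisfies fpqc (hence smooth) descent on smooth $k$-algebras. The reason is that it is the pushforward of a quasi-coherent sheaf under the standard comparison: $\Omega^j$ is a sheaf for the fpqc topology on all $k$-algebras, a classical fact that follows from faithfully flat descent plus the fact that $\Omega^j_{B}=B\otimes_A\Omega^j_A$ Čech-wise after the usual cotangent complex/Hodge descent argument of Bhatt, as in \cite{ABM}. The same holds for the subsheaves $Z_m$ and $B_m$ and their quotients, by the Cartier isomorphism and left exactness. Finite limits of sheaves are sheaves, and extensions of sheaves are sheaves in the stable setting. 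So each $\mathrm{gr}^m$, and by induction on $m$ each $W_n\Omega^i$, is a sheaf. The main obstacle is exactly this Hodge-descent input for smooth (non-étale) covers. We would try first to cite the derived-de-Rham/Hodge descent from \cite{ABM} or \cite{BLM} directly, which gives that $\mathrm{R}\Gamma(-,\Omega^j)$ is a sheaf on $\mathrm{Sm}_k^{\mathrm{op}}$ with the smooth topology, and then propagate it through the Cartier operator.

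For the first claim we use \Cref{Wn and Rn}: $W\Omega^\bullet_A \simeq \lim_n W_n\Omega^\bullet_A$ in $\DGr(\RR)$. Indeed, $W\Omega^\bullet_A$ is complete. It is a derived $p$-complete, $V$-adically complete module with surjective transition maps, so $\mathrm{R}\lim=\lim$. Moreover, the forgetful functor $\DGr(\RR)\to\DGr(\mathbb{Z}_p)$ is conservative and commutes with limits. Since limits of sheaves are sheaves, $A\mapsto W\Omega^\bullet(A/k)=\mathrm{R}\lim_n W_n\Omega^\bullet_{A/k}$ is a sheaf.
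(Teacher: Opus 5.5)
Your argument is correct in outline, but it takes a genuinely different and longer route than the paper. The paper never has to confront the obstacle you single out. By \cite[Tag 055V]{stacks-project}, a smooth surjection $\Spec B\to\Spec A$ admits sections \'{e}tale locally on $\Spec A$, so every smooth cover is refined by an \'{e}tale cover. Since \'{e}tale covers are smooth covers, the two topologies on $\mathrm{Sm}_k^{\mathrm{op}}$ have the same covering sieves and hence the same sheaves. It therefore suffices to check \'{e}tale descent, which is Illusie's result that $W_n\Omega^i$ is quasi-coherent over $W_n\mathcal{O}$ and compatible with \'{e}tale base change \cite{IlldRW}. The case of $W\Omega^\bullet$ then follows by passing to the limit, just as in your last paragraph. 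You instead run a d\'{e}vissage of $W_n\Omega^i$ through the $V$-filtration, Illusie's description of $\mathrm{gr}^m$, and the Cartier isomorphism, down to descent for $A\mapsto\Omega^j_A$ along smooth covers. What this buys is that descent for $W_n\Omega^i$ becomes a formal consequence of Hodge descent, with no appeal to \'{e}tale base change for $W_n\Omega$. What it costs is a longer induction and a heavier input: Hodge descent along smooth, non-\'{e}tale covers (e.g.\ via \cite{ABM}), which is itself most easily obtained from the same \'{e}tale refinement.

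Two steps need tightening. First, your justification of Hodge descent is muddled: $\Omega^j_B\neq B\otimes_A\Omega^j_A$ for smooth $A\to B$, as you note yourself, so simply cite it. Second, and more substantively, left exactness does not make $Z_1\Omega^j$ a sheaf in the derived sense required here, because the fiber of $d\colon\Omega^j\to\Omega^{j+1}$ has $\HH^1=\Omega^{j+1}/B_1\Omega^{j+1}$. Instead, induct upward on $j$ using the sectionwise exact sequences $0\to B_1\Omega^j_A\to Z_1\Omega^j_A\xrightarrow{C}\Omega^j_{A^{(1)}}\to 0$ and $0\to Z_1\Omega^j_A\to\Omega^j_A\xrightarrow{d}B_1\Omega^{j+1}_A\to 0$. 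Start from $B_1\Omega^0=0$ and use that sheaves are closed under fibers and cofibers. Then induct on $m$ via $0\to B_1\Omega^j_A\to B_{m+1}\Omega^j_A\xrightarrow{C}B_m\Omega^j_{A^{(1)}}\to 0$ and its analogue for $Z_{m+1}$. The Frobenius twists cause no trouble: since $k$ is perfect, $A\mapsto A^{(1)}$ is an autoequivalence of $\mathrm{Sm}_k$ preserving covers.
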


\begin{proof}
By \cite[\href{https://stacks.math.columbia.edu/tag/055V}{Tag 055V}]{stacks-project},
it suffices to show that the said functors satisfy the sheaf property with respect to \'{e}tale
coverings, which is shown in \cite[Proposition II.1.13, II.1.14]{IlldRW}.
\end{proof}

\begin{remark}
Let us mention that a more general descent statement for the de Rham--Witt complex is known, 
provided that one works in the animated setting, see \cite[Proposition 2.19]{DM25}.
\end{remark}

\begin{construction}[de Rham--Witt cohomology of geometric Artin stacks]
\label{HWArtinstack}
Let $X/k$ be a smooth geometric Artin stack. We set
\[
W\Omega^{\bullet}(X/k) \coloneqq \mathrm{R\Gamma}(\mathrm{Sm}^{\mathrm{op}}_{k, /X}, W\Omega^{\bullet}) \in \DGr(\RR).
\]
By \Cref{WOmega smooth sheaf}, we see that the assignment $X/k \mapsto W\Omega^{\bullet}(X/k)$
is again a sheaf on $\mathrm{SmStk}_k$. 
In particular, one may ``compute'' the value $W\Omega^{\bullet}(X/k)$ by induction on the geometricity of $X$.
When $X$ is $(-1)$-geometric, which means it is a smooth affine scheme $X = \Spec(A)$, it is given by
the usual $W\Omega^{\bullet}(A/k)$. Similarly, if $X$ is a smooth scheme, then we can compute
$W\Omega^{\bullet}(X/k)$ by Zariski descent. 
In general, suppose $X$ is a smooth $n$-geometric stack. Then there exists an $(n-1)$-representable smooth surjection
$\pi \colon U \twoheadrightarrow X$ from a smooth scheme $U$.
All the terms $U^{({\ast})} \coloneqq U^{\times_X (\ast + 1)}$ in the \v{C}ech nerve of $\pi$
are smooth $(n-1)$-geometric stacks, hence their de Rham--Witt cohomologies are ``computed'' by induction.
Moreover, we have an equivalence in $\DGr(\RR)$:
\[
W\Omega^{\bullet}(X/k) \xrightarrow{\cong} \mathrm{Tot}\big(
\begin{tikzcd}[column sep=large]
W\Omega^{\bullet}(U^{(0)}/k) \arrow[r, yshift=0.6ex] \arrow[r, yshift=-0.6ex] 
& W\Omega^{\bullet}(U^{(1)}/k) \arrow[r, yshift=1.2ex] \arrow[r] \arrow[r, yshift=-1.2ex] 
& W\Omega^{\bullet}(U^{(2)}/k) \arrow[r, yshift=1.8ex] \arrow[r, yshift=0.6ex] \arrow[r, yshift=-0.6ex] 
\arrow[r, yshift=-1.8ex] 
& \cdots
\end{tikzcd}
\big).
\]
\end{construction}

Let us establish some easy properties of the above construction.
We say that an object in $\DGr(\RR)$ is grading-left bounded by $0$ if it has no negative grading pieces.

\begin{lemma}
\label{dRW of a smooth stack is always complete}
Let $X/k$ be a smooth geometric Artin stack. Then
$W\Omega^{\bullet}(X/k) \in \widehat{\DGr}(\RR)$ and it is grading-left bounded by $0$.
\end{lemma}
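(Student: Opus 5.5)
The strategy is to reduce to the affine case and then use that both properties are preserved by limits. By \Cref{HWArtinstack}, $W\Omega^{\bullet}(X/k)$ is by definition the limit $\mathrm{R\Gamma}(\mathrm{Sm}^{\mathrm{op}}_{k,/X}, W\Omega^{\bullet})$ over the smooth site of $X$. Equivalently, by induction on geometricity and \v{C}ech descent, it is an iterated limit (totalizations, and Zariski descent for schemes) of the objects $W\Omega^{\bullet}(A/k)$ with $A$ a smooth $k$-algebra. So it suffices to check two things: the affine objects $W\Omega^\bullet(A/k)$ have both properties, and both properties are stable under arbitrary limits in $\DGr(\RR)$.

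Grading-left boundedness by $0$ is straightforward. For affine $\Spec(A)$, the complex $W\Omega^\bullet_{A/k}$ lives in gradings $\geqslant 0$ and cohomological degree $0$. The full subcategory of $\DGr(\RR)$ of objects with vanishing negative grading pieces is closed under all limits, because the functor $M\mapsto M^i$ to $\calD(W)$ commutes with limits (limits in $\DGr(\RR)$ are computed in $\DGr(\mathbb{Z}_p)$, i.e.\ gradingwise).

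For completeness, \Cref{completeness closed under taking limit} already gives closure of $\widehat{\DGr}(\RR)$ under limits. It therefore remains to show that $W\Omega^\bullet_{A/k}$ is complete for a smooth $k$-algebra $A$, which I expect to be the main point. My plan is to compute $\widehat{W\Omega^\bullet_{A/k}} = \mathrm{Rlim}_n (\RR_n\otimes^{\rmL}_\RR W\Omega^\bullet_{A/k})$. By \Cref{Wn and Rn}, $\RR_n\otimes^{\rmL}_\RR W\Omega^\bullet_{A/k}\cong W_n\Omega^\bullet_{A/k}$, compatibly in $n$ with the restriction maps $\pi$. The transition maps $W_{n+1}\Omega^i_{A/k}\to W_n\Omega^i_{A/k}$ are surjective, so $\mathrm{Rlim}=\lim$, concentrated in degree $0$. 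Finally, Illusie's result $W\Omega^i_{A/k}=\lim_n W_n\Omega^i_{A/k}$ (\cite{IlldRW}; the de Rham--Witt complex is defined as this limit) identifies the natural map $\eta$ with an isomorphism.

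One point needs care: the identification of $\eta$ with the canonical map to the limit should be compatible with the identification of \Cref{Wn and Rn}. This holds because both are induced by the projection $\RR\to\RR_n$ acting on $W\Omega^\bullet$.
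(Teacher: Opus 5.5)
Your proposal is correct and follows essentially the same route as the paper: it handles the affine case via the Illusie--Raynaud identification $\RR_n\otimes^{\rmL}_\RR W\Omega^\bullet_{A/k}\cong W_n\Omega^\bullet_{A/k}$ of \Cref{Wn and Rn}, then passes to general $X$ using closure of $\widehat{\DGr}(\RR)$ under limits (\Cref{completeness closed under taking limit}) and the fact that limits are computed gradingwise for the grading bound. The only difference is that you make explicit the affine-case step the paper leaves implicit: $\mathrm{Rlim}=\lim$ because the restriction maps are surjective, and $W\Omega^\bullet_{A/k}=\lim_n W_n\Omega^\bullet_{A/k}$.
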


\begin{proof}
When $X$ is an affine scheme, the completeness of its de Rham--Witt complex
follows from \Cref{Wn and Rn}.
The bound on the grading follows from the definition.
The completeness for general $X$ follows from \Cref{completeness closed under taking limit},
whereas the bound on the grading follows from the fact that limits are computed ``grading-wise''.
\end{proof}

\begin{remark}
\label{E1 structure}
For each smooth algebra $A/k$, the multiplication on $W\Omega^{\bullet}_{A/k}$
satisfies the properties listed in \Cref{star product abelian level}
(see \cite[Th\'eor\`eme I.1.3, Proposition I.2.18]{IlldRW}),
making it an associative algebra in the symmetric monoidal category of
graded left $\RR$-modules. Since $W\Omega^{\bullet}_{A/k}$ is concentrated in cohomological
degree $0$, and $- \stimes^{\rmL} -$ preserves the connective part, we see that
$W\Omega^{\bullet}_{-/k}$ defines a sheaf on $\mathrm{Sm}_k^{\mathrm{op}}$
valued in $\mathrm{Alg}_{\mathbb{E}_1}(\DGr(\RR))$.
Finally, since $W\Omega^{\bullet}_{A/k}$ is complete, we see that
$W\Omega^{\bullet}_{-/k}$ defines a sheaf on $\mathrm{Sm}_k^{\mathrm{op}}$
valued in $\mathrm{Alg}_{\mathbb{E}_1}(\widehat{\DGr}(\RR))$.
Therefore, we may regard $W\Omega^{\bullet}(-/k)$
as a sheaf on $\mathrm{SmStk}_k^{\mathrm{op}}$ taking values in $\mathbb{E}_1$-algebras in the symmetric monoidal category
$\widehat{\DGr}(\RR)$.
\end{remark}

\begin{lemma}
\label{Hodge and R1}
There is an equivalence of functors
\[
\RR_1 \otimes^{\rmL}_{\RR} W\Omega^{\bullet}(-/k)
\xrightarrow{\cong}
(\wedge^{\bullet}\mathbb{L}_{-/k})
\colon \mathrm{SmStk}_k^{\mathrm{op}} \to \DGr(k[d]/d^2).
\]
\end{lemma}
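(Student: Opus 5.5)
Both sides are sheaves on the smooth site extended to stacks by right Kan extension, so the plan is to build the transformation on smooth affines and then extend it by descent. Concretely, the functor $X \mapsto \wedge^{\bullet}\mathbb{L}_{X/k}$, i.e.\ the Hodge cohomology $\bigoplus_i \mathrm{R\Gamma}(X,\wedge^i\mathbb{L}_{X/k})[-i]$ with the de Rham differential viewed as an object of $\DGr(k[d]/d^2)$, is by definition (following \cite{ABM}, \cite{KP22}) the right Kan extension from $\mathrm{Sm}_k^{\mathrm{op}}$ of $A \mapsto \Omega^\bullet_{A/k}$, and it satisfies smooth descent. So I would first construct the map on smooth affines. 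For a smooth $k$-algebra $A$, \Cref{Wn and Rn} with $n=1$ gives functorial identifications
\[
\RR_1 \otimes^{\rmL}_\RR W\Omega^\bullet_{A/k} \cong \RR_1 \otimes_\RR W\Omega^\bullet_{A/k} \cong W_1\Omega^\bullet_{A/k} = \Omega^\bullet_{A/k}.
\]
These are natural in $A$ and compatible with the graded $k[d]/d^2$-structure. Since both sides are discrete there, no higher coherence issues arise, and the ordinary natural isomorphism promotes to an equivalence of $\DGr(k[d]/d^2)$-valued functors on $\mathrm{Sm}_k^{\mathrm{op}}$.

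The second step is to extend to $\mathrm{SmStk}_k$. By \Cref{HWArtinstack}, $W\Omega^\bullet(X/k) = \lim_{\mathrm{Spec} A \to X} W\Omega^\bullet_{A/k}$, computable via \v{C}ech nerves of smooth covers. The key point is that $\RR_1 \otimes^{\rmL}_\RR -$ commutes with all limits, because $\RR_1$ is a perfect right $\RR$-module by \Cref{Rn is right perfect}; this is exactly the argument already used in \Cref{completeness closed under taking limit}. Hence
\[
\RR_1 \otimes^{\rmL}_\RR W\Omega^\bullet(X/k) \cong \lim_{\mathrm{Spec}A\to X} \RR_1\otimes^{\rmL}_\RR W\Omega^\bullet_{A/k} \cong \lim_{\mathrm{Spec}A\to X}\Omega^\bullet_{A/k}.
\]
Thus $\RR_1 \otimes^{\rmL}_\RR W\Omega^\bullet(-/k)$ is itself a sheaf on $\mathrm{SmStk}_k$ that is right Kan extended from affines. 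Two sheaves right Kan extended from the same site, agreeing there, agree on stacks, and naturality comes for free from the universal property of the Kan extension.

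The main point needing care is identifying the right-hand side: one must check that $\lim_{\mathrm{Spec}A\to X}\Omega^\bullet_{A/k}$, taken in $\DGr(k[d]/d^2)$, really is $\wedge^\bullet\mathbb{L}_{X/k}$ in the sense the paper intends. For smooth Artin stacks the cotangent complex is not connective-discrete, since it has a $\mathrm{H}^{1}$ from the stabilizers, and $\wedge^i\mathbb{L}_{X/k}$ should be interpreted via derived exterior powers. The fact that the global sections of these satisfy smooth descent and are right Kan extended from smooth affines is \cite[Section 2]{ABM} and \cite{KP22} (Hodge cohomology of stacks). I would cite that, and note that limits in $\DGr(k[d]/d^2)$ are computed gradingwise on underlying objects. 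This is compatible with the identification $\DGr(k[d]/d^2)\simeq \widehat{\mathcal{DF}}(k)$ of \Cref{d gives rise to DFhat} only after forgetting to graded pieces, which is all the statement asserts.
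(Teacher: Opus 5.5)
Your proposal is correct and is essentially the paper's argument. You identify $\RR_1\otimes^{\rmL}_\RR W\Omega^\bullet_{A/k}\cong\Omega^\bullet_{A/k}$ on smooth affines via \Cref{Wn and Rn}, use that $\RR_1$ is a perfect right $\RR$-module so that $\RR_1\otimes^{\rmL}_\RR-$ commutes with the limit defining $W\Omega^\bullet(X/k)$, and then cite \cite[Theorem 2.5.(1) and Construction 2.7]{ABM} to identify $\mathrm{R\Gamma}(\mathrm{Sm}^{\mathrm{op}}_{k,/X},\Omega^\bullet)$ with $\wedge^\bullet\mathbb{L}_{X/k}$.

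The hedging in your last paragraph is unnecessary: limits in $\DGr(k[d]/d^2)$ are computed on the underlying graded objects. Also, with the paper's conventions the grading-$i$ piece is $\mathrm{R\Gamma}(X,\wedge^i\mathbb{L}_{X/k})$ with no shift $[-i]$; that shift only appears on $\mathrm{gr}^i$ of the totalization.
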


\begin{proof}
Consider the following natural map
\[
\RR_1 \otimes^\rmL_{\RR} W\Omega^{\bullet}(-/k)
=
\RR_1 \otimes^\rmL_{\RR} \mathrm{R\Gamma}(\mathrm{Sm}^{\mathrm{op}}_{k, /-}, W\Omega^{\bullet})
\to
\mathrm{R\Gamma}(\mathrm{Sm}^{\mathrm{op}}_{k, /-}, \RR_1 \otimes^\rmL_{\RR} W\Omega^{\bullet}).
\]

We have the following identification of the right-hand side:
\[
\mathrm{R\Gamma}(\mathrm{Sm}^{\mathrm{op}}_{k, /-}, \RR_1 \otimes^\rmL_{\RR} W\Omega^{\bullet})
\cong
\mathrm{R\Gamma}(\mathrm{Sm}^{\mathrm{op}}_{k, /-}, \Omega^{\bullet})
\cong
(\wedge^{\bullet}\mathbb{L}_{-/k}).
\]
Here the first identification is Illusie--Raynaud's \Cref{Wn and Rn}, and the second identification follows from
\cite[Theorem 2.5.(1) and Construction 2.7]{ABM}.
This map is an equivalence: since $\RR_1$ is a perfect right $\RR$-module, we know that
$\RR_1 \otimes^{\rmL}_\RR (\td)$ commutes with taking limits.
\end{proof}

\begin{remark}
\label{E1 structure and tensor R1}
For each smooth algebra $A/k$, the $\mathbb{E}_1$-structure on $W\Omega^{\bullet}_{A/k}$
gives rise to a multiplication on $\RR_1 \otimes^{\rmL}_\RR W\Omega^{\bullet}_{A/k} \cong 
\Omega^{\bullet}_{A/k}$. Tracing through the definition, we see that the multiplication
is none other than the wedge product in Hodge cohomology (see \cite[Th\'eor\`eme I.1.3]{IlldRW}).
Therefore, the equivalence in \Cref{Hodge and R1} is
automatically promoted to an equivalence of $\mathbb{E}_1$-algebras in $\DGr(k[d]/d^2)$.
\end{remark}

For a class of ``nice'' smooth geometric Artin stacks, their de Rham--Witt cohomology
satisfies a finiteness property.
Recall that a geometric Artin stack $X$ is called quasi-compact if there is a smooth surjection
$\Spec(A) \twoheadrightarrow X$ from an affine scheme; and a morphism of geometric Artin stacks
$X \to Y$ is called quasi-compact if for any map $\Spec(A) \to Y$, the fiber product
$X \times_Y \Spec(A)$ is quasi-compact, see \cite[Definition A.1.20]{KP24}.
Also recall that a geometric Artin stack $X$ over $k$ is called quasi-separated if the diagonal
morphism $X \to X \times_k X$ is quasi-compact; this is what is called $0$-quasi-separated
in \cite[Definition A.1.21]{KP24}.
Finally, we remind the reader that a smooth geometric Artin stack $X/k$ is called \emph{Hodge-proper}
if $\HH^j(X, \wedge^i \mathbb{L}_{X/k})$ is finite dimensional for all $i$ and $j$,
cf.~\cite[Definition 0.2.1]{KP22}. 

\begin{theorem}
\label{Hodge-proper dRW is coherent}
Let $X/k$ be a quasi-compact quasi-separated smooth Hodge-proper geometric Artin stack.
Then the object $W\Omega^{\bullet}(X/k) \in \DGr(\RR)$ lies in the full subcategory
$\DGr^+_c(\RR)$. Moreover, for each integer $i$,
we have \(\widetilde{\HH}^i(R\Gamma(W\Omega^\bullet))\in \dc \subset \dc'\). 
\end{theorem}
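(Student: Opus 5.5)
We apply the coherence criterion \Cref{coherence criteria} to $M \coloneqq W\Omega^{\bullet}(X/k)$, and then deduce the statement about diagonal cohomology from \Cref{First quadrant implies inside dc}. The criterion needs three inputs.

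\textbf{Completeness.} By \Cref{dRW of a smooth stack is always complete}, $M$ is complete, and it is grading-left bounded by $0$.

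\textbf{Cohomological boundedness below.} Since $X$ is quasi-compact, we may choose a smooth surjection $\Spec(A) \twoheadrightarrow X$. Since $X$ is quasi-separated, every term $U^{(n)}$ of the \v{C}ech nerve is a quasi-compact smooth stack. Each $W\Omega^{\bullet}(A/k)$ lies in cohomological degree $0$. The equivalence of \Cref{HWArtinstack} then writes $M$ as a totalization of objects that are, inductively on geometricity, coconnective. A limit of objects in $\DGr^{\geqslant 0}(\RR)$ stays in $\DGr^{\geqslant 0}(\RR)$, so $M\in\DGr^{\geqslant 0}(\RR)$, and in particular $M$ is bounded below. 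The only care needed is that the induction on geometricity really produces coconnective terms at each stage. For this we use that $W\Omega^{\bullet}$ is a sheaf on $\mathrm{SmStk}_k$ and that coconnectivity is preserved by arbitrary limits, so quasi-compactness is not even essential for this bound.

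\textbf{Finiteness of $\RR_1\otimes^{\rmL}_\RR M$.} By \Cref{Hodge and R1}, $\RR_1\otimes^{\rmL}_\RR M \cong \bigoplus_i \mathrm{R\Gamma}(X,\wedge^i\mathbb{L}_{X/k})[-i]$ as graded objects. Hodge-properness says each $\HH^j(X,\wedge^i\mathbb{L}_{X/k})$ is finite dimensional.

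The main obstacle is that condition (3) of \Cref{coherence criteria}, read literally, asks that each cohomology group of the ungraded object $\RR_1\otimes^{\rmL}_\RR M$ be finite dimensional. After forgetting the grading, $\HH^j$ is $\bigoplus_i \HH^{j-i}(X,\wedge^i\mathbb{L}_{X/k})$. On a stack this can involve infinitely many $i$ for fixed $j$: for instance, on $B\alpha_p$ the cotangent complex has a $\mathcal{O}[1]$ part, so $\wedge^i\mathbb{L}$ has cohomology in degrees down to $-i$. Boundedness below of $\mathrm{R\Gamma}$ does not control this.

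I would handle this by working one grading at a time. Completeness, coherence, and the functors $\RR_n\otimes^{\rmL}_\RR -$ are compatible with the truncation $M\mapsto M/M^{\geqslant N}$ to gradings $<N$, suitably taking $F^\infty d$ into account. Alternatively, I would reread the proofs of \Cref{alternative definition of coherence} and \Cref{coherence criteria}: they only use \Cref{Ek2I.1.1} with $A$ the category of finite length modules, applied bidegree by bidegree. So the hypothesis actually needed is that each graded piece $\HH^j(\RR_1\otimes^{\rmL}_\RR M)^i$ is finite dimensional, which Hodge-properness gives. Concretely, I would apply \Cref{Ek2I.1.1}(1) with $A=$ finite length $W$-modules; this is allowed since $M$ is grading-left bounded. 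This gives finite generation of each $\HH^j(\RR_n\otimes^{\rmL}_\RR M)^i$ for all $n$. I would then run the duality argument of \Cref{coherence criteria} grading-wise, via $D_\RR$ and \Cref{dualizing commutes with tensor Rn}, to conclude that $M\in\DGr^+_c(\RR)$.

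\textbf{Diagonal cohomology.} Finally, $M$ is cohomologically bounded below and grading-left bounded. So \Cref{First quadrant implies inside dc} gives $\widetilde{\HH}^i(M)\in\dc$ for every $i$.
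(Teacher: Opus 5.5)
Your steps for completeness, boundedness below, and the diagonal statement are fine and agree with the paper. The central step, however, has a genuine gap.

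You correctly see that condition (3) of \Cref{coherence criteria} requires that, for each $n$, only finitely many gradings contribute to $\HH^n(\RR_1\otimes^{\rmL}_\RR M)$. Note the convention here. In $\DGr$ the grading-$j$ piece of $\RR_1\otimes^{\rmL}_\RR M$ is $\mathrm{R\Gamma}(X,\wedge^j\mathbb{L}_{X/k})$ with no shift; the shift $[-j]$ only appears after $\mathrm{Tot}$. So the requirement is $\HH^n(X,\wedge^j\mathbb{L}_{X/k})=0$ for $j\gg 0$.

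You then claim this can fail, but your example is wrong. The complex $\mathbb{L}_{B\alpha_p/k}\simeq e^*\mathbb{L}_{\alpha_p/k}[-1]$ has cohomology only in degrees $0$ and $1$. Hence $\wedge^i\mathbb{L}_{B\alpha_p/k}$ is an extension built from $\mathcal{O}[-i]$ and $\mathcal{O}[1-i]$, concentrated in degrees $[i-1,i]$. It follows that $\HH^n(B\alpha_p,\wedge^i\mathbb{L})=0$ for $i>n+1$.

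More seriously, your workaround cannot succeed. Coherent modules have bounded gradings by definition, so no bidegree-by-bidegree finiteness criterion can detect coherence. For instance, take $M=\prod_{i\geqslant 0}W(-i)$ in cohomological degree $0$.
\begin{itemize}
\item It is complete, being a limit of coherent objects (\Cref{completeness closed under taking limit}).
\item $\RR_1\otimes^{\rmL}_\RR M$ is $k$ in each grading $i\geqslant 0$, so it is finite in every bidegree.
\item Yet $\HH^0(M)$ is not coherent, since its gradings are unbounded.
\end{itemize}
In fact the vanishing you doubt is \emph{necessary} for the theorem. By \Cref{Rn is right perfect}, $\mathrm{Tor}^{\RR}_\ast(\RR_1,-)$ vanishes outside $[0,2]$ and moves gradings by at most one. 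So if every $\HH^m(M)$ has bounded gradings, then so does every $\HH^n(\RR_1\otimes^{\rmL}_\RR M)$.

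What is missing is a proof of that vanishing. This is exactly where quasi-compactness and quasi-separatedness enter, not in the bound below, as you yourself observe. The paper argues by induction on geometricity.
\begin{itemize}
\item For a smooth affine $U$, one has $\wedge^j\mathbb{L}_{U/k}=\Omega^j_{U/k}=0$ for $j>\dim U$.
\item For general $X$, choose a smooth surjection from an affine scheme. Its \v{C}ech nerve $U^{(\ast)}$ consists of quasi-compact quasi-separated smooth $(n-1)$-geometric stacks. Inductively, $\HH^{i}(U^{(m)},\wedge^{j}\mathbb{L})=0$ for $j\geqslant N^{(m)}_i$.
\item The Hodge cohomology of $X$ is the totalization of that of $U^{(\ast)}$, and all terms are coconnective. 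Hence $\HH^i$ on $X$ only sees $\HH^{i-m}(U^{(m)},-)$ for $0\leqslant m\leqslant i$, and $N_i=\max_{0\leqslant m\leqslant i}N^{(m)}_{i-m}$ works.
\end{itemize}
Together with Hodge-properness, this shows that $\RR_1\otimes^{\rmL}_\RR M$ has finite-dimensional cohomology after forgetting the grading. Then \Cref{coherence criteria} applies as stated, and no grading-wise reworking of Ekedahl's arguments is needed.
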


\begin{proof}
By \Cref{coherence criteria} and \Cref{Hodge and R1}, it suffices to know that
for each $i$ the $i$-th Hodge cohomology is concentrated in finitely many gradings: 
Namely, $\HH^i(X, \wedge^{j}\mathbb{L}_{X/k}) = 0$ for $j \geqslant N_i$ for some natural number $N_i$. 
To that end, we argue by induction on the geometricity.
For $(-1)$-geometric Artin stacks, namely affine schemes, this is automatic, as we may choose
all $N_i$ to be the dimension of $X$. 
Suppose the claim is verified for all quasi-compact quasi-separated smooth $(n-1)$-geometric Artin stacks,
and let $X$ be a quasi-compact quasi-separated smooth $n$-geometric Artin stack.
Then there exists an $(n-1)$-representable smooth surjection $\pi \colon U \twoheadrightarrow X$
from a smooth affine scheme $U$,
and all the terms $U^{({\ast})} \coloneqq U^{\times_X (\ast + 1)}$ in the \v{C}ech nerve of $\pi$
are quasi-compact quasi-separated smooth $(n-1)$-geometric Artin stacks. 
Let $N_i^{(m)}$ be the natural number such that
$\HH^{i}(U^{(m)}, \wedge^{\geq N_i^{(m)}}\mathbb{L}_{U^{(m)}/k}) = 0$.
By \cite[Theorem 2.5.(1) \& Construction 2.7]{ABM},
the Hodge cohomology of $X$ is given by the totalization of the Hodge cohomologies
of $U^{({\ast})}$.
Our claim follows by choosing
\[
N_i \coloneqq \max\{N_{i-m}^{(m)} \mid 0 \leqslant m \leqslant i\}.
\]

The last statement follows from the combination of \Cref{First quadrant implies inside dc}
and \Cref{dRW of a smooth stack is always complete}.
\end{proof}

Next, let us study a class of ``nice'' morphisms between smooth geometric Artin stacks.
Recall that in \cite[Definition 5.1]{ABM}, the authors define
a map of syntomic algebraic stacks $X \to Y$ to be a
\emph{Hodge $d$-equivalence} if 
\[
\mathrm{H}^i\Big(\mathrm{Cone}\big(\mathrm{R\Gamma}(Y, \wedge^{j}\mathbb{L}_{Y/k})
\to \mathrm{R\Gamma}(X, \wedge^{j}\mathbb{L}_{X/k}) \big)\Big) = 0
\]
for all $i + j < d$. Similarly, we make the following definition.

\begin{definition}
We say that a map of smooth geometric Artin stacks $X \to Y$ is a \emph{de Rham--Witt $d$-equivalence}
if
\[
\mathrm{H}^i\Big(\mathrm{Cone}\big(W\Omega^{\bullet}(Y/k)
\to W\Omega^{\bullet}(X/k) \big)\Big)^j = 0
\]
for all $i + j < d$. 

We say that the map $X \to Y$ is a \emph{crystalline $d$-equivalence}
if
\[
\mathrm{H}^i\Big(\mathrm{Cone}\big(R\Gamma_{\cris}(Y/W)
\to R\Gamma_{\cris}(X/W) \big)\Big) = 0
\]
for all $i < d$. 
\end{definition}

\begin{lemma}
\label{Hodge equivalence implies crystalline equivalence}
Let $f \colon X \to Y$ be a map of smooth geometric Artin stacks.
If $f$ is a Hodge $d$-equivalence, then it is a crystalline $d$-equivalence.
\end{lemma}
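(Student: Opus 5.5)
The route is through de Rham--Witt cohomology and totalization. Set $C \coloneqq \mathrm{Cone}\big(W\Omega^{\bullet}(Y/k) \to W\Omega^{\bullet}(X/k)\big) \in \DGr(\RR)$. By \Cref{dRW of a smooth stack is always complete}, both terms are complete and grading-left bounded by $0$. Since $\widehat{\DGr}(\RR)$ is closed under finite limits (\Cref{completeness closed under taking limit}, together with stability), $C$ is complete and grading-left bounded by $0$ as well.

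By \Cref{Hodge and R1}, and because $\RR_1\otimes^{\rmL}_\RR(-)$ is exact, we get
\[
\RR_1\otimes^{\rmL}_\RR C \cong \mathrm{Cone}\big(\wedge^{\bullet}\mathbb{L}_{Y/k}\to \wedge^{\bullet}\mathbb{L}_{X/k}\big),
\]
where $\wedge^{\bullet}\mathbb{L}$ stands for the graded Hodge cohomology complexes. The Hodge $d$-equivalence hypothesis therefore says exactly that $\HH^i(\RR_1\otimes^{\rmL}_\RR C)^j=0$ for all $i+j<d$.

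Next we pass from $\RR_1$ to all $\RR_n$. Apply \Cref{Ek2I.1.1}(1) with $A=\{0\}$; this is allowed since $C$ is grading-left bounded. Fix $r<d-1$ and take $s\ll 0$. The hypotheses concern the antidiagonals $r-1$, $r$ and $r+1$, and all three lie below $d$, so they vanish. Hence $\HH^j(\RR_n\otimes^{\rmL}_\RR C)^i=0$ for all $i+j\le d-2$.

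The antidiagonal $i+j=d-1$ needs care, since there the $r+1$ line is not controlled. Here I would avoid trying to prove coherence (no finiteness is assumed) and work directly with the totalization.

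To reach crystalline cohomology, use \Cref{dRW-crys comparison}, extended to stacks. Crystalline cohomology of smooth stacks is defined by smooth descent, and both sides satisfy descent. $\Tot$ commutes with these limits because everything is uniformly grading-left bounded by $0$, exactly as in the proof of \Cref{dRW-crys comparison}. This gives $\mathrm{Cone}\big(R\Gamma_{\cris}(Y/W)\to R\Gamma_{\cris}(X/W)\big)\cong \Tot(C)$. Since $C$ is complete and $\Tot$ commutes with the limit over $n$ (uniform grading bound again), $\Tot(C)$ is derived $p$-complete: it is $\mathrm{Rlim}_n \Tot(\RR_n\otimes^{\rmL}_\RR C)$.

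The cleaner route, which I would take, goes through \Cref{Hodge-de Rham via tensor R1} and \Cref{Hodge--de Rham spectral sequence}, which identify $k\otimes^{\rmL}_W\Tot(C)$ with $\Tot(\RR_1\otimes^{\rmL}_\RR C)$. The Hodge--de Rham spectral sequence has $E_1$ terms vanishing on all antidiagonals $i+j<d$. Because $C$ is grading-left bounded, the filtration is complete and exhaustive in each degree, so the spectral sequence converges and $\HH^m(k\otimes^{\rmL}_W\Tot(C))=0$ for $m<d$.

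Finally, $\Tot(C)$ is a derived $p$-complete object of $\calD(W)$ whose reduction mod $p$ is coconnective in degrees $\ge d$. By derived Nakayama (induction on the lowest nonzero cohomology, or looking at the long exact sequence for $\cdot p$ and using that $\HH^m$ is then $p$-divisible and derived $p$-complete, hence $0$), $\HH^m(\Tot(C))=0$ for $m<d$. This is exactly the crystalline $d$-equivalence.

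The main obstacle I expect is justifying the stack-level crystalline comparison and the derived $p$-completeness of $\Tot(C)$. Specifically, one has to check that $\Tot$ commutes with the \v{C}ech totalizations and with $\mathrm{Rlim}_n$. I would handle both via the uniform grading-left bound by $0$, for which $\Tot=\Fil^{0}\Tot$ is a limit-preserving functor.
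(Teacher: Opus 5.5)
Your proof is correct, and its skeleton matches the paper's. In both, the Hodge $d$-equivalence kills the Hodge--de Rham $E_1$-terms on the antidiagonals $i+j<d$. That puts the mod-$p$ (de Rham) cone in $\mathcal{D}^{\geqslant d}$, and derived Nakayama for the derived $p$-complete crystalline cone then gives the crystalline $d$-equivalence.

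The difference is where the identifications come from.

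\textbf{The paper's route.} It applies the Hodge--de Rham spectral sequence directly to stacks. It also uses, implicitly, that $R\Gamma_{\cris}(-/W)$ of a stack is derived $p$-complete with mod-$p$ reduction equal to de Rham cohomology. Both are standard facts obtained by descent from the affine case, and the argument never touches de Rham--Witt theory.

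\textbf{Your route.} You work inside $\DGr(\RR)$:
\begin{itemize}
\item you realize the crystalline cone as $\Tot(C)$;
\item you identify $k\otimes^{\rmL}_W\Tot(C)$ with $\Tot(\RR_1\otimes^{\rmL}_\RR C)$ via \Cref{Hodge-de Rham via tensor R1};
\item you take the Hodge filtration to be the filtered totalization, as in \Cref{Hodge--de Rham spectral sequence}.
\end{itemize}
This makes every step traceable to results stated in the paper. The cost is that you must extend \Cref{dRW-crys comparison} to stacks. That extension does go through, for the reason you give: on objects grading-left bounded by $0$, $\Tot=\Fil^0\Tot$, and $\Fil^0\Tot$ commutes with the descent limits and with $\mathrm{Rlim}_n$.

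The paragraph invoking \Cref{Ek2I.1.1} to pass from $\RR_1$ to $\RR_n$ is superfluous, and you abandon it anyway. The concern about the antidiagonal $i+j=d-1$ never arises, because no coherence or $\RR_n$-level vanishing is needed. Only derived $p$-completeness of $\Tot(C)$ and the mod-$p$ vanishing enter.

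On the final Nakayama step: the long exact sequence for multiplication by $p$ shows that $p$ is bijective on $\HH^m(\Tot(C))$ for $m<d$. Bijectivity is stronger than the mere divisibility you mention, and together with derived $p$-completeness of each cohomology group it forces the vanishing immediately.
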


\begin{proof}
If we define \emph{de Rham $d$-equivalence} analogously, then a Hodge $d$-equivalence
implies a de Rham $d$-equivalence by the Hodge--de Rham spectral sequence.

We now note that a de Rham $d$-equivalence implies a crystalline $d$-equivalence.
Indeed, a derived $p$-complete complex lies in $\mathcal{D}^{\geqslant m}(\mathbb{Z}_p)$
if its derived reduction modulo $p$ does so. This condition implies that its derived reduction
modulo $p^n$ lies in $\mathcal{D}^{\geqslant m}(\mathbb{Z}_p)$ for all $n \in \mathbb{N}$,
and taking limits preserves coconnectivity.
\end{proof}

\begin{proposition}
\label{Hodge equivalence implies dRW equivalence}
Let $f \colon X \to Y$ be a map of smooth geometric Artin stacks.
If $f$ is a Hodge $d$-equivalence, then it is a de Rham--Witt $(d-1)$-equivalence. 
Moreover, if both \(X\) and \(Y\) are quasi-compact quasi-separated smooth Hodge-proper geometric Artin stacks, then \(f\) is a de Rham--Witt \(d\)-equivalence. 
\end{proposition}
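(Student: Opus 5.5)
Let $C \coloneqq \mathrm{Cone}\big(W\Omega^{\bullet}(Y/k)\to W\Omega^{\bullet}(X/k)\big)\in\DGr(\RR)$. By \Cref{dRW of a smooth stack is always complete}, both terms are complete and grading-left bounded by $0$. Since $\widehat{\DGr}(\RR)$ is a stable full subcategory closed under limits (\Cref{completeness closed under taking limit}), and the cone is a shifted cofiber, hence a shifted fiber, $C$ is complete and grading-left bounded. By \Cref{Hodge and R1} and exactness of $\RR_1\otimes^{\rmL}_\RR-$, we have
\[
\RR_1\otimes^{\rmL}_\RR C\cong \mathrm{Cone}\big(\wedge^\bullet\mathbb{L}_{Y/k}\to\wedge^\bullet\mathbb{L}_{X/k}\big),
\]
with the grading-$j$ piece equal to the cone on $\mathrm{R\Gamma}(-,\wedge^j\mathbb{L})$. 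The Hodge $d$-equivalence hypothesis therefore says exactly that $\HH^i(\RR_1\otimes^{\rmL}_\RR C)^j=0$ for all $i+j<d$.

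For the first claim, apply \Cref{Ek2I.1.1}(1) with $A=\{0\}$, using that $C$ is grading-left bounded. Fix $r\leqslant d-2$ and let $s\to-\infty$. The three hypothesis lines $i+j=r,\,r+1,\,r-1$ all lie in the range $i+j<d$, so $\HH^j(\RR_n\otimes^{\rmL}_\RR C)^i=0$ for all $n$ and all $i+j\leqslant d-2$. Since $C$ is complete, $C=\mathrm{Rlim}_n\,\RR_n\otimes^{\rmL}_\RR C$. Computing grading-wise, $\mathrm{Rlim}$ over $\mathbb{N}$ has amplitude $[0,1]$. So $\HH^j(C)^i$ is an extension of $\lim^1$ of $\HH^{j-1}(\RR_n\otimes C)^i$ by $\lim$ of $\HH^j(\RR_n\otimes C)^i$, and both vanish when $i+j\leqslant d-2$. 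Hence $\HH^j(C)^i=0$ for $i+j<d-1$, i.e.\ $f$ is a de Rham--Witt $(d-1)$-equivalence. (Note that the statement's indexing puts cohomological degree $i$ and grading $j$; I will match conventions carefully, but the condition $i+j<d$ is symmetric in the roles.)

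For the second claim, assume $X$ and $Y$ are qcqs Hodge-proper. By \Cref{Hodge-proper dRW is coherent}, both $W\Omega^\bullet$'s lie in $\DGr^+_c(\RR)$. Since $\DGr_c(\RR)$ is triangulated (discussion after \Cref{alternative definition of coherence}), $C\in\DGr_c(\RR)$, and $C$ is grading-left bounded. Now \Cref{coherence and Hodge-d-equivalence gives de Rham--Witt d-equivalence} applies verbatim to $C$: its hypothesis is precisely the Hodge vanishing above, and its conclusion is $\HH^j(C)^i=0$ for $i+j<d$. That is the de Rham--Witt $d$-equivalence.

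The main obstacle is bookkeeping rather than substance. One must check that the cone is complete and grading-left bounded, and that the identification in \Cref{Hodge and R1} is compatible with the map $f^*$, so that the cone of $\RR_1\otimes W\Omega$ is the cone of Hodge cohomology. Naturality of the equivalence in \Cref{Hodge and R1} as a transformation of functors on $\mathrm{SmStk}_k^{\mathrm{op}}$ handles the latter. The only genuinely delicate input is the boundary line $i+j=d-1$, which needs coherence (classical completeness of the cohomology modules); this is why the unconditional statement only gives $d-1$.
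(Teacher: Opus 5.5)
Your proposal is correct and follows the paper's own proof. For the first claim, you form the cone, use completeness together with \Cref{Hodge and R1} and \Cref{Ek2I.1.1} (with $A=\{0\}$); for the sharp claim, you combine \Cref{Hodge-proper dRW is coherent} with \Cref{coherence and Hodge-d-equivalence gives de Rham--Witt d-equivalence}.

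Two small differences are worth noting. You write out the Milnor $\lim^1$ step that the paper compresses into ``it suffices''. You also apply part (1) of \Cref{Ek2I.1.1} using grading-left boundedness, whereas the paper points to cohomological boundedness below, which matches part (2). Both are valid.
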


\begin{proof}
Let $M \coloneqq \mathrm{Cone}\big(W\Omega^{\bullet}(Y/k)
\to W\Omega^{\bullet}(X/k) \big)$, which is bounded below by $0$.
Moreover, by \Cref{dRW of a smooth stack is always complete}, we know that
$M$ is complete. Hence, for the first statement,
it suffices to show that $\mathrm{H}^i(\RR_n \otimes^{\rmL}_\RR M)^j = 0$
for all $n$ and all $i + j < d-1$.
This follows from the combination of \Cref{Hodge and R1} and \Cref{Ek2I.1.1}
(with $A = \{0\}$).

If, in addition, both \(X\) and \(Y\) are quasi-compact, quasi-separated, smooth Hodge-proper geometric Artin stacks, then \(M\) is automatically grading-left bounded by \Cref{dRW of a smooth stack is always complete}. By \Cref{Hodge-proper dRW is coherent} together with \Cref{coherence and Hodge-d-equivalence gives de Rham--Witt d-equivalence}, it follows that \(f\) is a de Rham--Witt \(d\)-equivalence.
\end{proof}

\begin{example}
\label{dRW of BGm}
The above allows us to compute the de Rham--Witt cohomology of $B\mathbb{G}_m$.
Consider the following diagram of smooth stacks:
\[
\mathbb{P}^1_k \to \mathbb{P}^2_k \to \cdots \to B\mathbb{G}_m,
\]
where the inclusion $\mathbb{P}^{n-1}_k \to \mathbb{P}^n_k$ is given by
the hyperplane at infinity, and the map to $B\mathbb{G}_m$ is classified by $\mathcal{O}(1)$
on projective spaces. 
Let us tentatively write $\mathbb{P}^{\infty}_k \coloneqq B\mathbb{G}_m$.
Then the map $\mathbb{P}^i_k \to \mathbb{P}^j_k$ is a Hodge $(2i+1)$-equivalence
(hence a de Rham--Witt $(2i+1)$-equivalence thanks to \Cref{Hodge equivalence implies dRW equivalence}) for all 
$j \in \mathbb{N}_{\geqslant i} \cup \{\infty\}$. 
Consequently, the map
\(
W\Omega^{\bullet}(B \mathbb{G}_m) \to \lim_n \mathrm{R\Gamma}(\mathbb{P}^n_k, W\Omega^{\bullet})
\)
is an equivalence, and the limit is eventually constant in each cohomological degree and grading.
Therefore, we obtain the following computation:
\[
W\Omega^{\bullet}(B \mathbb{G}_m) \cong \bigoplus_{n \geq 0} W(k)(-n)[-n],
\]
with $F$ and $V$ on $W(k)$ given by the usual Witt vector Frobenius and Verschiebung.
\end{example}

The following result follows from the proof of \cite[Theorem 1.2 \& Proposition 5.11]{ABM}:

\begin{theorem}[Antieau--Bhatt--Mathew]
\label{Enough Hodge equivalence approximations}
Suppose that $k$ is a perfect field of characteristic $p > 0$ and that $G$
is a finite $k$-group scheme. 
For any integer $d > 0$, there exists a smooth projective
$k$-scheme $X$ of dimension $d$ together with a map $X \to BG \times B\mathbb{G}_m$
which is a Hodge $d$-equivalence.
\end{theorem}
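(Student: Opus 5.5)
The plan is to follow the strategy of Antieau--Bhatt--Mathew: reduce to the case of classifying stacks of the finite connected group schemes $\alpha_p$ and $\mu_p$, where approximations can be written down by hand, and then propagate along extensions. As stated, the theorem is attributed to the proof of \cite[Theorem 1.2 \& Proposition 5.11]{ABM}, so the argument is mostly a matter of extracting the relevant construction.

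\textbf{Step 1: the construction.} Choose a closed embedding $G \hookrightarrow \mathrm{GL}_N$. This is possible for any finite $k$-group scheme, for instance via the regular representation. Then $\mathrm{GL}_N/G$ is smooth and affine, and $BG \simeq [(\mathrm{GL}_N/G)/\mathrm{GL}_N]$.

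\textbf{Step 2: a Hodge $d$-equivalence from a smooth quasi-projective scheme.} Approximate $B\mathrm{GL}_N$ Totaro-style by $U/\mathrm{GL}_N$, where $U \subset \mathrm{Hom}(k^N, k^M)$ is the open locus of injective maps. Its complement has codimension $M-N+1$, which we take to be large. The map $U \times^{\mathrm{GL}_N} (\mathrm{GL}_N/G) = U/G \to BG$ is then a smooth morphism whose fibres are $U$. Since $U$ is a complement of high codimension in affine space, $U$ has trivial Hodge cohomology in the range $i+j<d$, by the purity/depth argument for differential forms. Smooth base change then shows that $U/G \to BG$ is a Hodge $d$-equivalence. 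Multiplying by $\mathbb{P}^n \to B\mathbb{G}_m$, which is a Hodge $(2n+1)$-equivalence by \Cref{dRW of BGm}, and applying a Künneth formula for Hodge cohomology of stacks, gives a Hodge $d$-equivalence $U/G \times \mathbb{P}^n \to BG \times B\mathbb{G}_m$ from a smooth quasi-projective scheme.

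\textbf{Step 3: cutting down to a smooth projective $d$-fold.} This is the role of the $B\mathbb{G}_m$ factor. Compactify $U/G$ equivariantly, or use a projective quotient $\mathbb{P}(V)^{ss}/G$ with $G$ acting freely off a locus of high codimension, to obtain a projective variety $\overline{Y}$. Then take general complete intersections $X$ of high-degree hypersurfaces inside the free locus, using the $\mathcal O(1)$ classified by the map to $B\mathbb{G}_m$. Bertini makes $X$ smooth and lets it avoid the bad locus once that locus has codimension $>\dim X$. The Lefschetz hyperplane theorem for Hodge cohomology (Kodaira--Akizuki--Nakano-type vanishing does not hold in characteristic $p$, so instead use the $p$-adic/ample-degree version in \cite[Section 5]{ABM}: Serre vanishing for sufficiently high degree) then shows that $X \to \overline{Y}$ is a Hodge $d$-equivalence with $\dim X = d$.

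\textbf{Main obstacle.} I expect the main obstacle to be the Lefschetz step. In characteristic $p$ there is no Kodaira vanishing, so one must choose the hypersurface degrees large, depending on $d$, and control the cohomology of $\Omega^j_{\overline Y}(-m)$ restricted to complete intersections by Serre vanishing together with the conormal exact sequences. This is exactly the content of \cite[Proposition 5.11]{ABM}, which I would invoke directly.
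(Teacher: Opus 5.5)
\textbf{There is a genuine gap in Step 2.} You claim that $U$ has trivial Hodge cohomology for $i+j<d$. This is false, because Hodge cohomology is not $\mathbb{A}^1$-invariant. Purity/depth only kills the higher groups $H^j(U,\Omega^i_U)$ for $1\leqslant j\leqslant \mathrm{codim}-2$. In degree $j=0$, Hartogs gives $H^0(U,\Omega^i_U)=H^0(\mathbb{A},\Omega^i_{\mathbb{A}})$, the infinite-dimensional space of polynomial $i$-forms on $\mathbb{A}=\mathrm{Hom}(k^N,k^M)$.

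Consequently $U/G\to BG$ is not even a Hodge $1$-equivalence: $H^0(U/G,\mathcal{O})=k[\mathbb{A}]^G$ is infinite-dimensional, while $H^0(BG,\mathcal{O})=k$. Multiplying by $\mathbb{P}^n\to B\mathbb{G}_m$ does not help, since that factor never interacts with $U$.

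Step 3 cannot rescue the argument either. A Lefschetz theorem only transfers a Hodge equivalence you already have for the ambient projective stack. You never show that $\overline{Y}$ (an equivariant compactification of $U/G$, or $\mathbb{P}(V)/G$) is Hodge $N$-equivalent to $BG\times B\mathbb{G}_m$ for some $N\gg d$; that is the real content. The Totaro-style approximation $U/G$ works for $\mathbb{A}^1$-invariant theories such as Chow groups or \'etale cohomology, but not for Hodge cohomology.

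\textbf{The missing idea.} The $B\mathbb{G}_m$ factor is needed for the Hodge equivalence itself, not merely to supply an ample line bundle, and the $\mathbb{G}_m$ must scale the same space on which $G$ acts. The paper uses exactly this, following the proof of \cite[Proposition 5.11]{ABM}. For a suitable faithful representation $V$ of $G$ (acting freely on $\mathbb{P}(V)$ outside high codimension), one takes
$[\mathbb{P}(V)/G]\simeq[(V\setminus 0)/(G\times\mathbb{G}_m)]\to BG\times B\mathbb{G}_m$,
with the map to $B\mathbb{G}_m$ given by $\mathcal{O}(1)$ on $\mathbb{P}(V)$.

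Because $\mathbb{G}_m$ scales $V$ with nonzero weight, the $\mathbb{G}_m$-invariant part of the non-constant polynomial functions and forms on $V$ vanishes. Hence $[V/(G\times\mathbb{G}_m)]\to B(G\times\mathbb{G}_m)$ is a Hodge equivalence, and deleting the origin only affects total degrees $\geqslant 2\dim V$. This is why the paper obtains an $N$-equivalence with $N=2\dim V$.

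One then takes the $d$-dimensional smooth complete intersection $Z$ from the proof of \cite[Theorem 1.2]{ABM}, lying in the free locus, with $\dim V\gg d$. For that last step, the high-degree Lefschetz argument via Serre vanishing that you sketch is the right tool. (Over a finite field, smoothness of the complete intersection requires Poonen's Bertini theorem rather than the classical one.)

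So you should replace your Steps 1--2 by this $\mathbb{G}_m$-equivariant projective model, rather than a product of a Totaro approximation of $BG$ with $\mathbb{P}^n$.
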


\begin{proof}
We note that the stack $[\mathbb{P}(V)/G]$ in the \cite[first paragraph of the proof of Theorem 1.2]{ABM}
admits a map to $B\mathbb{G}_m$ (see the proof of \cite[Proposition 5.11]{ABM})
as well as a map to $BG$. According to the proof of \cite[Proposition 5.11]{ABM},
the induced map $[\mathbb{P}(V)/G] \to BG \times B\mathbb{G}_m$ is an $N$-equivalence,
where $N$ is twice the dimension of the $G$-representation $V$.
According to the construction of the complete intersection $Z$ in the proof of
\cite[Theorem 1.2]{ABM}, we know that $N$ is much larger than $d$.
\end{proof}

\begin{corollary}
\label{Enough dRW equivalence approximations}
Suppose that $k$ is a perfect field of characteristic $p > 0$ and that $G$
is a finite $k$-group scheme. 
For any integer $d > 0$, there exists a smooth projective
$k$-scheme $X$ of dimension $d$ together with a map $X \to BG \times B\mathbb{G}_m$
which is a de Rham--Witt $d$-equivalence.
\end{corollary}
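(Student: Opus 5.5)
The plan is to combine \Cref{Enough Hodge equivalence approximations} with \Cref{Hodge equivalence implies dRW equivalence}. Fix $d>0$. By \Cref{Enough Hodge equivalence approximations} we obtain a smooth projective $k$-scheme $X$ of dimension $d$ together with a map $f\colon X \to BG \times B\mathbb{G}_m$ which is a Hodge $d$-equivalence. It then suffices to upgrade this to a de Rham--Witt $d$-equivalence. The first half of \Cref{Hodge equivalence implies dRW equivalence} only gives a $(d-1)$-equivalence, so we want the second half, which needs both source and target to be quasi-compact quasi-separated smooth Hodge-proper geometric Artin stacks.

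For the source this is immediate: $X$ is a smooth projective scheme, so it is quasi-compact and quasi-separated, and its Hodge cohomology is finite-dimensional. For the target $Y = BG\times B\mathbb{G}_m$ we argue as follows.
\begin{enumerate}
\item $G$ is a finite $k$-group scheme and $\mathbb{G}_m$ is affine, so $Y = B(G\times\mathbb{G}_m)$ is the classifying stack of an affine group scheme of finite type. It is therefore a quasi-compact geometric Artin stack with a smooth cover $\Spec k \to Y$ and affine diagonal, hence quasi-separated.
\item Smoothness of $Y$ over $k$ holds in the sense used here. It follows from the smooth cover by $\Spec k$, with the \v{C}ech nerve consisting of the smooth schemes $(G\times\mathbb{G}_m)^n$. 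Note that $BG$ for non-smooth $G$ such as $\alpha_p$ is still smooth as a stack, because $\Spec k\to BG$ is flat and finitely presented, and the relevant structure maps are syntomic; this is the setting in which \cite{ABM} work.
\item Hodge-properness of $Y$ follows from the finiteness of the Hodge cohomology of $BG$ and $B\mathbb{G}_m$ established in \cite{ABM} and \cite{KP22}, together with a K\"unneth formula for Hodge cohomology.
\end{enumerate}

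The step I expect to be the main obstacle is the compatibility in item 2. We must check that $BG$ for a non-smooth finite group scheme is a ``smooth geometric Artin stack'' in the paper's sense, so that the definitions in \Cref{HWArtinstack} and \Cref{Hodge-proper dRW is coherent} apply. The first thing I would try is to present $BG$ via a smooth cover, e.g.\ $GL_n/G\to BG$ after embedding $G\subset GL_n$, whose \v{C}ech nerve consists of smooth schemes. If that is delicate, a fallback is to replace the second half of \Cref{Hodge equivalence implies dRW equivalence} by a direct argument. Namely, apply \Cref{Ek2I.1.1} and \Cref{coherence and Hodge-d-equivalence gives de Rham--Witt d-equivalence} to the cone $M$: it is complete by \Cref{dRW of a smooth stack is always complete} and grading-left bounded. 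It is also coherent, since by \Cref{coherence criteria} coherence needs only finiteness of $\RR_1\otimes^{\rmL}_\RR M$, which is the Hodge cohomology of the cone and is finite-dimensional because $X$ and $Y$ are Hodge-proper.
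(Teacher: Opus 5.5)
Your proposal is correct and follows the paper's argument: the paper's proof just combines \Cref{Enough Hodge equivalence approximations} with the second half of \Cref{Hodge equivalence implies dRW equivalence}, and leaves the hypotheses on $BG\times B\mathbb{G}_m$ implicit. One correction to your verification: for non-smooth $G$ the map $\Spec k\to BG$ is only an fppf cover (its fibre is $G$), so your items 1--2 as written fail, and it is the smooth affine cover $GL_n/G\to BG$ you propose (or the cover $A/G\to BG$ from an abelian variety $A\supset G$, used in \Cref{dRW of Balphap}) that actually gives smoothness and quasi-compactness in the paper's sense.
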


\begin{proof}
This follows from the combination of \Cref{Hodge equivalence implies dRW equivalence}
and \Cref{Enough Hodge equivalence approximations}.
\end{proof}

The remainder of this section shall be devoted to establishing
a K\"{u}nneth formula for the de Rham--Witt cohomology of smooth geometric Artin stacks. 

\begin{theorem}
\label{KF for stacks}
Let \(X,Y\) be smooth geometric Artin stacks over \(k\). Assume that one of the following holds:
\begin{enumerate}
\item Both $X$ and $Y$ are quasi-compact and quasi-separated; 
\item $X$ or $Y$ is quasi-compact, quasi-separated, and Hodge-proper.
\end{enumerate}
Then the following natural map is an isomorphism:
\[
W\Omega^\bullet(X/k) \cdstimes W\Omega^\bullet(Y/k)
\xrightarrow[\cong]{p_1^* \cup p_2^*} W\Omega^\bullet(X\times Y/k). 
\]
\end{theorem}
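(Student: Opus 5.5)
The natural map is built from the $\mathbb{E}_1$-structures of \Cref{E1 structure}. The projections $p_1,p_2$ give maps $W\Omega^\bullet(X/k)\to W\Omega^\bullet(X\times Y/k)$ and $W\Omega^\bullet(Y/k)\to W\Omega^\bullet(X\times Y/k)$. Multiplying in the $\mathbb{E}_1$-algebra $W\Omega^\bullet(X\times Y/k)\in\widehat{\DGr}(\RR)$ then gives $p_1^*\cup p_2^*$ out of $W\Omega^\bullet(X/k)\cdstimes W\Omega^\bullet(Y/k)$.

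To prove it is an isomorphism, I would first reduce to a statement modulo $\RR_1$. Both sides are complete: the source by \Cref{star product complete level}, the target by \Cref{dRW of a smooth stack is always complete}. Both are also cohomologically bounded below (by $0$) and grading-left bounded by $0$. Hence the cone $M$ is complete and bounded in a direction allowed by \Cref{Ek2I.1.1}, so it suffices to show $\RR_1\otimes^\rmL_\RR M=0$. By \Cref{tensor Rn and completion} and the symmetric monoidality of $\RR_1\otimes^\rmL_\RR-$ (\Cref{derived tensor R1 is symmetric monoidal}), $\RR_1\otimes^\rmL_\RR$ of the source is $(\RR_1\otimes^\rmL_\RR W\Omega^\bullet(X/k))\otimes^\rmL_k(\RR_1\otimes^\rmL_\RR W\Omega^\bullet(Y/k))$. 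By \Cref{Hodge and R1} and \Cref{E1 structure and tensor R1}, this becomes the Hodge cohomology map
\[
R\Gamma(X,\wedge^\bullet\mathbb{L}_{X/k})\otimes^\rmL_k R\Gamma(Y,\wedge^\bullet\mathbb{L}_{Y/k})\to R\Gamma(X\times Y,\wedge^\bullet\mathbb{L}_{X\times Y/k}),
\]
which is the cup product of pullbacks. So the theorem reduces to the Künneth formula for Hodge cohomology of stacks, grading by grading. For each $n$, $\wedge^n\mathbb{L}_{X\times Y}=\bigoplus_{a+b=n}p_1^*\wedge^a\mathbb{L}_X\otimes p_2^*\wedge^b\mathbb{L}_Y$, so the claim is the coherent Künneth isomorphism $R\Gamma(X,\mathcal F)\otimes^\rmL_k R\Gamma(Y,\mathcal G)\cong R\Gamma(X\times Y,\mathcal F\boxtimes\mathcal G)$ for these complexes.

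I would prove that Künneth formula by smooth descent along Čech nerves, inducting on geometricity. For affine smooth schemes it is tautological. In case (1), choose smooth affine covers $U\to X$ and $V\to Y$. The products of the Čech nerves give a bisimplicial cover of $X\times Y$, and $R\Gamma(X\times Y,-)$ is the totalization over $\Delta\times\Delta$, which by cofinality of the diagonal is computed either way. The key point is that $\otimes_k$ commutes with these totalizations. Here the quasi-compact quasi-separated hypothesis makes the terms bounded below uniformly: each term is in $\mathcal{D}^{\geqslant 0}$, and the Čech totalization in a given cohomological degree involves only finitely many stages. Coconnective objects over a field then allow exchanging $\otimes_k$ with $\lim_\Delta$. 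In case (2), say $X$ is Hodge-proper. Then each $R\Gamma(X,\wedge^a\mathbb{L}_X)$ has finite-dimensional cohomology in each degree and is bounded below. Tensoring with it commutes with the Čech totalization for $Y$, even when $Y$ is arbitrary, since $Y$ can be written as a colimit of its qcqs opens/pieces with limits taken termwise. Then I reduce to $Y$ affine and proceed as in case (1) for $X$.

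The main obstacle is this interchange of $\otimes$ with limits, especially in case (2) with no finiteness on $Y$. There I would argue degree by degree. Since $R\Gamma(X,\wedge^a\mathbb{L}_X)$ is bounded below with finite-dimensional cohomology, its truncations $\tau^{\leqslant m}$ are perfect over $k$, and a perfect complex commutes with all limits. In any fixed cohomological degree only a finite truncation contributes, uniformly because every term is coconnective. The example $X=Y=\bigsqcup_{\mathbb Z}\Spec k$ shows exactly where this finiteness is needed.
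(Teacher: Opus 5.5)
Your route is the paper's. You use completeness plus a boundedness condition to invoke \Cref{Ek2I.1.1}. You then use \Cref{tensor Rn and completion}, \Cref{derived tensor R1 is symmetric monoidal}, \Cref{Hodge and R1} and \Cref{E1 structure and tensor R1} to reduce to the K\"unneth formula for Hodge cohomology. That formula is proved by \v{C}ech descent, using that over a field, tensoring with a bounded-below complex commutes with totalizations of uniformly coconnective objects. In case (2), Hodge-properness lets the tensor product commute with the limits over $Y$. Your version of this last step is a clean reformulation of the paper's reduction through quasi-compact opens. You use the perfect truncations $\tau^{\leqslant m}$, so tensoring commutes with all limits of uniformly coconnective objects, and you pass directly to $Y$ affine.

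One step is wrong as stated. You assert that the source $W\Omega^\bullet(X/k)\cdstimes W\Omega^\bullet(Y/k)$ is cohomologically bounded below by $0$, and nothing gives this a priori. The completed derived star product of coconnective objects need not be coconnective: \Cref{key computation} gives $\HH^{-1}(E_{1/2}\cdstimes\mathbb{D}(\alpha_p))=U_{-1}\neq 0$ for two modules sitting in degree $0$. Coconnectivity of the source is known only after the theorem is proved. So when you apply \Cref{Ek2I.1.1}, the bound must be the grading-left one, and for the source this needs an argument you did not give. First, $-\stimes^{\rmL}-$ preserves objects grading-left bounded by $0$. The property is stable under colimits, so you may reduce to cohomologically bounded-above objects represented by complexes of sums of $\RR(i)$ with $i\leqslant 0$, and then use \Cref{stimesexamples}. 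Second, completion preserves this bound, since the resolution in \Cref{Rn is right perfect} involves only $\RR$ and $\RR(-1)$. With this supplied, your proof goes through.

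A smaller point concerns case (1). The qcqs hypothesis is not what makes the \v{C}ech terms uniformly coconnective; Hodge cohomology of every smooth stack is coconnective. Its role is to let you cover by a single affine with qcqs \v{C}ech terms. The induction then ends at products of affines rather than at infinite products, and $\otimes_k$ does not commute with infinite products.
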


Here we are using the $\mathbb{E}_1$-structure on $W\Omega^\bullet(X\times Y/k)$
(see \Cref{E1 structure}) to define the natural map.

\begin{proof}
We claim that the left-hand side is grading-left bounded by $0$.
Note that $W\Omega^{\bullet}(-/k)$ is always grading-left bounded by $0$, and the completion
of an object that is grading-left bounded by $0$ is also grading-left bounded by $0$.
Therefore, it suffices to know that $- \stimes^{\rmL} -$ preserves the full subcategory of $\DGr(\RR)$
spanned by objects that are grading-left bounded by $0$.

Since this property is preserved under colimits, we may further reduce to considering
the full subcategory spanned by objects that are cohomologically bounded above and grading-left bounded by $0$.
Now the claim follows from the fact that such objects can be represented by complexes whose
terms are direct sums of $\RR(i)$ with $i \leqslant 0$.

Therefore our statement concerns a map between two complete objects that are grading-left bounded by $0$.
By \Cref{Ek2I.1.1}, in order to show that the map is an isomorphism, it suffices to check that
it becomes an isomorphism after applying $\RR_1 \otimes^{\rmL}_\RR -$.
Using \Cref{Hodge and R1} and \Cref{E1 structure and tensor R1},
we are reduced to showing that the similarly defined map on Hodge cohomology is an isomorphism,
which is the content of the lemma below.
\end{proof}

\begin{lemma}
In the setting of \Cref{KF for stacks}, the following natural map is an isomorphism:
\[
\wedge^{\bullet}\mathbb{L}_{-/k}(X) \otimes^{\rmL}_k \wedge^{\bullet}\mathbb{L}_{-/k}(Y)
\xrightarrow[\cong]{p_1^* \cup p_2^*} \wedge^{\bullet}\mathbb{L}_{-/k}(X \times Y). 
\]
\end{lemma}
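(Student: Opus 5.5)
The plan is to reduce to the case of schemes, and then to affine schemes, by descent in both variables. Throughout, \(\wedge^{\bullet}\mathbb{L}_{-/k}(X)\) means the graded object \(\bigoplus_j \mathrm{R\Gamma}(X,\wedge^j\mathbb{L}_{X/k})[-j]\), which by \cite[Theorem 2.5.(1) \& Construction 2.7]{ABM} is the right Kan extension of \(A \mapsto \Omega^{\bullet}_{A/k}\) along smooth covers. It is a sheaf for the smooth topology on \(\mathrm{SmStk}_k\).

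\textbf{Affine base case.} For \(X=\Spec(A)\) and \(Y=\Spec(B)\) smooth affine, the map is the classical K\"unneth isomorphism
\[
\Omega^{\bullet}_{A/k}\otimes_k \Omega^{\bullet}_{B/k} \cong \Omega^{\bullet}_{A\otimes_k B/k},
\]
coming from \(\Omega^1_{A\otimes B}=(\Omega^1_A\otimes B)\oplus(A\otimes\Omega^1_B)\) and taking exterior powers. No higher cohomology is involved.

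\textbf{Descent in \(Y\).} Fix \(X=\Spec(A)\) affine. Pick a smooth surjection \(U\twoheadrightarrow Y\) and form its \v{C}ech nerve \(U^{(\ast)}\). The right side is \(\lim_{\Delta}\wedge^{\bullet}\mathbb{L}(X\times U^{(\ast)})\), because \(X\times U^{(\ast)}\) is the \v{C}ech nerve of the smooth cover \(X\times U\to X\times Y\). On the left we must commute \(\Omega^{\bullet}_{A}\otimes_k(-)\) with the totalization. This is where the hypotheses enter, and I expect it to be the main obstacle.

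\begin{itemize}
\item \emph{Case (2), the Hodge-proper variable is \(X\).} Its Hodge cohomology is finite-dimensional in each bidegree and, by the argument in the proof of \Cref{Hodge-proper dRW is coherent}, concentrated in finitely many gradings for each cohomological degree. It is also coconnective. Tensoring over \(k\) with such an object commutes with cosimplicial limits of uniformly coconnective objects, working grading by grading: in each bidegree only finitely many pieces contribute, and each of those is a finite direct sum.
\item \emph{Case (1), both quasi-compact quasi-separated.} Here \(X\) is affine, so \(\Omega^{\bullet}_A\) is a bounded complex of flat \(k\)-modules, and \(\Omega^{\bullet}_A\otimes_k -\) commutes with limits of uniformly bounded-below objects computed grading-wise. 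The Hodge complexes of the \(U^{(m)}\) are all coconnective (in cohomological degrees \(\geqslant 0\)), so the totalization is a limit of uniformly coconnective objects. In each fixed cohomological degree this is a finite limit, since \(\mathrm{Tot}\) of coconnective objects agrees with a finite partial totalization in any given degree, and finite limits commute with the tensor product.
\end{itemize}

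\textbf{Induction and descent in \(X\).} Induct on the geometricity of \(Y\), as in the proof of \Cref{Hodge-proper dRW is coherent}. Quasi-compactness and quasi-separatedness ensure that the \(U^{(m)}\) remain quasi-compact quasi-separated of lower geometricity, and that \(U\) can be taken affine. Next, run the same argument in the \(X\) variable with \(Y\) now arbitrary. In case (2) with \(Y\) the Hodge-proper stack, swap the roles of the variables using symmetry.

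\textbf{Compatibility of the comparison map.} The map \(p_1^*\cup p_2^*\) is natural in both variables. So the descent identifications are compatible with it, and the isomorphism on the affine pieces passes to the limits.

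\textbf{The key point.} The delicate step is the limit–tensor interchange. Concretely, the claim to verify is that for \(V\in\calD^{\geqslant 0}(k)\) either bounded with flat terms, or with finite-dimensional cohomology in each degree, \(V\otimes_k -\) commutes with \(\mathrm{Tot}\) of cosimplicial objects in \(\calD^{\geqslant 0}(k)\). I would check this on cohomology, using the fact that \(\mathrm{Tot}\) of coconnective objects agrees with a finite partial totalization in any fixed degree.
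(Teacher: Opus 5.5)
\textbf{Case (2) has a genuine gap.} Your case (1) follows the paper's route: induction on geometricity through affine atlases, plus commuting the tensor product with totalizations. Case (2) does not go through as written.

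In case (2) the non-Hodge-proper factor, say $Y$, is arbitrary. It need not be quasi-compact or quasi-separated, so your step ``quasi-compactness and quasi-separatedness ensure \ldots\ that $U$ can be taken affine'' is not available. An atlas of a non-quasi-compact $Y$ is an infinite disjoint union $\coprod_i \Spec(B_i)$, and its Hodge cohomology is an infinite product. So your induction does not bottom out at the affine case. It bottoms out at the need for
\[
\wedge^{\bullet}\mathbb{L}_{-/k}(X)\otimes_k \prod_i \Omega^{\bullet}_{B_i/k} \xrightarrow{\cong} \prod_i \bigl(\wedge^{\bullet}\mathbb{L}_{-/k}(X)\otimes_k \Omega^{\bullet}_{B_i/k}\bigr).
\]
Your key lemma concerns only totalizations, and this product interchange is not formal. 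For $X=\mathbb{A}^1$ and $Y=\coprod_{\mathbb{N}}\Spec(k)$, the element $(x^n)_n\in\prod_{\mathbb{N}}k[x]$ is not in the image of $k[x]\otimes_k\prod_{\mathbb{N}}k$.

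This interchange is exactly where Hodge-properness is needed, and it fixes the order of reductions. You must use Hodge-properness of $X$ as a whole to pass to quasi-compact pieces of $Y$ before any descent in $X$. Your step ``fix $X=\Spec(A)$ affine'' cannot come first in case (2): after replacing $X$ by affine \v{C}ech pieces, the tensor factor is no longer finite-dimensional in each bidegree, and the interchange fails.

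The paper proceeds in this order:
\begin{enumerate}
\item Reduce $Y$ to a smooth scheme by descent.
\item Write $Y$ as an increasing union of quasi-compact opens $U_\lambda$. The Hodge cohomology of $Y$, and of $X\times Y$, is the cofiltered limit over $\lambda$.
\item Use that tensoring with the Hodge cohomology of the Hodge-proper $X$ commutes with such limits. This holds because that cohomology is finite-dimensional in each bidegree, and only finitely many bidegrees contribute to a given bidegree of the tensor product.
\end{enumerate}
This reduces case (2) to case (1).

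\textbf{Hodge-properness is spent in the wrong place.} You use it for the totalization interchange, where it is superfluous. Over a field, for any bounded-below $V$, the functor $V\otimes^{\rmL}_k-$ commutes with totalizations of uniformly bounded-below cosimplicial objects. Your own partial-totalization argument proves this with no finiteness or bounded-above hypothesis, and this general form is the fact the paper invokes.

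You need the general form even in case (1). In the descent-in-$X$ step, the tensor factor is $\wedge^{\bullet}\mathbb{L}_{-/k}(Y)$ for a quasi-compact quasi-separated $Y$. In general this is neither bounded nor finite-dimensional in each degree; $Y=\mathbb{A}^1\times B\alpha_p$ is an example. So your key lemma, as stated with the ``bounded, or finite-dimensional'' dichotomy, does not cover that step.
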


\begin{proof}
We note that working over a field $k$ implies that for any bounded-below object 
$M \in \DGr(k)$, the functor $M \otimes^{\rmL}_k -$ commutes with the totalization
of uniformly bounded-below objects.

Let us first prove case (1), so suppose that both $X$ and $Y$ are quasi-compact and quasi-separated.
We proceed by induction on the geometricity of $Y$. Suppose $Y$ is $n$-geometric. Then our assumption
implies that we can choose an $(n-1)$-representable smooth surjection $\pi \colon U \twoheadrightarrow Y$
from a smooth affine scheme $U$, and all the terms
$U^{({\ast})} \coloneqq U^{\times_Y (\ast + 1)}$ in the \v{C}ech nerve of $\pi$
are quasi-compact, quasi-separated smooth $(n-1)$-geometric Artin stacks.
By the fact recalled in the first paragraph, we are reduced to showing the claim in the case
where $Y$ is replaced by $U^{({\ast})}$. Therefore we are reduced to the case where $Y$
is affine. Repeating the same argument, we may further reduce to the case where $X$
is also affine, which is well known.

Next we reduce case (2) to case (1). Assume that $X$ is quasi-compact, quasi-separated,
and Hodge-proper. By mimicking the argument in the previous paragraph, we may reduce to the case
where $Y$ is a smooth scheme. Write
\[
Y = \bigcup_{\lambda \in \Lambda} U_{\lambda}
\]
as an increasing union of its quasi-compact open subschemes $U_{\lambda}$.
The Hodge cohomology of $Y$ (resp.~$X \times Y$) is the cofiltered limit of the Hodge cohomologies
of $U_{\lambda}$ (resp.~$X \times U_{\lambda}$).
The assumption that $X$ is Hodge-proper implies that tensoring with its Hodge cohomology
commutes with cofiltered limits. Therefore we may replace $Y$ by one of the $U_{\lambda}$,
which is quasi-compact and quasi-separated. Now we are in case (1), and the proof is complete.
\end{proof}

\begin{corollary}
\label{effect of multiplying BGm}
Let \(X\) be a smooth geometric Artin stack over \(k\). Then we have a natural equivalence:
\[
\bigoplus_{n \geqslant 0} W\Omega^{\bullet}(X)(-n)[-n] \xrightarrow{\cong} W\Omega^{\bullet}(X \times B\mathbb{G}_m).
\]
\end{corollary}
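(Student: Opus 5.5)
The plan is to apply the Künneth formula \Cref{KF for stacks} with $Y = B\mathbb{G}_m$, combined with the computation in \Cref{dRW of BGm}. The natural map in the statement should be the cup product map from \Cref{KF for stacks}. I will first check that its hypothesis (2) holds: $B\mathbb{G}_m$ is quasi-compact (there is a smooth cover $\Spec(k) \to B\mathbb{G}_m$) and quasi-separated (its diagonal is a $\mathbb{G}_m$-torsor over affines). It is also Hodge-proper, since $\HH^j(B\mathbb{G}_m, \wedge^i\mathbb{L})$ equals $k$ if $i=j$ and $0$ otherwise. This Hodge cohomology is the $\RR_1$-reduction of \Cref{dRW of BGm} via \Cref{Hodge and R1}, and it also follows from the projective space approximation. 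Hence, for arbitrary smooth $X$,
\[
W\Omega^\bullet(X/k) \cdstimes W\Omega^\bullet(B\mathbb{G}_m/k) \xrightarrow{\cong} W\Omega^\bullet(X\times B\mathbb{G}_m/k).
\]

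Next I will evaluate the left-hand side. By \Cref{dRW of BGm}, $W\Omega^\bullet(B\mathbb{G}_m)\cong\bigoplus_{n\geqslant 0} W(-n)[-n]$, where $W$ is the unit of $\stimes^{\rmL}$ (\Cref{unital star product connective level}). The product $\stimes^{\rmL}$ commutes with colimits in each variable and with the shifts $(-n)[-n]$, so
\[
W\Omega^\bullet(X)\stimes^{\rmL}\bigoplus_n W(-n)[-n] \cong \bigoplus_n W\Omega^\bullet(X)(-n)[-n].
\]
It remains to check that completion does nothing to this sum. The summand $W\Omega^\bullet(X)(-n)[-n]$ is complete by \Cref{dRW of a smooth stack is always complete}, and it is grading-left bounded by $n$. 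In each fixed grading only finitely many summands contribute, so the direct sum agrees with the product. Since completion is an exact localization and each $\RR_m\otimes^{\rmL}_\RR-$ is computed gradingwise up to a bounded shift of grading (by the resolution in \Cref{Rn is right perfect}, which only involves $\RR(-1)$), this direct sum is complete. Alternatively, I can write it as the product and invoke \Cref{completeness closed under taking limit}.

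The step I expect to be the main obstacle is making sure the isomorphism is natural and is the map $p_1^*\cup p_2^*$ restricted to the classes $c_1(\mathcal O(1))^n$. I will handle this by defining the map in the statement as the composite of the identifications above, so naturality in $X$ follows from the naturality of \Cref{KF for stacks}. A second point needing care is the gradingwise-finiteness argument for completeness, which I will check using that the completion functor preserves the property of being grading-left bounded by $n$.
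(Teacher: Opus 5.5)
Your proposal is correct and is essentially the paper's own proof. The paper likewise combines \Cref{KF for stacks} (necessarily in case (2), with $B\mathbb{G}_m$ as the quasi-compact, quasi-separated, Hodge-proper factor), the computation $W\Omega^{\bullet}(B\mathbb{G}_m)\cong\bigoplus_{n\geqslant 0}W(-n)[-n]$ from \Cref{dRW of BGm}, and the fact that $W$ is the unit for $\stimes^{\rmL}$. Your extra check that completion does not change $\bigoplus_n W\Omega^{\bullet}(X)(-n)[-n]$ is sound: the sum is finite in each grading, so it is a product of complete objects. The paper leaves this step implicit.
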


\begin{proof}
Combining \Cref{dRW of BGm} and \Cref{KF for stacks}, the statement follows from
the fact that $W(k)$ (with its Witt vector Frobenius and Verschiebung) is the unit for $- \stimes^{\rmL} -$.
\end{proof}

\section{de Rham--Witt cohomology of $B\alpha_p$}
\label{dRW of Balphap}

In this section, we explain the part of the de Rham--Witt cohomology of $B\alpha_p$ that
we are able to compute.

Let \(G\) be a finite flat commutative group scheme over \(k\). Let us study the 
de Rham--Witt cohomology of the classifying stack \(BG\).
Recall that any finite flat commutative group scheme over \(k\) is a closed subgroup scheme of an abelian variety:
see \cite[II.15.4, II.15.11]{Oort} or \cite[Theorem 3.1.1]{BBM82} (where a more general result
is attributed to Raynaud). 
Fix such an embedding \(G \hookrightarrow A\) into an abelian variety \(A\), 
and set $B \coloneqq A/G$, which is another abelian variety.
Since the map $A \xrightarrow{g} B$ is a $G$-torsor and $A$ is smooth, we obtain a smooth surjection
$B \twoheadrightarrow BG$.
According to \Cref{HWArtinstack}, the de Rham--Witt cohomology of $BG$ is given by the totalization
of the de Rham--Witt cohomology of the simplicial scheme $B^{\times_{BG} (* + 1)}$
obtained by taking the \v{C}ech nerve of $B \twoheadrightarrow BG$.

\begin{proposition}
\label{Rewrite the simplicial scheme}
There are natural isomorphisms of schemes
\[
B^{\times_{BG} (* + 1)} \cong A^{\times_k *} \times_k B.
\]
Moreover, under these isomorphisms,
the face maps $\{d_i\}_{0 \leqslant i \leqslant n} \colon B^{\times_{BG} (n + 1)} \to B^{\times_{BG} n}$
are identified with the maps
$f_i: A^{n}\times B\rightarrow A^{n-1}\times B$ given by
\[
f_i(a_0,\ldots,a_{n-1},b)=
\begin{cases}
(a_1,a_2,\ldots,a_{n-1},b), & \text{if } i=0,\\
(a_0,\ldots,a_{i-2},a_{i-1}+a_i,a_{i+1},\ldots,a_{n-1},b), & \text{if } 1 \leqslant i \leqslant n-1,\\
(a_0,\ldots,a_{n-2},g(a_{n-1})+b), & \text{if } i=n;
\end{cases}
\]
and the degeneracy maps $\{s_i\}_{0 \leqslant i \leqslant n} \colon B^{\times_{BG} (n + 1)} \to B^{\times_{BG} (n+2)}$
are identified with the maps
$0_i \colon A^{n}\times B\rightarrow A^{n+1}\times B$ given by
\[
0_i(a_0, \ldots,a_{n-1},b) = (a_0, \ldots,a_{i-1}, 0, a_i, \ldots, a_{n-1}, b).
\]
\end{proposition}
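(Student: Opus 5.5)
The plan is to build the isomorphism first for $n=1$ from the $G$-torsor $g\colon A\to B$, then get the higher terms by iterating, and finally check the face and degeneracy formulas on points. Throughout, $G$ acts on $A$ by translation through the embedding $G\hookrightarrow A$.

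\emph{Step 1: the case $n=1$.} The map $B\to BG$ classifies the $G$-torsor $A\to B$. For any test scheme $T$, a $T$-point of $B\times_{BG}B$ consists of a pair $(b_0,b_1)\in B(T)^2$ together with an isomorphism of the pulled-back $G$-torsors. Since these torsors are the fibres of $g$, this is the same as $B\times_{BG}B\cong A\times_G A$, the quotient of $A\times A$ by the diagonal action of $G$. Next, I use the shearing map
\[
A\times A\to A\times A,\qquad (x,y)\mapsto (y-x,\,x).
\]
It carries the diagonal $G$-action to the action on the second factor alone, so it descends to an isomorphism $A\times_G A\cong A\times B$, given by $(x,y)\mapsto (a_0,b)=(y-x,\,g(x))$. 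In these coordinates the two projections to $B$ are $b=g(x)$ and $g(y)=g(a_0)+b$.

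\emph{Step 2: general $n$.} By induction,
\[
B^{\times_{BG}(n+1)}\cong A^{\times_G(n+1)}\cong A^n\times B,
\]
via $(x_0,\dots,x_n)\mapsto (x_1-x_0,\ x_2-x_1,\ \dots,\ x_n-x_{n-1},\ g(x_0))$. This uses the $n+1$ projections from the fibre product and that $G$ acts freely on $A$. The isomorphisms are natural because they are built purely from the torsor structure. The ordering convention (which $a_i$ is which difference, and whether $b$ comes from $x_0$ or $x_n$) must be chosen so that the face maps come out in the stated form. I expect to reverse the labels: put $b=g(x_n)$ and $a_i=x_{n-i}-x_{n-i-1}$, or the analogous variant.

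\emph{Step 3: face and degeneracy maps.} Deleting an interior point $x_j$ merges two consecutive differences into their sum, which gives $f_i$ for $1\le i\le n-1$. Deleting the extremal point that is not used for $b$ drops $a_0$, which gives $f_0$. Deleting the extremal point used for $b$ replaces $b$ by $g(a_{n-1})+b$ (up to the sign convention), which gives $f_n$. Repeating a point inserts a zero difference, which gives $0_i$.

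The only real obstacle is this bookkeeping: matching the index conventions, and the sign of $a$ versus $-a$ in $f_n$, against the stated formulas. The sign is harmless, since the automorphism $a\mapsto -a$ can be absorbed into the chosen identification. Since all identifications are checked on $T$-points, functoriality in $T$ gives the isomorphisms of schemes compatibly with the simplicial structure.
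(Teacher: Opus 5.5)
Your strategy is the paper's: identify $B^{\times_{BG}(n+1)}$ with the quotient $A^{n+1}/G$ by the diagonal action, shear to $A^n\times B$, and read off the face and degeneracy maps on points. The only content of the statement is the choice of coordinates, and the choice you commit to in Step 2 does not give the stated formulas.

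With $b=g(x_n)$ and $a_i=x_{n-i}-x_{n-i-1}$, the difference adjacent to $x_0$ sits in the last slot and the difference adjacent to $x_n$ sits in slot $0$. This has three consequences:
\begin{itemize}
\item deleting $x_0$ drops $a_{n-1}$ instead of $a_0$;
\item deleting $x_n$ sends $(a_0,\dots,a_{n-1},b)$ to $(a_1,\dots,a_{n-1},\,b-g(a_0))$;
\item repeating $x_i$ inserts the zero in slot $n-i$ instead of slot $i$.
\end{itemize}
Already for $n=2$ you get $d_0(a_0,a_1,b)=(a_0,b)$, whereas $f_0(a_0,a_1,b)=(a_1,b)$.

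Your own Step 3 already pins down the right choice. It says that deleting the extremal point not used for $b$ drops $a_0$, and that deleting the point used for $b$ absorbs $a_{n-1}$ into $b$. This forces the $a_i$ to be ordered like the points, with $a_i$ built from $x_i$ and $x_{i+1}$. The correct identification, which is exactly the one in the paper, is
\[
(x_0,\dots,x_n)\longmapsto (x_0-x_1,\ x_1-x_2,\ \dots,\ x_{n-1}-x_n,\ g(x_n)).
\]
With these coordinates everything checks:
\begin{itemize}
\item deleting $x_0$ drops $a_0$ and leaves $b$ unchanged;
\item deleting $x_i$ for $0<i<n$ produces $x_{i-1}-x_{i+1}=a_{i-1}+a_i$;
\item deleting $x_n$ replaces $b$ by $g(x_{n-1})=g(a_{n-1})+b$;
\item repeating $x_i$ inserts $0$ in slot $i$.
\end{itemize}
These are precisely $f_0$, $f_i$, $f_n$ and $0_i$.

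Your original Step 2 formula $(x_1-x_0,\dots,x_n-x_{n-1},g(x_0))$ is still a valid isomorphism of simplicial schemes. However, it yields $d_0\mapsto(a_1,\dots,a_{n-1},g(a_0)+b)$ and $d_n\mapsto(a_0,\dots,a_{n-2},b)$. To repair it you only needed to take $b$ from the other end, $x_n$, not to reverse the order of the $a_i$.

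Your remark about the sign is correct: negating all $a_i$ at once is compatible with every face and degeneracy map, and it converts $b-g(a_{n-1})$ into $g(a_{n-1})+b$.
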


\begin{proof}
First, we write
\[
B^{\times_{BG} (* + 1)} \cong (A/G)^{\times_{[\Spec(k)/G]} (* + 1)} \cong A^{\times_k (* + 1)}/G,
\]
where $A^{\times_k (* + 1)}$ is equipped with the diagonal $G$-action.
We then use the isomorphisms
\[
\begin{aligned}
A^{\times_k (* + 1)}/G &\cong A^{\times_k *}\times B,\\
(a_0,a_1,\ldots,a_n)&\mapsto (a_0-a_1,\ldots,a_{n-1}-a_n,g(a_n)).
\end{aligned}
\]
A direct computation shows that the maps $d_i$ (resp.~$s_i$) correspond to $f_i$
(resp.~$0_i$) under these isomorphisms.
\end{proof}

\begin{example}
\label{description of face maps}
For example, we have the following face maps from \(A\times B\) to \(B\):
\[
\begin{aligned}   
&f_0(a_0,b)=(b),\\
&f_1(a_0,b)=(g(a_0)+b).
\end{aligned}
\]
In the next degree, we have the following face maps
from \(A\times A\times B\) to \(A\times B\):
\[
\begin{aligned}   
&f_0(a_0,a_1,b)=(a_1,b),\\
&f_1(a_0,a_1,b)=(a_0+a_1,b),\\
&f_2(a_0,a_1,b)=(a_0,g(a_1)+b).
\end{aligned}
\]
\end{example}

Using the explicit simplicial scheme described above, we obtain
\(
W\Omega^{\bullet}(BG) \cong \mathrm{Rlim}_{[*] \in \Delta}
W\Omega^{\bullet}(A^{\times_k *} \times_k B).
\)

\begin{construction}
\label{diagonal filtration and spectral sequence}
Let us filter each $W\Omega^{\bullet}(A^{\times_k *} \times_k B)$ using the diagonal $t$-structure
introduced in \Cref{subsection: Diagonal}.
By \Cref{diagonal t-bound for smooth varieties},
each $W\Omega^{\bullet}(A^{\times_k *} \times_k B)$ lies in 
$\widetilde{\DGr}^{[0, 2(* + 1) \dim(A)]}_c(\RR)$.
Therefore, we obtain an increasing exhaustive filtration (indexed by $\mathbb{N}$)
\[
\mathrm{Fil}_n \coloneqq \mathrm{Rlim}_{[*] \in \Delta}
\dt^{\leqslant n} W\Omega^{\bullet}(A^{\times_k *} \times_k B)
\]
on $W\Omega^{\bullet}(BG)$, whose associated graded pieces are
\[
\mathrm{gr}_n \coloneqq \mathrm{Rlim}_{[*] \in \Delta} 
\widetilde{\HH}^n(W\Omega^{\bullet}(A^{\times_k *} \times_k B))[-n].
\]
As a result, we obtain a spectral sequence of objects in \(\dc\):
\begin{equation}
\label{TotSS}
E_1^{i,j}=\widetilde{\HH}^j(W\Omega^\bullet(A^{i}\times B))
\Longrightarrow
\widetilde{\HH}^{i+j}(W\Omega^\bullet(BG)).
\end{equation}
\end{construction}

\begin{notation}
Throughout the remainder of this section, we abbreviate \(\widetilde{\HH}^i(W\Omega^\bullet(-))\)
by \(\widetilde{\HH}^i(-)\).
\end{notation}

\begin{proposition} 
\label{two rows of E2 for BG}
On the \(E_2\)-page of the spectral sequence \((\ref{TotSS})\), the only nonzero terms
in the zeroth and first rows are
\[
E_2^{0,0}=W(k), \qquad 
E_2^{1,1}= \D(G_{\mathrm{uni}})\oplus \D(G_{\mathrm{mul}})'(-1)[1].
\]
In other words, the \(E_2\)-page of the spectral sequence \((\ref{TotSS})\) has the form
\[
\begin{tikzcd}
	{E_2^{0,2}} & {E_2^{1,2}} & {E_2^{2,2}} & {E_2^{3,2}} & {\cdots} \\
	0 & {E_2^{1,1}} & 0 & 0 & {\cdots} \\
	W & 0 & 0 & 0 & {\cdots}
	\arrow[from=1-1, to=2-3]
	\arrow[from=1-2, to=2-4]
	\arrow[from=2-1, to=3-3]
	\arrow[from=2-2, to=3-4]
\end{tikzcd}
\]
\end{proposition}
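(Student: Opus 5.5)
The plan is to compute rows $0$ and $1$ of the $E_1$-page explicitly as cosimplicial objects in $\dc$, and then compute their cohomology.

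\textbf{Row $0$.} For any smooth proper connected $X$, \Cref{H and tildeH} shows that $\widetilde{\HH}^0(X)$ has only one possibly nonzero cohomology, namely $\HH^0(X,WO_X)=W$. This follows because $\HH^0(W\Omega^\bullet)^0$ is $W$ and the $d$-part vanishes. Every face and degeneracy map acts on this copy of $W$ as the identity. So row $0$ is the constant cosimplicial object $W$. Its normalized complex is $W \xrightarrow{0} W \xrightarrow{\id} W \xrightarrow{0}\cdots$, which gives $E_2^{0,0}=W$ and $E_2^{i,0}=0$ for $i>0$.

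\textbf{Row $1$.} I will use the Künneth formula (\Cref{KF for stacks}) together with \Cref{Kunneth for Mazur--Ogus}. Abelian varieties are Mazur--Ogus: their crystalline cohomology is torsion-free, and the Hodge--de Rham spectral sequence degenerates. Hence
\[
\widetilde{\HH}^1(A^{n}\times B)\cong \widetilde{\HH}^1(A)^{\oplus n}\oplus \widetilde{\HH}^1(B),
\]
since $\widetilde{\HH}^0=W$ is the unit. Under this identification, a face map $f_i$ induces on $\widetilde{\HH}^1$ the pullback of a homomorphism composed with a translation. The translation acts trivially, and addition $A\times A\to A$ pulls back to the sum of the two projections (this follows from primitivity of $\HH^1$). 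So row $1$ is the standard cosimplicial object built from the two-term complex $\widetilde{\HH}^1(B)\xrightarrow{g^*}\widetilde{\HH}^1(A)$, placed in cosimplicial degrees $0,1$. This is the "bar construction" of a homomorphism of abelian groups; in the simplicial setting its Moore complex is exactly $N^0=\widetilde{\HH}^1(B)$, $N^1=\widetilde{\HH}^1(A)$, with all higher $N^i=0$.

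It follows that $E_2^{0,1}=\ker g^*$ and $E_2^{1,1}=\mathrm{coker}\, g^*$, with the other entries of row $1$ vanishing.

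\textbf{Identifying $\ker g^*$ and $\mathrm{coker}\,g^*$.} By \Cref{widetildeH^1(A)}, $g^*$ is, on unipotent parts, the map $\HH^1_{\cris}(B)_{\mathrm{uni}}\to \HH^1_{\cris}(A)_{\mathrm{uni}}$. On multiplicative parts it is the $'$-version shifted by $(-1)[1]$. These two summands live in different gradings, so the decomposition is compatible with $g^*$.

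Since $g$ is an isogeny, $g^*$ is injective after inverting $p$, and $\HH^1_{\cris}$ is torsion-free, so $g^*$ is injective. This gives $E_2^{0,1}=0$.

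For the cokernel, I use $\HH^1_{\cris}(A/W)\cong \D(A[p^\infty])$, the contravariant Dieudonné module. The short exact sequence $0\to G\to A[p^\infty]\to B[p^\infty]\to 0$ then gives
\[
0\to \D(B[p^\infty])\to\D(A[p^\infty])\to \D(G)\to 0,
\]
which splits into unipotent and multiplicative parts. Applying the exact functor of \Cref{taking ' is exact} to the multiplicative part yields $E_2^{1,1}=\D(G_{\mathrm{uni}})\oplus\D(G_{\mathrm{mul}})'(-1)[1]$. Here the étale part of $G$ is absorbed into the unipotent/multiplicative split according to the convention on $V$.

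\textbf{Main obstacle.} The step I expect to be hardest is making the identification of the maps induced by $f_i$ rigorous on $\widetilde{\HH}^1$. This requires two facts:
\begin{itemize}
\item translations act trivially;
\item the group law $m^*$ equals $p_1^*+p_2^*$ on $\widetilde{\HH}^1$, compatibly with the Künneth isomorphism.
\end{itemize}
I would first check both on crystalline $\HH^1$, where they are classical. I would then transfer them to $\widetilde{\HH}^1$ via the canonical decomposition in \Cref{widetildeH^1(A)}, which is functorial in morphisms of abelian varieties.
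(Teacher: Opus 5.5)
Your proposal is correct and follows essentially the same route as the paper. Both use the K\"unneth formula for Mazur--Ogus objects and the additivity of $\widetilde{\HH}^1$ to make rows $0$ and $1$ explicit, then \Cref{widetildeH^1(A)} and \Cref{taking ' is exact} to identify $\mathrm{coker}(g^*)$ with $\D(G_{\mathrm{uni}})\oplus\D(G_{\mathrm{mul}})'(-1)[1]$.

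The only difference is packaging. You recognize $[n]\mapsto A^{n}\times B$ as the Dold--Kan object of $A\xrightarrow{g}B$, so row $1$ reduces to $\ker g^*$ and $\mathrm{coker}\, g^*$; the paper instead writes out $\sum_i(-1)^i f_i^*$ in each degree and inspects it. Your Dieudonn\'e-theoretic identification of the cokernel via $0\to\D(B[p^\infty])\to\D(A[p^\infty])\to\D(G[p^\infty])\to 0$ makes explicit what the paper calls ``direct inspection''; note the sequence should use $G[p^\infty]$ rather than $G$ when $G$ is not of $p$-power order.
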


The notation $\D(G_{\mathrm{mul}})'$ is introduced in \Cref{' notation}.

\begin{proof}
The zeroth row of the spectral sequence reads
\(
W \xrightarrow{0} W \xrightarrow{\mathrm{id}} W \xrightarrow{0} W \longrightarrow \cdots .
\)
For the first row, by \Cref{Kunneth for Mazur--Ogus} we have
\(
\widetilde{\HH}^1(A^n\times B)
\cong
\widetilde{\HH}^1(A)^{\oplus n}\oplus \widetilde{\HH}^1(B).
\)
These cohomology groups are additive in the sense that \(f^*+g^*=(f+g)^*\). 
In degree \(n\), a direct computation shows that the map
\(
\sum_{i=0}^n (-1)^i f_i: A^n\times B\rightarrow A^{n-1}\times B
\)
can be written as
\[
\Bigl(\sum_{i=0}^n (-1)^if_i\Bigr)(a_0,\ldots,a_{n-1},b)=
\begin{cases}
(0,a_1+a_2,0,a_3+a_4,\ldots,0,g(a_{n-1})+b), 
& \text{if } n \text{ is even},\\[4pt]
(-a_0,a_2,-a_2,\ldots,a_{n-1},-g(a_{n-1})), 
& \text{if } n \text{ is odd}.
\end{cases}
\]

Consequently, we may express
\[
\sum_{i=0}^n (-1)^if_i^*:
\widetilde{\HH}^1(A)^{n-1}\oplus \widetilde{\HH}^1(B)
\longrightarrow
\widetilde{\HH}^1(A)^{n}\oplus \widetilde{\HH}^1(B)
\]
as
\[
\Bigl(\sum_{i=0}^n (-1)^if_i^*\Bigr)(x_0,\ldots,x_{n-2},y)=
\begin{cases}
(0,x_1,x_1,x_3,x_3,\ldots,x_{n-3},x_{n-3},g^*(y),y), 
& \text{if } n \text{ is even},\\[4pt]
(-x_0,0,x_1-x_2,\ldots,x_{n-2}-g^*(y),0), 
& \text{if } n \text{ is odd}.
\end{cases}
\]

The proposition now follows by direct inspection, together with
\Cref{taking ' is exact} and \Cref{widetildeH^1(A)}.
\end{proof}

\begin{situation}
\label{alphap setup}
From now on, we specialize to the case where $G = \alpha_p$ and $k$ is algebraically closed.
Choose a supersingular elliptic curve over $k$, and write \(g \colon E\to E^{(1)}\) for the relative Frobenius.
Then \(g\) exhibits \(E \to E^{(1)}\) as an \(\alpha_p\)-torsor, and we may apply the discussion
carried out so far in this section.
\end{situation}

\begin{lemma}
\label{lemma: vanishing on 0-th column of TotSS}
Specializing \Cref{diagonal filtration and spectral sequence} to \Cref{alphap setup},
we have \(E_1^{0, \geqslant 3} = 0\).
\end{lemma}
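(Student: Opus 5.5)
The column $E_1^{0,j}=\widetilde{\HH}^j(W\Omega^\bullet(B))$ involves only $B=E^{(1)}$, a supersingular elliptic curve. So the claim reduces to a diagonal-degree bound for the de Rham--Witt cohomology of a curve. By \Cref{diagonal t-bound for smooth varieties} with $D=\dim B=1$, we have $W\Omega^\bullet(B)\in\widetilde{\DGr}^{[0,2]}_c(\RR)$. Hence $\widetilde{\HH}^j(W\Omega^\bullet(B))=0$ for $j\geqslant 3$, and that is the whole argument.

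If one wants to see the vanishing directly, without the remark, the computation runs through \Cref{H and tildeH}. It gives
\[
\HH^m\big(\widetilde{\HH}^j(B)\big)\cong \HH^{j+m}(B,W\Omega^\bullet)^{[-m,-m]}.
\]
This is built from the grading $-m$ and $-m+1$ pieces of $\HH^{j+m}$. Only gradings $0$ and $1$ of $W\Omega^\bullet(B)$ are nonzero, and only cohomological degrees $0$ and $1$ occur, since $B$ is a curve and $W\Omega^i$ is computed by Zariski cohomology of quasi-coherent-type sheaves. The truncation $(-)^{[-m,-m]}$ therefore picks out gradings $-m$ and $-m+1$ lying in $\{0,1\}$, which forces $m\in\{-1,0\}$.

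In both cases $j+m\leqslant 1$ is needed for nonvanishing. For $m=0$ this gives $j\leqslant 1$. For $m=-1$ it gives $j\leqslant 2$. So nothing survives for $j\geqslant 3$, consistent with the remark.

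There is no real obstacle here. The only points to check are the cohomological amplitude $[0,1]$ of $W\Omega^\bullet(B)$, which holds for curves, and the grading support in $\{0,1\}$, which is immediate. The remark as stated in the paper already packages both facts.
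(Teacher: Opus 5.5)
Your argument is correct and matches the paper's: its proof is the one-line observation that $\dim E^{(1)}=1$. By \Cref{diagonal t-bound for smooth varieties} this puts $W\Omega^\bullet(E^{(1)})$ in $\widetilde{\DGr}^{[0,2]}_c(\RR)$, so $\widetilde{\HH}^j$ vanishes for $j\geqslant 3$. (In your optional direct check, $m=1$ drops out because the grading $-m+1$ part of the $[-m,-m]$ truncation is $F^\infty d$ of the grading $-m$ part, which is zero here; even allowing $m=1$ would only permit $j\leqslant 0$, so the conclusion is unaffected.)
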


\begin{proof}
This follows from the fact that \(\dim(E^{(1)}) = 1\).
\end{proof}

Our goal in this article is to understand $\dt^{\leqslant 3} W\Omega^{\bullet}(B\alpha_p)$.
Thus, the only remaining task is to analyze the first few terms of the second row
of the spectral sequence $(\ref{TotSS})$.

\begin{proposition}
\label{Compute second row of TotSS}
Specializing \Cref{diagonal filtration and spectral sequence} to \Cref{alphap setup},
we have $E_2^{0, 2} = 0$, and there is a short exact sequence in $\dc$:
\[
0 \to U_{-1} \to E_2^{1, 2} \to k(-1)[1] \to 0.
\]
\end{proposition}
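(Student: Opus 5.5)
The plan is to compute the second row of $(\ref{TotSS})$ explicitly in low degrees, using Künneth for Mazur--Ogus objects together with the known structure of $\widetilde{\HH}^\ast$ of a supersingular elliptic curve.

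\textbf{Input for each factor.} A supersingular elliptic curve $E$ (and likewise $E^{(1)}$) is Mazur--Ogus: its crystalline cohomology is torsion free and its Hodge--de Rham spectral sequence degenerates. By \Cref{widetildeH^1(A)} and the computation of $\HH^1(E,W\Omega^1)\cong W$ (with Frobenius $F$, grading $1$), we have
\[
\widetilde{\HH}^0(E)=W,\qquad \widetilde{\HH}^1(E)=E_{1/2},\qquad \widetilde{\HH}^2(E)=W(-1)[1].
\]
\Cref{Kunneth for Mazur--Ogus} then gives
\[
\widetilde{\HH}^2(E^n\times E^{(1)})\cong \bigoplus_{\text{factors}} \widetilde{\HH}^2(\text{factor})\ \oplus\ \bigoplus_{\text{pairs of distinct factors}} E_{1/2}\cdstimes E_{1/2}.
\]
The cosimplicial structure is determined by the maps $f_i$ of \Cref{Rewrite the simplicial scheme}. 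On $\widetilde{\HH}^1$ these act additively. On the cross terms they act through $\cup$-products of pullbacks, using the $\mathbb{E}_1$-structure of \Cref{E1 structure}. On $\widetilde{\HH}^2$ of a factor, $(x+y)^*$ decomposes as $x^*+y^*$ plus a cross term. Finally, $g^*$ acts on $\widetilde{\HH}^1$ as right multiplication by $F$ on $E_{1/2}$, with cokernel $\D(\alpha_p)=k$, and on $\widetilde{\HH}^2$ as multiplication by $\deg g=p$.

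\textbf{Step 1: $E_2^{0,2}=0$.} The differential $d_1=f_0^*-f_1^*\colon \widetilde{\HH}^2(E^{(1)})\to\widetilde{\HH}^2(E\times E^{(1)})$ has, in the $\widetilde{\HH}^2(E)$-component, the map $g^*$, which is multiplication by $p$ on $W(-1)[1]$. This is injective in $\dc$, so the kernel vanishes.

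\textbf{Step 2: a model for $E_2^{\ast,2}$.} Rather than handle the normalized complex by hand, I would observe that the whole cosimplicial object $n\mapsto \widetilde{\HH}^\ast(E^n\times E^{(1)})$ is the graded-commutative ($\cdstimes$) exterior algebra on the row-one cosimplicial object. That row-one object is the (co)simplicial Dold--Kan object of the two-term complex
\[
K=[\,E_{1/2}\xrightarrow{\cdot F}E_{1/2}\,]
\]
in degrees $0,1$, which is quasi-isomorphic to $k[-1]$; this is consistent with \Cref{two rows of E2 for BG}. The cosimplicial (Eilenberg--Zilber/décalage) computation then identifies the second row with a three-term complex
\[
\widetilde{\HH}^2(E^{(1)})\to \widetilde{\HH}^2(E)\oplus (E_{1/2}\cdstimes E_{1/2})\oplus\widetilde{\HH}^2(E^{(1)})\to \cdots
\]
whose cohomology in degree $1$ is governed by the derived self-product of $K\simeq \D(\alpha_p)[-1]$.

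\textbf{Step 3: the degree-one cohomology.} Concretely, $E_2^{1,2}$ should receive:
\begin{itemize}
\item a contribution from the cokernel of $p$ on $W(-1)[1]$, namely $k(-1)[1]$, coming from the $\widetilde{\HH}^2$-components;
\item a contribution from the "Tor" part of the self-product of $K$, computed via \Cref{key computation}: resolve $\D(\alpha_p)$ by $0\to E_{1/2}\xrightarrow{\cdot F}E_{1/2}\to\D(\alpha_p)\to 0$, so that $E_{1/2}\cdstimes\D(\alpha_p)$ has $\HH^{-1}=U_{-1}$ and $\HH^0=U_1$.
\end{itemize}
The claim is that the $U_1$ piece and the remaining parts of $E_{1/2}\cdstimes E_{1/2}$ are killed by the differentials, while $U_{-1}$ survives as a subobject. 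I expect this step to be the main obstacle. It requires tracking signs and the symmetry isomorphism of $\cdstimes$ carefully, and checking that the maps induced by $\cdot F$ on both tensor factors match the resolution used in \Cref{key computation}. This is what pins down which domino survives in the kernel versus the cokernel. It also requires exactness of the relevant kernels and cokernels inside $\dc$ (an abelian category), using \Cref{H and tildeH} to read off $\HH^j$. The result should be the stated short exact sequence
\[
0\to U_{-1}\to E_2^{1,2}\to k(-1)[1]\to 0.
\]
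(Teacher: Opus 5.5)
Your Step 1 is correct and matches the paper's argument. The $\widetilde{\HH}^2(E)$-component of $d_1^{0,2}$ is $-g^*$, which is multiplication by $p$ on $W(-1)[1]$ and hence a monomorphism in $\dc$.

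\textbf{The gap.} All the content of the proposition lies in Steps 2--3, and you leave them as a sketch (``should receive'', ``the claim is''). The shortcut in Step 2 is not available as stated. $\cdstimes$ does not preserve $\dc$ and is not exact on it: \Cref{key computation} itself shows that $E_{1/2}\cdstimes\D(\alpha_p)$, a product of two objects of $\dc$, has two nonzero diagonal cohomologies. So there is no ready-made ``exterior algebra'' or Eilenberg--Zilber/d\'ecalage formalism in $\dc$ to invoke. Even granting one, calling the relevant piece a ``derived self-product of $K$'' is wrong. The object that actually appears is $E_{1/2}\cdstimes\D(\alpha_p)$, the cone of $\id\cdstimes g^*$ with $g^*=\cdot F$ acting on \emph{one} factor only. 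A self-product of $K\simeq\D(\alpha_p)[-1]$ would involve $F$ on both factors, which your worry in Step 3 about ``$\cdot F$ on both tensor factors'' suggests you had in mind.

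\textbf{Three missing steps.}

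\emph{First}, compute $d_1^{1,2}=f_0^*-f_1^*+f_2^*$ explicitly from the face maps in \Cref{description of face maps}, including the cross terms $h_E=m^*-\pi_1^*-\pi_2^*$ and $h_{E^{(1)}}$. One finds that $d_1^{1,2}$ has the component $\pi_3$, the identity on the $\widetilde{\HH}^2(E^{(1)})$ summand. Hence $\ker d_1^{1,2}\subset\ker\pi_3$, and also $\operatorname{im}d_1^{0,2}\subset\ker\pi_3$. This cuts $E_2^{1,2}$ down to the middle cohomology of
\[
\widetilde{\HH}^2(E^{(1)})\to \bigl(\widetilde{\HH}^1(E)\cdstimes\widetilde{\HH}^1(E^{(1)})\bigr)\oplus\widetilde{\HH}^2(E)\to \widetilde{\HH}^1(E)\cdstimes\widetilde{\HH}^1(E),
\]
and it is this reduction that makes the remaining K\"unneth components of $\widetilde{\HH}^2(E\times E\times E^{(1)})$ irrelevant.

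\emph{Second}, filter this complex by the subcomplex
\[
0\to \widetilde{\HH}^1(E)\cdstimes\widetilde{\HH}^1(E^{(1)})\xrightarrow{\id\cdstimes g^*}\widetilde{\HH}^1(E)\cdstimes\widetilde{\HH}^1(E).
\]
By \Cref{key computation} and \Cref{H and tildeH}, its cohomology is $U_{-1}$ (the kernel, in the middle spot) and $U_1$ (the cokernel, in the last spot). The quotient complex is $W(-1)[1]\xrightarrow{-p}W(-1)[1]\to 0$, with middle cohomology $k(-1)[1]$.

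\emph{Third}, the long exact sequence gives $0\to U_{-1}\to E_2^{1,2}\to k(-1)[1]\xrightarrow{\partial} U_1$. Exactness on the right requires $\partial=0$. This holds because $\Hom_{\DGr(\RR)}(k(-1)[1],U_1)$ is an $\Ext^{-1}$ between objects of the standard heart, so it vanishes.

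Your phrase ``the $U_1$ piece \dots\ is killed by the differentials'' misplaces this point. $U_1$ never sits in $E_2^{1,2}$; what must be shown is that this connecting map vanishes. Without that, you only get an extension of $\ker\partial$ by $U_{-1}$.
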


Here $k$ denotes the left graded $\RR$-module with its usual (bijective) Frobenius and $V = 0$.

\begin{proof}
Consider the following cochain complex whose terms lie in $\dc$:
\(
\widetilde{\HH}^2(E^{(1)}) 
\xrightarrow{d_1^{0,2}} 
\widetilde{\HH}^2(E \times_k E^{(1)})
\xrightarrow{d_1^{1,2}} 
\widetilde{\HH}^2(E \times E \times_k E^{(1)}).
\)
Our task is to show that $d_1^{0,2}$ has no kernel and to understand the 
cohomology in the middle. 
By \Cref{Kunneth for Mazur--Ogus}, the terms can be identified with
\[
\widetilde{\HH}^2(E^{(1)}) = W(-1)[1] 
\xrightarrow{d_1^{0,2}} 
\widetilde{\HH}^2(E \times_k E^{(1)})
=
\Big(\widetilde{\HH}^1(E) \cdstimes \widetilde{\HH}^1(E^{(1)})\Big)
\oplus \widetilde{\HH}^2(E) \oplus \widetilde{\HH}^2(E^{(1)})
\xrightarrow{d_1^{1,2}} 
\]
\[
\xrightarrow{d_1^{1,2}} 
\widetilde{\HH}^2(E \times E \times_k E^{(1)}) =
\Big(\widetilde{\HH}^1(E)\cdstimes\widetilde{\HH}^1(E^{(1)})\Big)^{\oplus 2} 
\oplus 
\Big(\widetilde{\HH}^1(E)\cdstimes\widetilde{\HH}^1(E)\Big)
\oplus
\widetilde{\HH}^2(E)^{\oplus 2}
\oplus 
\widetilde{\HH}^2(E^{(1)}).
\]

Using the first part of \Cref{description of face maps}, we see that
$d_1^{0,2}=f_0^*-f_1^*$ and \(f_0^*=(0,0,\id)\).
To describe $f_1^*$, let us denote the map
\(
m^*-\pi_1^*-\pi_2^* \colon \widetilde{\HH}^2(E^{(1)})\to
\widetilde{\HH}^1(E)\cdstimes\widetilde{\HH}^1(E^{(1)})
\)
by \(h_{E^{(1)}}\).
Then we may write
\(
f_1^* = ((g^*\cdstimes \id)\circ h_{E^{(1)}},\, g^*,\, \id).
\)
As a result,
\(
d_1^{0,2}=f_0^*-f_1^*
= (-(g^*\cdstimes \id)\circ h_{E^{(1)}}, -g^*,0).
\)
Here in the second component of this map,
\(
g^* \colon \widetilde{\HH}^2(E^{(1)})
\to \widetilde{\HH}^2(E),
\)
can be identified with
\(
W(k)(-1)[1] \xrightarrow{\cdot p} W(k)(-1)[1].
\)
In particular, this map is injective in $\dc$. Therefore \(E_2^{0,2}=0\).

Using the second part of \Cref{description of face maps}, we may compute the
three maps
\[
f_0^*,f_1^*,f_2^*\colon \widetilde{\HH}^2(E \times E^{(1)})\longrightarrow
\widetilde{\HH}^2(E \times E \times E^{(1)}).
\]
In terms of the K\"{u}nneth components, first we have
\(
f_0^* = (0, \pi_1, 0, 0, \pi_2, \pi_3).
\) 
To describe $f_1^*$, let us denote the map
\(
m^*-\pi_1^*-\pi_2^* \colon \widetilde{\HH}^2(E)\to
\widetilde{\HH}^1(E)\cdstimes\widetilde{\HH}^1(E)
\)
by \(h_E\). Then we have
\(
f_1^* = (\pi_1, \pi_1, h_E \circ \pi_2, \pi_2, \pi_2, \pi_3).
\) 
Lastly, let $h_{E^{(1)}}$ be defined similarly to $h_E$. Then we have
\[
f_2^* = (\pi_1, (g^*\cdstimes \id) \circ h_{E^{(1)}} \circ \pi_3,
(\id \cdstimes g^*) \circ \pi_1, \pi_2, g^* \circ \pi_3, \pi_3).
\]
Taking the alternating sum, we obtain
\(
d_1^{1,2} =
(0, (g^*\cdstimes \id) \circ h_{E^{(1)}} \circ \pi_3, 
(\id \cdstimes g^*) \circ \pi_1 - h_E \circ \pi_2, 
0, g^* \circ \pi_3, \pi_3).
\)

Let us summarize our knowledge so far: We see that $\Ker(d_1^{1,2}) \subset \Ker(\pi_3)$,
and the latter also contains the image of $d_1^{0,2}$.
Therefore, $E_2^{1,2}$ can be described as the middle cohomology (in $\dc$)
of the following sequence:
\[
\widetilde{\HH}^2(E^{(1)}) \xrightarrow{(-(g^*\cdstimes \id)\circ h_{E^{(1)}}, -g^*)} \Big(\widetilde{\HH}^1(E) 
\cdstimes \widetilde{\HH}^1(E^{(1)})\Big)
\oplus \widetilde{\HH}^2(E) \xrightarrow{(\id \cdstimes g^*) \circ \pi_1 - h_E \circ \pi_2}
\Big(\widetilde{\HH}^1(E) 
\cdstimes \widetilde{\HH}^1(E)\Big).
\]
Consider the subcomplex
\(
0 \to \widetilde{\HH}^1(E) 
\cdstimes \widetilde{\HH}^1(E^{(1)}) \xrightarrow{\id \cdstimes g^*} \widetilde{\HH}^1(E) 
\cdstimes \widetilde{\HH}^1(E),
\)
and the associated quotient complex
\(
\widetilde{\HH}^2(E^{(1)}) = W(-1)[1] \xrightarrow{\cdot (-p)} W(-1)[1] = \widetilde{\HH}^2(E) \to 0.
\)
Using \Cref{widetildeH^1(A)},
the subcomplex is identified with $\widetilde{\HH}^1(E) \cdstimes \mathbb{D}(\alpha_p)[-2]$.
Since $k$ is algebraically closed, applying \Cref{widetildeH^1(A)} again, we obtain
an isomorphism $\widetilde{\HH}^1(E) \simeq E_{1/2}$.
By \Cref{key computation} and \Cref{H and tildeH}, we obtain the cohomology of the subcomplex: 
$\widetilde{\HH}^1 \simeq U_{-1}$ and $\widetilde{\HH}^2 \simeq U_1$. 
Therefore, the associated long exact sequence in $\dc$ has the form
\[
0 \to U_{-1} \to E_2^{1,2} \to k(-1)[1] \to U_1.
\]
By considering the standard $t$-structure, we see that the last arrow must be $0$,
which completes the proof of the description of $E_2^{1,2}$.
\end{proof}

We see that $E_2^{1,2}$ is the cone of a map $k(-1) \to U_{-1}$. It is simple to
classify all such maps. 

\begin{lemma}
\label{classify k(-1) to U-1}
There is an isomorphism $\mathrm{Hom}_{\DGr(\RR)}(k(-1), U_{-1}) \cong k$
as abelian groups. For any map corresponding to $\lambda \in k^{\times}$,
the cone is isomorphic to $U_0$.
\end{lemma}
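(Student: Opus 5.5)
Both $k(-1)$ and $U_{-1}$ lie in the heart of the standard $t$-structure, so $\mathrm{Hom}_{\DGr(\RR)}(k(-1),U_{-1})$ is the group of homomorphisms of graded left $\RR$-modules. The plan is to compute this group by hand and then identify the cone by a structural argument rather than an explicit isomorphism.

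\emph{Computing the Hom group.} With the convention $M(n)^i=M^{i+n}$, the module $k(-1)$ is $k$ placed in grading $1$, with $F=\sigma$, $V=0$ and $d=0$. A homomorphism $f$ is determined by $x\coloneqq f(1)\in U_{-1}^1=\prod_{n\geqslant -1}k\cdot dV^n$, where $dV^{-1}=Fd$.
\begin{itemize}
\item The conditions $Vx=0$ and $dx=0$ hold automatically, since $V$ and $d$ act by zero on $U_{-1}^1$.
\item Compatibility with $F$ reads $f(\lambda^\sigma)=F f(\lambda)$, i.e.\ $\lambda^\sigma x=\lambda^\sigma Fx$, which is equivalent to $Fx=x$.
\item Writing $x=\sum_{n\geqslant -1}a_n dV^n$, the formula for $F$ in \Cref{Ut} turns $Fx=x$ into $a_n=a_{n+1}^{\sigma}$ for all $n\geqslant -1$.
\item Since $k$ is perfect, this gives $a_n=a_{-1}^{\sigma^{-(n+1)}}$, so $x$ is uniquely determined by the arbitrary value $a_{-1}\in k$.
\end{itemize}
Hence $x\mapsto a_{-1}$ gives $\mathrm{Hom}(k(-1),U_{-1})\cong k$ as abelian groups. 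Moreover, if $\lambda=a_{-1}\neq 0$ then every coordinate $a_n$ of $x$ is nonzero.

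\emph{Identifying the cone.} For $\lambda\in k^\times$ the map $f$ is injective, so its cone is the module $C\coloneqq U_{-1}/k x$ in cohomological degree $0$. Concretely, $C^0=k_\sigma[[V]]$ with $F=0$, and $C^1=U_{-1}^1/kx$.

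One might try to write down an explicit isomorphism $U_0\to C$ that is the identity on $k[[V]]$ and the obvious inclusion of $\prod_{n\geqslant 0}k\,dV^n$ in grading $1$. This does not work: in $C$, $F(d)=dV^{-1}$ is a nonzero class, whereas $F(d)=0$ in $U_0$. So the isomorphism cannot simply be "drop the coordinate $a_{-1}$", and I would argue structurally instead.
\begin{enumerate}
\item $C$ is coherent, being an extension of coherent modules (thickness, \Cref{alternative definition of coherence}). It is concentrated in gradings $0$ and $1$.
\item $C^0=k[[V]]$ has no nonzero $F$-stable $\RR^0$-submodule killed by $d$. Indeed, $d(\sum b_nV^n)=\sum_{n\geqslant 0}b_n dV^n$ has vanishing $dV^{-1}$-coordinate, while every nonzero element of $kx$ has nonzero $dV^{-1}$-coordinate. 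Hence $d\colon C^0\to C^1$ is injective and $V^{-\infty}Z^0=0$.
\item Consequently $C$ equals its own $0$-th domino. By \cite[Proposition I.2.18]{IRdRW} it is an iterated extension of elementary dominoes, and their number is $T^0(C)=\dim_k C^0/(V^{-\infty}Z^0+VC^0)=\dim_k k[[V]]/V=1$.
\item So $C\cong U_t$ for a single $t$. To pin down $t$, I compare the two invariants that distinguish the $U_t$: the kernel of $d$ in grading $0$, and the cokernel of $d$ in grading $1$ (up to closure).
\item The kernel of $d$ on $C^0$ is $0$ by step 2, which forces $t\leqslant 0$.
\item The image of $d$ together with $kx$ is dense in $U_{-1}^1$, because it contains all coordinates $n\geqslant 0$ plus a vector with nonzero $dV^{-1}$-coordinate. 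Hence $\mathrm{coker}(d\colon C^0\to C^1)=0$. For $U_t$ with $t<0$ this cokernel is nonzero, since $\prod_{t\leqslant n<0}k\,dV^n$ is not hit by $d$. This forces $t=0$.
\end{enumerate}

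The main obstacle is the last step: making rigorous that these two invariants single out $t=0$, using that $U_t$ and its $d$-cokernel are isomorphism invariants. An alternative is to check that $\coeur$ and the domino data agree. Failing either, one could construct the isomorphism $U_0\cong C$ directly by choosing an $F$-compatible lift of each $dV^n$-coordinate modulo $x$, but the structural route above is cleaner.
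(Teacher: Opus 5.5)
Your Hom computation is the same as the paper's: the map is determined by an $F$-fixed vector $x\in U_{-1}^1$, and $Fx=x$ forces $a_n=a_{n+1}^{\sigma}$. For the cone you take a genuinely different route, and it is correct.

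The paper simply asserts that the cokernel $U_{-1}/kx$ is $k[[V]]\xrightarrow{d}\prod_{n\geqslant 0}k\,dV^n$, i.e.\ $U_0$. You rightly note that the literal identification is not $F$-equivariant: taking the identity in grading $0$ and the complement $\prod_{n\geqslant 0}k\,dV^n$ of $kx$ in grading $1$ fails because $F(d1)=[Fd]\neq 0$ in $C$. You replace it by structure theory. $C$ is coherent and is its own $0$-th domino with $T^0(C)=1$, so it is some $U_t$ by Illusie--Raynaud's Proposition I.2.18. Then $t=0$ because $d\colon C^0\to C^1$ has zero kernel and zero cokernel.

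The step you call the main obstacle is not one. Any isomorphism of graded $\RR$-modules commutes with $d$. For $U_t$, the kernel of $d$ on $U_t^0$ is spanned by $1,V,\dots,V^{t-1}$, and its cokernel is $\prod_{t\leqslant n<0}k\,dV^n$.

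Two small corrections:
\begin{itemize}
\item $d(C^0)$ is all of $C^1$, not merely dense. Indeed $d$ maps $k[[V]]$ onto $\prod_{n\geqslant 0}k\,dV^n$, which is a complement of $kx$, so $d\colon C^0\to C^1$ is bijective.
\item This surjectivity gives $F^\infty B^1=C^1$, which your step 3 already needs in order to say $C$ is its own domino.
\end{itemize}

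As for what each approach buys: yours avoids writing down an isomorphism, at the cost of the classification of dominoes and the formula for $T^0$. The paper's explicit route is elementary but needs one twist to be right. Put $u=\sum_{m\geqslant 0}\lambda^{\sigma^{-(m+1)}}V^m$, a unit of $k_\sigma[[V]]$. Then $du=x-\lambda\,Fd$, so $F(du)=Fx=x\equiv 0$ in $C$. The map $y\mapsto yu$ in grading $0$ and $dy\mapsto d(yu)$ in grading $1$ is then an isomorphism $U_0\xrightarrow{\cong}C$; it is well defined because $d$ is bijective on $U_0$. Compatibility with $F$ follows from $Fd(b_0+Vy')=b_0^{\sigma}Fd(1)+dy'$, together with $Fd(1)=0$ in $U_0$ and $F(du)=0$ in $C$.
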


\begin{proof}
The grading $1$ piece of $U_{-1}$ is identified with 
$k \cdot Fd \oplus \prod_{n \geqslant 0} k \cdot dV^n$.
Any map is determined by the image of $1 \in k$ in grading $1$; write this image as
\(
\lambda_{-1} Fd + \sum_{n \geqslant 0} \lambda_n dV^n .
\)
For the map to be an $\RR$-module map, the above sum must be stable under $F$,
which amounts to the relations $\lambda_m = \lambda_{m+1}^p$. Therefore the maps are in bijection
with $k$, sending such a map to $\lambda_{-1}$. 
If $\lambda_{-1} \neq 0$, we see that the cone is isomorphic to the following graded left $\RR$-module:
\(
k[\![V]\!] \xrightarrow{d} \prod_{n \geqslant 0} k \cdot dV^n,
\)
which is $U_0$.
\end{proof}

\begin{theorem}
\label{nonsplit SS}
The short exact sequence from \Cref{Compute second row of TotSS} is non-split.
Consequently, there is an isomorphism $E_2^{1,2} \simeq U_0$.
Moreover, for all pairs of natural numbers $(i, j)$ with $i + j \leqslant 3$, we have
\[
\HH^j(W\Omega^{\bullet}(B\alpha_p))^{[i,i]} =
\begin{cases}
W & \text{if } (i,j) = (0,0) \\
\mathbb{D}(\alpha_p) & \text{if } (i,j) = (0, 2) \\
U_0 & \text{if } (i, j) = (0, 3) \\
0 & \text{otherwise.}
\end{cases}
\]
\end{theorem}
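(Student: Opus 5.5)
The plan is to first read off the whole diagonal range $\leqslant 3$ of $W\Omega^{\bullet}(B\alpha_p)$ from the spectral sequence \eqref{TotSS}, and only then settle the non-splitness.

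\textbf{Step 1: the diagonal range $\leqslant 3$.} By \Cref{two rows of E2 for BG} with $G=\alpha_p$ (which is unipotent, so $\D(G_{\mathrm{mul}})=0$), the first two rows of the $E_2$-page are $E_2^{0,0}=W$ and $E_2^{1,1}=\D(\alpha_p)$, with all other entries zero. By \Cref{lemma: vanishing on 0-th column of TotSS} we have $E_1^{0,\geqslant 3}=0$, and by \Cref{Compute second row of TotSS} we have $E_2^{0,2}=0$. In total degree $3$ the only possibly nonzero $E_2$-term is therefore $E_2^{1,2}$.

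Every differential leaving $E_2^{1,2}$ lands in a row $\leqslant 1$ at column $\geqslant 3$, hence in zero. Every differential entering it comes from column $<0$, hence from zero. Likewise, in total degree $2$ the term $E_2^{1,1}$ can only receive a differential from $E_2^{-1,2}=0$ or map to a zero term. Since the filtration is finite in each degree, we get
\[
\widetilde{\HH}^0=W,\qquad \widetilde{\HH}^1=0,\qquad \widetilde{\HH}^2\simeq\D(\alpha_p),\qquad \widetilde{\HH}^3\simeq E_2^{1,2}.
\]
Once $E_2^{1,2}\simeq U_0$ is known, \Cref{H and tildeH} converts this into the displayed table. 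Indeed $\D(\alpha_p)$ is concentrated in cohomological degree $0$ and grading $0$, and $U_0$ is concentrated in cohomological degree $0$ and gradings $0,1$.

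\textbf{Step 2: reduce to non-splitness.} Granting that the sequence of \Cref{Compute second row of TotSS} does not split, \Cref{classify k(-1) to U-1} identifies $E_2^{1,2}$ with $U_0$. So everything reduces to ruling out the split alternative
\[
\widetilde{\HH}^3(W\Omega^\bullet(B\alpha_p))\simeq U_{-1}\oplus k(-1)[1].
\]

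\textbf{Step 3: rule out splitting via Hodge cohomology (the main obstacle).} The idea is to compare $\RR_1\otimes^{\rmL}_\RR(-)$ of the two candidates against the known Hodge cohomology of $B\alpha_p$. By \Cref{Hodge and R1}, $\RR_1\otimes^{\rmL}_\RR W\Omega^\bullet(B\alpha_p)$ is the Hodge cohomology $\HH^j(B\alpha_p,\wedge^i\mathbb{L})$, which is known explicitly from Totaro and Antieau--Bhatt--Mathew (cf.\ \cite{ABM}). Apply $\RR_1\otimes^{\rmL}_\RR$ to the diagonal filtration $\dt^{\leqslant 3}\to W\Omega^\bullet(B\alpha_p)\to\dt^{\geqslant 4}$.

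\begin{itemize}
\item Using the resolution \eqref{Rn finite flat dim}, $\RR_1\otimes^{\rmL}_\RR$ has amplitude $[-2,0]$ and the $\Tor_2$ term raises the grading by $1$. Hence $\dt^{\geqslant 4}$ contributes only to Hodge bidegrees with $i+j\geqslant 3$. To avoid contamination I will focus on a bidegree where only $\widetilde{\HH}^{\leqslant 3}$ can contribute, most likely $(i,j)=(0,3)$ or $(1,2)$, controlled as in the proof of \Cref{coherence and Hodge-d-equivalence gives de Rham--Witt d-equivalence}.
\item Compute $\RR_1\otimes^{\rmL}_\RR$ of $U_0$, $U_{-1}$, $k(-1)$, $W$ and $\D(\alpha_p)$ directly from \eqref{Rn finite flat dim}. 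The key distinguishing feature is that $k(-1)[1]$ has bijective $F$ and $d=0$, whereas $U_0$ fuses the two pieces through $d$. For instance, the $\Tor_0$ and $\Tor_1$ contributions in grading $1$ differ: $U_0^1/(V,dV)$ versus the contribution of $k$.
\end{itemize}

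I then check which option reproduces the known value of $h^{1,2}(B\alpha_p)$, or of $h^{0,3}(B\alpha_p)$. If the bookkeeping in a single Hodge bidegree proves ambiguous, the fallback is a global numerical test. Approximate $B\alpha_p\times B\G_m$ by a smooth projective variety using \Cref{Enough dRW equivalence approximations} and \Cref{effect of multiplying BGm}. Then apply Crew's formula \Cref{CrewFor} or Ekedahl's inequality \Cref{Ekeineq}, noting that the domino counts $T$ differ between $U_0$ and $U_{-1}\oplus k(-1)[1]$. This should force the non-split option.
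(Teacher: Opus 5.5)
Your Steps 1 and 2 match the paper, but Step 3 has a genuine gap: none of the invariants you propose can detect the extension class.

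Bidegree $(0,3)$ only involves grading $0$. There both candidates are $k[\![V]\!]$ with the same $V$-action, and $k(-1)$ contributes nothing. Moreover, both $(0,3)$ and $(1,2)$ have $i+j=3$, so, as you yourself observe, they receive $\Tor_1$ and $\Tor_2$ contributions from $\HH^4(W\Omega^\bullet(B\alpha_p))^{[0,0]}$ and $\HH^3(W\Omega^\bullet(B\alpha_p))^{[1,1]}$. These come from $\widetilde{\HH}^4$, which nothing in the paper computes.

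In the uncontaminated range $i+j\leqslant 2$ the two candidates have the same Hodge numbers. For example, $h^{1,1}=2$ either way: it comes from $\Tor^{\RR}_1(\RR_1,\D(\alpha_p))^1\oplus\Tor^{\RR}_1(\RR_1,k(-1))^1$ in the split case, and from $\Tor^{\RR}_1(\RR_1,\D(\alpha_p))^1\oplus\Tor^{\RR}_2(\RR_1,U_0)^1$ in the non-split case.

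Your fallback rests on a false premise. $U_0$ and $U_{-1}$ each contribute exactly one domino to $T^{0,3}$. The module $k(-1)$ is of type $\rmI_a$ and contributes neither dominoes nor slopes. So all Hodge--Witt numbers coincide in the two cases; indeed the asymmetry in \Cref{the counterexample} does not depend on non-splitness. Hence Crew's formula and Ekedahl's inequality cannot decide between them.

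The missing idea is to use the $k[d]/d^2$-module structure, i.e.\ the Hodge-to-de Rham differential, in the clean bidegree $(1,1)$. This is what the paper does.

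Suppose the sequence split. Then $\HH^2(W\Omega^\bullet(B\alpha_p))\cong\D(\alpha_p)\oplus k(-1)\oplus M$ with $M^{\leqslant 1}=0$. Since $\HH^1(W\Omega^\bullet(B\alpha_p))^{\leqslant 2}=0$, the Tor spectral sequence gives a $d$-compatible injection
$\Tor^{\RR}_1(\RR_1,\HH^2(W\Omega^\bullet(B\alpha_p)))^{\leqslant 2}\hookrightarrow\HH^1(B\alpha_p,\wedge^\bullet\mathbb{L})^{\leqslant 2}$.
By \cite[Corollaire I.3.6]{IRdRW}, its source contains $(k\hookrightarrow k^{\oplus 2}\to 0)$. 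This produces a $2$-dimensional $d$-closed subspace of $\HH^1(B\alpha_p,\mathbb{L}_{B\alpha_p})$.

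But by \cite[Propositions 4.10, 4.12]{ABM}, that group is spanned by $u$ and $\alpha\cdot s$, with $d(u)=0$ and $d(\alpha\cdot s)=u\cdot s\neq 0$. Its $d$-closed part is therefore $1$-dimensional, a contradiction. Consistently, in the non-split case $d$ maps $\Tor^{\RR}_2(\RR_1,U_0)^1$ isomorphically onto $\Tor^{\RR}_2(\RR_1,U_0)^2$.
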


\begin{proof}
Let us begin by analyzing $\HH^j(W\Omega^{\bullet}(B\alpha_p))^{[i,i]}$.
Combining \Cref{two rows of E2 for BG}, \Cref{lemma: vanishing on 0-th column of TotSS},
and \Cref{Compute second row of TotSS}, we see that
$\widetilde{\HH}^n(W\Omega^{\bullet}(B\alpha_p))$ is:
\begin{itemize}
\item $W$ when $n = 0$;
\item $0$ when $n = 1$;
\item $\mathbb{D}(\alpha_p)$ when $n = 2$; and
\item $E_2^{1,2}$ when $n = 3$.
\end{itemize}
Using \Cref{H and tildeH}, the last statement now follows from the second statement and the above
description. The second statement follows from the first statement together with
\Cref{classify k(-1) to U-1}.

Our only remaining task is to show that the exact sequence from
\Cref{Compute second row of TotSS} does not split.
Suppose otherwise. Then we would have
$E_2^{1,2} = U_{-1} \oplus k(-1)[1]$.
Using \Cref{H and tildeH}, we obtain
$\HH^2(W\Omega^{\bullet}(B\alpha_p))^{[1,1]} = k(-1)$.
In particular, we arrive at a decomposition
\[
\HH^2(W\Omega^{\bullet}(B\alpha_p))
= \mathbb{D}(\alpha_p) \oplus k(-1) \oplus M
\]
as graded left $\RR$-modules, where $M^{\leqslant 1} = 0$.

We claim that this contradicts the known description of the Hodge cohomology of $B\alpha_p$
obtained in \cite{ABM}, as follows.
By \Cref{Hodge and R1}, we have an equivalence
\(
\RR_1 \otimes^{\rmL}_\RR W\Omega^{\bullet}(B\alpha_p)
\simeq \wedge^{\bullet}\mathbb{L}_{-/k}(B\alpha_p)
\)
in $\DGr(k[d]/d^2)$.
Since $\HH^1(W\Omega^{\bullet}(B\alpha_p))^{\leqslant 2} = 0$, we see that
$\mathrm{Tor}^\RR_0(\RR_1, \HH^2(W\Omega^{\bullet}(B\alpha_p)))$
has no component in grading $\leqslant 2$.
Using the resolution in \Cref{Rn finite flat dim}, we see that, within the grading
$\leqslant 2$ range, there is an injection
\[
\mathrm{Tor}^\RR_1(\RR_1, \HH^2(W\Omega^{\bullet}(B\alpha_p)))^{\leqslant 2}
\hookrightarrow
\HH^1(B\alpha_p, \wedge^{\bullet}\mathbb{L}_{-/k})^{\leqslant 2}.
\]
Using the decomposition
$\HH^2(W\Omega^{\bullet}(B\alpha_p))
= \mathbb{D}(\alpha_p) \oplus k(-1) \oplus M$,
together with the calculation in \cite[Corollaire I.3.6]{IRdRW}, we see that
there is an injection of graded $k[d]/d^2$-modules
\[
(k \hookrightarrow k^{\oplus 2} \to 0)
\hookrightarrow
\HH^1(B\alpha_p, \wedge^{\bullet}\mathbb{L}_{-/k})^{\leqslant 2}.
\]
In particular, we deduce that the $(1,1)$-Hodge cohomology of $B\alpha_p$
contains a $2$-dimensional subspace annihilated by the Hodge--to--de~Rham differential $d_1$.
This contradicts \cite[Propositions 4.10 and 4.12]{ABM}.
The authors show (\cite[Proposition 4.10]{ABM}) that this cohomology group is exactly
$2$-dimensional with generators $u$ and $\alpha \cdot s$
(using the notation of loc.~cit.).
Moreover, they show (\cite[Proposition 4.12]{ABM}) that
$d_1(\alpha) = u$ up to a unit and $d_1(s) = 0$.
Therefore we have
$d_1(u) = d_1^2(\alpha) = 0$
but
$d_1(\alpha \cdot s) = u \cdot s \neq 0$,
where the last nonvanishing again uses \cite[Proposition 4.10]{ABM}.
Hence we obtain a contradiction, proving the first statement.
\end{proof}

\begin{corollary}
\label{the counterexample}
Let $k$ be a field of characteristic $p > 0$.
There exists a smooth projective fourfold $X$ over $k$ such that, for all pairs of natural numbers $(i,j)$ with $i + j \leqslant 3$, the base change $X_{\overline{k}}$ has Hodge--Witt cohomology given by
\[
\HH^j(X_{\overline{k}}, W\Omega^{\bullet}_{X_{\overline{k}}/\overline{k}})^{[i,i]} =
\begin{cases}
W & \text{if } (i,j) = (0,0), \\
W(-1) & \text{if } (i,j) = (1,1), \\
\mathbb{D}(\alpha_p) & \text{if } (i,j) = (0,2), \\
U_0 & \text{if } (i,j) = (0,3), \\
0 & \text{otherwise.}
\end{cases}
\]

Moreover, its first and third crystalline cohomology groups vanish.
In particular, the Hodge--Witt numbers of $X$ in total degree $\leqslant 3$ are given below, and they are asymmetric in degree $3$:
\begin{center}
    
\begin{tikzpicture}
    \draw [thick,->] (0, 0) -- (2.5, 0);
    \draw [thick,->] (0, 0) -- (0, 2.5);
    \node at (0.25,0.25){\(1\)}; 
    \node at (0.25,0.85){\(0\)};
    \node at (0.85,0.25){\(0\)};
    \node at (0.85,0.85){\(1\)};
    \node at (1.45,0.25){\(0\)};
    \node at (0.25,1.45){\(0\)};
    \node at (2.05,0.25){\(0\)};
    \node at (1.45,0.85){\(1\)};
    \node at (0.85,1.45){\(-2\)};
    \node at (0.25,2.05){\(1\)};

    \node at (-1.5,0.25){\(h_W^{i,j}(X):\)};
    \node at (0.25,-0.2){\textcolor{teal}{0}};
    \node at (0.85,-0.2){\textcolor{teal}{1}};
    \node at (1.45,-0.2){\textcolor{teal}{2}};
    \node at (2.05,-0.2){\textcolor{teal}{3}};
    \node at (3,-0.2){\textcolor{teal}{\(i\)}}; 

    \node at (-0.2,0.25){\textcolor{teal}{0}};
    \node at (-0.2,0.85){\textcolor{teal}{1}};
    \node at (-0.2,1.45){\textcolor{teal}{2}};
    \node at (-0.2,2.05){\textcolor{teal}{3}};
    \node at (-0.2,3){\textcolor{teal}{\(j\)}};     
\end{tikzpicture}
\end{center}
\end{corollary}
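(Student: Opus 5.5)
We take $X$ to be the output of \Cref{Enough dRW equivalence approximations} applied with $G=\alpha_p$ and $d=4$: a smooth projective fourfold $X$ over $k$ (or over $\mathbb{F}_p$, then base changed) with a map $X\to B\alpha_p\times B\mathbb{G}_m$ that is a de Rham--Witt $4$-equivalence. The construction of \cite{ABM} works over any field containing $\mathbb{F}_p$, and Hodge $d$-equivalences are stable under field extension. So we build $X$ over the prime field, base change to $k$, and run the computation over $\overline{k}$, where \Cref{alphap setup} applies. Since $X\to B\alpha_p\times B\mathbb{G}_m$ is a de Rham--Witt $4$-equivalence, the cone $M$ satisfies $\HH^j(M)^i=0$ for $i+j<4$. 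The long exact sequence then identifies $\HH^j(X,W\Omega^\bullet)^{[i,i]}$ with the corresponding piece for the stack whenever $i+j\leqslant 3$. One caveat: the $(i,j)$ with $i+j=3$ sit next to the cone's $(i,j-1)$-terms, but those have total degree $2<4$ and vanish as well.

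Next we compute the stack side. By \Cref{effect of multiplying BGm}, $W\Omega^\bullet(B\alpha_p\times B\mathbb{G}_m)\cong\bigoplus_{n\geqslant0}W\Omega^\bullet(B\alpha_p)(-n)[-n]$. In total degree $\leqslant3$, the $n=0$ summand contributes exactly the table of \Cref{nonsplit SS}. The $n=1$ summand contributes $W\Omega^\bullet(B\alpha_p)$ in total degrees $\leqslant1$, shifted by $(1,1)$; by \Cref{nonsplit SS} this is only $W$ at $(0,0)$, giving $W(-1)$ at $(1,1)$. The $n\geqslant2$ summands start in total degree $4$. Together these give the displayed table. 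The graded pieces $\HH^j(\cdot)^{[i,i]}$ commute with the direct sum, so we should double-check that the $(1,1)$ entry is $W(-1)$ as a left graded $\RR$-module, with $F$ acting as $p\sigma$ from the twisting convention. This matches \Cref{Hodge--Witt decomposition}.

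For the crystalline statement, we use the slope spectral sequence/\Cref{dRW-crys comparison}. By \Cref{SSbasic}, $\HH^n_{\cris}\otimes K$ is $\bigoplus_{i+j=n}$ of the cœurs of $\HH^j(W\Omega^i)\otimes K$. In degrees $1$ and $3$, every term in the table is $p$-torsion ($\mathbb{D}(\alpha_p)$, $U_0$) or zero, so $b_1=b_3=0$. To get actual vanishing (not just torsion), argue via \Cref{Hodge equivalence implies crystalline equivalence}: $X\to B\alpha_p\times B\mathbb{G}_m$ is a crystalline $4$-equivalence. It therefore suffices to know $\HH^1,\HH^3$ of $\Tot$ of the stack's dRW complex vanish. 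Compute $\Tot$ of the $n=0$ part in low degrees: the $\mathbb{D}(\alpha_p)$ in $\HH^2$ and the domino $U_0$ in $\HH^3$ interact. One could use \Cref{Hodge--de Rham spectral sequence} mod $p$ together with the known crystalline cohomology of $B\alpha_p$ from \cite{ABM}, or directly totalize the domino $U_0$; a domino $U_0$ has totalization $k[[V]]\to\prod k\,dV^n$ given by $d$, which is an isomorphism and so acyclic. So only $\mathbb{D}(\alpha_p)$ survives, contributing to $\HH^2$, and $\HH^1=\HH^3=0$. This identification of $\Tot$ in the presence of the extension structure between cohomological degrees is the main obstacle. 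I would handle it by truncating $W\Omega^\bullet(B\alpha_p)$ with canonical truncations and using that $\Tot$ is exact.

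Finally, we compute the Hodge--Witt numbers from \Cref{numerical invariants}. The domino numbers are $T^{0,3}=1$ (from $U_0$) and all others with $i+j\leqslant3$ are zero. We take $T^{i,j}$ for $i+j=4$ from Serre duality considerations: $h_W^{i,j}$ with $i+j=3$ involves $T^{i-1,j+1}$ and $T^{i-2,j+2}$, which also have total degree $3$. So we only need $T^{0,3}=1$ and $T^{1,2}=T^{2,1}=T^{3,0}=0$ (the latter since $\HH^0,\HH^1,\HH^2$ have no dominoes in grading $\geqslant1$ within range). Slope numbers $m^{i,j}$ with $i+j=3$ vanish because $b_3=0$; $m^{1,1}=1$ from $W(-1)$, and $m^{0,0}=1$. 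Then $h_W^{0,3}=T^{0,3}=1$, $h_W^{1,2}=-2T^{0,3}=-2$, $h_W^{2,1}=T^{0,3}=1$, and $h_W^{3,0}=0$. The degree-$2$ terms $\mathbb{D}(\alpha_p)$ are finite length, so they contribute nothing. This yields the table and hence \Cref{Main Thm:counterexample}.
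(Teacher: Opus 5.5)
Your proposal is correct. Its main line is the paper's: the Antieau--Bhatt--Mathew approximation upgraded to a de Rham--Witt $4$-equivalence, \Cref{effect of multiplying BGm} to split off the $B\mathbb{G}_m$ factor, \Cref{nonsplit SS} for the $n=0$ summand, and then reading off $T^{i,j}$ and $m^{i,j}$. One detail in the transfer step: to identify $[i,i]$-pieces when $i+j=3$ you also need injectivity in grading $i+1$. The relevant cone term $\HH^{j-1}(\mathrm{Cone})^{i+1}$ has total degree $i+j\leqslant 3$, so this is fine.

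The genuine difference is the crystalline vanishing. The paper imports it: it cites \cite[Proposition 4.17]{ABM} for $B\alpha_p$, uses a crystalline analogue of \Cref{effect of multiplying BGm}, and transports along the crystalline $4$-equivalence. You instead extract it from the de Rham--Witt computation via the slope spectral sequence. Your idea works even more cleanly run directly on $X_{\overline{k}}$ via \Cref{dRW-crys comparison}, with no crystalline $4$-equivalence and no stack-level identification of $\Tot$ with crystalline cohomology. Each $\HH^j(\cdot)^i$ is an extension of the grading-$i$ part of the $[i,i]$-piece by the grading-$i$ part of the $[i-1,i-1]$-piece, so the table gives all $E_1$-terms in total degrees $1$ and $3$. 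The only nonzero one is $\HH^3(\cdot)^0=k[[V]]$, and $d_1=d$ is injective on it. The paper's route is shorter but rests on an independent computation in \cite{ABM}. Yours shows the crystalline claim is a formal consequence of the Hodge--Witt table already proved.

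Three corrections to the write-up.

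First, you only know that the $[0,0]$-piece of $\HH^3(W\Omega^\bullet(B\alpha_p))$ is $U_0$. Its grading-$1$ part may be strictly larger than $\prod_{n\geqslant 0}k\,dV^n$, since that is total degree $4$ and outside your control. So ``the totalization of $U_0$ is acyclic'' is not the right input. What you need, and what holds, is that $d\colon \HH^3(\cdot)^0\to \HH^3(\cdot)^1$ is injective, because it factors through the injective map $k[[V]]\to F^\infty d(\HH^3(\cdot)^0)$ of $U_0$.

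Second, in $W(-1)$ the twist is only a grading shift, so $F$ acts by the usual $\sigma$. The operator $p\sigma$ is the crystalline Frobenius $p^1F$ on that piece under \Cref{Hodge--Witt decomposition}. Your own count $m^{1,1}=1$ depends on this, since it uses that the piece has slope $0$.

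Third, as the paper notes, you should say explicitly that Hodge--Witt numbers are unchanged under extension of the perfect ground field. The statement concerns $X$, while you compute on $X_{\overline{k}}$.
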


\begin{proof}
By \Cref{Enough dRW equivalence approximations} and \Cref{Hodge equivalence implies crystalline equivalence},
we may find a smooth projective fourfold $X$ together with a map
$X \to B\alpha_p \times B\mathbb{G}_m$
which is both a de Rham--Witt $4$-equivalence and a crystalline $4$-equivalence
(and remains so after base change to $\overline{k}$).
Using \Cref{effect of multiplying BGm} and \Cref{nonsplit SS},
we obtain the claimed description of its Hodge--Witt cohomology.
The statement about its crystalline cohomology follows from
\cite[Proposition 4.17]{ABM}.
\footnote{The analogue of \Cref{effect of multiplying BGm} also holds for crystalline cohomology,
by a very similar argument.}
Finally, for the computation of the Hodge--Witt numbers, we use the fact that
base change of the ground perfect field does not change these numbers.
\end{proof}

\bibliographystyle{amsalpha}
\bibliography{main}

\end{document}